 \def\NN{{\mathbb N}}  
 \def\RR{{\mathbb R}}  
 \def\ZZ{{\mathbb Z}}
\def\La{\Lambda}
\def\Wi{\widetilde}
\def\cA{{\cal A}}
\def\cF{{\cal F}}
\newtheorem{theorem}{{Theorem}}[section]
\newtheorem{proposition}[theorem]{{Proposition}}
\newtheorem{lemma}[theorem]{{Lemma}}
\newtheorem{corollary}[theorem]{{Corollary}}
\theoremstyle{definition}
\newtheorem{definition}[theorem]{{Definition}}
\theoremstyle{remark}
\newtheorem{remark}[theorem]{{Remark}}
\newtheorem{remarks}[theorem]{{Remarks}}
\title{A uniqueness theorem for transitive Anosov flows \\obtained by gluing hyperbolic plugs}
\author{Fran\c cois B\'eguin and Bin Yu\footnote{This work was partially carried during some stay of Fran\c cois B\'eguin in Shanghai and Bin Yu in Paris. We thank our universities for the financial support for these visits. Yu was partially supported by National Natural Science Foundation of China (NSFC 11871374).}}
\date{\today}
\begin{document}
\maketitle

\begin{abstract}
In a previous paper with C. Bonatti (\cite{BBY}), we have defined a general procedure to build new examples of Anosov flows in dimension 3. The procedure consists in gluing together some building blocks, called \emph{hyperbolic plugs}, along their boundary in order to obtain a closed 3-manifold endowed with a complete flow. The main theorem of~\cite{BBY} states that (under some mild hypotheses) it is possible to choose the gluing maps so the resulting flow is Anosov. The aim of the present paper is to show a uniqueness result for Anosov flows obtained by such a procedure. Roughly speaking, we show that the orbital equivalence class of these Anosov flows is insensitive to the precise choice of the gluing maps used in the construction. The proof relies on a coding procedure which we find interesting for its own sake, and follows a strategy that was introduced by T. Barbot in a particular case.
\end{abstract}

\section{Introduction}

In a previous paper written with C. Bonatti (\cite{BBY}), we have proved a result allowing to construct transitive Anosov flows in dimension 3 by ``gluing hyperbolic plugs along their boundaries". The purpose of the present paper is to study Anosov flows obtained by such a construction. We focus our attention on the diffeomorphisms that are used to glue together the boundaries of the hyperbolic plugs. We aim to understand what is the impact of the choice of these diffeomorphisms on the dynamics of the resulting Anosov flows. We will see that two gluing diffeomorphisms that are ``strongly isotopic" yield some Anosov flows that are orbitally equivalent. In other words, in~\cite{BBY}, we have proved the \emph{existence} of Anosov flows constructed by a certain gluing procedure, and the goal of the present paper is to prove a \emph{uniqueness result} for these Anosov flows.

In order to state some precise questions and results, we need to introduce some terminology.  A \emph{hyperbolic plug} is a pair $(U,X)$ where $U$ is a (not necessarily connected) compact three-dimensional manifold with boundary, and $X$ is a vector field on $U$, transverse to $\partial U$, and such that the maximal invariant set $\Lambda_X:=\bigcap_{t\in\RR} X^t(U)$ is a saddle hyperbolic set for the flow $(X^t)$.  Given such a hyperbolic plug $(U,X)$, we decompose $\partial U$ as the disjoint union of an \emph{entrance boundary} $\partial^{in} U$ (the union of the connected component of $\partial U$ where the vector field $X$ is pointing inwards $U$) and an \emph{exit boundary} $\partial^{out} U$ (the union of the connected component of $\partial U$ where the vector field $X$ is pointing outwards $U$). The stable lamination $W^s(\Lambda_X)$ of the maximal invariant set $\Lambda_X$ intersects transversally the entrance boundary $\partial^{in} U$  and is disjoint from the exit boundary $\partial^{out} U$. Hence, $L^s_X:=W^s(\La_X)\cap\partial U$ a one-dimensional lamination embedded in the surface $\partial^{in} U$. Similarly, $L^u_X:=W^u(\La_X)\cap\partial U$ a one-dimensional lamination embedded in the surface $\partial^{out} U$. We call $L^s_X$ and $L^u_X$ the \emph{entrance lamination} and the \emph{exit lamination} of the hyperbolic plug $(U,X)$. It can be proved that these laminations are quite simple: 
\begin{itemize}
\item[(i)] They contain only finitely many compact leaves.
\item[(ii)] Every half non-compact leaf is asymptotic to a compact leaf.
\item[(iii)] Each compact leaf  may be oriented  such that its holonomy is a contraction.  
\end{itemize}

Hyperbolic plugs should be thought as the basic blocks of a building game, our goal being to build some Anosov flows by gluing a collection of such basic blocks together. From a formal viewpoint, a finite collection of hyperbolic plugs can always be viewed as a single non-connected hyperbolic plug. For this reason, it is enough to consider a single hyperbolic plug $(U,X)$ and a gluing diffeomorphism $\psi:\partial^{out} U\to\partial^{in} U$. For such $(U,X)$  and $\psi$, the quotient space $M:=U/\psi$ is a closed three-manifold, and the incomplete flow $(X^t)$ on $U$ induces a complete flow $(Y^t)$ on $M$. The purpose of the paper~\cite{BBY} was to describe some sufficient conditions on $U$, $X$ and $\psi$ for $(Y^t)$ to be an Anosov flow. We will now explain these conditions. 

We say that a one-dimensional lamination $L$ is \emph{filling a surface $S$} if every connected component $C$ of $S\setminus L$ is ``a strip whose width tends to $0$ at both ends": more precisely, $C$ is simply connected, the accessible boundary of $C$ consists of two distinct non-compact leaves $\ell^-,\ell^+$ of $L$, and these two leaves $\ell^-,\ell^+$ are asymptotic to each other at both ends. We say that two laminations $L_1,L_2$ embedded in the same surface $S$ are \emph{strongly transverse} if they are transverse to each other, and moreover every connected component $C$ of $S\setminus (L_1\cup L_2)$  is a topological disc whose boundary $\partial C$ consists of exactly four arcs $\alpha_1,\alpha_2, \alpha_1',\alpha_2'$ where $\alpha_1,\alpha_1'$ are arcs of leaves of the lamination $L_1$ and $\alpha_2,\alpha_2'$ are arcs of leaves of the lamination $L_2$. We say that a hyperbolic plug $(U,X)$ has \emph{filling laminations} if the entrance lamination $L^s_X$ is filling the surface $\partial^{in} U$ and the exit lamination $L^u_X$ is filling the surface $\partial^{out} U$. Given a hyperbolic plug $(U,X)$, we say that a gluing diffeomorphism $\psi:\partial^{out} U\to\partial^{in} U$ is \emph{strongly transverse} if the laminations $L^s_X$ and $\psi_*L^u_X$ (both embedded in the surface $\partial^{in} U$) are strongly transverse. If $(U,X_1)$ and $(U,X_2)$ are two hyperbolic plugs with the same underlying manifold $U$, and $\psi_1,\psi_2:\partial^{out} U\to\partial^{in} U$ are two gluing diffeomorphisms, we say that the triples $(U,X_1,\psi_1)$ and $(U,X_2,\psi_2)$ are \emph{strongly isotopic} if one can find a continuous one-parameter family $\{(U,X_t,\psi_t)\}_{t\in [1,2]}$ so that $(U,X_t)$ is a hyperbolic plug and $\psi_t:\partial^{out} U\to\partial^{in} U$ is a strongly transverse gluing map for every $t$. The main technical result of~\cite{BBY} can be stated as follows: 

\begin{theorem}
\label{t.BBY}
Let $(U,X_0)$ be a hyperbolic plug with filling laminations such that the maximal invariant set of $(U,X_0)$ contains neither attractors nor repellers, and let $\psi_0:\partial^{out}U\to \partial^{in} U$ be a strongly transverse gluing diffeomorphism. Then there exist a hyperbolic plug $(U,X)$ with filling laminations and a strongly transverse gluing diffeomorphism $\psi:\partial^{out}U\to \partial^{in}U$ such that $(U,X_0,\psi_0)$ and $(U,X,\psi)$ are strongly isotopic, and such that the vector field $Y$ induced by $X$ on the closed manifold $M:=U/\psi$ is Anosov.
\end{theorem}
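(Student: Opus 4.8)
The plan is to modify $X_0$ and $\psi_0$ within the strong isotopy class so that the flow $(Y^t)$ induced on $M=U/\psi$ carries a pair of transverse, strictly invariant cone fields, and then to conclude by the classical cone criterion for Anosov flows. First I would make the plug normal near the boundary: an isotopy supported in a collar of $\partial U$ (harmless for the filling condition, for strong transversality of the gluing, and for the isotopy class) puts us in the situation where there are product collars $\partial^{in}U\times[0,1]$ and $\partial^{out}U\times[0,1]$ on which $X$ is the coordinate field and on which $W^s(\Lambda_X)$, $W^u(\Lambda_X)$ are the products $L^s_X\times[0,1]$, $L^u_X\times[0,1]$. On a neighbourhood of $\Lambda_X$ I take the stable and unstable cone fields given by hyperbolicity — narrow "flat" cones around $E^s\oplus\RR X$ and $E^u\oplus\RR X$, strictly invariant with uniform rates after a fixed time; on the entrance collar I take a narrow cone around $TL^s_X\oplus\RR X$ for the stable cone, and on the exit collar a narrow cone around $TL^u_X\oplus\RR X$ for the unstable cone. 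The first task is then to interpolate between these regimes over the transition region $U\setminus(\text{collars}\cup\text{nbhd of }\Lambda_X)$ while keeping the two cone fields disjoint (meeting only along $\RR X$); here one uses that $W^s(\Lambda_X)$ accumulates precisely onto $L^s_X$ near the entrance (dually near the exit), and that, since $\Lambda_X$ contains neither attractors nor repellers, $U\setminus\Lambda_X$ admits a filtration separating entrance from exit dynamics, which leaves room for the interpolation.

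Next I would check that the cone fields match across the gluing surface $\Sigma=\partial^{out}U/{\sim}=\partial^{in}U/{\sim}$. From the exit side the unstable cone is a narrow cone around $\psi_*(TL^u_X)\oplus\RR X$, while from the entrance side the stable cone is a narrow cone around $TL^s_X\oplus\RR X$. Strong transversality of $\psi$ says exactly that $\psi_*L^u_X$ and $L^s_X$ are transverse with complementary quadrilaterals, so the angle between $\psi_*(TL^u_X)$ and $TL^s_X$ is bounded away from $0$ along $\Sigma$; for small enough widths the two cone fields on $M$ are everywhere disjoint away from $\RR X$. Moreover, since $L^u_X$ fills $\partial^{out}U$ its image $\psi_*L^u_X$ fills $\partial^{in}U$, so the narrow unstable cone at the entrance genuinely spreads over the whole entrance surface as it is pushed into $U$, which is what makes the transition-region interpolation possible.

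The heart of the argument, and the step I expect to be the main obstacle, is to make these cone fields strictly invariant with uniform expansion/contraction along \emph{every} orbit, by a final modification of $X$ and $\psi$ inside the strong isotopy class. Along orbit segments that spend a definite amount of time near $\Lambda_X$ this is automatic from hyperbolicity; the difficulty is the orbits that keep crossing $U$ quickly without approaching $\Lambda_X$ — which after gluing organise into a suspension-like subsystem over the first-return map of $(Y^t)$ to $\Sigma$ — since neither the transit through $U$ nor the fixed diffeomorphism $\psi$ need by themselves contract or expand the chosen cones, and a mere reparametrisation of $X$ cannot change the transverse expansion along an orbit. The resolution is to extract the missing hyperbolicity from the combinatorics of the laminations: the strongly transverse pair $(L^s_X,\psi_*L^u_X)$ cuts $\partial^{in}U$ into quadrilaterals, a Markov-type finite partition of $\Sigma$ for the return map, and one then perturbs $X$ near the two collars — a change of vector field supported away from $\Lambda_X$ and from the compact leaves of the laminations, so that filling, strong transversality and the isotopy class survive — so that crossing $U$ strictly contracts the direction transverse to $L^s_X$ and strictly expands the direction transverse to $L^u_X$, at rates matching the combinatorial expansion of the partition; one also post-composes $\psi_0$ with an isotopy of $\partial^{in}U$ to the identity so that $D\psi$ respects these cones along $\Sigma$. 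Carrying this out uniformly over all orbit types, without destroying the product normal forms of the first step, is the technically heavy part.

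Finally, with a transverse pair of strictly invariant cone fields on $M$ in hand, I would invoke the classical cone criterion for non-singular flows on closed manifolds: taking $\bigcap_{n\ge 0}DY^{nT}(\mathcal{C}^u)$ and $\bigcap_{n\ge 0}DY^{-nT}(\mathcal{C}^s)$ produces a continuous flow-invariant splitting $E^s\oplus\RR Y\oplus E^u$ with uniform contraction on $E^s$ and expansion on $E^u$ after the fixed time $T$, i.e. $(Y^t)$ is Anosov, which completes the proof.
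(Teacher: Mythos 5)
This theorem is not proved in the paper you are reading: it is quoted verbatim from \cite{BBY}, where it is the main technical result and its proof occupies a substantial part of that paper. So there is no ``paper's own proof'' here to compare against, but the proposal can still be assessed on its own terms, and the verdict is that it is a strategic outline rather than a proof. You correctly place the cone criterion and the normalization of the plug near $\partial U$ at the centre, and you correctly identify the genuine obstruction — orbits that cross $U$ in bounded time without approaching $\Lambda_X$, for which neither the transit map $\theta_X$ nor a fixed $\psi$ provides any hyperbolicity, and which cannot be cured by reparametrizing $X$. But the paragraph you flag as ``the heart of the argument'' is exactly the content of the theorem, and it is left as a hope (``extract the missing hyperbolicity from the combinatorics'', ``at rates matching the combinatorial expansion of the partition'') with no mechanism given. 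Declaring a Markov-type partition from the quadrilaterals of $L^s_X\cup\psi_*L^u_X$ does not by itself produce expansion: one has to actually construct a deformation of $\psi$ in its strong isotopy class and of $X$ near $\partial U$, and prove uniform cone invariance for \emph{all} orbits simultaneously, including those shadowing $\Lambda_X$ for arbitrarily long and those leaving immediately, while keeping the laminations filling and strongly transverse throughout the deformation so that the strong isotopy class is preserved. That is the forty-odd-page technical core of \cite{BBY}, and none of it is present.

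Two further concrete issues. First, the asserted direction of hyperbolicity is backwards: on $\partial^{in}U$ the direction tangent to $L^s_X$ sits in $E^s\oplus\RR X$ and the direction transverse to $L^s_X$ is the unstable one, so the forward crossing of $U$ must \emph{expand} transverse to $L^s_X$, not contract it, and dually must \emph{contract} transverse to $L^u_X$ at $\partial^{out}U$, not expand it; with your stated orientations the cones would be invariant the wrong way round. Second, the ``filling'' hypothesis plays only a decorative role in your sketch (``the unstable cone spreads over the whole entrance surface''), whereas it is indispensable: without it $\partial^{in}U\setminus(L^s_X\cup\psi_*L^u_X)$ has non-quadrilateral components which are not controlled by the laminations, and the interpolation you describe in the transition region has no chance of producing a globally strictly invariant cone field. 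I would recommend reading Sections~7--8 of \cite{BBY} to see how the filling hypothesis, the absence of attractors and repellers (which yields a filtration of the plug), and the freedom to move $\psi$ within its strong isotopy class are actually combined.
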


The idea of building transitive Anosov flows by gluing hyperbolic plugs goes back to~\cite{BL} where Bonatti and R. Langevin consider a very simple hyperbolic plug $(U,X)$ whose maximal invariant set is a single isolated periodic orbit and are able to find an explicit gluing diffeomorphism $\psi:\partial^{out} U\to\partial^{in} U$ so that the vector field $Y$ induced by $X$ on the closed manifold $M:=U/\psi$ generates a transitive Anosov flow. This example was later generalized by T. Barbot who defined a infinite family of transitive Anosov flows which he calls \emph{BL-flows}. These examples are obtained by considering the same very simple hyperbolic plug $(U,X)$ as Bonatti and Langevin, but more general gluing diffeomorphisms. 

Theorem~\ref{t.BBY} naturally raises the following question (see~\cite[Question 1.7]{BBY}): \emph{in the statement of Theorem~\ref{t.BBY}, is the Anosov vector field $Y$ well-defined up to orbitally equivalence}~? (recall that two Anosov flows  are said to be \emph{orbitally equivalent} if there exists a homeomorphism between their phase space mapping the oriented orbits of the first flow to the oriented orbitsthe second one). One of the main purpose of the present paper is to provide a positive answer to this question. More precisely, we will prove the following:

\begin{theorem}
\label{t.main}
Let $(U,X_1,\psi_1)$ and $(U,X_2,\psi_2)$ be two hyperbolic plugs endowed with strongly transverse gluing diffeomorphisms. Let $Y_1$ and $Y_2$ be the vector fields induced by $X_1$ and $X_2$ on the closed manifolds $M_1:=U/\psi_1$ and $M_2:=U/\psi_2$. Suppose that:
\begin{enumerate}
\item[0.] the manifolds $U$, $M_1$ and $M_2$ are orientable;
\item[1.] both $Y_1$ and $Y_2$ are transitive Anosov flows;
\item[2.] the triples $(U,X_1,\psi_1)$ and $(U,X_2,\psi_2)$ are strongly isotopic.
\end{enumerate}
Then the flows $(Y_1^t)$ and $(Y_2^t)$ are orbitally equivalent.
\end{theorem}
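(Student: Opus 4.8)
The plan is to follow the scheme that T.~Barbot introduced for $BL$-flows (where the plug has a single periodic orbit as maximal invariant set) and to make it work for an arbitrary hyperbolic plug. To a triple $(U,X,\psi)$ with $\psi$ strongly transverse one attaches a \emph{coding}: a finite labelled graph $G=G(U,X,\psi)$, its subshift of finite type $\Sigma_G$, and an equivalence relation $\sim_G$ on $\Sigma_G$ given by combinatorial rules read off from $G$. One then proves two independent statements. (i) Along a strong isotopy the coding is constant. (ii) If the flow $Y$ induced by $(U,X,\psi)$ on $M=U/\psi$ is Anosov, then the orbital-equivalence class of $Y$ is determined by the coding. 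Granting these, Theorem~\ref{t.main} follows at once: hypothesis~2 and (i) give the same coding for $(U,X_1,\psi_1)$ and $(U,X_2,\psi_2)$, and hypothesis~1 together with (ii) then force $Y_1$ and $Y_2$ to be orbitally equivalent.

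\textbf{Construction of the coding.} I would first fix a Markov partition $\{R_1,\dots,R_k\}$ of the hyperbolic set $\Lambda_X$ by rectangles transverse to $X$, chosen so that their stable and unstable sides lie in $W^s(\Lambda_X)$ and $W^u(\Lambda_X)$ and match, near $\partial^{in}U$ and $\partial^{out}U$, the entrance and exit laminations $L^s_X,L^u_X$. Feeding into this the decomposition of $\partial^{in}U$ into quadrilateral chambers cut out by $L^s_X$ and $\psi_*L^u_X$ --- which exists precisely because $\psi$ is strongly transverse --- together with the incidence pattern of the finitely many compact leaves of $L^s_X,L^u_X$ and of the non-compact leaves asymptotic to them, one produces a finite labelled graph $G$, the subshift $\Sigma_G$, and a surjective map $\pi\colon\Sigma_G\to\cO(Y)$ onto the set of oriented orbits of $Y$: an orbit that stays in $U$ forever is coded by the Markov partition alone, while an orbit crossing $U$ alternates rectangle symbols with chamber-crossing symbols. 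The facts to establish are that $\pi$ is finite-to-one, that its fibers are exactly the classes of a relation $\sim_G$ depending only on $G$, and that $M$ together with its partition into oriented orbits can be reconstructed from $(\Sigma_G,\sim_G,\text{shift})$; in other words that this triple is a complete invariant of the orbital-equivalence class of $Y$. (The choice of Markov partition affects $G$ only up to an obvious refinement relation, which changes none of the conclusions; this is the part of the argument the abstract advertises as being of independent interest.)

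\textbf{Invariance along a strong isotopy.} Let $\{(U,X_t,\psi_t)\}_{t\in[1,2]}$ realise the isotopy. Near a given $t_0$: structural stability of hyperbolic sets provides a homeomorphism $\Lambda_{X_t}\to\Lambda_{X_{t_0}}$ conjugating the flows and carrying a Markov partition to one with the same transition graph; the combinatorial type of the laminations $L^s_{X_t},L^u_{X_t}$ (isotopy classes of their compact leaves, directions of contracting holonomy, asymptotic pattern of the non-compact leaves) varies locally constantly, since it could only change through a loss of transversality or an asymptotic degeneration, neither of which occurs along the family; and the chamber pattern of $L^s_{X_t}$ against $\psi_{t*}L^u_{X_t}$ is locally constant because, by the very definition of a strong isotopy, $\psi_t$ is strongly transverse for \emph{every} $t$. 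Hence $t\mapsto G(U,X_t,\psi_t)$ is locally constant, and by connectedness of $[1,2]$ it is constant; this is (i).

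\textbf{Recovering the flow, and the main difficulty.} For (ii) I would, from $G$ alone, assemble a model closed $3$-manifold $M_G$ carrying a flow $Y_G$ --- gluing flow boxes and plug pieces according to the edges and labels of $G$ --- together with a canonical coding $\pi_G\colon\Sigma_G\to\cO(Y_G)$ having the same fiber relation $\sim_G$. For any triple with coding graph $G$ and Anosov induced flow $Y$, the maps $\pi,\pi_G$ and the coincidence of their fibers give a bijection $\cO(Y)\to\cO(Y_G)$, which one must upgrade to a homeomorphism $M\to M_G$ sending oriented orbits to oriented orbits. Then $Y_1$ is orbitally equivalent to $Y_G$ and $Y_G$ to $Y_2$, hence $Y_1$ to $Y_2$. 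Here hypotheses~0 and~1 enter: orientability pins down the co-orientations and the signs used in the gluings, so that the reconstruction of $M$ from $(\Sigma_G,\sim_G)$ is unambiguous, and transitivity makes $G$ recurrent, with no attracting or repelling piece to treat apart. \emph{I expect this last step to be the main obstacle}: one has to pass from the totally disconnected space $\Sigma_G$ to an actual $3$-manifold, controlling exactly how $\pi$ collapses $\Sigma_G$ onto $M$ along the boundaries of the Markov rectangles and --- the delicate point --- near the hyperbolic set $\Lambda_X$ itself, where the cross-sectional picture degenerates and where the spiralling of the non-compact leaves of $L^s_X,L^u_X$ onto the compact ones must be matched leaf by leaf. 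Once the reconstructed bijection and its inverse are checked to be continuous on each piece of a suitable decomposition of $M$ and the pieces are seen to glue coherently, the homeomorphism $M_1\to M_2$ is the sought orbital equivalence and the theorem is proved.
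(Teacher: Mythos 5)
Your proposal takes a genuinely different route from the paper's, and I believe it has gaps that are not merely technicalities to be filled in.

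The paper does \emph{not} code the flow by a finite graph or a subshift of finite type. It codes in the universal cover: the alphabet is the (countably infinite) set of connected components of $\widetilde S\setminus \widetilde L^s$, and the resulting coding maps $\chi,\chi^s,\chi^u$ are \emph{bijections} (Proposition~\ref{p.bijective}). This bijectivity is the whole point, and the paper explicitly warns (Remark after Lemma~\ref{l.inverse-code}) that the corresponding coding downstairs on $S$ fails badly: the intersection $\bigcap_{p\geq 0}\theta^{-p}(D_p)$ for $D_p\subset S\setminus L^s$ is, when non-empty, an \emph{uncountable} union of stable leaves. Your chamber-crossing coding downstairs is therefore far from finite-to-one; to make it so you would have to switch to a genuine Markov partition of the full Anosov flow, but then your two key unproven claims become (a) that the fiber relation $\sim_G$ can be read off combinatorially from the transition graph $G$ alone, and (b) that $G$ up to refinement is an invariant of the strong isotopy class, and neither is a formality. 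In general the identification pattern along boundaries of Markov rectangles depends on the ambient geometry, not just on adjacency in $G$; and the stance that ``the choice of Markov partition affects $G$ only up to an obvious refinement relation'' conceals a famously delicate issue (conjugacy of subshifts does not descend from, nor imply, conjugacy of the coded systems in any simple way).

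More fundamentally, your reconstruction step (ii) has no mechanism to recover the fundamental group of $M$, and this is exactly what the classification of Anosov flows requires: the paper's endgame is to produce a homeomorphism between the orbit spaces in the universal covers that is \emph{equivariant} under $\pi_1(M_1)\to\pi_1(M_2)$, and then invoke Barbot's theorem (Theorem~\ref{t.Barbot}) that such an equivariant homeomorphism forces orbital equivalence. Working downstairs with a finite graph, there is no visible $\pi_1$-action, and the ``reconstruction of $M$ and $Y$ from $(\Sigma_G,\sim_G,\text{shift})$'' would in effect have to re-prove Barbot's theorem from scratch in a harder setting. You correctly flag this reconstruction as the main obstacle, but it is not a continuity-checking obstacle; it is where a new idea is needed, and the paper's new idea is precisely to lift to $\widetilde M$, code there with a $\pi_1$-equivariant infinite alphabet, transport the coding via $\tilde\phi_{in}$, prove order-preservation on leaf spaces, and extend to a $\pi_1$-equivariant homeomorphism of orbit spaces. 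Also note that, contrary to your framing, Barbot's original argument for BL-flows already proceeded in the universal cover along these lines; the paper says it ``roughly follows the same strategy'' as Barbot's but with much more intricate order-preservation arguments, because a general hyperbolic plug yields non-orientable unstable foliations and non-trivial return dynamics inside $U$.
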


It should be noted that, in the statement of Theorem~\ref{t.main}, we do not require that the hyperbolic plugs $(U,X_1)$ and $(U,X_2)$ have filling laminations. So Theorem~\ref{t.main} concerns a class of Anosov flows which is larger than the class of Anosov flows provided by Theorem~\ref{t.BBY}. For example, Bonatti-Langevin's classical example and its generalizations by Barbot (BL-flows) satisfy the hypotheses of Theorem~\ref{t.main}.

\begin{figure}[ht]
\begin{center}
  \includegraphics[totalheight=5cm]{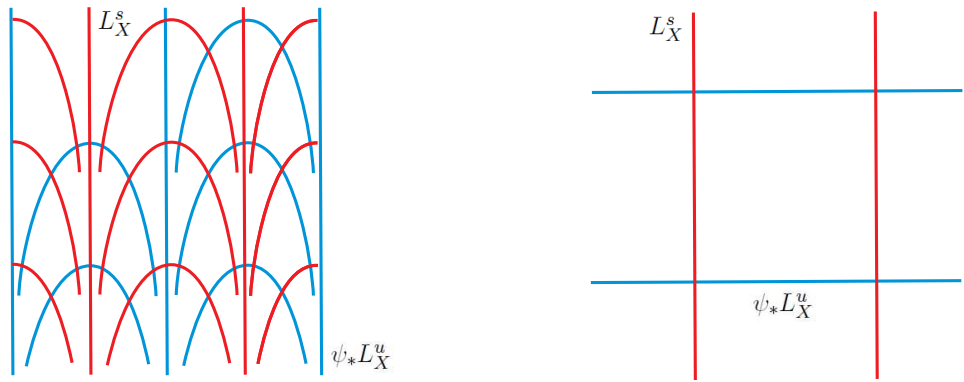}
\caption{\label{f.strongly-transverse}Two examples of strongly transverse gluing diffeomorphisms. On the left-hand side, the laminations are filling. The right-hand side corresponds to Bonatti-Langevin's example.}
 \end{center}
\end{figure}

\begin{remark}
A possible application of Theorem~\ref{t.main} is to get some finiteness results. Suppose we are given a hyperbolic plug $(U,X)$ and a diffeomorphism $\psi_0:\partial^{out} U\to\partial^{in} U$. Consider the partition of the isotopy class of $\psi_0$ into strong isotopy classes. Although we did not write down a complete proof, it seems to us that this partition is finite. In view of Theorem~\ref{t.main}, this means the following: up to orbital equivalence, there are only finitely many transitive Anosov flows that are built using the hyperbolic plug $(U,X)$ and a gluing map $\psi:\partial^{out} U\to\partial^{in} U$ isotopic to~$\psi_0$. A further consequence should be that, if we consider some given hyperbolic plugs $(U_1,X_1),\dots,(U_n,X_n)$ so that $U_1,\dots,U_n$ are hyperbolic manifolds, and if we consider a manifold $M$, then, up to orbital equivalence, there should only finitely many transitive Anosov flows on $M$ that are obtained by gluing $(U_1,X_1),\dots,(U_n,X_n)$. 
\end{remark}

An analog of Theorem~\ref{t.main} was proved by Barbot in the much more restrictive context of BL-flows (see~\cite[second assertion of Theorem~B]{Bar95}). Barbot's result can actually be considered as a particular case of Theorem~\ref{t.main}: it corresponds to the case where the maximal invariant set of the hyperbolic plug $(U_i,X_i)$ is a single isolated periodic orbit for $i=1,2$. Our proof of Theorem~\ref{t.main} roughly follows the same strategy as those of Barbot's result, but is far more intricate and requires some important new ingredients since we manipulate general hyperbolic plugs. 

The proof is based on a coding procedure that we will describe now. Consider a hyperbolic plug $(U,X)$ and a strongly transverse gluing diffeomorphism $\psi:\partial^{out} U\to\partial^{in} U$. Let $Y$ be the vector field induced by $X$ on the closed manifold $M:=U/\psi$, and assume that the flow $(Y^t)$ is a transitive Anosov flow. The projection in $M$ of $\partial U$ is a closed surface tranverse to the orbits of the Anosov flow $(Y^t)$; we denote this surface by $S$. The projection in $M$ of the entrance lamination of the plug $(U,X)$ is a lamination in the surface $S$; we denote it by $L^s$. Consider the universal cover $\widetilde M$ of the manifold $M$, and the lifts $(\widetilde Y^t),\widetilde S,\widetilde L^s$ of $(Y^t), S, L^s$. We will consider the (countable) alphabet $\cA$ whose letters are the connected components of $\widetilde S\setminus\widetilde L^s$, and the symbolic space $\Sigma$ whose elements are bi-infinite words on the alphabet $\cA$. We will construct a coding map $\chi$ from (a dense subset of) the surface $\widetilde S$ to the symbolic space $\Sigma$, commuting with the natural actions of the fundamental group of $M$, and conjugating the Poincar\'e first return map of the flow $(\widetilde Y^t)$ on the surface $\widetilde S$ to the shift map on the symbolic space $\Sigma$. If $\Lambda$ denotes the projection in $M$ of the maximal invariant set of the plug $(U,X)$, and $\widetilde\Lambda$ denotes the lift of $\Lambda$ in $\widetilde M$, then the map $\chi$ is defined at every point of $\widetilde S$ which is neither in the stable nor in the unstable lamination of $\widetilde \Lambda$. This means that the dynamics of the flow $(Y^t)$ can be decomposed into two parts: on the one hand, the orbits that converge towards to the maximal invariant set $\Lambda$ in the past or in the future, on the other hand, the dynamics that is well-described by the coding map $\chi$. 

\begin{remark}
Besides being the cornerstone of the proof of Theorem~\ref{t.main}, this coding procedure is interesting in its own sake. Indeed, it allows to understand the behavior of the recurrent orbits of the Anosov flow $(Y^t)$ that intersect the surface $S$ (\emph{i.e.} which do not correspond to recurrent orbits of the incomplete flow $(X^t)$). In a forthcoming paper~\cite{BeYu}, we will use this coding procedure to describe the free homotopy classes of theses orbits, and build new examples of transitive Anosov flows.
\end{remark}

Let us now explain how this coding procedure yields a proof of Theorem~\ref{t.main}. For $i=1,2$, we get a symbolic space $\Sigma_i$ and a coding map $\chi_i$ with value in $\Sigma_i$.  The strong isotopy between $(U,X_1,\psi_1)$ and $(U,X_2,\psi_2)$ implies that there is a natural map between the symbolic space $\Sigma_1$ and $\Sigma_2$.  Together with the coding maps, this yields a conjugacy between the Poincar\'e return maps of the flows $(\widetilde Y_1^t), (\widetilde Y_2^t)$ on the surfaces $\widetilde S_1,\widetilde S_2$. Unfortunately, this conjugacy is not well-defined on the whole surfaces $\widetilde S_1,\widetilde S_2$. So we need to extend it. In order to do that, we introduce some (partial) pre-orders on the leaf spaces of the lifts of the stable/unstable foliations of the Anosov flows $(Y_1^t),(Y_2^t)$, and prove that the  conjugacy preserves these  pre-orders. This is quite delicate since the coding maps $\chi_1,\chi_2$ do not behave very well with respect to these pre-orders. Once the extension has been achieved, we obtain a homeomorphism between the orbits spaces of the flows $(\widetilde Y_1^t)$ and $(\widetilde Y_2^t)$ that is equivariant with respect to the actions of the fundamental groups of the manifolds $M_1$ and $M_2$. Using a classical result, this implies that the Anosov flows $(Y_1^t)$ and $(Y_2^t)$ are orbitally equivalent.

\section{Coding procedure}
\label{s.coding}

In this section, we will consider a transitive Anosov flow obtained by gluing hyperbolic plugs. Our goal is to define a coding procedure for the orbits of this Anosov flow. Actually, this coding procedure will only describe the behavior of the orbits which do not remain in $\mathrm{int}(U)$ forever. 

\subsection{Setting}
\label{ss.setting}


We consider a hyperbolic plug $(U,X)$. Recall that this means that $U$ is a (not necessarily connected\footnote{Hence, a finite collection of hyperbolic plugs can always be considered as a single, non connected, hyperbolic plug.}) compact $3$-dimensional manifold with boundary, and $X$ is a vector field on $U$, transverse to $\partial U$, such that the maximal invariant set $$\Lambda_X:=\bigcap_{t\in\RR} X^t(U)$$ is a saddle hyperbolic set for the flow of $X$. We decompose the boundary of $U$ as
$$\partial U:=\partial^{in} U\sqcup \partial^{out} U$$
where $\partial^{in} U$ (resp. $\partial^{out} U$)  is the union of the connected component of $\partial U$ where $X$ is pointing inwards (resp. outwards) $U$. The stable manifold theorem implies that $W^s_X(\Lambda_X)$ and $W^u_X(\Lambda_X)$ are two-dimensional laminations transverse to $\partial U$. Moreover, $W^s_X(\Lambda_X)$ is obviously disjoint from $\partial^{out} U$ and $W^u_X(\Lambda_X)$ is obviously disjoint from $\partial^{in} U$. As a consequence,
$$L^s_X:=W^s_X(\Lambda_X)\cap \partial U=W^s_X(\Lambda_X)\cap \partial^{in} U\quad\mbox{ and }\quad L^u_X:=W^u_X(\Lambda_X)\cap \partial U=W^u_X(\Lambda_X)\cap \partial^{out} U$$
are one-dimensional laminations embedded in the surfaces $\partial^{in} U$ and $\partial^{out} U$ respectively. Note that $L^s_X$ can be described as the set of points in $\partial^{in} U$ whose forward $(X^t)$-orbit remains in $U$ forever, \emph{i.e.} does not intersect $\partial^{out} U$. Similarly,  $L^u_X$ is the set of points in $\partial^{out} U$ whose backward $(X^t)$-orbit remains in $U$ forever, \emph{i.e.} does not intersect $\partial^{in} U$. These characterizations of $L^s_X$ and $L^u_X$ allow to define a map
$$\theta_X:\partial^{in} U\setminus L^s_X\longrightarrow \partial^{out} U\setminus L^u_X$$
where $\theta_X(x)$ is the (unique) point of intersection the $(X^t)$-orbit of $x$ with the surface $\partial^{out} U$. Clearly, $\theta_X$ is a homeomorphism between $\partial^{in} U\setminus L^s_X$ and $\partial^{out} U\setminus L^u_X$. We call $\theta_X$ the \emph{crossing map} of the plug $(U,X)$. 

In order to create a closed manifold equipped with a transitive Anosov flow, we consider a diffeomorphism 
$$\psi:\partial^{out} U\to\partial^{in} U.$$  
The quotient space 
$$M:=U/\psi.$$
is a closed three-dimensional topological manifold. We denote by $\pi:U\to M$ the natural projection map. The topological manifold $M$ can equipped with a differential structure (compatible with the differential structure of $U$) so that the vector field $$Y:=\pi_* X$$
is well-defined (and as smooth as $X$). We make the following hypotheses:
\begin{enumerate}
\item[0.] the manifolds $U$ and $M$ are orientable;
\item[1.] the flow $(Y^t)$ is a transitive Anosov flow on the manifold $M$;
\item[2.] the diffeomorphism $\psi$ is a strongly transverse gluing diffeomorphism.
\end{enumerate}
 Recall that 2 means that the laminations $L^s_X$ and $\psi_*(L^u_X)$ are transverse in the surface $\partial^{in} U$ and moreover that  every connected component $C$ of $\partial^{in} U \setminus (L^s_X\cup\psi_*(L^u_X))$  is a topological disc whose boundary $\partial C$ consists of exactly four arcs $\alpha^s,{\alpha^s}', \alpha^u,{\alpha^u}'$ where $\alpha^s,{\alpha^s}'$ are arcs of leaves of $L_X^s$ and $\alpha^u,{\alpha^u}'$ are arcs of leaves  $\psi_*(L^u_X))$. We denote
$$S:=\pi(\partial^{in} U)=\pi(\partial^{out} U)\quad\quad \Lambda:=\pi(\Lambda_X)\quad\quad L^s:=\pi_*(L^s_X)\quad\quad L^u:=\pi_*(L^u_X).$$

By construction, $S$ is a closed surface, embedded in the manifold $M$, transverse to the vector field $Y$. The set $\Lambda$ is the union of the orbits of $(Y^t)$ that do not intersect the surface $S$. It is an invariant saddle hyperbolic set for the Anosov flow $(Y^t)$. Our assumptions imply that $L^s$ and $L^u$ are two strongly transverse one-dimensional laminations in the surface $S$. The lamination $L^s$ (resp. $L^u$) can be described as the sets of points in $S$ whose forward (resp. backward) $(Y^t)$-orbit does not intersect $S$. Similarly, $L^u$ is a strict subset of $W^u(\Lambda)\cap S$. The homeomorphism $\theta_X$ induces a homeomorphism
$$\theta=(\pi_{|\partial^{out}U}) \circ \theta_X\circ (\pi_{|\partial^{in}U})^{-1} : S\setminus L^s \longrightarrow S\setminus L^u.$$
Note that $\theta$ is nothing but the Poincar\'e first return map of the orbits of the Anosov flow $(Y^t)$ on the surface $S$.

Since $(Y^t)$ is an Anosov flow, it comes with a stable foliation $\cF^s$ and an unstable foliation $\cF^u$. These are two-dimensional foliations, transverse to each other, and transverse to the surface $S$. Hence, they induce two transverse one-dimensional foliations
$$F^s:=\cF^s\cap S\quad\mbox{ and }\quad F^u:=\cF^u\cap S$$
on the surface $S$. Clearly, $L^s$ and $L^u$ are sub-laminations (\emph{i.e.} union of. leaves) of the foliations $F^s$ and $F^u$ respectively.

In order to code the orbits of the Anosov flow $(Y^t)$, we cannot work directly in the manifold $M$, we need to unfold the leaves of the laminations $L^s$ and $L^u$ by lifting them to theuniversal cover of $M$. We denote this universal cover by $p:\widetilde M\longrightarrow M$, and we denote by 
$$\Wi S,\quad\Wi\Lambda,\quad \Wi W^s(\Lambda),\quad \Wi W^u(\Lambda),\quad \Wi L^s,\quad\Wi L^u, \quad\Wi\cF^s,\quad\Wi\cF^u,\quad\Wi F^s,\quad\Wi F^u,$$
the complete lifts of the surface $S$, the hyperbolic set $\Lambda$, the laminations $W^s(\Lambda), W^u(\Lambda),L^s, L^u$ and the foliations $\cF^s, \cF^u, F^s, F^u$. We insist that $\Wi S$ is the \emph{complete} lift of $S$, that is $\Wi S:=p^{-1}(S)$. In particular, $\Wi S$ has infinitely many connected components. By construction, $\Wi F^s$ and $\Wi F^u$ are two transverse one-dimensional foliations on the surface $\Wi S$, and  $\Wi L^s$ and $\Wi L^u$ are sub-laminations of $\Wi F^s$ and $\Wi F^u$ respectively. We also lift the vector field $Y$ to a vector field $\Wi Y$ on $M$. Of course, $\Wi Y$ is transverse to the surface $\Wi S$, so we can consider the Poincar\'e return map
$$\tilde\theta:\Wi S\setminus \Wi L^s\to \Wi S\setminus \Wi L^u$$
of the orbits of $(\Wi Y^t)$ on the surface $\Wi S$. Obviously, $\tilde\theta$ is a lift of the map $\theta$.

 \subsection{Connected components of $\widetilde S\setminus \widetilde L^s$}
\label{ss.connected-components}

The purpose of the present subsection is to collect some informations about the connected components of $\widetilde S\setminus \widetilde L^s$ and the action of the Poincar\'e map $\widetilde\theta$ on these connected components. These informations will be used in Subsection~\ref{ss.coding-procedure}. Let us start by the topology of the surface $\widetilde S$.

\begin{proposition}
\label{p.topology-surface}
Every connected component of $\widetilde S$ is a properly embedded topological plane.
\end{proposition}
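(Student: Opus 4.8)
The plan is to first pin down the topology of $S$ --- it is a disjoint union of \emph{incompressible} tori --- and then to deduce the statement from elementary covering-space theory together with the classical fact that the universal cover of a closed $3$-manifold carrying an Anosov flow is homeomorphic to $\RR^3$.

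First I would dispatch the soft part. Since $S$ is transverse to the nonsingular vector field $Y$, it is co-oriented, hence two-sided, and since $M$ is orientable, $S$ is orientable. Moreover $S$ is transverse to the weak stable foliation $\cF^s$, because if $T_pS$ coincided with the tangent plane of $\cF^s$ at some point $p\in S$ then $Y_p$ would be tangent to $S$. Hence $F^s=\cF^s\cap S$ is a nonsingular one-dimensional foliation on the closed surface $S$, so $\chi(S)=0$ and every connected component of $S$ is a torus. Now fix a connected component $\widetilde S_0$ of $\widetilde S=p^{-1}(S)$ and let $T:=p(\widetilde S_0)$ be the torus component of $S$ that it covers; the covering $p\colon\widetilde S_0\to T$ is the one associated with the subgroup $\ker\!\big(\pi_1(T)\to\pi_1(M)\big)$ of $\pi_1(T)$ (a loop of $T$ lifts to a loop of $\widetilde S_0$ precisely when its lift to the simply connected $\widetilde M$ closes up, i.e.\ when it is null-homotopic in $M$). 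So, once we know $T$ is incompressible in $M$, this subgroup is trivial, $\widetilde S_0$ is the universal cover of $T$, and therefore $\widetilde S_0$ is homeomorphic to $\RR^2$. Finally, $p^{-1}(S)$ is closed in $\widetilde M$, being the preimage of the closed set $S$; hence its connected component $\widetilde S_0$ is closed in $\widetilde M$, and a closed embedded submanifold of a manifold is automatically properly embedded.

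The hard part is the incompressibility of $S$. Here I would argue by contradiction. Since $(Y^t)$ is Anosov on a closed $3$-manifold, $\widetilde M$ is homeomorphic to $\RR^3$, so $\pi_2(M)=0$, and by the sphere theorem together with the orientability of $M$ the manifold $M$ is irreducible. Suppose some torus component $T$ of $S$ is compressible; then, $M$ being irreducible, $T$ bounds a solid torus $V\subset M$ on one of its two sides. Because $Y$ is transverse to the connected surface $T=\partial V$, it points to a single side of $T$ all along $\partial V$: either into $V$, in which case $V$ is positively invariant and $\Lambda_V^+:=\bigcap_{t\ge0}Y^t(V)$ is a nonempty compact invariant set which is an attractor with basin containing $\mathrm{int}(V)$, or out of $V$, in which case the same applies to $-Y$ and $(Y^t)$ has a nonempty proper repeller contained in $V$. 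In either case $(Y^t)$ has a proper attractor or repeller contained in $V\subsetneq M$, contradicting transitivity --- a dense orbit that enters the basin of an attractor is eventually confined to a small neighbourhood of it and cannot be dense. This contradiction shows $T$ is incompressible, which finishes the proof.

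The step I expect to be the real obstacle is exactly the incompressibility: it relies both on the $3$-manifold-topology input that a compressible torus in an irreducible manifold bounds a solid torus, and on using transversality of $Y$ to $S$ to convert that solid torus into a forbidden attracting or repelling region. This is also the only place where hypothesis~1 (transitivity of $(Y^t)$) is used; by contrast the co-orientation and Euler-characteristic observations and the identification of $\widetilde S_0$ with $\RR^2$ are routine.
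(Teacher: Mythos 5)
The paper disposes of this in one line by citing Fenley's result that a closed surface transverse to an Anosov flow is a union of incompressible tori; you instead reprove incompressibility from scratch, which is a legitimate, more self-contained alternative. Your Euler-characteristic step, your covering-space step, and your idea that transversality plus transitivity forbids a compressible torus are all sound. However, the $3$-manifold fact you lean on in the step you yourself identify as the crux is not correct as stated: in a closed orientable irreducible $3$-manifold it is \emph{not} true that every compressible torus bounds a solid torus. The correct trichotomy is that a two-sided torus is either incompressible, or bounds a solid torus, or lies in a $3$-ball, and the last case genuinely occurs without the torus bounding a solid torus on either side. For example, in $M=T^3$ take a ball $B$ and inside it an embedded copy $E$ of a trefoil-knot exterior with $T=\partial E$: then $T$ is compressible (it lies in a ball, so $\pi_1(T)\to\pi_1(M)$ is trivial), the compact side $E$ is not a solid torus because the trefoil is knotted, and $M\setminus\mathrm{int}(E)$ cannot be a solid torus either, since no Dehn filling of the trefoil exterior yields $T^3$.

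Fortunately, the dynamical half of your argument never needed the solid torus --- only that $T$ separates $M$ into two compact codimension-zero pieces. That holds for every compressible torus in an irreducible closed $M$: compressing $T$ along a disk produces a sphere $\Sigma$ which, by irreducibility, bounds a ball, so $[\Sigma]=0$ in $H_2(M;\ZZ/2)$; since the compression is a cobordism, $[T]=[\Sigma]=0$, hence $T$ separates, and both complementary pieces are compact because $M$ is closed. If you replace ``$T$ bounds a solid torus $V$'' by ``let $V$ be the compact piece into which $Y$ points,'' the rest of your argument (a proper compact forward- or backward-invariant region contradicts transitivity) goes through unchanged. On that last point it would also be prudent to say a word more than ``eventually confined, cannot be dense,'' since the backward half-orbit is not confined: the clean justifications are either that periodic orbits of a transitive Anosov flow are dense and pairwise heteroclinically related, so one inside $\mathrm{int}(V)$ and one outside cannot both exist, or that the maximal invariant set of $V$ would be a proper attractor while transitivity forces the nonwandering set to be all of $M$.
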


\begin{proof}
The surface $S$ is transverse to the Anosov flow $(Y^t)$. Hence, $S$ is a collection of incompressible tori in $M$ (see \emph{e.g.} \cite[Corollary 2.2]{Fen}). The proposition follows.
\end{proof}

This allows us to describe the topology of the leaves of the foliations $\Wi F^s$ and $\Wi F^u$:

\begin{proposition}
\label{p.intersection-two-leaves}
Every leaf of the foliations $\Wi F^s$ and $\Wi F^u$ is a properly embedded topological line. A leaf of $\Wi F^s$ and a leaf of $\Wi F^u$ intersect no more than one point.
\end{proposition}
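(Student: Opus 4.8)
The plan is to prove the two assertions in sequence, the first being essentially formal and the second being the substantive one. For the first assertion, recall from Proposition~\ref{p.topology-surface} that every connected component of $\Wi S$ is a properly embedded topological plane. A leaf $\ell$ of $\Wi F^s$ lives inside one component $P$ of $\Wi S$, and it is a leaf of the lifted one-dimensional foliation $\Wi F^s$ restricted to $P\cong\RR^2$. Since $\Wi S$ is the complete lift of $S$, the surface $P$ is simply connected, and $\Wi F^s|_P$ is the pullback of the foliation $F^s$ on $S$ under the (restricted) covering map; hence $\Wi F^s|_P$ admits no holonomy and no Reeb components, so every leaf is a properly embedded line (a closed leaf would lift the corresponding closed leaf downstairs, but a circle leaf in $\RR^2$ bounds a disc, forcing a singularity of the foliation inside, contradiction; and a leaf accumulating on itself would, via the Poincar\'e--Bendixson-type analysis on the plane, again produce a closed leaf or a singularity). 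So first I would invoke simple-connectedness of the components of $\Wi S$ and the absence of singularities to conclude that each leaf of $\Wi F^s$ (and likewise $\Wi F^u$) is a properly embedded topological line.

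For the intersection statement, suppose for contradiction that a leaf $\lambda^s$ of $\Wi F^s$ and a leaf $\lambda^u$ of $\Wi F^u$ meet at two distinct points $a$ and $b$. Both leaves lie in the same component $P$ of $\Wi S$ (if they lay in different components they would not intersect at all), and inside $P\cong\RR^2$ each is a properly embedded line. The arc of $\lambda^s$ from $a$ to $b$ together with the arc of $\lambda^u$ from $a$ to $b$ forms an embedded circle $\gamma$ in $P$ (embedded because $\Wi F^s\pitchfork\Wi F^u$, so the two arcs meet only at their shared endpoints, at least after choosing $a,b$ to be consecutive intersection points along $\lambda^s$). By the Jordan curve theorem in $P\cong\RR^2$, $\gamma$ bounds a closed disc $D\subset P$. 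Now I would run a standard index/Poincar\'e--Hopf argument on $D$: the restriction of, say, the line field $\Wi F^s$ to $D$ has no singularities, and along $\partial D=\gamma$ the field $\Wi F^s$ is tangent to the two $\lambda^s$-arcs and transverse to the two $\lambda^u$-arcs, so the turning of $\Wi F^s$ along $\gamma$ forces a nonzero total index, contradicting the absence of singularities in $D$. Equivalently, and perhaps cleaner, one can push this contradiction down to $M$ or up to $\Wi M$: the disc $D$ lies in one leaf of $\Wi F^s$? no — rather, consider the saturation of $D$ by the flow to get a contradiction with the transversality of $\cF^s$ and $\cF^u$, or simply note that a bigon between a stable and an unstable leaf of a pair of transverse foliations on a surface is impossible by the classical no-bigon property of transverse foliations. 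I would present the index argument as the main line, with the no-bigon phrasing as the conceptual content.

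The main obstacle is making the ``consecutive intersection points'' reduction rigorous: a priori $\lambda^s\cap\lambda^u$ could be an infinite or complicated set, so I cannot immediately pick two points bounding an innermost bigon. The cleanest fix is to first establish that $\lambda^s\cap\lambda^u$ is discrete (transversality of $\Wi F^s$ and $\Wi F^u$ gives this locally, hence globally since both are properly embedded in $P$) and then, among all pairs of intersection points, pass to an innermost one: choose $a,b\in\lambda^s\cap\lambda^u$ so that the open $\lambda^s$-arc $(a,b)$ contains no further intersection point; then the $\lambda^u$-arc joining $a$ to $b$ may still be chosen to meet $\lambda^s$ only at $a,b$ by again taking it innermost, and one genuinely gets an embedded bigon, to which the index argument applies. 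An alternative that sidesteps innermost-arc bookkeeping is to work in the orbit space: two transverse foliations of a simply connected surface induce, near any double intersection, a product chart, and the global statement ``no bigons'' for a transverse pair $(\Wi F^s,\Wi F^u)$ on the plane $P$ is exactly the statement that the map $P\to(\text{leaf space of }\Wi F^s)\times(\text{leaf space of }\Wi F^u)$ is injective, which follows once the leaf spaces are shown to be simply connected (even just Hausdorff-free) $1$-manifolds; I would likely adopt whichever of these two routes matches the machinery the paper develops in the next subsection, flagging that the content is the elementary ``two transverse foliations on $\RR^2$ admit no bigon'' fact.
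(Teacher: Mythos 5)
Your proof is correct, and for the first assertion it is essentially the paper's argument (Proposition~\ref{p.topology-surface} plus the classical fact that a nonsingular foliation of the plane has only properly embedded lines, justified via Poincar\'e--Hopf and Poincar\'e--Bendixson). For the second assertion the overall strategy also matches the paper---assume two intersection points, produce a bigon bounded by a stable arc $\alpha^s$ and an unstable arc $\alpha^u$, and derive a Poincar\'e--Hopf contradiction---but the paper uses a doubling trick that you do not: it glues two copies of the bigon disc along the stable side $\alpha^s$ to obtain a new disc $D$ whose boundary consists of \emph{two copies of $\alpha^u$ only}. Because $\Wi F^s$ is transverse to $\alpha^u$, the doubled line field on $D$ is transverse to all of $\partial D$, and a nonsingular line field on a disc transverse to the boundary contradicts Poincar\'e--Hopf immediately, with no corner contributions or mixed tangent/transverse boundary to track. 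Your version---computing the index directly on the bigon, where the field is tangent along $\alpha^s$, transverse along $\alpha^u$, and there are two corners---is also valid, but it requires the Poincar\'e--Hopf formula with corners and boundary tangencies, which you only gesture at (``the turning of $\Wi F^s$ along $\gamma$ forces a nonzero total index''); the doubling trick is exactly what lets the paper avoid that bookkeeping. One more remark in your favor: you correctly flag the innermost-bigon reduction as a step needing care, and your fix (discreteness of $\lambda^s\cap\lambda^u$ from transversality and properness, then take consecutive points along $\lambda^s$, then pass to an innermost sub-arc of $\lambda^u$) is the right way to justify the existence of arcs $\alpha^s,\alpha^u$ with disjoint interiors, a point the paper states without elaboration.
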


\begin{proof}
The first assertion follows immediately from Proposition~\ref{p.topology-surface}: it is a classical consequence of Poincar\'e-Hopf and Poincar\'e-Bendixon theorems that the leaves of a foliation of a plane are properly embedded topological lines.

The second assertion is again a consequence Proposition~\ref{p.topology-surface}, together with the transversality of the foliations $\Wi F^s$ and $\Wi F^u$. To prove it, we argue by contradiction: consider a leaf $\ell^s$ of $\Wi F^s$, a leaf $\ell^u$ of $\Wi F^u$, and assume that $\ell^s$ and $\ell^u$ intersect at more than one point. Then one can find two arcs $\alpha^s\subset\ell^s$ and $\alpha^u\subset\ell^u$, which  share the same endpoints and have disjoint interiors. The union $\alpha^s\cup\alpha^s$ is a simple closed curve in $\Wi S$. Since every connected component of $\Wi S$ is a topological plane, $\alpha^s\cup\alpha^s$ bounds a topological disc $C\subset\Wi S$. Consider two copies of $C$, and glue them along $\alpha^s$ in order to obtain a new topological disc $D$. The boundary of $D$ is the union of two copies of $\alpha^u$, hence is piecewise smooth. The foliation $\Wi F^s$ provides a one-dimensional foliation on $D$, which is topologically tranverse to boundary $\partial D$. This contradicts Poincar\'e-Hopf Theorem.
\end{proof}

The next three propositions below concern the action of the Poincar\'e map $\tilde\theta$ on the foliations $\Wi F^s,\Wi F^u$ and the laminations $\Wi L^s,\Wi L^u$. We recall that $\Wi L^s$ and $\Wi L^u$ are sub-laminations (\emph{i.e.} union of leaves) of the foliations~$\Wi F^s$ and $\Wi F^u$ respectively.

\begin{proposition}
\label{p.Poincare-map}
The Poincar\'e map $\tilde\theta:\Wi S-\Wi L^s\to\Wi S-\Wi L^u$ preserves the foliations $\Wi F^s$ and $\Wi F^u$.
\end{proposition}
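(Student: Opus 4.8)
The plan is to use that $\tilde\theta$ is defined by following the orbits of the flow $(\Wi Y^t)$, together with the flow-invariance of the weak stable and weak unstable foliations $\Wi\cF^s,\Wi\cF^u$. I would write out the argument for $\Wi F^s$; the case of $\Wi F^u$ is obtained by the same argument, applied if one prefers to the reversed flow.

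First I would record two structural facts. Since $\Wi L^s$ is a sub-lamination of $\Wi F^s$, every leaf of $\Wi F^s$ is either entirely contained in $\Wi L^s$ or entirely contained in $\Wi S\setminus\Wi L^s$; hence the leaves of $\Wi F^s$ that meet the domain $\Wi S\setminus\Wi L^s$ of $\tilde\theta$ are whole leaves of $\Wi F^s$. Moreover, by construction $\Wi F^s=\Wi\cF^s\cap\Wi S$ and the leaves of $\Wi F^s$ are exactly the connected components of the sets $\Wi W\cap\Wi S$, where $\Wi W$ runs over the leaves of $\Wi\cF^s$; in particular each leaf of $\Wi F^s$ is contained in a single leaf of $\Wi\cF^s$.

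Now fix a leaf $\ell$ of $\Wi F^s$ contained in $\Wi S\setminus\Wi L^s$, and let $\Wi W$ be the leaf of $\Wi\cF^s$ containing $\ell$. As a weak stable leaf of the Anosov flow $(\Wi Y^t)$, the surface $\Wi W$ is invariant under the flow: $\Wi Y^t(\Wi W)=\Wi W$ for every $t$. For each $x\in\ell$ the point $\tilde\theta(x)$ lies on the $(\Wi Y^t)$-orbit of $x$, hence in $\Wi W$, and it also lies on $\Wi S$; thus $\tilde\theta(\ell)\subset\Wi W\cap\Wi S$. Since $\tilde\theta$ is continuous and $\ell$ is connected, $\tilde\theta(\ell)$ is a connected subset of $\Wi W\cap\Wi S$, hence is contained in a single one of its connected components, i.e.\ in a single leaf $\ell'$ of $\Wi F^s$. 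This already gives the proposition in the sense that $\tilde\theta$ sends leaves of $\Wi F^s$ into leaves of $\Wi F^s$; running the same argument for the Poincar\'e return map $\tilde\theta^{-1}$ of the reversed flow $(\Wi Y^{-t})$ — for which $\Wi\cF^s$ is still leafwise invariant — upgrades this to: $\tilde\theta$ restricts to a homeomorphism of $\ell$ onto a connected component of $\ell'\setminus\Wi L^u$, so that $\tilde\theta$ carries the partition of $\Wi S\setminus\Wi L^s$ into $\Wi F^s$-leaves to the partition of $\Wi S\setminus\Wi L^u$ into $\Wi F^s$-leaves. Replacing $\Wi\cF^s,\Wi F^s$ by $\Wi\cF^u,\Wi F^u$ throughout yields the statement for the unstable foliation.

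I do not expect a serious obstacle. The one point to be careful about is that $\tilde\theta$ follows the flow for a time that is not locally constant, so one must argue through the \emph{global} flow-invariance of the leaves of $\Wi\cF^s$ and $\Wi\cF^u$, as above, rather than through any local computation or transversality estimate; and one should keep in mind that, although $\Wi L^s$ is saturated by $\Wi F^s$-leaves, it is only partially transverse to the $\Wi F^u$-leaves, which is why the "onto a leaf" refinement is cleanest to extract via the reversed-flow argument rather than directly.
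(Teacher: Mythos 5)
Your argument is correct and is essentially the paper's proof, only spelled out in more detail: both rest on the leafwise invariance of $\Wi\cF^s$ (and $\Wi\cF^u$) under the flow $(\Wi Y^t)$, from which the invariance of $\Wi F^s=\Wi\cF^s\cap\Wi S$ (and $\Wi F^u$) under the Poincar\'e return map follows. The paper leaves the connectedness step and the reversed-flow refinement implicit, but there is no difference in approach.
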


\begin{remark}
\label{r.preserve-foliations}
Propostion~\ref{p.Poincare-map} states that the foliation $(\Wi F^s)_{|\Wi S-\Wi L^s}$ is mapped by $\tilde\theta$ to the foliation $(\Wi F^s)_{|\Wi S-\Wi L^u}$. The leaves of $(\Wi F^s)_{|\Wi S-\Wi L^s}$ are full leaves of the foliation $\Wi F^s$. On the contrary, a leaf of the foliation $(\Wi F^s)_{|\Wi S-\Wi L^u}$ is never a full leaf of $\Wi F^s$ (because every leaf of $\Wi F^s$ is ``cut into infinitely many pieces" by the transverse lamination $\Wi L^u$).  As a consequence, $\tilde\theta$ maps leaves of $\Wi F^s$ to pieces of leaves of $\Wi F^s$. 
Similarly,  $\tilde\theta$ maps pieces of leaves of $\Wi F^u$ to full leaves of $\Wi F^u$.
\end{remark}

\begin{proof}[Proof of Proposition~\ref{p.Poincare-map}]
Recall that $\Wi F^s$ is defined as the intersection of the foliation $\Wi\cF^s$ with the transverse surface $\Wi S$. The foliation $\Wi\cF^s$ is leafwise invariant under the flow $(\Wi Y^t)$. As a consequence, $\Wi F^s=\Wi\cF^s\cap\Wi S$ is invariant under the Poincar\'e return map of $(\Wi Y^t)$ on $\Wi S$.
\end{proof}

\begin{proposition}
\label{p.preimage-stable-lamination}
For every $n\geq 0$, $\bigcup_{p=0}^n\tilde \theta^{-p}(\Wi L^s)$ is a closed sub-lamination of the foliation $\Wi F^s$.
\end{proposition}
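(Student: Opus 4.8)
The plan is to prove two things about $A_n:=\bigcup_{p=0}^n\tilde\theta^{-p}(\widetilde L^s)$: that it is a union of full leaves of $\widetilde F^s$, and that it is closed in $\widetilde S$. Together these say exactly that $A_n$ is a closed sub-lamination of $\widetilde F^s$: in a foliation chart of $\widetilde F^s$, a union of full leaves is a union of plaques, i.e. has the form (transverse set)$\times$(plaque), and closedness of $A_n$ forces the transverse set to be closed, which is the required local product structure. Being a union of leaves is a soft consequence of Proposition~\ref{p.Poincare-map}; the real content is closedness.

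\emph{$A_n$ is a union of leaves of $\widetilde F^s$.} By induction on $p$, each $\tilde\theta^{-p}(\widetilde L^s)$ is a union of full leaves of $\widetilde F^s$, hence so is the finite union $A_n$. The case $p=0$ is part of the setting of Subsection~\ref{ss.setting}. Assuming the claim for $p$, put $B:=\tilde\theta^{-p}(\widetilde L^s)$ and take $x$ with $x\notin\widetilde L^s$ and $\tilde\theta(x)\in B$; let $\ell$ be the leaf of $\widetilde F^s$ containing $x$. Since $\widetilde L^s$ is a union of full leaves of $\widetilde F^s$ and $x\notin\widetilde L^s$, the leaf $\ell$ is disjoint from $\widetilde L^s$, so it lies entirely in the domain $\widetilde S\setminus\widetilde L^s$ of $\tilde\theta$; by Proposition~\ref{p.Poincare-map} and Remark~\ref{r.preserve-foliations}, $\tilde\theta(\ell)$ is contained in a single leaf of $\widetilde F^s$, and since $\tilde\theta(x)\in B$, that leaf is contained in $B$. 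Thus $\ell\subset\tilde\theta^{-(p+1)}(\widetilde L^s)$, completing the induction.

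\emph{$A_n$ is closed.} For $x\in\widetilde S$, the forward $\tilde\theta$-orbit $x=x_0,\ x_1=\tilde\theta(x_0),\ x_2=\tilde\theta(x_1),\dots$ is defined up to and only up to the first index at which it lands in $\widetilde L^s$ (the domain of $\tilde\theta$ being $\widetilde S\setminus\widetilde L^s$, which $\tilde\theta$ maps homeomorphically onto $\widetilde S\setminus\widetilde L^u$). Hence the complement of $A_n$ in $\widetilde S$ is exactly
$$V_n:=\{\,x\in\widetilde S:\ x_0,x_1,\dots,x_n\ \text{are all defined and none lies in}\ \widetilde L^s\,\}.$$
I show $V_n$ is open by induction on $n$. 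First $V_0=\widetilde S\setminus\widetilde L^s$ is open, because $\widetilde L^s$ is closed in $\widetilde S$: it is the complete lift of $L^s$, and $L^s$ is closed in $S$ since its complement is the set of points of $S$ whose forward $(Y^t)$-orbit meets $S$ again, an open condition by transversality of $Y$ to $S$. For the inductive step, it follows at once from the definitions that $V_{n+1}=\{\,x\in\widetilde S\setminus\widetilde L^s:\ \tilde\theta(x)\in V_n\,\}$, i.e. $V_{n+1}=\tilde\theta^{-1}(V_n)$; since $\tilde\theta$ is continuous and its domain $\widetilde S\setminus\widetilde L^s$ is open in $\widetilde S$, the set $V_{n+1}$ is open in $\widetilde S$. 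Therefore $A_n=\widetilde S\setminus V_n$ is closed, and with the previous step this shows $A_n$ is a closed sub-lamination of $\widetilde F^s$.

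The argument is elementary; the one point to handle carefully is the bookkeeping for the partial map $\tilde\theta$ — that its domain $\widetilde S\setminus\widetilde L^s$ is open, that it is a homeomorphism onto $\widetilde S\setminus\widetilde L^u$, and that $\widetilde L^s$ is a union of full leaves of $\widetilde F^s$ — so that forming $\tilde\theta$-preimages simultaneously preserves openness and the property of being a union of full leaves of $\widetilde F^s$. No quantitative or genuinely geometric input is needed beyond Proposition~\ref{p.Poincare-map}.
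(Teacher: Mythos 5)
Your proof is correct and follows essentially the same route as the paper: use Proposition~\ref{p.Poincare-map} to see that each $\tilde\theta^{-p}(\widetilde L^s)$ is a union of leaves, and induct on $n$ using continuity of $\tilde\theta$ and the closedness of $\widetilde L^s$ to get closedness. Your reformulation of the inductive step as ``the complement satisfies $V_{n+1}=\tilde\theta^{-1}(V_n)$ and is therefore open'' is a tidy way to package the observation --- used but left implicit in the paper's proof --- that a set closed in the open subset $\widetilde S\setminus\widetilde L^s$, once united with $\widetilde L^s$, is closed in $\widetilde S$.
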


\begin{proof}
The foliation $\Wi F^s$ is invariant under the Poincar\'e map $\tilde\theta:\Wi S-\Wi L^s\to\Wi S-\Wi L^u$. Since $\Wi L^s$ is a union of leaves of $\Wi F^s$, it follows that $\tilde \theta^{-1}(\Wi L^s)$ is a union of leaves of $\Wi F^s$. Moreover, since $\Wi L^s$ is a closed subset of $\widetilde S$, its pre-image $\tilde\theta^{-1}(\Wi L^s)$ must be a closed subset of $\widetilde S-\Wi L^s$ (remember that $\tilde\theta$ is well-defined on $\widetilde S-\Wi L^s$). Therefore $\bigcup_{p=0}^1 \tilde\theta^{-p}(\Wi L^s)$ is a closed subset of $\Wi S$. So $\bigcup_{p=0}^1 \tilde\theta^{-p}(\Wi L^s)$ is a closed union of leaves of $\Wi F^s$, \emph{i.e.} a closed sub-lamination of $\Wi F^s$. Repeating the same arguments, one proves by induction that $\bigcup_{p=0}^n \tilde\theta^{-p}(\Wi L^s)$ is a closed sub-lamination of $\Wi F^s$ for every $n\geq 0$.
\end{proof}

\begin{proposition}
\label{p.stable-set-Lambda}
$\displaystyle \bigcup_{p=0}^\infty \tilde\theta^{-p}(\Wi L^s)=\Wi W^s(\Lambda)\cap\Wi S$
\end{proposition}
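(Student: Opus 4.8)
The plan is to prove the two inclusions separately, after first reducing the statement to the quotient manifold $M$. Since $\Wi S=p^{-1}(S)$, $\Wi L^s=p^{-1}(L^s)$, $\Wi W^s(\Lambda)=p^{-1}(W^s(\Lambda))$, and $\tilde\theta$ is a lift of $\theta$, a short verification gives $\tilde\theta^{-p}(\Wi L^s)=p^{-1}\!\big(\theta^{-p}(L^s)\big)$ for every $p\ge 0$; hence the asserted identity is the complete lift of the equality $\bigcup_{p\ge 0}\theta^{-p}(L^s)=W^s(\Lambda)\cap S$ in $M$, and it suffices to prove the latter. I would then rephrase both sides dynamically: using that $\theta$ is the first-return map of $(Y^t)$ on $S$ and the characterization of $L^s$ recalled in Subsection~\ref{ss.setting} (a point of $S$ lies in $L^s$ iff its forward orbit does not return to $S$), the left-hand side is exactly the set of $y\in S$ whose forward $(Y^t)$-orbit meets $S$ only finitely many times; while, by definition, $W^s(\Lambda)\cap S$ is the set of $y\in S$ whose forward orbit converges to $\Lambda$.

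For the inclusion $W^s(\Lambda)\cap S\subseteq\bigcup_p\theta^{-p}(L^s)$, I would use that $\Lambda_X$ is disjoint from $\partial U$ (because $X$ is transverse to $\partial U$), so that $\Lambda=\pi(\Lambda_X)$ is a compact subset of $M\setminus S$ and hence $\mathrm{dist}(\Lambda,S)>0$. Then a forward orbit converging to $\Lambda$ eventually stays at positive distance from $S$, hence meets $S$ only on a bounded time interval; since $Y$ is transverse to $S$, intersections of an orbit with $S$ are isolated in time, so there are only finitely many of them.

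For the reverse inclusion, suppose the forward orbit of $y\in S$ meets $S$ only finitely many times, at times $s_0=0<s_1<\dots<s_k$. For $t>s_k$ the orbit stays in $M\setminus S$, which is canonically identified with $\mathrm{int}(U)$; lifting this orbit segment to $U$, we obtain a forward $X$-orbit that remains in the compact set $U$ forever. Since $\Lambda_X$ is the maximal invariant subset of $U$, the $\omega$-limit set of this orbit is contained in $\Lambda_X$, and — the orbit lying in a compact set — this forces $\mathrm{dist}(X^t(\cdot),\Lambda_X)\to 0$ as $t\to+\infty$; projecting back to $M$, the forward orbit of $y$ converges to $\Lambda$, i.e.\ $y\in W^s(\Lambda)\cap S$.

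The routine parts are the covering-space reduction and the bookkeeping identifying $\bigcup_p\theta^{-p}(L^s)$ with ``finitely many forward returns to $S$'' (unwinding the definition of the first-return map $\theta$ together with the description of $L^s$). The one genuinely dynamical ingredient — and the step I would treat most carefully — is the implication ``a forward orbit that remains in $U$ forever converges to $\Lambda_X$'', which rests on $\Lambda_X$ being the maximal invariant set of the compact plug $U$; everything else is a formal consequence of this together with the transversality of $X$ to $\partial U$.
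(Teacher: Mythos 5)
Your proof is correct and follows essentially the same route as the paper: reduce to the identity $\bigcup_{p\ge 0}\theta^{-p}(L^s)=W^s(\Lambda)\cap S$ in $M$ via the covering projection, rephrase both sides using the dynamical characterizations of $L^s$ and $W^s(\Lambda)$ recalled in Subsection~\ref{ss.setting}, and check the two inclusions. The only cosmetic difference is that for the reverse inclusion you give a self-contained $\omega$-limit-set argument, whereas the paper dismisses that direction as ``straightforward'' (indeed one can just note $L^s\subset W^s(\Lambda)$ and use flow-invariance of $W^s(\Lambda)$); both are fine.
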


\begin{proof}
By definition, $W^s(\Lambda)\cap S$  is the set of all points $x\in S$ so that the forward orbit of $x$ converges towards the set $\Lambda$, which is disjoint from $S$. As a consequence, for every point $x\in W^s(\Lambda)\cap S$, the forward orbit of $x$ intersects the surface $S$ only finitely many times, say $p(x)$ times. We have observed that $L^s$ is the set of all points $y\in S$ so that the forward orbit of $y$ does not intersect $S$ and converges towards the set $\Lambda$ (see Subsection~\ref{ss.setting}). It follows that, for every  $x\in W^s(\Lambda)\cap S$, the last intersection point $\theta^{p(x)}$ of the forward orbit of $x$ with $S$ is in $L^s$. This proves the inclusion $W^s(\Lambda)\cap S\subset\bigcup_{p=0}^\infty \theta^{-p}( L^s)$. The converse inclusion is straightforward. Hence $\bigcup_{p=0}^\infty \theta^{-p}( L^s)=W^s(\Lambda)\cap S$. The equality $\bigcup_{p=0}^\infty \tilde\theta^{-p}(\Wi L^s)=\Wi W^s(\Lambda)\cap\Wi S$ follows by lifting to the universal cover.
\end{proof}

Of course, $\Wi W^s(\Lambda)\cap\Wi S$ and $\Wi W^u(\Lambda)\cap\Wi S$ are union of leaves of the foliations $\Wi F^s$ and $\Wi F^u$ respectively. But these sets are not closed. More precisely:

\begin{proposition}
\label{p.density-stable-manifold}
Both $\Wi W^s (\Lambda)\cap\Wi S$ and $\Wi S - \Wi W^s (\Lambda)$ are dense in $\Wi S$.
\end{proposition}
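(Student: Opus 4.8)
The plan is to prove each of the two density statements separately, using the transitivity of the Anosov flow $(Y^t)$ together with the fact that $\Lambda$ is a proper saddle hyperbolic subset of $M$ (it is neither all of $M$ nor an attractor/repeller in the relevant sense — in fact $\Lambda$ has empty interior and its complement is the union of orbits meeting $S$).

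For the density of $\Wi W^s(\Lambda)\cap\Wi S$ in $\Wi S$: since $\Lambda$ is a hyperbolic set for a transitive Anosov flow and $\Lambda\neq M$, there is a periodic orbit $\gamma\subset\Lambda$ whose stable manifold $W^s(\gamma)$ is dense in $M$ — indeed, periodic orbits are dense in a transitive Anosov flow and any periodic orbit in $\Lambda$ has a stable manifold dense in $M$ by transitivity of the Anosov flow (the stable manifold of any orbit of a transitive Anosov flow is dense). Hence $W^s(\Lambda)\supset W^s(\gamma)$ is dense in $M$. Now take any point $\tilde x\in\Wi S$ and a small neighborhood $\Wi V$ of $\tilde x$ in $\Wi S$; projecting down, $p(\Wi V)$ contains an open set of $S$, and since $W^s(\Lambda)$ is dense in $M$ and transverse to $S$, it meets $p(\Wi V)$; lifting back, $\Wi W^s(\Lambda)$ meets $\Wi V$. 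I would then need to check that $\Wi W^s(\Lambda)\cap\Wi S = \Wi S\cap \Wi{W^s(\Lambda)}$, which is immediate since $\Wi S = p^{-1}(S)$ and $\Wi{W^s(\Lambda)} = p^{-1}(W^s(\Lambda))$.

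For the density of $\Wi S - \Wi W^s(\Lambda)$ in $\Wi S$: equivalently I must show $W^s(\Lambda)\cap S$ has empty interior in $S$. Suppose not; then an open subset $O$ of $S$ lies in $W^s(\Lambda)$, so every forward $(Y^t)$-orbit through $O$ eventually stays in $\mathrm{int}(U)$ and converges to $\Lambda$. But the saturation $\bigcup_{t\geq 0}Y^t(O)$ is open and its forward limit set is contained in $\Lambda$; by transitivity the forward orbit of a dense set of points is dense in $M$, so some orbit through $O$ is dense in $M$, contradicting that it converges to $\Lambda\subsetneq M$. Alternatively, and perhaps more cleanly, one uses that $W^u(\Lambda)\cap S = L^u$ together with the unstable lamination being nowhere dense: the complement $\Wi S - \Wi W^s(\Lambda)$ contains $\Wi W^u(\Lambda)\cap\Wi S$ minus its intersection with $\Wi W^s(\Lambda)$, and more robustly it contains all points whose forward orbit returns to $S$ infinitely often, which by the same transitivity argument (density of points with dense forward orbit) is dense. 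I would phrase it this way: a point $x\in S$ with dense forward $(Y^t)$-orbit cannot lie in $W^s(\Lambda)$ since $\Lambda$ is closed and proper; the set of such $x$ is dense (a residual set, by transitivity), hence $S-W^s(\Lambda)$ is dense, and lifting gives the claim.

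The main obstacle I anticipate is justifying the "density of the stable manifold of a single periodic orbit" input cleanly in this setting — one must be careful that $\Lambda$, although only a saddle hyperbolic set and not all of $M$, still contains periodic orbits (which it does, being a nontrivial hyperbolic set of a transitive flow, provided $\Lambda\neq\emptyset$), and that transitivity of $(Y^t)$ on all of $M$ forces $W^s$ of any orbit to be dense in $M$. A small technical point is the transversality of $W^s(\Lambda)$ to $S$, needed so that "meets an open set of $S$" is genuine and not tangential; but this is already recorded in the setup, where it is stated that $W^s(\Lambda)$ intersects $S$ in the one-dimensional lamination structure transversally. Once these inputs are in hand, both density statements are short topological arguments via the covering projection $p$.
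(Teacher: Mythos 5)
Your proposal is correct, but takes a somewhat different and less symmetric route than the paper. The paper observes that for a transitive Anosov flow every leaf of the weak stable foliation $\cF^s$ is dense (minimality of the stable foliation), and that \emph{both} $W^s(\Lambda)$ and $M\setminus W^s(\Lambda)$ are non-empty unions of $\cF^s$-leaves; transversality of $\cF^s$ to $S$ then gives density of both $W^s(\Lambda)\cap S$ and $S\setminus W^s(\Lambda)$ in $S$ in one stroke, and lifting finishes the argument. You treat the two halves differently: for the first you exhibit a single dense leaf of $\cF^s$ inside $W^s(\Lambda)$ (the weak stable manifold of a periodic orbit of $\Lambda$), which is essentially the same input as the paper's minimality fact; but for the second you switch to a Baire-category argument (residual set of points with dense forward orbit, which cannot lie in $W^s(\Lambda)$ since $\Lambda$ is closed and proper). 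Your second argument is valid, but invokes a heavier tool than necessary and hides the fact that the complement $M\setminus W^s(\Lambda)$ is itself a non-empty $\cF^s$-saturated set, which makes the second density statement fall out of the same minimality fact as the first. One small point you glossed over: to transfer "residual set of points in $M$ with dense forward orbit" to "dense subset of $S$," you need the flow-box structure near the transversal $S$; this is routine but should be noted. Overall your proof is correct; the paper's is more economical and symmetric.
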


\begin{proof}
Recall that $(Y^t)$ is a transitive Anosov flow on $M$. Hence every leaf of the weak stable foliation $\cF^s$  is dense in $M$. Since both $W^s(\Lambda)$ and $M\setminus W^s(\Lambda)$ are non-empty union of leaves of the foliation $\cF^s$, and since the leaves of $\cF^s$ are transversal to the surface $S$, it follows that both $W^s(\Lambda)\cap S$ and $S\setminus W^s(\Lambda)$ are dense in $S$. Lifting to the universal cover, we obtain that $\Wi{W^s} (\Lambda)\cap\Wi S$ and $\Wi S - \Wi{W^s} (\Lambda)$ are dense in $\Wi S$.
 \end{proof}

Of course, the analogs of Propositions~\ref{p.preimage-stable-lamination},~\ref{p.stable-set-Lambda} and~\ref{p.density-stable-manifold} for $\widetilde L^u$ and $W^u(\Wi \Lambda)$ hold ($\tilde\theta^{-p}$ should be replaced by $\tilde\theta^p$ in Propositions~\ref{p.preimage-stable-lamination} and~\ref{p.stable-set-Lambda}). We will now describe the topology of the connected components of $\Wi S\setminus\Wi L^s$. We first introduce some vocabulary.

\begin{definition}
We call \emph{proper stable strip} every topological open disc $D$ of $\Wi S$ whose boundary is the union of two leaves of the foliation $\Wi F^s$.
\end{definition}

If $D$ is a proper stable strip, one can easily construct a homeomorphism $h$ from the closure of $D$ to $\RR\times [-1,1]$. We will need the following stronger notion.

\begin{definition}
\label{d.trivially-bifoliated}
We say that a proper stable strip $D$ is \emph{trivially bifoliated} if there exists a homeomorphism $h$ from the closure of $D$ to $\RR\times [-1,1]$ mapping the foliations $\Wi F^s$ and $\Wi F^u$ to the horizontal and vertical foliations on $\RR\times [-1,1]$.
\end{definition}

\begin{figure}[ht]
\begin{center}
  \includegraphics[totalheight=7cm]{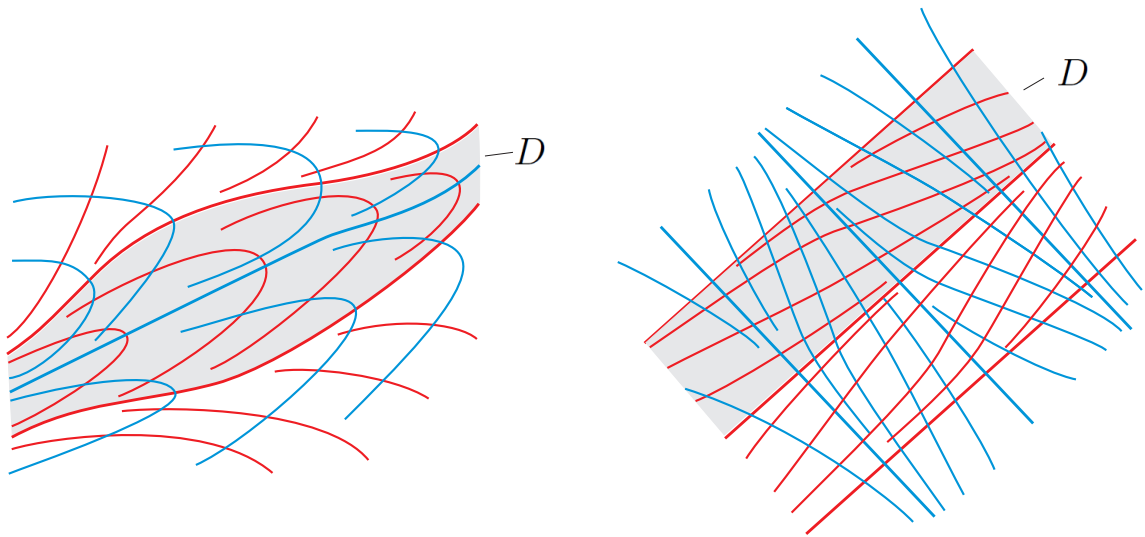}
\caption{\label{f.strip}A proper stable strip (left). A trivially bifoliated proper stable strip (right).}
 \end{center}
\end{figure}

Of course, \emph{proper unstable strips} and \emph{trivially bifoliated proper unstable strips} can be defined similarly. The proposition below gives a fairly precise description of the positions of the connected components of  $\Wi S-\Wi L^s$ with respect to the foliations $\Wi F^s$ and $\Wi F^u$.

\begin{proposition}
\label{p.strip}
Every connected component of $\Wi S-\Wi L^s$ is a trivially bifoliated proper stable strip bounded by two leaves of the lamination $\Wi L^s$.
\end{proposition}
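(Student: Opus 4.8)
My plan is to reduce the statement to a counting statement for the frontier leaves of a component, settle that statement downstairs via the structure of entrance laminations, and then read off the trivially bifoliated normal form from strong transversality.

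Let $\widetilde D$ be a connected component of $\widetilde S\setminus\widetilde L^s$. Since $\widetilde L^s$ is a closed union of leaves of the foliation $\widetilde F^s$, every leaf of $\widetilde F^s$ is either contained in $\widetilde L^s$ or disjoint from it; hence $\widetilde D$ is saturated by $\widetilde F^s$ and is foliated by full leaves of $\widetilde F^s$, each a properly embedded topological line by Proposition~\ref{p.intersection-two-leaves}. A standard local argument for laminations shows that the frontier $\operatorname{fr}(\widetilde D)$ is a union of full leaves of $\widetilde L^s$. Moreover $\widetilde D$ is simply connected: it is an open subset of the plane $\widetilde S$ (Proposition~\ref{p.topology-surface}), and $S^{2}\setminus\widetilde D$ is connected because the closure in $S^{2}$ of every leaf of $\widetilde L^s$ contains the point at infinity, so every leaf of $\widetilde L^s$ and every other complementary region (whose frontier lies in $\widetilde L^s$) is joined to that point. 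Thus $\widetilde D$ is an open disc foliated by lines, and it remains to prove that (a) $\operatorname{fr}(\widetilde D)$ consists of exactly two leaves, and (b) the pair $(\widetilde F^s,\widetilde F^u)$ on $\overline{\widetilde D}$ has the trivial normal form.

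For (a) I would descend to $M$. Let $C$ be the component of $S\setminus L^s$ whose preimage contains $\widetilde D$; then $p_{|\widetilde D}\colon\widetilde D\to C$ is a covering map, so $\widetilde D$ is the universal cover of $C$. First, $S$ is incompressible in $M$ (as in the proof of Proposition~\ref{p.topology-surface}) and the leaves of $\widetilde F^s$ are lines, so no compact leaf of $L^s$ can be null-homotopic — it would otherwise lift to a circle leaf of $\widetilde L^s$, contradicting Proposition~\ref{p.intersection-two-leaves}; hence every compact leaf of $L^s$ is essential in its torus component of $S$. Then, using the properties (i)--(iii) of entrance laminations recalled in the introduction — finitely many compact leaves, every non-compact half-leaf asymptotic to a compact leaf, contracting holonomy along compact leaves — together with the hyperbolicity of $\Lambda$, the transitivity of $(Y^t)$ and the strong transversality of $L^s$ with $L^u$, one shows that on each torus component of $S$ the lamination $L^s$ is, up to homeomorphism, a finite family of parallel essential circles together with non-compact leaves spiralling onto them, and that every complementary region $C$ is either an open annulus (its closure adding two compact leaves, possibly equal) or an open ``infinite bigon'' bounded by two non-compact leaves asymptotic at both ends; in particular $L^s$ meets every torus component of $S$. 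In each case $\operatorname{fr}_{S}(C)$ has one or two leaves and $\pi_{1}(C)$ is trivial or infinite cyclic, and passing to the universal cover shows $\operatorname{fr}(\widetilde D)$ consists of exactly two leaves of $\widetilde L^s$. With the previous paragraph this already gives that $\widetilde D$ is a proper stable strip bounded by two leaves of $\widetilde L^s$.

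For (b) I would invoke strong transversality directly. Every component of $\widetilde S\setminus(\widetilde L^s\cup\widetilde L^u)$ is a topological rectangle with two sides on $\widetilde F^s$ (arcs of leaves of $\widetilde L^s$) and two on $\widetilde F^u$ (arcs of leaves of $\widetilde L^u$), and each such rectangle lies in a unique component of $\widetilde S\setminus\widetilde L^s$; so $\widetilde D$ is tiled by such rectangles, glued along their $\widetilde F^u$-sides. Because $\widetilde D$ meets no leaf of $\widetilde L^s$, the two $\widetilde F^s$-sides of each tile lie on the two boundary leaves of $\widetilde D$ provided by (a), so each tile runs all the way across $\widetilde D$ and the tiling is a bi-infinite linear chain. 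On each tile the bifoliation is trivial by construction, so the chain structure yields a homeomorphism $\overline{\widetilde D}\to\RR\times[-1,1]$ carrying $\widetilde F^s$ to the horizontal foliation and $\widetilde F^u$ to the vertical one, which is precisely the trivially bifoliated conclusion. (One can also argue without strong transversality: the leaf space of $\widetilde F^s_{|\widetilde D}$ is a Hausdorff $1$-manifold, hence $\cong(-1,1)$, giving the first factor; by Proposition~\ref{p.intersection-two-leaves} a leaf of $\widetilde F^u$ meets each leaf of $\widetilde F^s_{|\widetilde D}$ in at most one point, and a monotonicity-and-completeness argument along $\widetilde F^u$-leaves upgrades this to exactly one, giving the second factor.)

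The main obstacle is the classification of the complementary regions in step (a): one must exclude regions with three or more frontier leaves, ``half-plane'' regions with a single frontier leaf, and whole planar components with empty frontier. The delicate configuration is a non-compact leaf of $L^s$ both of whose ends spiral onto compact leaves; this is compatible with properties (i)--(iii) taken in isolation, and if it occurred the region it cuts off would be a disc with three frontier leaves, so it must be ruled out using the dynamics of the flow — transitivity and the hyperbolicity of $\Lambda$ — and the interaction with $L^u$ coming from strong transversality. The remaining ingredients (saturation, planarity and simple connectivity of $\widetilde D$, and the normal form in step (b)) are soft and follow from the propositions already established in this section.
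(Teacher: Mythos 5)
There is a genuine gap, and you have in fact named it yourself: the classification of the complementary regions of $L^s$ downstairs in step (a). You assert that on each torus component of $S$, ``one shows that'' $L^s$ consists of parallel essential circles with spiralling non-compact leaves, and that every complementary region is an annulus or an infinite bigon, but no argument is given. Properties (i)--(iii) of entrance laminations do not by themselves forbid a region with three or more frontier leaves (your own example of a non-compact leaf whose two ends spiral onto two different compact circles), nor a half-plane region, and ruling these out is precisely the hard content of the proposition. Your closing sentence defers this to ``the dynamics of the flow \ldots\ and the interaction with $L^u$ coming from strong transversality'' without saying how; the rest of the proposal then depends on step (a), so nothing is actually established.

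The paper resolves this step entirely upstairs and more directly, never attempting the downstairs classification. After showing (via strong transversality and Proposition~\ref{p.intersection-two-leaves}) that every arc $\alpha^u(x)$ of $\Wi F^u\cap\overline D$ through a point $x\in\overline D$ joins two distinct boundary leaves of $D$, the paper orients $\Wi F^u$ so that the two endpoints of $\alpha^u(x)$ can be unambiguously labelled $\ell^s_-(x)$ and $\ell^s_+(x)$; these maps are locally constant by transversality, hence constant on the connected set $\overline D$, which forces $D$ to have exactly two accessible boundary leaves, and the accessible boundary is then shown to be closed. This is shorter than a classification of $L^s$ in $S$, it does not require you to untangle spiralling behaviour of entrance laminations, and it avoids the lifting subtleties (e.g.\ $\pi_1$-injectivity of annular components) that your route would additionally need. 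You may find it instructive that your step (b) tiling argument already contains the germ of this: the assertion that ``each tile runs all the way across $\widetilde D$'' is essentially the paper's Claim~1, and the observation that the tiles chain up linearly is the local-constancy argument of Claim~2; pushed a little, that reasoning proves (a) directly and makes the descent to $M$ unnecessary. Finally, your parenthetical alternative for (b) -- that the leaf space of $\widetilde F^s_{|\widetilde D}$ is a Hausdorff $1$-manifold -- is not justified: Hausdorffness of a leaf space of a planar foliation is not automatic, and here it is part of what the trivially bifoliated conclusion asserts, so invoking it is circular.
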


\begin{proof}
Let $D$ be a connected component of $\Wi S-\Wi L^s$. Denote by $P$ the connected component of $\Wi S$ containing $D$. Since $P$ is a topological plane (Proposition~\ref{p.topology-surface}), and since each leaf of $\Wi L^s$ is a properly embedded topological line (Proposition~\ref{p.intersection-two-leaves}) which separates $P$ into two connected components, it follows that $D$ is a topological disc. The boundary of $D$ is a union of leaves of $\Wi L^s$ (which we call \emph{the boundary leaves of $D$}). We denote by $\overline D$ the closure of $D$.

\medskip

\noindent\textit{Claim 1. Let $\ell^u$ be a leaf of the foliation $\Wi F^u$ intersecting $\overline D$, and $\alpha^u$ be a connected component of $\ell^u\cap\overline D$. Then $\alpha^u$ is an arc joining two different boundary leaves of $D$.}

\smallskip

\noindent Let $R$ be a connected component of $D\setminus\Wi L^u$, so that $\alpha^u$ is included in the closure $\overline R$ of $R$ (actually $R$ is unique, but we will not use this fact). Observe that $R$ is a connected component of $\Wi S-(\Wi L^s\cup\Wi L^u)$. Our assumptions (namely, the strong transversality of the gluing map $\psi$) imply that $R$ is a relatively compact topological disc, whose boundary $\partial R$ is made of four arcs $\alpha_-^s,\alpha_+^s,\alpha_-^u,\alpha_+^u$, where $\alpha_-^s$ and $\alpha_-^s$ are disjoint and lie in some leaves of $\Wi L^s$, and where $\alpha_-^u$ and $\alpha_+^u$ are disjoint and lie in some leaves of $\Wi L^u$. Loosely speaking, $R$ is a rectangle with two sides $\alpha_-^s,\alpha_+^s$ in $\Wi L^s$ and two sides $\alpha_-^u,\alpha_+^u$ in $\Wi L^u$. Proposition~\ref{p.intersection-two-leaves} implies that
$\ell^u$ intersects $\alpha_-^s$ and $\alpha_+^s$ at no more than one point. Since $\ell^u$ is a proper line, and $\overline R$ is a compact set, it follows that $\alpha^u$ must be an arc going from $\alpha_-^s$ to $\alpha_+^s$. Using again Proposition~\ref{p.intersection-two-leaves}, it also follows that $\alpha_-^s$ to $\alpha_+^s$ cannot be in the same leaf of $\Wi F^s$. The claim is proved.

\medskip

\noindent\textit{Claim 2. $D$ has exactly two boundary leaves.}

\smallskip

\noindent In order to prove this claim, we endow the foliation $\Wi F^u$ with an orientation (this is possible since $\Wi F^u$ is a foliation on a collection of topological planes). For every $x\in \overline D$, we denote by $\ell^u(x)$ the leaf of the foliation $\Wi F^u$ passing through $x$, and denote by $\alpha^u(x)$ the connected component of $\ell^u _x\cap\overline D$ containing $x$. Note that $\ell^u(x)$ and $\alpha^u(x)$ are oriented by the orientation of $\Wi F^u$. By Claim~1, $\alpha^u(x)$ is an arc whose endpoints lie on two boundary leaves $\ell_-^s(x)$ and $\ell_+^s(x)$ of $D$. By transversality of the foliations $\Wi F^u$ and $\Wi F^s$, the maps $x\mapsto\ell^s_-(x)$ and $x\mapsto\ell^s_+(x)$ are locally constant. Since $\overline D$ is connected, these maps are constant. In other words, one can find two boundary leaves $\ell^s_-$ and $\ell^s_+$ of $D$, so that $\alpha^u(x)$ is an arc from $\ell^s_-$ to $\ell^s_+$ for every $x\in\overline D$. It follows that $\ell^s_-$ and $\ell^s_+$ are the only accessible boundary leaves of $D$: otherwise, one can consider another boundary leaf $\ell^s$, take a point $x\in\ell^s$, and get a contradiction since one end of $\alpha^u_x$ is on $\ell^s$. As a further consequence, the accessible boundary of $D$ is closed (recall that $\ell^s_-$ and $\ell^s_+$ are properly embedded lines), and therefore coincides with the boundary of $D$. We finally conclude that $\ell^s_-$ and $\ell^s_+$ are the only boundary leaves of $D$ and Claim 2 is proved.

\medskip

Claim~1 and~2 already imply that $D$ is a proper stable strip bounded by two leaves $\ell^s_-,\ell^s_+$ of $\Wi L^s$. We are left to prove that $D$ a trivially bifoliated. Recall that $\Wi S$ is a topological plane (Proposition~\ref{p.topology-surface}), and that $\ell_-^s,\ell_+^s$ are properly embedded topological lines (Proposition~\ref{p.intersection-two-leaves}). By easy planar topology, it folllows that there exists a homeomorphism $h$ from $\overline D$ to $\RR\times [-1,1]$ mapping $\ell_-^s$ and $\ell_+^s$ to $\RR\times \{-1\}$ and $\RR\times \{1\}$ respectively. Claim~1 implies that $h_*(\Wi F^u_{\overline D}$ is a foliation of $\RR\times [-1,1]$ by arcs going from $\RR\times \{-1\}$ and $\RR\times \{1\}$. One can easily construct a self-homeomoprhism $h'$ of $\RR\times [-1,1]$ mapping this foliation on the vertical foliation of $\RR\times [-1,1]$. Up to replacing $h$ by $h'\circ h$, we will assume that $h$ maps $\Wi F^u_{\overline D}$ on the vertical foliation of $\RR\times [-1,1]$. Now we consider a leaf $\ell^s$ of the foliation $\Wi F^s$ included in $\overline D$. According to Proposition~\ref{p.intersection-two-leaves}, $\ell^s$ intersects each leaf of $\Wi F^u$ at no more than one point. Hence $h(\ell^s)$ intersects each vertical segment in $\RR\times [-1,1]$ at no more than one point. Let $E$ be the set of $t\in\RR$ so that $h(\ell^s)$ intersects the vertical segment $\{t\}\times [-1,1]$. Since $\ell^s$ is a proper topological line tranvsersal to $\Wi F^u$, the set $E_t$ must be open and closed in $\RR$. Therefore $h(\ell^s)$ intersects every vertical segment in $\RR\times [-1,1]$ at exactly one point. In other words, the leaves of $h_*(\Wi F^s_{\overline D})$ are graphs over the first coordinate in $\RR\times [-1,1]$. One can easily modify the homeomorphism $h$ so that $h_*(\Wi F^s_{\overline D})$ is the horizontal foliation of $\RR\times [-1,1]$. Hence $D$ is a trivially bifoliated proper stable  strip.
\end{proof}

Of course, the unstable analog of Proposition~\ref{p.strip} holds true: every connected component of $\Wi S-\Wi L^u$ is a trivially bifoliated proper unstable strip bounded by two leaves of the lamination $\Wi L^u$. On the other hand, $\tilde\theta$ maps connected components of $\Wi S-\Wi L^s$ to connected component of $\Wi S-\Wi L^u$. So, we obtain:

\begin{corollary}
\label{c.image-connected-component}
If $D$ is a connected component of $\Wi S-\Wi L^s$, then $\tilde\theta(D)$ is a trivially bifoliated proper unstable strip, disjoint from $\Wi L^u$, bounded by two leaves of the lamination $\Wi L^u$.
\end{corollary}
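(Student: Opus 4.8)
The plan is to derive Corollary~\ref{c.image-connected-component} directly from the unstable analog of Proposition~\ref{p.strip} together with the fact, recorded in Subsection~\ref{ss.setting}, that the Poincar\'e return map $\tilde\theta$ is a homeomorphism from $\widetilde S\setminus\widetilde L^s$ \emph{onto} $\widetilde S\setminus\widetilde L^u$ (this is exactly how $\tilde\theta$ was introduced there, as a lift of the Poincar\'e first return map $\theta:S\setminus L^s\to S\setminus L^u$).

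First I would note the elementary fact that a homeomorphism between two topological spaces carries the connected components of the source bijectively onto the connected components of the target. Applying this to $\tilde\theta:\widetilde S\setminus\widetilde L^s\to\widetilde S\setminus\widetilde L^u$ and to the connected component $D$ of $\widetilde S\setminus\widetilde L^s$, I conclude that $\tilde\theta(D)$ is a connected component of $\widetilde S\setminus\widetilde L^u$; in particular $\tilde\theta(D)$ is disjoint from $\widetilde L^u$.

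Next I would invoke the unstable analog of Proposition~\ref{p.strip} --- obtained from the proof of Proposition~\ref{p.strip} by exchanging the roles of $\widetilde L^s,\widetilde F^s$ and $\widetilde L^u,\widetilde F^u$ and replacing $\tilde\theta$ by $\tilde\theta^{-1}$, using the unstable counterparts of Propositions~\ref{p.preimage-stable-lamination}--\ref{p.density-stable-manifold} mentioned right after their statements. This gives that every connected component of $\widetilde S\setminus\widetilde L^u$ is a trivially bifoliated proper unstable strip bounded by two leaves of $\widetilde L^u$. In particular this applies to $\tilde\theta(D)$, and combining with the previous paragraph yields the corollary.

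I do not expect any real obstacle here: the statement is a bookkeeping consequence of facts already established. The one point worth stressing is that $\tilde\theta$ is genuinely a homeomorphism \emph{onto} $\widetilde S\setminus\widetilde L^u$, so that the image of a connected component of the source is an entire connected component of the target rather than merely a subset of one; this is precisely what was observed in Subsection~\ref{ss.setting}. If one preferred a completely self-contained argument one could instead transport the trivial bifoliation of $D$ through $\tilde\theta$ using Proposition~\ref{p.Poincare-map} (which says $\tilde\theta$ preserves $\widetilde F^s$ and $\widetilde F^u$), but appealing to the unstable analog of Proposition~\ref{p.strip} is the shortest route.
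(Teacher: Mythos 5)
Your argument is correct and is essentially the same as the paper's: the paper likewise deduces the corollary from the unstable analog of Proposition~\ref{p.strip} together with the observation that $\tilde\theta$ sends connected components of $\Wi S\setminus\Wi L^s$ to connected components of $\Wi S\setminus\Wi L^u$. You merely make explicit the (correct) justification that $\tilde\theta$ is a homeomorphism onto $\Wi S\setminus\Wi L^u$, which is left implicit in the paper.
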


The following proposition describes the action of $\tilde\theta$ on the connected components of $\Wi S-\Wi L^s$.

\begin{proposition}
\label{p.substrip}
Let $D$ be a connected component of $\Wi S-\Wi L$, and $D'$ be any trivially bifoliated proper stable strip. Assume that $D\cap\tilde\theta^{-1}(D')$ is non-empty. Then $D\cap\tilde\theta^{-1}(D')$ is a trivially bifoliated proper stable sub-strip of $D$.
\end{proposition}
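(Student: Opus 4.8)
The plan is to transport the problem into the unstable strip $\tilde\theta(D)$, where $D\cap\tilde\theta^{-1}(D')$ becomes the genuine overlap $\tilde\theta(D)\cap D'$ of two strips carrying complementary trivial bifoliations, and then to pull the answer back through $\tilde\theta^{-1}$. So I would set $E:=D\cap\tilde\theta^{-1}(D')$ and assume $E\ne\emptyset$. Since $\tilde\theta$ restricts to a homeomorphism of $D$ onto $\tilde\theta(D)$, the set $E':=\tilde\theta(E)=\tilde\theta(D)\cap D'$ is non-empty, hence contained in one connected component $P$ of $\widetilde S$, which is a topological plane (Proposition~\ref{p.topology-surface}). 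Recall that $D$ is a trivially bifoliated proper stable strip whose boundary is made of two leaves of $\widetilde L^s$ (Proposition~\ref{p.strip}), that $\tilde\theta(D)$ is a trivially bifoliated proper unstable strip whose boundary is made of two $\widetilde F^u$-leaves $\ell^u_-,\ell^u_+$ (Corollary~\ref{c.image-connected-component}), and that $D'$ is, by hypothesis, a trivially bifoliated proper stable strip whose boundary is made of two $\widetilde F^s$-leaves $m^s_-,m^s_+$. Fix the three systems of product coordinates. Since $\partial D$, $\partial D'$ and $\partial\tilde\theta(D)$ are unions of leaves, I will use throughout that any $\widetilde F^s$-leaf meeting $D$ (resp. $D'$) is a \emph{full} $\widetilde F^s$-leaf of $P$ entirely contained in $D$ (resp. in $D'$), and that any $\widetilde F^u$-leaf meeting $\tilde\theta(D)$ and distinct from $\ell^u_\pm$ is a full $\widetilde F^u$-leaf entirely contained in $\tilde\theta(D)$.

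The first step records two saturation properties. (a) $E$ is a union of leaves of $\widetilde F^s|_D$; equivalently, in the coordinates of $D$, $E=\RR\times Y_0$ for some open $Y_0\subseteq(-1,1)$. Indeed, for $p\in E$ let $\lambda$ be the leaf of $\widetilde F^s|_D$ through $p$: by Proposition~\ref{p.Poincare-map} the connected set $\tilde\theta(\lambda)$ lies inside a single $\widetilde F^s$-leaf, and that leaf meets $D'$ at the point $\tilde\theta(p)$, hence is contained in $D'$; so $\tilde\theta(\lambda)\subseteq D'$, i.e. $\lambda\subseteq E$. (b) $E'$ is a union of leaves of $\widetilde F^u|_{D'}$; equivalently, in the coordinates of $D'$, $E'=W_0\times(-1,1)$ for some open $W_0\subseteq\RR$. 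Indeed, for $q\in E'$ the $\widetilde F^u$-arc $\tau$ of $D'$ through $q$ lies on a full $\widetilde F^u$-leaf of $P$ distinct from $\ell^u_\pm$ (since $q\in\tilde\theta(D)$), hence $\tau\subseteq\tilde\theta(D)$; since also $\tau\subseteq D'$, we get $\tau\subseteq E'$.

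The core of the argument is to prove that $E$ — equivalently $E'$, equivalently $W_0$ by property (b) — is connected. I would pick $q\in E'$ and let $m$ be the $\widetilde F^s$-leaf of $P$ through $q$. On one hand, $m$ meets $D'$, so it is a full $\widetilde F^s$-leaf contained in $D'$, i.e. a ``horizontal'' leaf $\RR\times\{d_0\}$ in the coordinates of $D'$, and therefore $m\cap E'=W_0\times\{d_0\}$ by (b). On the other hand, $m\cap\tilde\theta(D)$ is a \emph{single} leaf $\lambda$ of $\widetilde F^s|_{\tilde\theta(D)}$: if $m\cap\tilde\theta(D)$ had two distinct components (two ``vertical'' segments in the coordinates of $\tilde\theta(D)$), then $m$, being closed in $P$, would contain the closures of those segments and hence would meet the boundary $\widetilde F^u$-leaf $\ell^u_-$ of $\tilde\theta(D)$ in two distinct points, contradicting Proposition~\ref{p.intersection-two-leaves}. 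As $\lambda\subseteq m\subseteq D'$, this gives $m\cap E'=m\cap\tilde\theta(D)\cap D'=\lambda$. Comparing the two descriptions, $W_0\times\{d_0\}=\lambda$ is connected, so $W_0$ is an interval; hence $E'$, and therefore $E=\tilde\theta^{-1}(E')$, is connected.

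It then remains to assemble the pieces: by (a) and the previous step, $E=\RR\times Y_0$ with $Y_0$ a non-empty open subinterval of $(-1,1)$, so its closure in $P$ is the band $\RR\times\overline{Y_0}$, whose boundary is the union of two $\widetilde F^s$-leaves (full leaves of $\widetilde F^s$, or leaves of $\widetilde L^s$ when an endpoint of $Y_0$ equals $\pm1$); thus $E$ is a proper stable strip contained in $D$, and restricting the product structure of $D$ to $E$ exhibits it as trivially bifoliated — which is the assertion. The step I expect to resist hardest is the connectedness: a priori $D\cap\tilde\theta^{-1}(D')$ could split into several pairwise parallel stable sub-strips of $D$, and the only mechanism forbidding this is the ``at most one intersection point'' phenomenon of Proposition~\ref{p.intersection-two-leaves}; what makes it delicate is that it has to be played off against \emph{both} trivial bifoliations at once — that of $\tilde\theta(D)$, to recognize $\lambda$ as a single full leaf of $\widetilde F^s|_{\tilde\theta(D)}$, and that of $D'$, to read $m\cap E'$ as $W_0\times\{d_0\}$ — since neither strip alone sees the obstruction.
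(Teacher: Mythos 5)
Your proof is correct and follows the same strategy as the paper: transport the problem to $\tilde\theta(D)\cap D'$, show it is a ``rectangle'' cut out by the two complementary strip bifoliations, and pull back through $\tilde\theta^{-1}$. The only difference is one of detail: where the paper asserts that $\tilde\theta(D)\cap D'$ is ``easily'' seen to be a trivially bifoliated rectangle, you carefully establish the connectedness via Proposition~\ref{p.intersection-two-leaves}, which is precisely the mechanism the paper's ``easily follows'' implicitly relies on.
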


\begin{proof}
We call \emph{trivially bifoliated rectangle} every topological open disc $R\subset\Wi S$ such that there exists a homeomorphism from the closure of $R$ to $[-1,1]^2$ mapping the restrictions of $\Wi F^s$ and $\Wi F^u$ to the horizontal and vertical foliations of $[-1,1]^2$. In particular, the boundary of such a trivially bifoliated rectangle is made of two stable sides, and two unstable sides.

According to Corollary~\ref{c.image-connected-component}, $\tilde\theta(D)$ is a trivially bifoliated proper unstable strip, disjoint from $\Wi L^u$, bounded by two leaves of $\Wi L^u$. By assumption, $D'$ is a trivially bifoliated proper stable strip. It easily follows that $\tilde\theta(D)\cap D'$ is a trivially bifoliated rectangle, disjoint from $\Wi L^u$, whose unstable sides are in $\Wi L^u$ (see Figure~\ref{f.substrip}). Observe that the interiors of two stable sides of $\tilde\theta(D)\cap D'$ are full leaves of $\Wi F^s_{|\Wi S-\Wi L^u}$. Hence:
\begin{itemize}
\item[$(\star)$] $\theta(D)\cap D'$ is a connected subset of $\theta(D)$, and the boundary of $\theta(D)\cap D'$ in $\theta(D)$ is made of two disjoint leaves of $\Wi F^s_{|\Wi S-\Wi L^u}$.
\end{itemize}
Now recall that $\tilde\theta^{-1}$ is a homeomorphism from $\Wi S-\Wi L^u$ to $\Wi S-\Wi L^s$, mapping leaves of $\Wi F^s_{|\Wi S-\Wi L^u}$ to full leaves of $\Wi F^s$  (see Proposition~\ref{p.Poincare-map} and Remark~\ref{r.preserve-foliations}). Also observe that $D\cap \tilde\theta^{-1}(D')$ is a subset of $D$. As a consequence, Properties~$(\star)$ implies:
\begin{itemize}
\item[$(\star')$] $D\cap \tilde\theta^{-1}(D')$ is a connected subset of  $D$, and the boundary of $D\cap \tilde\theta^{-1}(D')$ is made of two disjoint leaves of $\Wi F^s$.
\end{itemize}
Since $D$ is a trivially foliated proper stable strip $D$, Property~$(\star')$ clearly implies that $D\cap \tilde\theta^{-1}(D')$ is a trivially bifoliated proper stable sub-strip of $D$.  See Figure~\ref{f.substrip}.
\end{proof}

\begin{figure}[ht]
\begin{center}
  \includegraphics[totalheight=7cm]{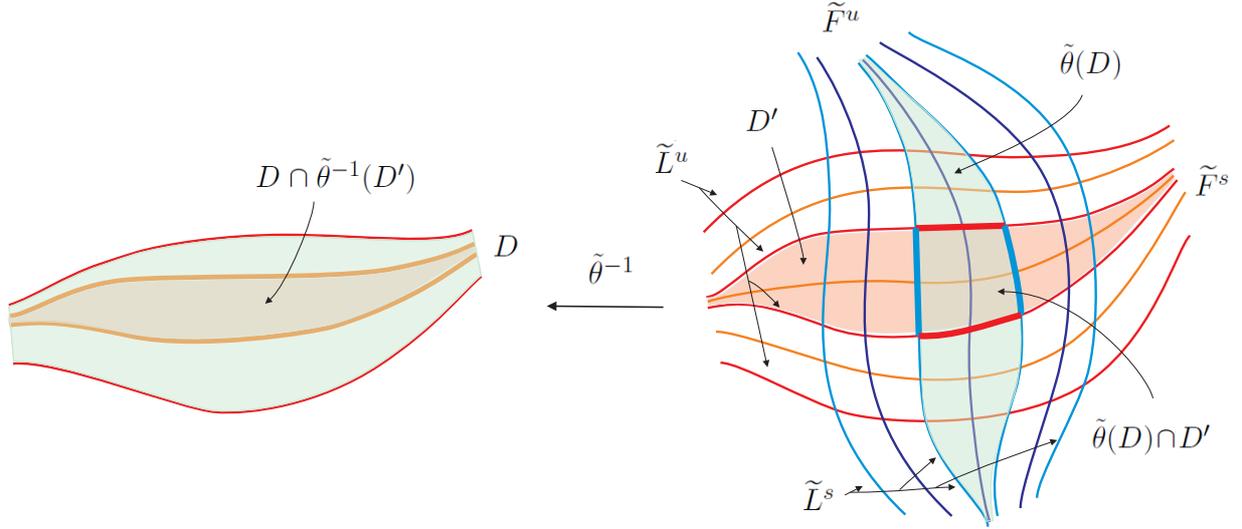}
\caption{\label{f.substrip}The proof of Proposition~\ref{p.substrip}}
\end{center}
\end{figure}

\subsection{The coding procedure}
\label{ss.coding-procedure}

In this section, we will use the connected components of  $\Wi S \setminus \Wi L^s$ to describe the itinerary of the orbits the flow $(\Wi Y^t)$ that do not belong to $\Wi W^s(\Lambda)\cup\Wi W^u(\Lambda)$. We consider the alphabet
$$\cA:=\{\mbox{connected components of }\Wi S \setminus \Wi L^s\},$$
and the symbolic spaces
 \begin{eqnarray*}
 \Sigma^s & = & \left\{\overline{D}^s=(D_p)_{p\geq 0}\mbox{ such that } \  D_p \in \cA \mbox{ and }\Wi{\theta} (D_p) \cap D_{p+1} \neq \emptyset\mbox{ for every }p\right\},\\
    \Sigma^u & = & \left\{\overline{D}^u=(D_p)_{p<0}\mbox{ such that } \  D_p \in \cA \mbox{ and }\Wi{\theta} (D_p) \cap D_{p+1} \neq \emptyset\mbox{ for every }p\right\},\\
     \Sigma & = & \left\{\overline{D}=(D_p)_{p\in \mathbb{Z}}\mbox{ such that } \  D_p \in \cA \mbox{ and }\Wi{\theta} (D_p) \cap D_{p+1} \neq \emptyset\mbox{ for every }p\right\}.
 \end{eqnarray*}

In order to define the coding maps, we need to introduce some leaf spaces. We will denote by $f^s$ the leaf space of the foliation $\Wi F^s$ (equipped with the quotient topology). We will denote $f^{s,\infty}$ the subset of $f^s$ made of the leaves that are not in $\Wi W^s(\Lambda)$. Similarly, we denote by $f^u$ the leaf space of $\Wi F^u$, and by $f^{u,\infty}$ the subset fo $f^u$ made of the leaves that are not in $\Wi W^u(\Lambda)$. Finally we denote by $\Wi S^\infty$ the set of points $\Wi S$ that are neither in $\Wi W^s(\Lambda)$ nor in $\Wi W^u(\Lambda)$.
\begin{eqnarray*}
f^{s,\infty} & = & \{\mbox{leaves of }\Wi F^s\mbox{ that are not in }\Wi W^s(\Lambda)\},\\
f^{u,\infty} & = & \{\mbox{leaves of }\Wi F^u\mbox{ that are not in }\Wi W^u(\Lambda)\},\\
\Wi S^\infty & = & \Wi S-(\Wi W^s(\Lambda)\cup\Wi W^u(\Lambda)).
\end{eqnarray*}

By Proposition~\ref{p.stable-set-Lambda}, if $\ell^s\in f^{s,\infty}$, then $\tilde\theta^p (\ell^s)$ is included in a connected component of $\Wi S-\Wi L^s$ for every $p\geq 0$. Similarly, if $\ell^u\in f^{u,\infty}$, then $\tilde\theta^p (\ell^u)$ is included in a connected component of $\Wi S-\Wi L^u$ for every $p\leq 0$. Since $\tilde\theta^{-1}$ maps homeomorphically $\Wi S-\Wi L^u$ to $\Wi S-\Wi L^s$, we deduce that: if $\ell^u\in f^{u,\infty}$, then $\tilde\theta^p (\ell^u)$ is included in a connected component of $\Wi S-\Wi L^s$ for every $p<0$. As a further consequence, if $x$ is a point of $\Wi S^\infty$, then $\tilde\theta^p (x)$ is in a connected component of $\Wi S-\Wi L^s$ for every $p\in\ZZ$. This shows that the following coding maps are well-defined:
$$\begin{array}{ccrcll}
\chi^s & : & f^{s,\infty} & \longrightarrow & \Sigma^s & \\
& & \ell^s & \longmapsto &  \overline{D}^s=(D_p)_{p\geq 0} & \mbox{where }\tilde\theta^p (\ell^s) \subset D_p\mbox{ for every }p\geq 0\\
&\\
\chi^u & : & f^{u,\infty} & \longrightarrow & \Sigma^u & \\
& & \ell^u & \longmapsto &  \overline{D}^u=(D_p)_{p<0} & \mbox{where }\tilde\theta^p (\ell^u) \subset D_p\mbox{ for every }p<0\\
&\\
\chi & : & \Wi S^\infty & \longrightarrow & \Sigma & \\
& & x & \longmapsto &  \overline{D}=(D_p)_{p\in\ZZ} & \mbox{where }\tilde\theta^p (x) \in D_p\mbox{ for every }p\in\ZZ
\end{array}$$

The following proposition is an important step ingredient of the proof of Theorem~\ref{t.main}.

\begin{proposition}
\label{p.bijective}
The maps $\chi^s$, $\chi^u$ and $\chi$ are bijective.
 \end{proposition}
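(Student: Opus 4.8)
The plan is to prove bijectivity of $\chi$ directly, and to obtain bijectivity of $\chi^s$ and $\chi^u$ as byproducts of the same argument (they are the ``one-sided" versions, and $\chi$ is roughly $\chi^s\times\chi^u$ matched along coordinate $0$). I will reduce everything to a single geometric principle: a nested intersection of trivially bifoliated proper stable strips in a topological plane, whose widths (measured transversally by the foliation $\Wi F^u$) shrink to a single leaf, determines exactly one leaf of $\Wi F^s$, and dually for $\Wi F^u$. The key tool is Proposition~\ref{p.substrip}: given $\overline D=(D_p)_{p\geq 0}\in\Sigma^s$, the sets
$$E_n:=D_0\cap\bigcap_{p=1}^{n}\tilde\theta^{-p}(D_p)$$
are, by induction using Proposition~\ref{p.substrip} (with $D'$ replaced at each stage by $\tilde\theta^{-1}$ of the previously constructed substrip, which is again a trivially bifoliated proper stable strip by Corollary~\ref{c.image-connected-component} applied in reverse, i.e.\ Remark~\ref{r.preserve-foliations}), a decreasing sequence of trivially bifoliated proper stable substrips of $D_0$, each non-empty by the admissibility condition $\tilde\theta(D_p)\cap D_{p+1}\neq\emptyset$ together with a short argument that these pairwise-nonempty conditions propagate to nonempty finite intersections (here one uses that $\tilde\theta(D_p)$ meets $D_{p+1}$ in a full $\Wi F^s$-leaf-width rectangle, so the constraints are ``independent" across coordinates).

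\textbf{Surjectivity of $\chi^s$.} Given an admissible sequence $\overline D^s=(D_p)_{p\geq 0}$, I form the $E_n$ as above and set $E_\infty:=\bigcap_n E_n$. Since each $E_n$ is a proper stable strip in the topological plane $\Wi S$ whose two boundary leaves are leaves of $\Wi F^s$, and since the $\Wi F^u$-transverse width of $E_n$ is controlled — this is where I must show the widths shrink to zero, using that $\Wi L^s$ is ``filling up" enough, or more precisely that otherwise $E_\infty$ would contain a nondegenerate stable strip all of whose forward $\tilde\theta$-iterates stay in single components of $\Wi S-\Wi L^s$, which by Proposition~\ref{p.stable-set-Lambda} would force a whole $\Wi F^u$-interval to lie outside $\Wi W^s(\Lambda)$ while its image under finitely many $\tilde\theta$ is pinned, contradicting the strong transversality of $L^s$ and $L^u$ (every complementary region of $\Wi L^s\cup\Wi L^u$ is a relatively compact rectangle, so a stable leaf cannot avoid $\Wi L^u$ forever) — I conclude $E_\infty$ is a single leaf $\ell^s$ of $\Wi F^s$. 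By construction $\tilde\theta^p(\ell^s)\subset D_p$ for all $p\geq 0$, and $\ell^s\in f^{s,\infty}$ because if $\ell^s$ were in $\Wi W^s(\Lambda)$ then by Proposition~\ref{p.stable-set-Lambda} some $\tilde\theta^p(\ell^s)$ would lie in $\Wi L^s$, contradicting $\tilde\theta^p(\ell^s)\subset D_p\subset\Wi S-\Wi L^s$. Hence $\chi^s(\ell^s)=\overline D^s$.

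\textbf{Injectivity of $\chi^s$.} If $\chi^s(\ell^s)=\chi^s(\ell'^s)=(D_p)_{p\geq 0}$, then for every $n$ both $\ell^s$ and $\ell'^s$ lie in $E_n$; but $E_n$ is a trivially bifoliated strip, so $\ell^s,\ell'^s$ are ordered by the $\Wi F^u$-transverse coordinate, and since $\bigcap_n E_n$ is a single leaf they coincide. The arguments for $\chi^u$ are verbatim the same with $\tilde\theta$ replaced by $\tilde\theta^{-1}$, negative indices, and the roles of $\Wi F^s,\Wi F^u,\Wi L^s,\Wi L^u$ swapped (using the unstable analog of Proposition~\ref{p.substrip}). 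For $\chi$: given $x\in\Wi S^\infty$, its $\Wi F^s$-leaf $\ell^s(x)$ lies in $f^{s,\infty}$ and its $\Wi F^u$-leaf $\ell^u(x)$ lies in $f^{u,\infty}$, and $x=\ell^s(x)\cap\ell^u(x)$ by Proposition~\ref{p.intersection-two-leaves}; the positive part of $\chi(x)$ is $\chi^s(\ell^s(x))$ and the negative part is read off from $\chi^u(\ell^u(x))$ — note the coordinate-$0$ component $D_0$ is the same in both since it is just the component of $\Wi S-\Wi L^s$ containing $x$. So $\chi$ is determined by the pair $(\chi^s(\ell^s(x)),\chi^u(\ell^u(x)))$ subject to agreement at coordinate $0$, which gives injectivity immediately. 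For surjectivity, given $\overline D=(D_p)_{p\in\ZZ}\in\Sigma$ I apply the surjectivity of $\chi^s$ to $(D_p)_{p\geq 0}$ to get $\ell^s$, the surjectivity of $\chi^u$ to $(D_p)_{p<0}$ (after shifting) to get $\ell^u$, observe $\tilde\theta^0$ of each lands in $D_0$ so $\ell^s$ and $\ell^u$ both meet the component $D_0$, hence intersect (both being leaves through $D_0$, which is a trivially bifoliated strip, so every stable leaf through it meets every unstable leaf through it), and set $x:=\ell^s\cap\ell^u\in\Wi S^\infty$, giving $\chi(x)=\overline D$.

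\textbf{Main obstacle.} The one genuinely delicate point is the ``width shrinks to zero" claim in the surjectivity argument: a priori the nested strips $E_n$ could stabilize to a nondegenerate strip $E_\infty$. Ruling this out is the heart of the matter and is exactly where the transitivity of $(Y^t)$ and the strong transversality hypothesis must be used. I expect the clean way is: if $E_\infty$ had nonempty $\Wi F^u$-transverse interior, pick an unstable arc $J\subset E_\infty$; all forward $\tilde\theta$-iterates of $J$ (as a piece of an unstable leaf, via $\tilde\theta^p(\ell^u)\subset\Wi S-\Wi L^s$) stay in the single components $D_p$, so $J$ meets neither $\Wi L^u$ nor any $\tilde\theta^{-p}(\Wi L^s)$ for $p\geq 0$; but $J$ does intersect $\Wi W^s(\Lambda)\cap\Wi S=\bigcup_p\tilde\theta^{-p}(\Wi L^s)$ on a dense set (by a density argument parallel to Proposition~\ref{p.density-stable-manifold}), a contradiction. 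I would expect the author's proof to encapsulate precisely this density/transversality input.
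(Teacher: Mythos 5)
Your proposal is correct and follows essentially the same route as the paper: nested trivially bifoliated strips via Proposition~\ref{p.substrip}, with the ``heart of the matter'' being that the infinite intersection is disjoint from $\Wi W^s(\Lambda)\cap\Wi S$ (Proposition~\ref{p.stable-set-Lambda}), which is dense (Proposition~\ref{p.density-stable-manifold}), so it cannot contain an open unstable arc and must be a single leaf --- exactly what you articulate in your ``Main obstacle'' paragraph. Your earlier attempt to invoke filling laminations or strong transversality for the width-shrinking step is a wrong scent (filling is not assumed in Theorem~\ref{t.main}) and more convoluted than needed, but since you supersede it with the correct density argument, the proof stands; you also make explicit the non-emptiness of the finite intersections $E_n$ via the ``independence'' of stable and unstable constraints, a point the paper leaves implicit in its ``straightforward induction.''
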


 \begin{lemma}
 \label{l.inverse-code}~
 \begin{enumerate}
 \item For every $\overline{D}^s=(D_p)_{p\geq 0}\in\Sigma^s$, the set $\bigcap_{p\geq 0}\tilde \theta^{-p} (D_p)$ is a stable leaf $\ell^s\in f^{s,\infty}$.
 \item For every $\overline{D}^u=(D_p)_{p<0}\in\Sigma^u$, the set $\bigcap_{p<0}\tilde \theta^{-p} (D_p)$ is an unstable leaf $\ell^u\in f^{u,\infty}$.
 \item For every $\overline{D}=(D_p)_{p\in\ZZ}\in\Sigma$, the set $\bigcap_{p\in\ZZ}\tilde \theta^{-p} (D_p)$ is a single point $x\in \Wi S^\infty$.
 \end{enumerate}
 \end{lemma}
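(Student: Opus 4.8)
The plan is to prove the three assertions of Lemma~\ref{l.inverse-code} simultaneously by a nested-intersection argument, using Proposition~\ref{p.substrip} as the engine. For assertion (1), fix $\overline D^s=(D_p)_{p\geq 0}\in\Sigma^s$ and set $K_n:=\bigcap_{p=0}^n\tilde\theta^{-p}(D_p)$. I claim each $K_n$ is a trivially bifoliated proper stable strip contained in $D_0$. This follows by induction on $n$: $K_0=D_0$ is such a strip by Proposition~\ref{p.strip}; and assuming $K_{n-1}=\bigcap_{p=1}^{n}\tilde\theta^{-(p-1)}(D_p)$ — rewritten via the shift — is a trivially bifoliated proper stable strip, the condition $\tilde\theta(D_p)\cap D_{p+1}\neq\emptyset$ guarantees $K_n=D_0\cap\tilde\theta^{-1}(K_{n-1})$ is nonempty, and Proposition~\ref{p.substrip} (applied with $D=D_0$ and $D'=K_{n-1}$, after checking $K_{n-1}$ meets the hypotheses of that proposition, i.e. is a trivially bifoliated proper stable strip) shows $K_n$ is a trivially bifoliated proper stable sub-strip of $D_0$. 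The decreasing intersection $\ell^s:=\bigcap_{n\geq 0}K_n=\bigcap_{p\geq 0}\tilde\theta^{-p}(D_p)$ is then a nested intersection of closed-in-$D_0$ strips.

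Next I must identify $\ell^s$. Since each $K_n$ is foliated by full leaves of $\Wi F^u$ going from one stable boundary leaf to the other (the ``trivially bifoliated'' structure) and $K_{n+1}\subset K_n$ is a proper stable sub-strip, the two stable boundary leaves of $K_n$ converge; passing to the limit, $\bigcap_n K_n$ is contained in a single leaf of $\Wi F^s$. It is nonempty because a decreasing sequence of proper stable strips (each homeomorphic to $\RR\times[-1,1]$ with the inclusions respecting the product structure) has nonempty intersection — concretely, pick for each $n$ a leaf $\ell_n^s$ of $\Wi F^s$ inside $K_n$; using that $K_n$ is trivially bifoliated one sees the $\ell_n^s$ can be taken nested or at least accumulating on a leaf of $\Wi F^s$ lying in every $\overline{K_n}$, hence in every $K_n$ since the intersection of the $K_n$ avoids their boundary leaves (here one uses that the $K_n$ shrink ``transversally'' in the $\Wi F^u$-direction but each $K_n$ is a full union of $\Wi F^u$-arcs, so the limit is a whole $\Wi F^s$-leaf, not a proper sub-arc). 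Finally $\ell^s\in f^{s,\infty}$: by construction $\tilde\theta^p(\ell^s)\subset D_p$ for all $p\geq 0$, so $\tilde\theta^p(\ell^s)$ never meets $\Wi L^s$, hence $\ell^s\notin\bigcup_{p\geq 0}\tilde\theta^{-p}(\Wi L^s)=\Wi W^s(\Lambda)\cap\Wi S$ by Proposition~\ref{p.stable-set-Lambda}, so $\ell^s$ is a leaf of $\Wi F^s$ not contained in $\Wi W^s(\Lambda)$, i.e. $\ell^s\in f^{s,\infty}$.

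Assertion (2) is the exact mirror image: using Corollary~\ref{c.image-connected-component}, the fact that $\tilde\theta^{-1}$ maps $\Wi S-\Wi L^u$ homeomorphically onto $\Wi S-\Wi L^s$ sending $\Wi F^u$-leaves to $\Wi F^u$-leaves, and the unstable analog of Proposition~\ref{p.substrip}, one runs the same nested-intersection argument for $\bigcap_{p<0}\tilde\theta^{-p}(D_p)$ (now the strips are trivially bifoliated proper unstable strips and the intersection collapses onto a single leaf of $\Wi F^u$), concluding with the unstable analog of Proposition~\ref{p.stable-set-Lambda} that $\ell^u\in f^{u,\infty}$. For assertion (3), given $\overline D=(D_p)_{p\in\ZZ}\in\Sigma$, I split: $(D_p)_{p\geq 0}$ lies in $\Sigma^s$ and $(D_p)_{p<0}$ lies in $\Sigma^u$, so by (1) and (2) the set $\bigcap_{p\geq 0}\tilde\theta^{-p}(D_p)$ is a leaf $\ell^s$ of $\Wi F^s$ and $\bigcap_{p<0}\tilde\theta^{-p}(D_p)$ is a leaf $\ell^u$ of $\Wi F^u$, and $\bigcap_{p\in\ZZ}\tilde\theta^{-p}(D_p)=\ell^s\cap\ell^u$, which by Proposition~\ref{p.intersection-two-leaves} is at most one point. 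It is exactly one point: one needs $\ell^s$ and $\ell^u$ to actually meet. Here I would argue that both $\ell^s$ and $\ell^u$ pass through $D_0$ — indeed $\ell^s\subset\bigcap_{p\geq0}\tilde\theta^{-p}(D_p)\subset D_0$, and $\ell^u\subset\bigcap_{p<0}\tilde\theta^{-p}(D_p)$; one must check this latter intersection, which a priori sits in $\tilde\theta^{-(-1)}(D_{-1})$-type sets, actually lands in $D_0$, which follows because $D_0$ is the unique component of $\Wi S-\Wi L^s$ meeting $\tilde\theta(D_{-1})$ that is named in the sequence and $\tilde\theta(\ell^u)$'s relevant piece lies in it — more cleanly, run the two-sided nested intersection directly: $\bigcap_{-n\leq p\leq n}\tilde\theta^{-p}(D_p)$ is, by alternately applying Proposition~\ref{p.substrip} and its unstable analog, a decreasing sequence of trivially bifoliated rectangles inside $D_0$, whose intersection is a single point $x\in D_0$; and as before $x\notin\Wi W^s(\Lambda)\cup\Wi W^u(\Lambda)$, so $x\in\Wi S^\infty$.

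The main obstacle I anticipate is the nonemptiness and single-leaf (resp.\ single-point) nature of the nested intersections: closed nested sets in a non-compact plane can have empty intersection, so one genuinely needs the rigid ``trivially bifoliated'' structure — the fact that each $K_n$ is a full union of transverse arcs spanning between its two boundary leaves — to guarantee that shrinking happens only in the transverse direction and that the limit captures a complete leaf rather than degenerating. Making this last point rigorous (that the limit of the nested stable strips is a whole $\Wi F^s$-leaf and is nonempty) is the technical heart; I would handle it by transporting everything to the model $\RR\times[-1,1]$ via the homeomorphisms furnished by Definition~\ref{d.trivially-bifoliated} and checking the inclusions $K_{n+1}\subset K_n$ become, in the model, inclusions of sub-strips $\RR\times[a_{n+1},b_{n+1}]\subset\RR\times[a_n,b_n]$ with $[a_n,b_n]$ decreasing to a point.
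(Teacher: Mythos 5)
Your overall strategy — the induction using Proposition~\ref{p.substrip} to show that each $K_n=\bigcap_{p=0}^n\tilde\theta^{-p}(D_p)$ is a trivially bifoliated proper stable sub-strip of $D_0$, and the reduction of assertion (3) to (1) and (2) via the trivially-bifoliated rectangle $D_0\cap\tilde\theta(D_{-1})$ — matches the paper's. But there is a genuine gap at the step you yourself flag as the technical heart: the claim that the nested strips collapse to a single leaf. You assert that ``the two stable boundary leaves of $K_n$ converge'' and that in the model the intervals $[a_n,b_n]$ ``decrease to a point,'' but neither of these follows from the nested-strip structure. The inclusions $K_{n+1}\subsetneq K_n$ are indeed strict (the boundary leaves of $K_n$ lie in $\tilde\theta^{-n}(\Wi L^s)$, those of $K_{n+1}$ in $\tilde\theta^{-(n+1)}(\Wi L^s)$, and these laminations are disjoint), yet a strictly decreasing nested sequence of intervals need not shrink to a point — think of $[-1-\tfrac1n,\,1+\tfrac1n]\searrow[-1,1]$. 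Nothing in your argument rules out $\bigcap_n K_n$ being a non-degenerate sub-strip, so transporting to the model $\RR\times[-1,1]$ and ``checking'' the intervals collapse, as you propose in your last paragraph, is precisely the thing that cannot be checked without extra input.

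The paper supplies that extra input via a density argument that you never invoke. Because each $D_p$ is a connected component of $\Wi S\setminus\Wi L^s$, the intersection $\bigcap_{p\geq 0}\tilde\theta^{-p}(D_p)$ is disjoint from $\bigcup_{p\geq 0}\tilde\theta^{-p}(\Wi L^s)=\Wi W^s(\Lambda)\cap\Wi S$ by Proposition~\ref{p.stable-set-Lambda}; and by Proposition~\ref{p.density-stable-manifold} — whose proof rests on the transitivity of the Anosov flow $(Y^t)$ — the set $\Wi W^s(\Lambda)\cap\Wi S$ is dense in $\Wi S$. A set disjoint from a dense set has empty interior, so $\bigcap_n K_n$ cannot contain a non-degenerate open sub-strip; being a nonempty connected union of full leaves of $\Wi F^s$ (which your nested-model argument does correctly establish), it must therefore be a single leaf. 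This dynamical ingredient (transitivity $\Rightarrow$ density) is indispensable and is what your purely topological nested-intersection argument is missing; the symmetric point applies to your proof of assertion (2).
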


\begin{remark}
Lemma~\ref{l.inverse-code} is completely false if we replace the connected components of $\Wi S\setminus\Wi L^s$ by the connected components of $S\setminus L^s$ (and $\tilde\theta$ by $\theta$). For example, if $(D_p)_{p\geq 0}$ is a sequence of connected components of $S\setminus L^s$, then $\bigcap_{p\geq 0}\theta^{-p} (D_p)$, if not empty, will be the union of uncountably many leaves of the foliation $F^s$. This is the reason why, we need to work in the universal cover of $M$.
\end{remark}

 \begin{proof}
Let us prove the first item. Consider a sequence $\overline{D}^s=(D_p)_{p\geq 0}\in \Sigma^s$. By Proposition~\ref{p.strip}, $D_0$ is a trivially bifoliated proper stable strip. Proposition~\ref{p.substrip} and a straightfoward induction imply that for every $n\in \mathbb{N}$, the set $\bigcap_{p=0}^n\Wi\theta^{-p} (D_p)$ is a sub-strip of $D_0$. So $(\bigcap_{p=0}^n\Wi\theta^{-p} (D_p))_{n\geq 0}$ is a decreasing sequence of sub-strips of the trivially bifoliated proper stable strip $D_0$. It easily follows that $\bigcap_{p\geq 0}\Wi\theta^{-p} (D_p)$ is a sub-strip of $D_0$. In particular, $\bigcap_{p\geq 0}\Wi\theta^{-p} (D_p)$ is a connected union of leaves of $\Wi F^s$. On the other hand, since $D_0,D_1,\dots$ are connected components of $\Wi S-\Wi L^s$, the set $\bigcap_{\geq 0}\Wi\theta^{-p} (D_p)$ is disjoint from $\bigcup_{p\geq 0}\Wi\theta^{-p} (\Wi L^s)=\Wi W^s(\Lambda)\cap \Wi S$ (see Proposition~\ref{p.stable-set-Lambda}). But $\Wi W^s(\Lambda)\cap \Wi S$ is dense in $\Wi S$ (Proposition~\ref{p.density-stable-manifold}). It follows that $\bigcap_{p\geq 0}\Wi{\theta}^{-p} (D_p)$ must be a single leaf of $\Wi F^s$. This completes the proof of item~1.

Item~2 follows from exactly the same arguments as item~1. In order to prove the last item, we consider a sequence $\overline{D}=(D_p)_{p\in\ZZ}$ in $\Sigma$. According to the items~1 and~2, $\bigcap_{p\geq 0}\Wi \theta^{-p} (D_p)$ is a leaf $\ell^s$ of the foliation $\Wi F^s$ and $\bigcap_{p< 0}\Wi \theta^{-p} (D_p)$ is a leaf $\ell^u$ of the foliation $\Wi F^u$. Since $\overline{D}=(D_p)_{p\in\ZZ}$ is in $\Sigma$, the intersection $D_0\cap\tilde\theta^(D_{-1})$ is not empty. Since $D_0$ is a trivially bifoliated proper stable strip (Proposition~\ref{p.strip}) and $\tilde\theta^(D_{-1})$ is a trivially bifoliated proper unstable strip (Corollary~\ref{c.image-connected-component}), every leaf of $\Wi F^s$ in $D_0$ must intersects every leaf of $\Wi F^u$ in  $\tilde\theta(D_{-1})$ at exactly one point. In particular, $\bigcap_{p\in\ZZ}\Wi \theta^{-p} (D_p)=\ell^s\cap\ell^u$ is made of exactly one point $x$. Since the leaves $\ell^s$ and $\ell^u$ are disjoint from $\Wi W^s(\Lambda)$ and $\Wi W^u(\Lambda)$ respectively, the point $x$ must be in $\Wi S^\infty$. This completes the proof.
\end{proof}

\begin{proof}[Proof of Proposition~\ref{p.bijective}]
Lemma~\ref{l.inverse-code} allows to define some inverse maps for $\chi^s$, $\chi^u$ and $\chi$. Therefore, $\chi^s$, $\chi^u$ and $\chi$ are bijective.
\end{proof}

Deck transformation preserve the surface $\widetilde S$, the foliations $\Wi \cF^s, \Wi \cF^u$, and the laminations $W^s(\Wi \La), W^u(\Wi \La)$. This induces some natural actions of $\pi_1(M)$ on the set $\Wi S^\infty$, on the leaf spaces $f^{s,\infty}, f^{u,\infty}$, on the alphabet $\cA$, and therefore on the symbolic spaces $\Sigma,\Sigma^s,\Sigma^u$. From the definition of the coding maps, one easily checks that:

\begin{proposition}
\label{p.coding-commutes-pi-1}
The coding maps $\chi$, $\chi^s$ and $\chi^u$ commute with the actions of the fundamental group of $M$ on $\Wi S^\infty$ $f^s$, $f^u$, $\Sigma$ $\Sigma^s$ and $\Sigma^u$.
\end{proposition}

The definition of the coding maps also implies that:

\begin{proposition}
\label{p.coding-conjugates-Poincare-shift}
The coding map $\chi$ (resp. $\chi^s$ and $\chi^u$) conjugates the action of Poincar\'e first return map $\Wi\theta$ on $\Wi S^\infty$ (resp. $f^s$ and $f^u$) to the left shift on the symbolic space $\Sigma$ (resp. $\Sigma^s$ and $\Sigma^u$).
\end{proposition}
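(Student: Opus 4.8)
The plan is to read off the conjugacy directly from the definitions of the coding maps, the only real inputs being that the connected components of $\Wi S\setminus\Wi L^s$ are pairwise disjoint and that $\Wi W^s(\Lambda)$ and $\Wi W^u(\Lambda)$ are saturated by orbits of $(\Wi Y^t)$. I will carry out the argument for $\chi$; the cases of $\chi^s$ and $\chi^u$ are formally identical (for $\chi^u$ one should keep in mind that the relevant map on $f^{u,\infty}$ is the one induced by $\Wi\theta^{-1}$, so that it matches the left shift on the left-infinite words of $\Sigma^u$).

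First I would check that $\Wi\theta$ induces a bijection of $\Wi S^\infty$ onto itself. Since $\Wi L^s\subset\Wi W^s(\Lambda)$ and $\Wi L^u\subset\Wi W^u(\Lambda)$ (the $p=0$ case of Proposition~\ref{p.stable-set-Lambda} and its unstable analogue), both $\Wi\theta$ and $\Wi\theta^{-1}$ are defined on all of $\Wi S^\infty$. Moreover $\Wi W^s(\Lambda)$ and $\Wi W^u(\Lambda)$ are unions of orbits of $(\Wi Y^t)$, and $x$ and $\Wi\theta(x)$ lie on the same orbit; hence $x\in\Wi S^\infty$ if and only if $\Wi\theta(x)\in\Wi S^\infty$, which gives $\Wi\theta(\Wi S^\infty)=\Wi S^\infty$. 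The analogous statements for $f^{s,\infty}$ and $f^{u,\infty}$ follow from Proposition~\ref{p.Poincare-map} and Proposition~\ref{p.stable-set-Lambda} together with Remark~\ref{r.preserve-foliations}: $\Wi\theta$ sends each leaf in $f^{s,\infty}$ into a unique leaf of $\Wi F^s$, still in $f^{s,\infty}$, which defines the induced map there, and dually $\Wi\theta^{-1}$ does so for $f^{u,\infty}$.

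The conjugacy is then a one-line index shift. Fix $x\in\Wi S^\infty$ and write $\chi(x)=(D_p)_{p\in\ZZ}$, so that $\Wi\theta^p(x)\in D_p$ for every $p$. Put $y:=\Wi\theta(x)\in\Wi S^\infty$ and $\chi(y)=(E_p)_{p\in\ZZ}$, so that $\Wi\theta^p(y)\in E_p$. Since $\Wi\theta^p(y)=\Wi\theta^{p+1}(x)$, this point lies in both $E_p$ and $D_{p+1}$; as the connected components of $\Wi S\setminus\Wi L^s$ are pairwise disjoint and $\Wi\theta^{p+1}(x)\notin\Wi L^s$, we conclude $E_p=D_{p+1}$ for every $p$, that is $\chi(\Wi\theta(x))=\sigma(\chi(x))$ with $\sigma$ the left shift. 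Running the same computation with the defining relations $\Wi\theta^p(\ell^s)\subset D_p$ ($p\geq 0$) and $\Wi\theta^p(\ell^u)\subset D_p$ ($p<0$) yields the corresponding identities for $\chi^s$ and $\chi^u$.

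I do not anticipate a genuine difficulty in this proposition: the substantive work has already been done (notably in Proposition~\ref{p.bijective} and the structural statements preceding it), and what remains is a bookkeeping verification. The one place that calls for care is the choice of conventions in the one-sided case $\chi^u$: because $\Wi\theta$ maps pieces of unstable leaves to full unstable leaves (Remark~\ref{r.preserve-foliations}), the natural dynamics on $f^{u,\infty}$ is induced by $\Wi\theta^{-1}$ rather than by $\Wi\theta$, and one must orient the shift on $\Sigma^u$ accordingly so that $\chi^u$ is equivariant.
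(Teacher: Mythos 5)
Your proof is correct and follows exactly the route the paper intends: the paper offers no written proof, presenting the proposition as an immediate consequence of the definitions of the coding maps, and your writeup is a faithful elaboration of that (the index shift $\Wi\theta^{p}(\Wi\theta(x))=\Wi\theta^{p+1}(x)\in D_{p+1}$ plus disjointness of the connected components of $\Wi S\setminus\Wi L^s$). Your closing remark is a genuine subtlety that the paper glosses over: since $\Sigma^u$ consists of left-infinite words $(D_p)_{p<0}$, the ``left shift'' is not literally defined there, and the correct statement is that $\chi^u$ conjugates the map induced by $\Wi\theta^{-1}$ on $f^{u,\infty}$ to the index-decreasing shift $(D_p)_{p<0}\mapsto(D_{p-1})_{p<0}$.
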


Given an integer $n\geq 0$ and some connected components $D_0^0,\dots,D_n^0$ of $\Wi S-\Wi L^s$, we define the cylinder
$$[D_0^0\dots D_n^0]^s:=\{(D_p)_{p\geq 0}\in\Sigma^s\mbox{ such that } D_p=D_p^0\mbox{ for }0\leq p\leq n\}.$$
Similarly, given $n<0$ and some connected components  $D_n^0,\dots,D_{-1}^0$  of $\Wi S-\Wi L^s$, we define the cylinder
$$[D_n^0\dots D_{-1}^0]^u:=\{(D_p)_{p< 0}\in\Sigma^u\mbox{ such that } D_p=D_p^0\mbox{ for }n\leq p\leq -1\}.$$
The following proposition will be used in the next subsection.

\begin{proposition}\label{p.sub-strips}~
\begin{enumerate}
  \item for $n\geq 0$ and $D_0,\dots,D_n\in\cA$, the set $(\chi^s)^{-1}([D_0 D_1 \dots D_n]^s)= \bigcap_{0\leq p\leq n}\tilde{\theta}^{-p} (D_p)$ is either empty or a sub-strip of the trivially foliated proper stable strip $D_0$ bounded by two leaves of $\Wi \theta^{-n}(\Wi L^s)$;
  \item for $n< 0$ and $D_n,\dots,D_{-1}\in\cA$, the set $(\chi^u)^{-1}([D_{n} D_{n+1} \dots D_{-1}])= \bigcap_{-n\leq p\leq -1}\tilde{\theta}^{-p+1} (D_p)$ is a sub-strip of the trivially foliated proper unstable strip $\tilde\theta(D_{-1})$ bounded by two leaves of $\Wi \theta^{K-1}(\Wi{L^u})$.
 \end{enumerate}
\end{proposition}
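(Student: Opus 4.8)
The plan is to prove the first assertion by induction on $n$, with Proposition~\ref{p.substrip} as the inductive step — this is exactly the induction already carried out inside the proof of Lemma~\ref{l.inverse-code} — to read off the positions of the two boundary leaves from that same induction, and then to deduce the second assertion by the mirror argument. Concretely, for $n=0$ the set $\bigcap_{0\le p\le 0}\tilde\theta^{-p}(D_p)=D_0$ is a trivially bifoliated proper stable strip bounded by two leaves of $\Wi L^s=\tilde\theta^{0}(\Wi L^s)$, by Proposition~\ref{p.strip}. For the inductive step I would peel off the first factor,
$$\bigcap_{p=0}^{n}\tilde\theta^{-p}(D_p)\;=\;D_0\,\cap\,\tilde\theta^{-1}\!\Bigl(\,\bigcap_{q=0}^{n-1}\tilde\theta^{-q}(D_{q+1})\,\Bigr),$$
apply the induction hypothesis to the $n$-tuple $(D_1,\dots,D_n)$ to see that $D':=\bigcap_{q=0}^{n-1}\tilde\theta^{-q}(D_{q+1})$ is either empty (in which case the whole intersection is empty, and there is nothing to prove) or a trivially bifoliated proper stable sub-strip of $D_1$ bounded by two leaves of $\tilde\theta^{-(n-1)}(\Wi L^s)$, and then invoke Proposition~\ref{p.substrip} with $D=D_0$ and this $D'$ to conclude that $D_0\cap\tilde\theta^{-1}(D')$, if non-empty, is a trivially bifoliated proper stable sub-strip of $D_0$. (That this set coincides with $(\chi^s)^{-1}([D_0\cdots D_n]^s)$ is just the remark, already used in Lemma~\ref{l.inverse-code}, that $\chi^s(\ell^s)\in[D_0\cdots D_n]^s$ iff $\ell^s\subset\bigcap_{p=0}^{n}\tilde\theta^{-p}(D_p)$.)

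The part that genuinely requires care is locating the two boundary leaves, which I would extract from the same induction by following the proof of Proposition~\ref{p.substrip}. There, the two boundary leaves of $D_0\cap\tilde\theta^{-1}(D')$ are the $\tilde\theta^{-1}$-images of the two stable sides of the trivially bifoliated rectangle $\tilde\theta(D_0)\cap D'$; since $\tilde\theta(D_0)$ is an unstable strip spanned in the $\Wi F^s$-direction (Corollary~\ref{c.image-connected-component}), those stable sides are arcs of the two boundary leaves of $D'$, i.e. arcs of leaves of $\tilde\theta^{-(n-1)}(\Wi L^s)$ whose endpoints lie on $\Wi L^u$, and as such they are full leaves of $\Wi F^s_{|\Wi S-\Wi L^u}$. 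Now $\tilde\theta^{-1}$ carries leaves of $\Wi F^s_{|\Wi S-\Wi L^u}$ to \emph{full} leaves of $\Wi F^s$ (Proposition~\ref{p.Poincare-map} and Remark~\ref{r.preserve-foliations}) and carries a leaf of $\tilde\theta^{-(n-1)}(\Wi L^s)$ into $\tilde\theta^{-n}(\Wi L^s)$, which is a union of leaves of $\Wi F^s$ (Proposition~\ref{p.preimage-stable-lamination}); and a full $\Wi F^s$-leaf contained in $\tilde\theta^{-n}(\Wi L^s)$ is a leaf of $\tilde\theta^{-n}(\Wi L^s)$. Hence the two boundary leaves of $\bigcap_{p=0}^{n}\tilde\theta^{-p}(D_p)$ are two leaves of $\tilde\theta^{-n}(\Wi L^s)$, which closes the induction and gives assertion~1.

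For the second assertion I would run the exact mirror argument, exchanging the roles of $\Wi F^s$ and $\Wi F^u$, of $\Wi L^s$ and $\Wi L^u$, and of $\tilde\theta$ and $\tilde\theta^{-1}$: one uses that every connected component of $\Wi S-\Wi L^u$ is a trivially bifoliated proper unstable strip (the unstable analog of Proposition~\ref{p.strip}), that $\tilde\theta(D_{-1})$ is a trivially bifoliated proper unstable strip for $D_{-1}\in\cA$ (Corollary~\ref{c.image-connected-component}), and the unstable analog of Proposition~\ref{p.substrip} (proved the same way). Starting from $\tilde\theta(D_{-1})$ and intersecting successively with the factors $\tilde\theta^{-p}(D_p)$ for $p=-2,-3,\dots,n$, and using that $\tilde\theta$ sends a leaf of $\tilde\theta^{k}(\Wi L^u)$ to a leaf of $\tilde\theta^{k+1}(\Wi L^u)$, one obtains that $\bigcap_{p=n}^{-1}\tilde\theta^{-p}(D_p)$ is a trivially bifoliated proper unstable sub-strip of $\tilde\theta(D_{-1})$, bounded by two leaves of $\tilde\theta^{|n|-1}(\Wi L^u)$, which is the content of the statement. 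The recurring obstacle in both assertions — and the reason Proposition~\ref{p.substrip} alone does not suffice — is the bookkeeping of the boundary leaves: one must check at each stage that they are honest \emph{full} leaves of the correct iterated lamination, and not merely sub-arcs (nor leaves of a lower iterate), which is precisely what Remark~\ref{r.preserve-foliations} (full $\Wi F^s$-leaves are restored by $\tilde\theta^{-1}$) and Proposition~\ref{p.preimage-stable-lamination} (the iterated preimages of $\Wi L^s$ form laminations) guarantee.
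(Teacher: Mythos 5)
Your proof is correct and follows the same route the paper intends: the paper's proof of Proposition~\ref{p.sub-strips} is simply a reference to ``the arguments of the proof of Lemma~\ref{l.inverse-code},'' which is precisely the induction via Proposition~\ref{p.substrip} that you carry out. You go further than the paper by explicitly tracking the boundary leaves through the induction (using Remark~\ref{r.preserve-foliations} and Proposition~\ref{p.preimage-stable-lamination}), which the paper leaves implicit, and you also tacitly correct two typos in the statement of item~2 (the index range ``$-n\leq p\leq -1$'' should read ``$n\leq p\leq -1$'' and ``$K-1$'' should read ``$|n|-1$'').
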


\begin{proof}
This follows from the arguments of the proof of Lemma~\ref{l.inverse-code}.
\end{proof}

\subsection{Partial orders on the leaf spaces and the symbolic spaces}
\label{ss.partial-orders}

We will now describe a partial pre-order on the leaf space $f^s$. The preservation of this partial pre-order will be a fundamental ingredient of our proof of Theorem~\ref{t.main} in Section~\ref{s.top-equiv}.

Let us start by choosing some orientations. First of all, we choose an orientation of the hyperbolic plug $U$. The orientation of $U$, together with the vector field $X$, provide an orientation of $\partial U$: if $\omega$ is a $3$-form defining the orientation on $U$, then the $2$-form $i_X U$ defines the orientation on $\partial U$. The orientation of $U$ induces an orientation of the manifold $M=U/\psi$ (we have assumed that the manifold $M$ is orientable, which is equivalent to assuming that the gluing map $\psi$ preserves the orientation of $\partial U$), and the orientation of $\partial U$ induces an orientation of the surface $S=\pi(\partial^{in} U)=\pi(\partial^{out} U)$. The orientation of $M$ and $S$ induce some orientations on $\widetilde M$ and $\widetilde S$. Now, since every connected component of $\Wi S$ is a topological plane, the foliation $\Wi F^s$ is orientable. We fix an orientation of $\Wi F^s$. This automatically induces an orientation of the foliation $\Wi F^u$ as follows: the orientation of $\Wi F^u$ is chosen so that, if $Z^s$ and $Z^u$ are vector fields tangent to $\Wi F^s$ and  $\Wi F^u$ respectively, and pointing in the direction of the orientation of the leaves, then the frame field $(Z^s,Z^u)$ is positively oriented with respect to the orientation of $\Wi S$.

\begin{remarks}
\label{r.orientations}
\begin{enumerate}
\item By construction, the orientations of the manifold $\Wi M$ and the surface $\Wi S$ are related as follows: if $\omega$ is a $3$-form defining the orientation on $\Wi M$, then the $2$-form $i_{\Wi Y} \Wi M$ defines the orientation on $\Wi S$. As a consequence, the Poincar\'e return map $\tilde\theta$ of the orbits of $\Wi Y$ on $\Wi S$ preserves the orientation of $\Wi S$.
\item Consequently, for any connected component $D$ of $\Wi S-\Wi L^s$, if the Poincar\'e map $\tilde\theta_{|D}$ preserves (resp. reverses) the orientation of the foliation $\Wi F^s$, then it also preserves (resp. reverses) the orientation of the foliation $\Wi F^u$.
\end{enumerate}
\end{remarks}

Let $\ell$ be a leaf of the foliation $\Wi F^s$, contained in a  connected component $\Wi S_\ell$  of $\Wi S$. Recall that $\Wi S_\ell$ is a topological plane, and $\ell$ is a properly embedded line in $\Wi S_\ell$. As a consequence, $\Wi S_\ell\setminus\ell$ has two connected components.

\begin{definition}
We denote by $L(\ell)$ and $R(\ell)$ the two connected component of $\Wi S\setminus\ell$ so that the oriented leaves of $\Wi F^u$ crossing $\ell$ go from $L(\ell)$ towards $R(\ell)$. The points of $L(\ell)$ are said to be \emph{on the left} of $\ell$; the points of $R(\ell)$ are said to be \emph{on the right} of $\ell$
\end{definition}

 Now we can define a pre-order on the leaf space $f^s$.

 \begin{definition}[Pre-order on $f^s$]
 \label{d.order}
 Given two leaves $\ell\neq\ell'$ of the foliation $\Wi F^s$, we write $\ell \prec \ell'$ if there exists an arc of a leaf of $\Wi F^u$ with endpoints $a\in\ell$ and $a'\in\ell'$, so that the orientation of $\Wi F^u$ goes from $a$ towards $a'$.
 \end{definition}

\begin{proposition}
\label{p.pre-order}
$\prec$ is a pre-order on $f^s$: the relations $\ell \prec\ell'$ and $\ell' \prec \ell$ are incompatible
\end{proposition}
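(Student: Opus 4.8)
\textbf{Proof plan for Proposition~\ref{p.pre-order}.} The plan is to show that $\ell\prec\ell'$ and $\ell'\prec\ell$ cannot hold simultaneously by deriving a topological contradiction (essentially a violation of the fact that the leaves of $\Wi F^u$ are properly embedded lines meeting each leaf of $\Wi F^s$ in at most one point, Proposition~\ref{p.intersection-two-leaves}). First I would observe that both relations force $\ell$ and $\ell'$ to lie in the same connected component $\Wi S_\ell$ of $\Wi S$, since an arc of a leaf of $\Wi F^u$ joining a point of $\ell$ to a point of $\ell'$ stays in one component. So we work inside a single topological plane $\Wi S_\ell$, with $\ell,\ell'$ two disjoint properly embedded lines, each separating the plane into two half-planes.

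Next I would unwind the definitions in terms of the ``left/right'' sides. Suppose $\ell\prec\ell'$ via an arc $\beta$ of a leaf of $\Wi F^u$ from $a\in\ell$ to $a'\in\ell'$, oriented (by $\Wi F^u$) from $a$ to $a'$. The key point is that $\beta$ crosses $\ell$ transversally at $a$ in the positive $\Wi F^u$-direction, hence just past $a$ the arc $\beta$ enters $R(\ell)$ (the right side of $\ell$), and it never returns to $\ell$ because a leaf of $\Wi F^u$ meets $\ell$ at most once (Proposition~\ref{p.intersection-two-leaves}); therefore $a'\in R(\ell)$, so $\ell'\setminus\{a'\}$ — being connected and disjoint from $\ell$ — lies entirely in $R(\ell)$, i.e. $\ell'\subset R(\ell)$. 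Running the symmetric argument with the orientation reversed: near $a'$ the arc $\beta$ arrives \emph{from} $L(\ell')$ (the left side of $\ell'$, since the positively oriented leaf of $\Wi F^u$ through $a'$ goes from $L(\ell')$ to $R(\ell')$), and again since $\beta$ meets $\ell'$ only at $a'$ the whole sub-arc of $\beta$ before $a'$ lies in $L(\ell')$, so $a\in L(\ell')$ and hence $\ell\subset L(\ell')$. Thus $\ell\prec\ell'$ implies both $\ell'\subset R(\ell)$ and $\ell\subset L(\ell')$.

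Now if also $\ell'\prec\ell$ held, the same reasoning applied to the witnessing arc would give $\ell\subset R(\ell')$ and $\ell'\subset L(\ell)$. Combining, $\ell'\subset R(\ell)\cap L(\ell)=\emptyset$, a contradiction (and symmetrically $\ell\subset L(\ell')\cap R(\ell')=\emptyset$). Hence $\ell\prec\ell'$ and $\ell'\prec\ell$ are incompatible, which is exactly the assertion. One subtlety to nail down carefully — this is the step I expect to require the most care — is the claim that the side of $\ell$ that $\beta$ enters at $a$ is determined purely by the $\Wi F^u$-orientation and agrees with the definition of $R(\ell)$; this is where Remark on orientations and the compatibility between the orientation of $\Wi F^u$, the frame $(Z^s,Z^u)$, and the definition of $L(\ell),R(\ell)$ gets used, together with the fact from Proposition~\ref{p.intersection-two-leaves} that $\beta$ cannot re-cross $\ell$ or $\ell'$. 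Everything else is elementary planar separation.
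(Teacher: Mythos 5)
Your proof is correct and follows essentially the same route as the paper: the paper simply asserts that $\ell\prec\ell'$ forces $\ell'\subset R(\ell)$ and $\ell'\prec\ell$ forces $\ell'\subset L(\ell)$, and concludes from $L(\ell)\cap R(\ell)=\emptyset$; you have just filled in the justification of that implication using Proposition~\ref{p.intersection-two-leaves}. (One tiny cosmetic remark: you needn't pass to $\ell'\setminus\{a'\}$ — since $\ell$ and $\ell'$ are distinct leaves of the same foliation they are disjoint, so $\ell'$ itself is a connected set disjoint from $\ell$ and already lies in a single side.)
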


\begin{proof}
The relation $\ell\prec\ell'$ implies that the leaf $\ell'$ is on the right of $\ell$, that is $\ell'\subset R(\ell)$. Similarly,  the relation $\ell\prec\ell'$ implies $\ell'\subset L(\ell)$. The proposition follows since $L(\ell)\cap R(\ell)=\emptyset$.
\end{proof}

The proposition below is very easy to prove, but fundamental:

\begin{proposition}
\label{p.local-total-order}
$\prec$ is a local total order on $f^s$: for every leaf $\ell_0$ of $\Wi F^s$, there exists a neighbourhood $\mathcal{V}_0$ of $\ell_0$ in $f^s$ so that any two different leaves $\ell,\ell'\in\mathcal{V}_0$ are comparable (\emph{i.e.} satisfy either $\ell\prec\ell'$ or $\ell'\prec\ell$).
\end{proposition}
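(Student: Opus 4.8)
The plan is to exploit the local product structure of the bifoliation $(\Wi F^s,\Wi F^u)$ on the topological plane $\Wi S_{\ell_0}$ near the leaf $\ell_0$. Concretely, pick a point $x_0\in\ell_0$ and a small foliated chart $\phi:B\to(-1,1)^2$ around $x_0$, where $B$ is an open neighbourhood of $x_0$ in $\Wi S_{\ell_0}$, $\phi$ sends $\Wi F^s$ to the horizontal foliation and $\Wi F^u$ to the vertical foliation, $\phi(x_0)=(0,0)$, and (after possibly composing with a reflection) $\phi$ respects the chosen orientations of both foliations, so that the oriented unstable leaves run in the direction of increasing second coordinate. Let $\tau$ be the vertical segment $\phi^{-1}(\{0\}\times(-1,1))$, an arc of the unstable leaf $\ell^u(x_0)$ through $x_0$, transverse to $\Wi F^s$. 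I would then define $\mathcal V_0$ to be the set of leaves of $\Wi F^s$ that meet $\tau$; since $\ell_0$ meets $\tau$ (at $x_0$) and the quotient map $\Wi S\to f^s$ is open, $\mathcal V_0$ is an open neighbourhood of $\ell_0$ in $f^s$.

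The main verification is that any two distinct leaves $\ell,\ell'\in\mathcal V_0$ are $\prec$-comparable. By construction there are points $a=\ell\cap\tau$ and $a'=\ell'\cap\tau$, both lying on the single unstable arc $\tau$; moreover $a\neq a'$, because if they coincided then $\ell$ and $\ell'$ would be the leaf of $\Wi F^s$ through that common point, forcing $\ell=\ell'$. So $a$ and $a'$ are two distinct points of the arc $\tau$, hence the sub-arc $\beta\subset\tau$ joining them is an arc of a leaf of $\Wi F^u$ with endpoints $a\in\ell$ and $a'\in\ell'$. One of the two orderings of $\beta$ agrees with the orientation of $\Wi F^u$: if the orientation goes from $a$ to $a'$ then $\ell\prec\ell'$ by Definition~\ref{d.order}, and if it goes from $a'$ to $a$ then $\ell'\prec\ell$. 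In either case $\ell$ and $\ell'$ are comparable, which is exactly what is required. Combined with Proposition~\ref{p.pre-order}, which guarantees that at most one of the two relations holds, this shows $\prec$ restricts to a total order on $\mathcal V_0$.

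I do not expect any serious obstacle here; the only point needing a little care is the claim that distinct leaves of $\Wi F^s$ cannot meet $\tau$ at the same point, but this is immediate from the fact that $\Wi F^s$ is a genuine foliation (exactly one leaf through each point) together with Proposition~\ref{p.intersection-two-leaves}, which tells us a stable leaf and an unstable leaf meet in at most one point, so each $\ell\in\mathcal V_0$ determines its intersection point with $\tau$ uniquely. The openness of $\mathcal V_0$ relies only on the openness of the leaf-space quotient map, which is part of the definition of the quotient topology on $f^s$. The existence of the orientation-compatible foliated chart $\phi$ follows from the simultaneous local triviality of the two transverse foliations $\Wi F^s,\Wi F^u$ together with the fact that both foliations have been given global orientations on the component $\Wi S_{\ell_0}$.
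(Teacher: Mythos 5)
Your proposal is correct and follows essentially the same approach as the paper: both take an unstable leaf (or, in your case, a small arc of one) through a point of $\ell_0$ and use it as a local transversal to $\Wi F^s$, so that nearby stable leaves all cross this transversal and are therefore pairwise comparable via the sub-arc joining their intersection points. You spell out the foliated chart and the openness of the leaf-space projection in more detail than the paper (which simply invokes transversality), but the key idea and the structure of the argument are the same.
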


\begin{proof}
Consider a leaf $\ell_0$ of $\Wi F^s$ and a leaf $\ell^u$ of $\Wi F^u$ so that $\ell^u\cap\ell_0\neq\emptyset$. By transversality of the foliations $\Wi F^s$ and $\Wi F^u$, there exists a neighbourhood $\mathcal{V}_0$ of $\ell_0$ in $f^s$ so that $\ell^u$ crosses every leaf in $\mathcal{V}_0$. As a consequence, any two different leaves $\ell,\ell'\in\mathcal{V}_0$ are comparable for the pre-order $\prec$.
\end{proof}

The proposition below shows that the pre-order $\prec$ is  ``compatible" with the connected components decomposition of $\Wi S-\Wi L^s$.

\begin{proposition}
\label{p.order-compatible-cc}
Given two different elements $D,D'$ of $\cA$, the following are equivalent:
\begin{enumerate}
\item there exists some leaves $\ell_0,\ell_0'\in f^s$ so that $\ell_0\subset D$, $\ell_0' \subset D'$ and $\ell_0\prec\ell_0'$~;
\item every leaves $\ell,\ell'\in f^s$ so that $\ell\subset D$ and $\ell' \subset D'$ satisfy $\ell \prec \ell'$.
\end{enumerate}
\end{proposition}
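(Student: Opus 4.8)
The statement is an ``all or nothing'' principle: for two distinct strips $D,D'$, either every stable leaf in $D$ is $\prec$ every stable leaf in $D'$, or no such comparability holds (in the direction $D\to D'$). My plan is to show that the set of pairs $(\ell,\ell')$ with $\ell\subset D$, $\ell'\subset D'$ and $\ell\prec\ell'$ is both open and closed in the product of leaf spaces $f^s(D)\times f^s(D')$ (where $f^s(D)$ denotes the leaves of $\Wi F^s$ contained in $D$), and then invoke connectedness. Since $D$ and $D'$ are each trivially bifoliated proper stable strips (Proposition~\ref{p.strip}), their stable leaf spaces $f^s(D)$ and $f^s(D')$ are each homeomorphic to an open interval, hence connected; so is the product.

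\textbf{Openness.} Suppose $\ell_0\subset D$, $\ell_0'\subset D'$ and $\ell_0\prec\ell_0'$, witnessed by an arc $\beta$ of a leaf of $\Wi F^u$ from $a\in\ell_0$ to $a'\in\ell_0'$, oriented correctly. The key point is that $\beta$ can be slightly extended past its endpoints inside the leaf of $\Wi F^u$, and by transversality of $\Wi F^s$ and $\Wi F^u$ this (slightly longer) unstable arc still crosses every $\ell$ in a small neighbourhood $\mathcal V_0$ of $\ell_0$ in $f^s$ and every $\ell'$ in a small neighbourhood $\mathcal V_0'$ of $\ell_0'$, with the crossing points depending continuously on the leaf; moreover for $\ell\in\mathcal V_0$ and $\ell'\in\mathcal V_0'$ the induced sub-arc of this unstable leaf from $\ell$ to $\ell'$ is still coherently oriented (here one uses that a single leaf of $\Wi F^u$ meets a given leaf of $\Wi F^s$ in at most one point, Proposition~\ref{p.intersection-two-leaves}, so the cyclic/linear order of intersection points along the unstable leaf is unambiguous). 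Hence $\ell\prec\ell'$ for all $(\ell,\ell')\in\mathcal V_0\times\mathcal V_0'$.

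\textbf{Closedness.} Suppose $(\ell_n,\ell_n')\to(\ell_\infty,\ell_\infty')$ with $\ell_n\subset D$, $\ell_n'\subset D'$, $\ell_n\prec\ell_n'$ for all $n$, and $\ell_\infty\subset D$, $\ell_\infty'\subset D'$ (so in particular $\ell_\infty\ne\ell_\infty'$, since $D\ne D'$ are disjoint). I want $\ell_\infty\prec\ell_\infty'$. Pick an unstable leaf $\ell^u$ through a point of $\ell_\infty$; by local transversality it crosses $\ell_n$ for $n$ large, at points $a_n\to a_\infty\in\ell_\infty$. Similarly, after possibly passing to the neighbouring unstable leaves, one produces unstable arcs realizing $\ell_n\prec\ell_n'$ whose initial points converge; the subtlety is to control the terminal points near $\ell_\infty'$. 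Here I would use that $\prec$ restricted to a small neighbourhood is a genuine total order (Proposition~\ref{p.local-total-order}) together with the transitivity-like behaviour of unstable arcs, to ``transport'' the comparability to the limit. Concretely: by openness applied at $\ell_\infty$ there is a neighbourhood on which comparability with $\ell_\infty'$ is decided one way; since infinitely many $\ell_n$ lie in that neighbourhood and satisfy $\ell_n\prec\ell_n'$ with $\ell_n'\to\ell_\infty'$, the only consistent possibility (again by the openness statement, now applied in the $D'$ variable) is $\ell_\infty\prec\ell_\infty'$; the alternative $\ell_\infty'\prec\ell_\infty$ would, by openness, force $\ell_n'\prec\ell_n$ for large $n$, contradicting Proposition~\ref{p.pre-order}.

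\textbf{Conclusion and main obstacle.} Since $\{(\ell,\ell'): \ell\subset D,\ \ell'\subset D',\ \ell\prec\ell'\}$ is open and closed in the connected space $f^s(D)\times f^s(D')$, it is either empty or everything, which is exactly the dichotomy ``$1\Leftrightarrow 2$''. I expect the main difficulty to be the closedness step, specifically making rigorous the claim that comparability passes to the limit: one must rule out that a sequence of coherently oriented unstable arcs from $\ell_n$ to $\ell_n'$ degenerates (escapes to infinity along the strips, or flips orientation) as $n\to\infty$. The cleanest way around this is precisely the reduction performed above — avoid arguing directly with the limiting arc, and instead combine the openness statement (which is robust) with the antisymmetry of $\prec$ (Proposition~\ref{p.pre-order}) to pin down which of the two mutually exclusive alternatives must hold in the limit. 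The trivially bifoliated product structure of $D$ and $D'$ is what guarantees their stable leaf spaces are intervals, and hence that the open-closed argument actually yields the global statement.
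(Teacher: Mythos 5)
Your open--closed strategy on $f^s(D)\times f^s(D')$ is a genuinely different route from the paper's, and the gap you correctly anticipate in the closedness step is a real gap that your workaround does not close. The dichotomy you invoke — ``either $\ell_\infty\prec\ell_\infty'$ or $\ell_\infty'\prec\ell_\infty$, and the latter contradicts antisymmetry via openness'' — is incomplete because $\prec$ is only a \emph{partial} pre-order on $f^s$: the third possibility is that $\ell_\infty$ and $\ell_\infty'$ are simply incomparable, i.e.\ no arc of a leaf of $\Wi F^u$ joins them. Nothing in your argument rules this out; the witnessing arcs $\beta_n$ for $\ell_n\prec\ell_n'$ could a priori grow without bound and fail to converge, and neither Proposition~\ref{p.local-total-order} (which only orders leaves near a single $\ell_0$, not across the two strips) nor Proposition~\ref{p.pre-order} (antisymmetry) addresses incomparability. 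So the set $\{(\ell,\ell'):\ell\prec\ell'\}$ is not shown to be closed, and the connectedness argument does not go through as written.

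The ingredient you are missing is the \emph{global} structure provided by Proposition~\ref{p.strip}: because $D$ and $D'$ are \emph{trivially bifoliated} proper stable strips, any leaf $\ell^u$ of $\Wi F^u$ that meets $D$ crosses it from boundary to boundary, and therefore $\ell^u\cap D$ meets \emph{every} stable leaf in $D$ (in exactly one point, by Proposition~\ref{p.intersection-two-leaves}), and likewise in $D'$. Consequently the single unstable leaf $\ell^u$ that witnesses $\ell_0\prec\ell_0'$ already witnesses $\ell\prec\ell'$ for \emph{every} pair $\ell\subset D$, $\ell'\subset D'$: the arcs $\alpha:=\ell^u\cap D$ and $\alpha':=\ell^u\cap D'$ are disjoint, the orientation of $\ell^u$ goes from $\alpha$ to $\alpha'$, and each $\ell$ (resp.\ $\ell'$) meets $\ell^u$ inside $\alpha$ (resp.\ $\alpha'$). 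This is the paper's one-paragraph proof, and it bypasses the open--closed argument entirely; your openness step is morally a weak local version of this, but without upgrading to the full crossing property you cannot make the closedness step rigorous.
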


\begin{proof}
Assume that 1 is satisfied. Since $\ell_0\prec\ell_0'$, there must be a leaf $\ell^u$ of the foliation $\Wi F^u$ intersecting both $\ell_0$ and $\ell_0'$. Proposition~\ref{p.strip} implies that $\alpha:=\ell^u\cap D$ and $\alpha':=\ell^u\cap D'$ are two disjoint arcs in the leaf $\ell^u$. Consider some leaves $\ell,\ell'$ of $\Wi F^s$ contained in $D$ and $D'$ respectively. Again Proposition~\ref{p.strip} implies that $\ell$ intersects $\ell^u$ at some point $a_\ell\in\alpha$ and $\ell'$ intersects $\ell^u$ at some point $a_{\ell'}\in\alpha'$. Since $\ell_0\preceq\ell_0'$  the orientation of $\ell^u$ goes from $\alpha$ towards $\alpha'$, hence from $a_\ell$ towards $a_{\ell'}$. This shows that $\ell\prec\ell'$.
\end{proof}

\begin{definition}[Pre-order on $\mathcal{A}$]
Given two different elements $D,D'$ of $\cA$, we write $D\prec D'$ if there exists some leaves $\ell_0,\ell_0'\in f^s$ so that $\ell_0\subset D$, $\ell_0' \subset D'$ and $\ell_0\prec\ell_0'$.
\end{definition}

\begin{definition}[Pre-order on $\Sigma^s$]
\label{d.order-symbolic}
The partial pre-order $\prec$ on $\mathcal{A}$ induces a lexicographic partial pre-order on $\Sigma^s\subset \mathcal{A}^{\NN}$ that will also be denoted by $\prec$~: for $\overline{D}=(D_p)_{p\geq 0}$ and $\overline{D'}=(D_p')_{p\geq 0}$ in $\Sigma^s$, we write $\overline{D} \prec \overline{D'}$ if and only if there exists $p_0\geq 0$ such that $D_p = D_p'$ for $p\in\{0,\dots,p_0-1\}$, $D_{p_0} \prec D_{p_0}'$.
\end{definition}

We have defined a pre-order on the leaf space $f^s$ (Definition~\ref{d.order}) and a pre-order on the symbolic space $\Sigma^s $ (Definition~\ref{d.order-symbolic}). It is natural to wonder whether the coding map $\chi^s:f^{s,\infty}\to \Sigma^s$ is compatible with these pre-orders or not. For pedagogical reason, we first consider the simple situation where the two-dimensional foliation $\cF^u$ is orientable:

\begin{proposition}\label{p.orioricoh}
Assume that the unstable foliation $\cF^u$ is orientable. Then the coding map $\chi^s: f^{s,\infty} \to \Sigma^s$ preserves the pre-orders, \emph{i.e.} for $\ell,\ell'\in f^{s,\infty}$, $\ell \prec\ell'$ if and only if $\chi^s(\ell) \prec \chi^s(\ell')$.
\end{proposition}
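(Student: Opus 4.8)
The plan is to reduce the statement to a purely local comparison, exploiting the fact that an orientation of $\cF^u$ propagates coherently through the plug. First I would explain why orientability of $\cF^u$ helps: since $\cF^u$ is orientable, the restriction $\Wi\cF^u$ carries a $\pi_1(M)$-invariant orientation coming from $M$, and hence the one-dimensional foliation $\Wi F^u=\Wi\cF^u\cap\Wi S$ inherits a global orientation which is \emph{preserved} by the Poincar\'e map $\tilde\theta$ (because $\tilde\theta$ is realized by flowing along $\Wi Y$ inside $\Wi\cF^u$-leaves). By Remarks~\ref{r.orientations}, this forces $\tilde\theta$ to preserve the orientation of $\Wi F^s$ as well on every connected component $D$ of $\Wi S-\Wi L^s$ — there is no orientation-reversal to worry about. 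Consequently, for each letter $D\in\cA$, the homeomorphism $\tilde\theta_{|D}$ respects the left/right sides of stable leaves, i.e. it preserves the pre-order $\prec$ when restricted to leaves inside a single $D$.

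Next I would set up the forward direction. Let $\ell,\ell'\in f^{s,\infty}$ with $\ell\prec\ell'$, and write $\chi^s(\ell)=(D_p)_{p\ge 0}$, $\chi^s(\ell')=(D_p')_{p\ge 0}$. If $D_0\ne D_0'$, then by definition of the pre-order on $\cA$ (and Proposition~\ref{p.order-compatible-cc}) the relation $\ell\prec\ell'$ with $\ell\subset D_0$, $\ell'\subset D_0'$ gives exactly $D_0\prec D_0'$, hence $\chi^s(\ell)\prec\chi^s(\ell')$ and we are done. If $D_0=D_0'$, then both leaves live in the same strip $D_0$; I apply $\tilde\theta$, which preserves $\prec$ on leaves of $D_0$ by the previous paragraph, obtaining $\tilde\theta(\ell)\prec\tilde\theta(\ell')$ with $\tilde\theta(\ell)\subset D_1$ and $\tilde\theta(\ell')\subset D_1'$. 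Iterating, either at some finite stage $p_0$ we reach $D_{p_0}\ne D_{p_0}'$ — in which case the comparability $\tilde\theta^{p_0}(\ell)\prec\tilde\theta^{p_0}(\ell')$ yields $D_{p_0}\prec D_{p_0}'$ and hence the lexicographic relation $\chi^s(\ell)\prec\chi^s(\ell')$ — or $D_p=D_p'$ for all $p$, which by injectivity of $\chi^s$ (Proposition~\ref{p.bijective}) would force $\ell=\ell'$, contradicting $\ell\prec\ell'$. The key point making the iteration legitimate is that $\ell\prec\ell'$ with both leaves in the same trivially bifoliated strip $D_0$ implies: the arc of $\Wi F^u$ realizing the relation stays in $D_0$ (by Proposition~\ref{p.strip}, every $\Wi F^u$-leaf meets $D_0$ in a single arc crossing from one boundary stable leaf to the other), so the comparison is genuinely ``internal'' to $D_0$ and is transported by $\tilde\theta_{|D_0}$.

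For the converse, I would argue contrapositively using Proposition~\ref{p.pre-order}: if $\chi^s(\ell)\prec\chi^s(\ell')$ but not $\ell\prec\ell'$, then since $\ell\ne\ell'$ (the codes differ) and $\prec$ is antisymmetric, we would have $\ell'\prec\ell$; but then the forward direction applied to $\ell'\prec\ell$ gives $\chi^s(\ell')\prec\chi^s(\ell)$, contradicting antisymmetry of $\prec$ on $\Sigma^s$. (One must check $\prec$ is antisymmetric on $\Sigma^s$; this is immediate from the lexicographic definition together with antisymmetry on $\cA$, which in turn follows from Proposition~\ref{p.pre-order} applied inside a common component.) So the two conditions are equivalent.

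The main obstacle is the case $D_0=D_0'$, i.e. verifying that the pre-order comparison really is ``carried through'' the strip by $\tilde\theta$ without sign ambiguity; this is precisely where orientability of $\cF^u$ is used, and it is the step that will fail (or rather, require bookkeeping of orientation-reversals) in the general, non-orientable case treated afterwards. Everything else is formal manipulation of the lexicographic order and the injectivity already established in Proposition~\ref{p.bijective}.
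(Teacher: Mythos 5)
Your forward direction is essentially the paper's argument, presented iteratively rather than by a single jump to position $p_0$. The paper sets $\widehat D:=\bigcap_{p=0}^{p_0-1}\Wi\theta^{-p}(D_p)$ (a trivially bifoliated proper stable strip by Proposition~\ref{p.sub-strips}), observes that $\ell,\ell'\subset\widehat D$, and applies $\Wi\theta^{p_0}$ directly to the witnessing unstable arc; you instead peel off one application of $\tilde\theta$ at a time. Both work, and your key observation — that an unstable arc joining two stable leaves that both lie in the same strip $D_0$ is itself contained in $D_0$, so the comparison is genuinely transported by $\tilde\theta_{|D_0}$ — is exactly what makes the one-step version legitimate. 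The aside about $\tilde\theta$ also preserving the $\Wi F^s$-orientation is correct but not used: only the $\Wi F^u$-orientation matters, since $\prec$ is defined via $\Wi F^u$-arcs.

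There is, however, a genuine (small) gap in your converse. You write: ``since $\ell\ne\ell'$ and $\prec$ is antisymmetric, we would have $\ell'\prec\ell$.'' That step conflates antisymmetry with totality. Proposition~\ref{p.pre-order} only says $\ell\prec\ell'$ and $\ell'\prec\ell$ cannot both hold; it does not say that two distinct leaves are always comparable (indeed, two stable leaves in different components of $\Wi S$ are never comparable). The conclusion you want is still true in this situation, but for a different reason: $\chi^s(\ell)\prec\chi^s(\ell')$ forces the codes to agree up to some $p_0-1$, so both $\ell$ and $\ell'$ lie in the trivially bifoliated strip $\widehat D=\bigcap_{p=0}^{p_0-1}\Wi\theta^{-p}(D_p)$, and inside such a strip any two stable leaves \emph{are} comparable (there is an unstable arc through both). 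With that replacement the contrapositive goes through. The paper sidesteps the issue altogether by running the forward construction in reverse — starting from $D_{p_0}\prec D_{p_0}'$, using Proposition~\ref{p.order-compatible-cc} to get an oriented unstable arc from $\Wi\theta^{p_0}(\ell)$ to $\Wi\theta^{p_0}(\ell')$, and pulling it back by the orientation-preserving map $(\Wi\theta^{p_0})^{-1}$ — which is a cleaner route and worth adopting.
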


\begin{proof}
Since the two-dimension foliation $\cF^u$ is orientable, its lift $\Wi\cF^u$ is also orientable. Recall that the vector field $\Wi Y$ is tangent to the leaves of the foliation $\Wi \cF^u$. So, the orientatibility of the two-dimensional foliation $\Wi \cF^u$ implies that the return map $\tilde\theta$ of the orbits of the vector field $\Wi Y$ on the surface $\Wi S$ preserves the orientation of the one-dimensional foliation $\Wi F^u=\Wi\cF^u\cap\Wi S$.

Consider two leaves $\ell,\ell'\in f^{s,\infty}$ so that $\ell \prec\ell'$. Let $\chi^s (\ell)=(D_p)_{p\geq 0}$ and  $\chi^s (\ell)=(D_p')_{p\geq 0}$. Recall that this means that 
$$\ell=\bigcap_{p\geq 0}\Wi\theta^{-p}(D_p)\quad\mbox{ and }\quad\ell'=\bigcap_{p\geq 0}\Wi\theta^{-p}(D_p').$$ Consider the integer $p_0=\min\{p\geq 0\,,\, D_p\neq D_p'\}$ and the set 
$$\widehat D:=\bigcap_{p=0}^{p_0-1} \Wi\theta^{-p}(D_p).$$ 
Both the leaves $\ell$ and $\ell'$ are included in $\widehat D$, and, according to Proposition~\ref{p.sub-strips}, $\widehat D$ is a trivially bifoliated proper stable strip. So we can consider an arc $\alpha^u$ of a leaf $\ell^u$ of the foliation $\Wi F^u$, so that $\alpha^u$ is included in the trivially bifoliated proper stable strip $\widehat D$, so that the ends $a,a'$ of $\alpha^u$ are on $\ell$ and $\ell'$ respectively. Since $\ell\prec \ell'$, the orientation of $\Wi F^u$ goes from $a$ towards $a'$. Now observe that $\widehat D$ is a connected component of $\Wi S-\bigcup_{p=0}^{p_0-1} \Wi\theta^p(\Wi L^s)$. As a consequence, the map $\Wi\theta^{p_0}$ is well-defined on $\Wi D$. In particular, we can consider $\beta^u:=\Wi\theta^{p_0}(\alpha^u)$. Observe that $\beta^u$ is an arc of a leaf of the foliation $\Wi F^u$. Its ends $b:=\Wi\theta^{p_0}(a)$ and $b':=\Wi\theta^{p_0}(a')$ are respectively in $\Wi\theta^{p_0}(\ell)\subset D_{p_0}$ and $\Wi\theta^{p_0}(\ell')\subset D_{p_0}'$. Since the return map $\Wi\theta^{p_0}$ preserves the orientation of the foliation $\Wi F^u$, the orientation of $\Wi F^u$ goes from $b$ towards~$b'$. It follows that $\Wi\theta^{p_0}(\ell)\prec \Wi\theta^{p_0}(\ell')$ and therefore $D_{p_0}\prec D_{p_0}'$. As a further consequence,
$$\chi^s(\ell)=(D_0,D_1,\dots,D_{p_0-1},D_{p_0},\dots)\prec (D_0,D_1,\dots,D_{p_0-1},D_{p_0}',\dots)=\chi^s(\ell').$$
This completes the proof of the implication $\ell\prec\ell' \Rightarrow \chi^s(\ell)\prec \chi^s(\ell')$. The converse implication follows from the very same arguments in reversed order.
\end{proof}

In general, the relationships between the order on the leaf space $f^s$ and the symbolic space $\Sigma^s$ is more complicated:

\begin{proposition}\label{p.oricoh}
Let $\ell,\ell'$ be two different elements of $f^{s,\infty}$. Let $(D_p)_{p\geq 0}:=\chi^s (\ell)$ and $(D_p')_{p\geq 0}:=\chi^s (\ell')$. Let $p_0$ be the smallest interger $p$ so that $D_p\neq D_p'$.
\begin{enumerate}
        \item If the map $\Wi{\theta}^{p_0}_{|\bigcap_{p=0}^{p_0-1} \Wi\theta^{-p}(D_p)}$ preserves the orientation of the foliation $\Wi{F}^u$, then $$(\ell\prec\ell')\Longleftrightarrow (D_{p_0}\prec D_{p_0}')\Longleftrightarrow (\chi^s (\ell)\prec\chi^s (\ell')).$$
        \item If the map $\Wi{\theta}^{p_0}_{|\bigcap_{p=0}^{p_0-1} \Wi\theta^{-p}(D_p)}$ reverses the orientation of the foliation $\Wi{F}^u$, then $$(\ell\prec\ell')\Longleftrightarrow  (D_{p_0}'\prec D_{p_0})\Longleftrightarrow(\chi^s (\ell')\prec\chi^s (\ell)).$$
 \end{enumerate}
\end{proposition}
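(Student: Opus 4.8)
The plan is to reduce everything to the orientation-preserving case already treated in Proposition~\ref{p.orioricoh}, by carefully tracking how a single application of $\Wi\theta^{p_0}$ on the strip $\widehat D:=\bigcap_{p=0}^{p_0-1}\Wi\theta^{-p}(D_p)$ affects the left/right bookkeeping. First I would set up, as in the proof of Proposition~\ref{p.orioricoh}, the strip $\widehat D$, which by Proposition~\ref{p.sub-strips} is a trivially bifoliated proper stable strip containing both $\ell$ and $\ell'$, and on which $\Wi\theta^{p_0}$ is well-defined (since $\widehat D$ is a connected component of $\Wi S-\bigcup_{p=0}^{p_0-1}\Wi\theta^p(\Wi L^s)$). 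The key elementary observation is that, since $\ell$ and $\ell'$ lie in the \emph{same} strip $\widehat D$ and have different codes only from index $p_0$ on, there is an arc $\alpha^u$ of a leaf $\ell^u$ of $\Wi F^u$ inside $\widehat D$ with one endpoint $a\in\ell$ and the other $a'\in\ell'$; the relation $\ell\prec\ell'$ is then equivalent to the statement that the orientation of $\Wi F^u$ runs from $a$ to $a'$ along $\alpha^u$. Pushing forward by $\Wi\theta^{p_0}$, the arc $\beta^u:=\Wi\theta^{p_0}(\alpha^u)$ is an arc of a leaf of $\Wi F^u$ (Proposition~\ref{p.Poincare-map}) with endpoints $b:=\Wi\theta^{p_0}(a)\in D_{p_0}$ and $b':=\Wi\theta^{p_0}(a')\in D_{p_0}'$.

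Now the dichotomy is transparent. If $\Wi\theta^{p_0}_{|\widehat D}$ preserves the orientation of $\Wi F^u$, then the orientation of $\Wi F^u$ runs from $b$ to $b'$ along $\beta^u$, hence $\Wi\theta^{p_0}(\ell)\prec\Wi\theta^{p_0}(\ell')$, and by Proposition~\ref{p.order-compatible-cc} (applied to the distinct components $D_{p_0},D_{p_0}'$ of $\cA$) this is exactly $D_{p_0}\prec D_{p_0}'$; conversely $D_{p_0}\prec D_{p_0}'$ forces $\Wi\theta^{p_0}(\ell)\prec\Wi\theta^{p_0}(\ell')$, and applying the orientation-preserving homeomorphism $(\Wi\theta^{p_0}_{|\widehat D})^{-1}$ to a connecting unstable arc gives $\ell\prec\ell'$. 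This establishes $(\ell\prec\ell')\Leftrightarrow(D_{p_0}\prec D_{p_0}')$, and the equivalence with $\chi^s(\ell)\prec\chi^s(\ell')$ is immediate from Definition~\ref{d.order-symbolic}, since $D_p=D_p'$ for $p<p_0$ and $D_{p_0}\neq D_{p_0}'$. If instead $\Wi\theta^{p_0}_{|\widehat D}$ reverses the orientation of $\Wi F^u$, then the orientation runs from $b'$ to $b$ along $\beta^u$, so $\Wi\theta^{p_0}(\ell')\prec\Wi\theta^{p_0}(\ell)$, whence $D_{p_0}'\prec D_{p_0}$ by Proposition~\ref{p.order-compatible-cc}, and symmetrically; this yields case~2.

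One point that needs a word of care, rather than a genuine obstacle, is the direction ``$D_{p_0}\prec D_{p_0}'\Rightarrow\ell\prec\ell'$'': here I must connect $\Wi\theta^{p_0}(\ell)$ and $\Wi\theta^{p_0}(\ell')$ by an unstable arc lying in a single trivially bifoliated strip before pulling it back, so that the pulled-back arc again lies in $\widehat D$ and has its endpoints on $\ell,\ell'$. This is supplied by Corollary~\ref{c.image-connected-component}: $\Wi\theta^{p_0}(\widehat D)$ is itself a trivially bifoliated proper unstable strip, and by Proposition~\ref{p.order-compatible-cc} the relation $D_{p_0}\prec D_{p_0}'$ means every $\Wi F^s$-leaf of $D_{p_0}$ precedes every $\Wi F^s$-leaf of $D_{p_0}'$; picking the leaves $\Wi\theta^{p_0}(\ell)$ and $\Wi\theta^{p_0}(\ell')$ and a connecting unstable arc inside $\Wi\theta^{p_0}(\widehat D)$, then applying $(\Wi\theta^{p_0}_{|\widehat D})^{-1}$ — which maps $\Wi F^u$-arcs to $\Wi F^u$-arcs and, in case~1, preserves their orientation — closes the loop. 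The genuinely new content compared with Proposition~\ref{p.orioricoh} is simply that we no longer assume $\tilde\theta$ globally preserves the orientation of $\Wi F^u$, so the sign of the orientation change on $\widehat D$ must be carried along explicitly; Remark~\ref{r.orientations}(2) guarantees this sign is well-defined (it is the same for $\Wi F^s$ and $\Wi F^u$), which is what makes the statement clean.
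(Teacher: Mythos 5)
Your proof is correct and takes essentially the same approach as the paper, which simply refers back to the argument of Proposition~\ref{p.orioricoh}: push an unstable arc $\alpha^u$ joining $\ell$ to $\ell'$ inside $\widehat D$ forward by $\Wi\theta^{p_0}$ and track the sign of the orientation change. Your extra discussion of the converse direction is sound but not strictly necessary, since the single push-forward of $\alpha^u$ already yields the biconditional between the orientation of $\alpha^u$ at its endpoints and that of $\beta^u$ at its endpoints.
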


\begin{proof}
The arguments are exactly the same as in the proof of Proposition~\ref{p.orioricoh}.
\end{proof}

\section{Topological equivalence of Anosov flows}
\label{s.top-equiv}

We will now prove Theorem~\ref{t.main} with the help of the coding procedure implemented in section~\ref{s.coding}. 

\subsection{A simplification}

We begin by explaining why it is enough to prove Theorem~\ref{t.main} in  the particular case where the vector fields $X_1$ and $X_2$ coincide.

Let $(U,X_1,\psi_1)$ and $(U,X_2,\psi_2)$ be two triple satisfying the hypotheses of Theorem~\ref{t.main}. In particular $(U,X_1,\psi_1)$ and $(U,X_2,\psi_2)$ are strongly isotopic. This means that there exists a continuous one-parameter family $\{(U,X_t,\psi_t)\}_{t\in [1,2]}$ so that $(U,X_t)$ is a hyperbolic plug and $\psi_t:\partial^{out} U\to\partial^{in} U$ is a strongly transverse gluing map for every $t$. By standard hyperbolic theory, hyperbolic plugs are structurally stable. Hence, this means that we can find a continuous family $(h_t)_{t\in [1,2]}$ of self-homeomorphisms of $U$ so that $h_1=\mathrm{Id}$ and so that $h_t$ induces an orbital equivalence between $X_1$ and $X_t$. For $t\in [1,2]$, define
$$\widehat\psi_t:=(h_{t|\partial^{in} U})^{-1}\circ\psi_t\circ (h_{t|\partial^{out} U})$$ 
and observe that $\widehat\psi_1=\psi_1$. For sake of clarity, let $X:=X_1$. Then, 
\begin{itemize}
\item[--] the triples $(U,X,\widehat\psi_1)$ and  $(U,X,\widehat\psi_2)$ are strongly isotopic: the strong isotopy is given by the continuous path $\{(U,X,\widehat\psi_t)\}_{t\in [1,2]}$;
\item[--] for $t\in [1,2]$, the flow induced by the vector field $X$ on the manifold $\widehat M_t:=U/\widehat\psi_t$ is orbitally equivalent to the flow induced by the vector field $X_t$ on the manifold $M_t:=U/\psi_t$: the orbital equivalence is induced by the homeomorphism $h_t$. 
\end{itemize}
This shows that the hypotheses and the conclusion of Theorem~\ref{t.main} are satisfied for the triple $(U,X_1,\psi_1)$ and  $(U,X_2,\psi_2)$ if and only if they are satisfied for the triples $(U,X,\widehat \psi_1)$ and $(U,X,\widehat \psi_2)$. This allows us to replace the vector fields $X_1$ and $X_2$ by a single vector field $X$ in the proof of Theorem~\ref{t.main}.

\subsection{Setting}

From now on, we consider a hyperbolic plug $(U,X)$ endowed with two strongly transverse gluing diffeomorphisms $\psi_1,\psi_2:\partial^{out} U\to\partial^{in} U$. We denote by $\Lambda:=\bigcap_{t\in\RR} X^t(U)$ the maximal invariant set of the plug $(U,X)$. For $i=1,2$, the quotient space $M_i:=U/\psi_i$ is a closed three-dimensional manifold, and $X$ induces a vector field $Y_i$ on $M_i$. We assume that the hypotheses of Theorem~\ref{t.main} are satisfied, that is 
\begin{enumerate} 
\item[0.] the manifolds $U$, $M_1$ and $M_2$ are orientable,
\item[1.] for $i=1,2$, the flow $(Y_i^t)$ of the vector field $Y_i$ is a transitive Anosov flow,
\item[2.] the gluing maps $\psi_1$ and $\psi_2$ are strongly isotopic, \emph{i.e.} that there exists an isotopy $(\psi_s)_{s\in [1,2]}$ such that, for every $s$, the laminations $L^s$ and $\psi_s(L^u_X)$ are strongly transverse.
\end{enumerate}
In order to prove Theorem~\ref{t.main}, we have to construct a homeomorphism $H:M_1\to M_2$ mapping the oriented orbits of  the Anosov flow $(Y_1^t)$ to the orbits of the Anosov flow $(Y_2^t)$. The construction will be divided into several steps. 

\subsection{Starting point of the construction: the diffeomorphisms $\phi_{in},\phi_{out}:S_1\to S_2$}

For $i=1,2$, we denote by $\pi_i$ the projection of $U$ on the closed three-dimensional manifold $M_i=U/\psi_i$. We denote by 
$$S_i=\pi_i(\partial^{in}U)=\pi_i(\partial^{out}U)$$ 
the projection of the boundary of $U$. The surface $S_i$ is endowed with the strongly transverse laminations 
$$L^s_i:=\pi_i(L^s_X)\quad\quad\mbox{ and }\quad\quad L^u_i:=\pi_i(L^u_X).$$ 
The maps $\pi_{i|\partial^{in} U}:\partial^{in} U\to S_i$ and $\pi_{i|\partial^{out} U}:\partial^{out} U\to S_i$ are invertible. This provides us with two diffeomorphisms 
$$\phi_{in}:=\pi_{2|\partial^{in} U}\circ (\pi_{2|\partial^{in} U})^{-1}:S_1\to S_2\quad\quad\mbox{ and }\quad\quad\phi_{out}:=\pi_{2|\partial^{out} U}\circ (\pi_{2|\partial^{out} U})^{-1}:S_1\to S_2.$$ 

The diffeomorphisms $\phi_{in}$ and $\phi_{out}$ are the starting point of our construction. Observe that, at this step, we are very far from getting an orbital equivalence. Indeed, $\phi_{in}$ and $\phi_{out}$ are in no way compatible with the actions of the flows $(Y_1^t)$ and $(Y_2^t)$ (\emph{i.e.} they do not conjugate the Poincar\'e return maps of $(Y_1^t)$ and $(Y_2^t)$ on the surface $S_1$ and $S_2$). 

Nevertheless, the definition of the diffeomorphisms $\phi_{in}$ and $\phi_{out}$ imply that they satisfy:
$$\phi_{in}(L^s_1)=L^s_2\quad\mbox{and}\quad\phi_{out}(L^u_1)=L^u_2.$$ 

\begin{remark}
Be careful: in general $\phi_{in}(L^u_1)\neq L^u_2$ and $\phi_{out}(L^s_1)\neq L^s_2$.
\end{remark} 

On the other hand, the strong isotopy connecting the gluing maps $\psi_1$ and $\psi_2$ can be used to construct an isotopy between the diffeomorphisms $\phi_{in}$ and $\phi_{out}$: 

\begin{proposition}
\label{p.isotopy}
There exists a continuous family $(\phi_t)_{t\in [0,1]}$ of diffeomorphisms from $S_1$ to $S_2$, such that $\phi_0=\phi_{out}$, such that $\phi_1=\phi_{in}$, and such that the laminations $\phi_{t}(L^u_1)$ and $L^s_2$ are strongly transverse for every $t$. 
\end{proposition}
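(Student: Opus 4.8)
The plan is to build the isotopy $(\phi_t)_{t\in[0,1]}$ directly out of the strong isotopy $(\psi_s)_{s\in[1,2]}$ between the gluing maps, reversing the trick used in the simplification step. First I would recall what the two endpoint diffeomorphisms actually are: unravelling the definitions, $\phi_{in}=\pi_{2|\partial^{in}U}\circ(\pi_{1|\partial^{in}U})^{-1}$ and $\phi_{out}=\pi_{2|\partial^{out}U}\circ(\pi_{1|\partial^{out}U})^{-1}$, so both are built from the fixed identification of $\partial^{in}U$ (resp. $\partial^{out}U$) inside $M_1$ and inside $M_2$. The key observation is that the gluing map $\psi_i:\partial^{out}U\to\partial^{in}U$ is, after projecting, nothing but the statement that $\pi_{i|\partial^{out}U}=\pi_{i|\partial^{in}U}\circ\psi_i$ as maps into $S_i$. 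Feeding this into the formulas for $\phi_{in}$ and $\phi_{out}$ gives the relation
\begin{equation*}
\phi_{out}=\phi_{in}\circ\bigl(\text{identification of }\partial^{out}U\text{ with }\partial^{in}U\text{ via }\psi_1,\psi_2\bigr),
\end{equation*}
more precisely $\phi_{in}=\psi_{2}\circ\phi_{out}\circ\psi_1^{-1}$ once everything is transported to $\partial^{in}U$, i.e. the two diffeomorphisms differ exactly by pre- and post-composition with $\psi_1$ and $\psi_2$.

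Next I would use the given strong isotopy $(\psi_s)_{s\in[1,2]}$ to interpolate. Define, after reparametrising $s\in[1,2]$ to $t\in[0,1]$, a family $\phi_t$ that at $t=0$ equals $\phi_{out}$ and at $t=1$ equals $\phi_{in}$, by inserting $\psi_s$ in place of $\psi_1$ (or $\psi_2$) in the composition above — concretely, something like $\phi_t = \psi_{1+t}\circ\phi_{out}\circ\psi_1^{-1}$ transported to the appropriate surfaces, or whichever of the two natural interpolations makes the endpoints come out right. Each $\phi_t$ is a composition of diffeomorphisms, hence a diffeomorphism $S_1\to S_2$, and the family is continuous in $t$ because $(\psi_s)$ is continuous in $s$. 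The endpoint checks $\phi_0=\phi_{out}$, $\phi_1=\phi_{in}$ are then a direct bookkeeping computation using $\psi_1\circ\psi_1^{-1}=\mathrm{Id}$ and the relation from the previous paragraph.

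The substantive point — and the step I expect to be the main obstacle — is verifying that $\phi_t(L^u_1)$ and $L^s_2$ are strongly transverse for every $t$. Here I would translate the strong transversality hypothesis on the path $(\psi_s)$, which by hypothesis 2 says that $L^s_X$ and $\psi_s(L^u_X)$ are strongly transverse in $\partial^{in}U$ for every $s$, into a statement about $S_2$. Since $\phi_{in}$ carries $L^s_1$ to $L^s_2$ and, chasing the definitions, $\phi_t(L^u_1)$ is the image in $S_2$ of $\psi_{1+t}(L^u_X)$ (this is exactly why the $\psi_s$ must be inserted on the correct side), the pair $(\phi_t(L^u_1), L^s_2)$ in $S_2$ is the $\pi_2$-image of the pair $(\psi_{1+t}(L^u_X), L^s_X)$ in $\partial^{in}U$. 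Strong transversality is a purely local/combinatorial property of how two laminations cross (transversality plus every complementary region being a quadrilateral disc), so it is preserved by the diffeomorphism $\pi_{2|\partial^{in}U}$; hence it holds for $(\phi_t(L^u_1), L^s_2)$ because it holds for $(\psi_{1+t}(L^u_X), L^s_X)$ by the strong-isotopy hypothesis. The only real care needed is to make sure the two interpolations (inserting $\psi_s$ on the left versus the right) are set up so that it is genuinely $\psi_{1+t}(L^u_X)$, and not some other image, that appears — getting the "sides" right is the delicate bookkeeping, but once that is pinned down the strong transversality is immediate from the hypothesis.
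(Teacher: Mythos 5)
Your approach is the same as the paper's: build the interpolation $(\phi_t)$ directly out of the strong isotopy $(\psi_s)$, check the endpoints by bookkeeping with $\pi_{i|\partial^{in}U}\circ\psi_i=\pi_{i|\partial^{out}U}$, and then transport the strong transversality of $(\psi_s(L^u_X),L^s_X)$ in $\partial^{in}U$ to $S_2$ via a fixed diffeomorphism — this last point, which you correctly identify as the crux, is exactly how the paper concludes. However, your candidate formula $\phi_t=\psi_{1+t}\circ\phi_{out}\circ\psi_1^{-1}$ (and the relation $\phi_{in}=\psi_2\circ\phi_{out}\circ\psi_1^{-1}$) does not type-check even after transporting to $\partial^{in}U$: in those coordinates $\phi_{in}$ corresponds to the identity and $\phi_{out}$ to $\psi_2\circ\psi_1^{-1}$, so what you want to conjugate by is $\psi_2^{-1}$ composed with $\psi_s$, not $\psi_s$ composed with $\psi_1^{-1}$. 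The paper's formula is
\[
\phi_t:=\pi_{2|\partial^{out} U}\circ \psi_2^{-1}\circ \psi_{2-t}\circ (\pi_{1|\partial^{out} U})^{-1},
\]
which at $t=0$ gives $\phi_{out}$ and at $t=1$ gives $\phi_{in}$, and exhibits $(\phi_t(L^u_1),L^s_2)$ as the image under the single diffeomorphism $\pi_{2|\partial^{out}U}\circ\psi_2^{-1}=\pi_{2|\partial^{in}U}$ of the pair $(\psi_{2-t}(L^u_X),L^s_X)$, which is strongly transverse by hypothesis. You flagged that "getting the sides right" was where the care is needed; you were right, and the fix is as above — once the formula is corrected, the rest of your argument goes through verbatim and matches the paper's proof.
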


\begin{proof}
By assumption, the gluing maps $\psi_1$ and $\psi_2$ are connected by a continuous path $(\psi_s)_{s\in [1,2]}$ of diffeomorphisms from $\partial^{out} U$ to $\partial^{in} U$, such that the laminations $\psi_s(L^u)$ and $L^s$ are strongly transverse for every $s$. For $t\in [0,1]$, we set 
$$\phi_t:=\pi_{2|\partial^{out} U}\circ \psi_2^{-1}\circ \psi_{2-t}\circ (\pi_{1|\partial^{out} U})^{-1}.$$
From this formula, we immediately get 
$$\phi_0=\pi_{2|\partial^{out} U}\circ (\pi_{1|\partial^{out} U})^{-1}=\phi_{out}.$$ 
Plugging the equality $\pi_{i|\partial^{in} U}\circ \psi_i=\pi_{i|\partial^{out} U}$ into the definition of $\phi_1$, we get 
$$\phi_1= \pi_{2|\partial^{out} U}\circ \psi_2^{-1}\circ \psi_{1}\circ (\pi_{1|\partial^{out} U})^{-1}=\pi_{2|\partial^{in} U}\circ (\pi_{1|\partial^{in} U})^{-1}=\phi_{in}.$$
We know that the laminations $L^s_X$ and $\psi_{2-t}(L^u_X)$ are strongly transverse for every $t$. As a consequence, the laminations 
$$\pi_{2|\partial^{out} U}\circ \psi_2^{-1}(L^s_X)=\pi_{2|\partial^{in} U} (L^s_X)=L^s_2$$
and 
$$\pi_{2|\partial^{out} U}\circ \psi_2^{-1}\circ \psi_{2-t}(L^u_X)=\phi_t\circ \pi_{1|\partial^{out}U}(L^u_X)=\phi_{t}(L^u_1)$$ 
are strongly transverse for every $t$. This completes the proof.
\end{proof}

It is important to observe that the diffeomorphism $\phi_{in}$ can be obtained as the restriction of a diffeomorphism from $M_1$ to $M_2$:

\begin{proposition} 
\label{p.extension}
The diffeomorphism $\phi_{in}:S_1\to S_2$ is the restriction of a diffeomorphism $\Phi_{in}:M_1\to M_2$.
\end{proposition}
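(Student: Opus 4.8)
The diffeomorphism $\phi_{in}=\pi_{2|\partial^{in}U}\circ(\pi_{1|\partial^{in}U})^{-1}:S_1\to S_2$ is defined on the image of the entrance boundary. I would like to thicken it to a neighbourhood of $S_1$ and then extend over all of $M_1$. The key point is that both $M_1$ and $M_2$ are obtained from the \emph{same} compact manifold-with-boundary $U$ by gluing the exit boundary to the entrance boundary via $\psi_1$ and $\psi_2$ respectively; the difference between them is entirely encoded by the gluing maps. So the natural candidate for $\Phi_{in}$ is \emph{the map induced by the identity of $U$}. Concretely, since $\pi_{1|\mathrm{int}(U)}$ and $\pi_{2|\mathrm{int}(U)}$ are both diffeomorphisms onto $M_i\setminus S_i$, the identity of $U$ descends to a diffeomorphism $M_1\setminus S_1\to M_2\setminus S_2$; I must check that this extends continuously (and smoothly) across $S_1$ and that the extension restricts to $\phi_{in}$ on $S_1$.

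First I would set up the identification carefully. Writing $\hat U$ for $U$ with a collar $\partial^{in}U\times[0,\varepsilon)$ attached along $\partial^{in}U$, one sees that a neighbourhood of $S_i$ in $M_i$ is obtained from a neighbourhood of $\partial^{in}U\sqcup\partial^{out}U$ in $U$ by gluing via $\psi_i$. Using the flow of $X$ one gets canonical collar coordinates: a neighbourhood of $\partial^{in}U$ is $\partial^{in}U\times[0,1)$ and a neighbourhood of $\partial^{out}U$ is $\partial^{out}U\times(-1,0]$, with $X$ pointing in the $[0,1)$-direction, and near $S_i$ the manifold $M_i$ looks like $\partial^{in}U\times(-1,1)$ where the negative half is identified with a neighbourhood of $\partial^{out}U$ via $\psi_i$. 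Then the map ``identity on $\partial^{in}U\times[0,1)$, and $\psi_2^{-1}\circ\psi_1$ on the $\partial^{out}U$-collar'' — transported through the two gluings — is a well-defined diffeomorphism between these neighbourhoods of $S_1$ and $S_2$, it restricts to $\phi_{in}$ on $S_1$ (because on $\partial^{in}U$ it is literally the identity of $U$ read in the two quotients), and it agrees with the identity-induced diffeomorphism $M_1\setminus S_1\to M_2\setminus S_2$ away from $S_1$. Gluing the local model near $S_1$ to the global diffeomorphism on the complement of $S_1$ produces $\Phi_{in}:M_1\to M_2$.

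The step I expect to require the most care is the \emph{smoothness at $S_1$}, i.e.\ checking that the two descriptions (the one coming from $\mathrm{id}_U$ on $M_i\setminus S_i$, and the collar model near $S_i$) patch to a genuinely $C^\infty$ diffeomorphism rather than merely a homeomorphism. This is where the discrepancy between $\psi_1$ and $\psi_2$ enters: on the exit-collar side the model map is $\psi_2^{-1}\circ\psi_1$, which is smooth since $\psi_1,\psi_2$ are diffeomorphisms, so one genuinely has a smooth map; one only needs the two collar-neighbourhood structures on $M_i$ (the one from the $\partial^{in}$ side and the one from the $\partial^{out}$ side) to be compatible, which is arranged by choosing the collars to be flow-boxes of $X$ near $\partial U$ and using that $X$ is transverse to $\partial U$. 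Once this local smoothness is secured, the Proposition follows: $\Phi_{in}$ is a diffeomorphism $M_1\to M_2$ and $\Phi_{in|S_1}=\phi_{in}$ by construction. (One remark: $\Phi_{in}$ will in general \emph{not} be an orbital equivalence between $Y_1$ and $Y_2$ — indeed $\phi_{in}$ is not even compatible with the return maps, as noted above — it is only a smooth diffeomorphism of the ambient manifolds, which is all that is claimed here.)
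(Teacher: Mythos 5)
There is a genuine gap, and it sits in the step you flagged as the ``key point.'' Contrary to what you assert, the diffeomorphism $\pi_2\circ\pi_1^{-1}:M_1\setminus S_1\to M_2\setminus S_2$ induced by $\mathrm{id}_U$ does \emph{not} extend continuously across $S_1$. To see this, approach a point $q=\pi_1(y)\in S_1$, $y\in\partial^{in}U$, from the two sides: along a sequence $y_n\to y$ in $\mathrm{int}(U)$ (the $\partial^{in}$ side) the images $\pi_2(y_n)$ converge to $\pi_2(y)$, while along a sequence $z_n\to\psi_1^{-1}(y)$ in $\mathrm{int}(U)$ (the $\partial^{out}$ side, noting $\pi_1(\psi_1^{-1}(y))=q$) the images $\pi_2(z_n)$ converge to $\pi_2(\psi_1^{-1}(y))=\pi_2(\psi_2\circ\psi_1^{-1}(y))$. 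These two limits agree only if $\psi_2\circ\psi_1^{-1}(y)=y$, i.e.\ only if $\psi_1=\psi_2$. Correspondingly, your local collar model --- identity on the $\partial^{in}U$-collar, $\psi_2^{-1}\circ\psi_1\times\mathrm{id}$ on the $\partial^{out}U$-collar --- \emph{is} a well-defined diffeomorphism of a neighbourhood of $S_1$ to a neighbourhood of $S_2$, and it does restrict to $\phi_{in}$ on $S_1$; but on the interior of the $\partial^{out}U$-collar it equals $\psi_2^{-1}\circ\psi_1\times\mathrm{id}$, whereas the identity-induced map equals $\mathrm{id}$ there, so they do not agree on the overlap and cannot simply be glued.

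The missing ingredient is precisely the isotopy between $\psi_1$ and $\psi_2$ provided by the strong-isotopy hypothesis. One must build a single diffeomorphism $\bar\Phi_{in}:U\to U$ that equals $\psi_2^{-1}\circ\psi_1$ on $\partial^{out}U$, equals $\mathrm{id}$ on $\partial^{in}U$ (and away from a collar of $\partial^{out}U$), and interpolates smoothly in between; such $\bar\Phi_{in}$ satisfies $\psi_2\circ(\bar\Phi_{in}|_{\partial^{out}U})=(\bar\Phi_{in}|_{\partial^{in}U})\circ\psi_1$ and therefore descends to the desired $\Phi_{in}:M_1\to M_2$. The paper's proof does exactly this, using a collar $\xi:\partial^{out}U\times[0,1]\to V$ and the path $(\psi_s)_{s\in[1,2]}$ of gluing maps to set $\bar\Phi_{in}(\xi(x,t)):=\xi\bigl(\psi_{2-t}^{-1}\circ\psi_1(x),t\bigr)$ on $V$ and $\bar\Phi_{in}:=\mathrm{id}$ outside $V$; since $\psi_{2-t}^{-1}\circ\psi_1$ runs from $\psi_2^{-1}\circ\psi_1$ at $t=0$ to $\mathrm{id}$ at $t=1$, this patches smoothly. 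Your write-up never invokes the isotopy, and without it (or at least without the fact that $\psi_2^{-1}\circ\psi_1$ is isotopic to the identity) the construction of $\bar\Phi_{in}$ supported in a collar does not go through.
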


\begin{proof}
Once again, we use the existence of a continous path $(\psi_s)_{s\in [1,2]}$ of diffeomorphisms from $\partial^{out} U$ to $\partial^{in} U$ connecting the gluing maps $\psi_1$ and $\psi_2$. We consider a collar neighbourhood $V$ of $\partial^{out} U$ in $U$, and a diffeomorphism $\xi:\partial^{out} U\times [0,1]\to V$ of $V$ such that $\xi(\partial^{out} U\times \{0\})=\partial^{out} U$. We define a diffeomorphism $\bar \Phi_{in}:U\to U$ by setting $\bar \Phi_{in}(\xi(x,t)):=\psi_{2-t}^{-1}\circ\psi_1(x)$ for every $(x,t)\in \partial^{out} U\times [0,1]$, and $\bar \Phi_{in}=\mathrm{Id}$ on $U\setminus V$. By construction, this diffeomorphism satisfies
$$\begin{array}{rcll}
\bar \Phi_{in} & = &  \mathrm{Id} & \mbox{ on }\partial^{in} U\\
& = & \psi_2^{-1}\circ\psi_1 & \mbox{ on }\partial^{out} U.
\end{array}$$
As a consequence, the relation $\pi_2\circ \bar \Phi_{in}=\bar \Phi_{in}\circ \pi_1$ holds, and therefore $\bar\Phi_{in}$ induces a diffeomorphism $\Phi_{in}:M_1\to M_2$. Since $\bar \Phi_{in}=  \mathrm{Id}$  on $\partial^{in} U$, it follows that $\Phi_{in|S_1}=\pi_{2|\partial^{in} U}\circ (\pi_{2|\partial^{in} U})^{-1}=\phi_{in}$, as desired.
\end{proof}

Now, we introduce the return maps on the surface $S_1$ and $S_2$. We first consider the crossing map of the plug $(U,X)$
$$\theta_X:\partial^{in} U\setminus L^s\to\partial^{out} U\setminus L^u$$ 
By definition, $\theta_X(x)$ is the unique intersection point of the forward $(X^t)$-orbit of the point $x$ with the surface $\partial^{out} U$. For $i=1,2$, the map $\theta_X$ induces a map
$$\theta_i:=\pi_{i|\partial^{out} U}\circ \theta_X\circ (\pi_{i|\partial^{in} U})^{-1}:S_i\setminus L^s_i\to S_i\setminus L^u_i.$$
This map $\theta_i$ is just the Poincar\'e return map of the flow $(Y_i^t)$ on the surface $S_i$. 

\begin{proposition}
\label{p.relation}
The diffeomorphisms $\theta_1$, $\theta_2$, $\phi_{in}$ and $\phi_{out}$ are related by the following equality 
$$\theta_2\circ \phi_{in}=\phi_{out}\circ\theta_1.$$
\end{proposition}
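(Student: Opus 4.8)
The statement $\theta_2\circ\phi_{in}=\phi_{out}\circ\theta_1$ is a purely formal consequence of the definitions of the four maps, all of which are built out of the projections $\pi_{i|\partial^{in}U}$, $\pi_{i|\partial^{out}U}$ and the single crossing map $\theta_X$ of the plug $(U,X)$. The plan is simply to unwind the definitions and cancel the matching pairs $\pi\circ\pi^{-1}$. First I would recall:
$$\theta_i=\pi_{i|\partial^{out}U}\circ\theta_X\circ(\pi_{i|\partial^{in}U})^{-1},\qquad \phi_{in}=\pi_{2|\partial^{in}U}\circ(\pi_{1|\partial^{in}U})^{-1},\qquad \phi_{out}=\pi_{2|\partial^{out}U}\circ(\pi_{1|\partial^{out}U})^{-1}.$$
(Here I am reading $\phi_{in}$ and $\phi_{out}$ as maps $S_1\to S_2$, correcting the evident typo in the excerpt, where both formulas should start with $\pi_2$ and be post-composed with $(\pi_1)^{-1}$.)

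Then I would just compute the left-hand side:
$$\theta_2\circ\phi_{in}=\bigl(\pi_{2|\partial^{out}U}\circ\theta_X\circ(\pi_{2|\partial^{in}U})^{-1}\bigr)\circ\bigl(\pi_{2|\partial^{in}U}\circ(\pi_{1|\partial^{in}U})^{-1}\bigr)=\pi_{2|\partial^{out}U}\circ\theta_X\circ(\pi_{1|\partial^{in}U})^{-1},$$
where the middle cancellation $(\pi_{2|\partial^{in}U})^{-1}\circ\pi_{2|\partial^{in}U}=\mathrm{Id}$ uses that $\pi_{2|\partial^{in}U}$ is a bijection onto $S_2$. Similarly for the right-hand side:
$$\phi_{out}\circ\theta_1=\bigl(\pi_{2|\partial^{out}U}\circ(\pi_{1|\partial^{out}U})^{-1}\bigr)\circ\bigl(\pi_{1|\partial^{out}U}\circ\theta_X\circ(\pi_{1|\partial^{in}U})^{-1}\bigr)=\pi_{2|\partial^{out}U}\circ\theta_X\circ(\pi_{1|\partial^{in}U})^{-1},$$
using $(\pi_{1|\partial^{out}U})^{-1}\circ\pi_{1|\partial^{out}U}=\mathrm{Id}$. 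The two expressions agree, which proves the identity. One should also note that both composites are defined exactly on $S_1\setminus L^s_1$ and take values in $S_2\setminus L^u_2$, since $\phi_{in}(L^s_1)=L^s_2$ and $\phi_{out}(L^u_1)=L^u_2$, so the domains match up and the equality is between maps with the same source and target.

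There is essentially no obstacle here: the only thing to be careful about is bookkeeping, namely keeping track of which restriction $\pi_{i|\partial^{in}U}$ versus $\pi_{i|\partial^{out}U}$ appears where, and making sure the two cancellations are legitimate (they are, since each $\pi_{i|\cdot}$ restricted to the relevant boundary piece is a homeomorphism onto $S_i$). Conceptually, the identity just records the geometric fact that flowing across the plug by $\theta_X$ and then reading the result in $M_2$ is the same as first reading in $M_2$ the entrance point and then flowing across the plug — all three manifolds $M_1,M_2$ share the same plug $(U,X)$ and hence the same crossing map, so the Poincaré maps $\theta_1,\theta_2$ differ only by the change of charts $\phi_{in}$ on the entrance side and $\phi_{out}$ on the exit side.
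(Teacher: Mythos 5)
Your proof is correct and matches the paper's reasoning, which simply declares the identity an immediate consequence of the defining formulas; you have just written out the cancellations $(\pi_{2|\partial^{in}U})^{-1}\circ\pi_{2|\partial^{in}U}=\mathrm{Id}$ and $(\pi_{1|\partial^{out}U})^{-1}\circ\pi_{1|\partial^{out}U}=\mathrm{Id}$ explicitly. You are also right that the paper's displayed formulas for $\phi_{in}$ and $\phi_{out}$ contain a typo (both indices read $2$), and your corrected version $\phi_{in}=\pi_{2|\partial^{in}U}\circ(\pi_{1|\partial^{in}U})^{-1}$, $\phi_{out}=\pi_{2|\partial^{out}U}\circ(\pi_{1|\partial^{out}U})^{-1}$ is the intended one.
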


\begin{proof}
This is an immediate consequence of the formulas defining $\theta_1$, $\theta_2$, $\phi_{in}$ and $\phi_{out}$.
\end{proof}

Now we lift all the objects to the universal covers of $M_1$ and $M_2$. We pick a point $x_1\in M_1$ which will serve as the base point of the fundamental group of the manifold $M_1$. The point $x_2:=f(x_1)$ will be used as the base point of fundamental group of the manifold $M_2$. The diffeomorphism $\Phi_{in}$ provides us with an isomorphism $(\Phi_{in})_*$ between the fundamental groups $\pi_1(M_1,x_1)$ and $\pi_1(M_2,x_2)$. For $i=1,2$, we denote by $p_i:\widetilde M_i\to M_i$ the universal cover of the manifold $M_i$.  We denote by $\widetilde Y_i$ the lift of the vector field $Y_i$ on $\widetilde M_i$. Observe that $\widetilde Y_i$ is equivariant under the action of $\pi_1(M_i,x_i)$: for $\gamma\in\pi_1(M_i,x_i)$, one has $\widetilde Y_i(\gamma \tilde x)=D_{\tilde x}\gamma.\widetilde Y_i(\tilde x)$. We denote by $\widetilde S_i$ the complete lift of the surface $S_i$ (\emph{i.e.} $\widetilde S_i:=p_i^{-1}(S_i)$). 

We denote by $\widetilde L^s_i$ and $\widetilde L^u_i$ the complete lifts of the laminations $L^s_i$ and $L^u_i$. We denote by 
$$\widetilde\theta_i:\widetilde S_i\setminus\widetilde L^s_i\to\widetilde S_i\setminus L^u_i$$
the first return map of the flow of the vector field $\widetilde Y_i$ on the surface $\widetilde S_i$. Clearly, $\widetilde\theta_i$ is a lift of the map $\theta_i$. Moreover, $\widetilde\theta_i$ commutes with the deck transformations:
\begin{equation}
\label{e.lift-theta-deck}
\tilde\theta_i\circ\gamma=\gamma\circ\tilde\theta_i \mbox{ for every }\gamma\in\pi_1(M_i,x_i).
\end{equation}
This commutation relation is an immediate consequence of the equivariance of $\widetilde Y_i$ (see above).
Now we fix a lift $\widetilde \Phi_{in}:\widetilde M_1\to\widetilde M_2$ of the diffeomorphism $\Phi_{in}$ (note that, unlike what happens for $\theta_1$ and $\theta_2$, there is no canonical lift of $\Phi_{in}$). Recall that the diffeomorphism $\Phi_{in}$ maps the surface $S_1$ to the surface $S_2$, and that the restriction of $\Phi_{in}$ to $S_1$ coincides with $\phi_{in}$. As a consequence, the lift $\tilde \Phi_{in}$ maps the surface $\widetilde S_1$ to $\widetilde S_2$, and the restriction of $\tilde \Phi_{in}$ to $\widetilde S_1$ is a lift $\tilde \phi_{in}$ of the diffeomorphism $\phi_{in}$. By construction, this lift satisfies 
\begin{equation}
\label{e.lift-phi-s-deck}
\tilde\phi_{in}\circ\gamma=(\Phi_{in})_*(\gamma)\circ\tilde\phi_{in} \mbox{ for every }\gamma\in\pi_1(M_1,x_1)
\end{equation}
Now recall that, according to Proposition~\ref{p.isotopy}, there exists a continuous arc $(\phi_t)_{t\in [0,1]}$ of diffeomorphisms from $S_1$ to $S_2$, such that $\phi_0=\phi_{in}$ and $\phi_1=\phi_{out}$, and such that the laminations $\phi_t(L^u_1)$ and $L^s_2$ are strongly transverse for every $t$. We lift this isotopy, starting at the lift $\tilde\phi_{in}$ of $\phi_{in}=\phi_0$. This yields a continuous arc $(\tilde\phi_t)_{t\in [0,1]}$ of diffeomorphisms from $\widetilde S_1$ to $\widetilde S_2$, such that $\tilde\phi_0=\tilde\phi_{in}$ and such that the laminations $\tilde\phi_t(\widetilde L^u_1)$ and $\widetilde L^s_2$ are strongly transverse for every $t$. The difffeomorphism $\tilde\phi_{out}:=\tilde\phi_1$ is a lift of the diffeomorphism $\phi_{out}$. By continuity, the relation~\eqref{e.lift-phi-s-deck} remains true if we replace $\tilde\phi_{in}=\tilde\phi_0$ by $\tilde\phi_t$ for any $t\in [0,1]$. In particular, the diffeomorphism $\tilde\phi_{out}$ satisfies
\begin{equation}
\label{e.lift-phi-u-deck}
\tilde\phi_{out}\circ\gamma=(\Phi_{in})_*(\gamma)\circ\tilde\phi_{out} \mbox{ for every }\gamma\in\pi_1(M_1,x_1).
\end{equation}

\begin{proposition}
\label{p.relation-lifts}
The diffeomorphisms $\tilde\theta_1$, $\tilde\theta_2$, $\tilde\phi_{in}$ and $\tilde\phi_{out}$ are related by the equality
$$\tilde\theta_2\circ\tilde\phi_{in}=\tilde\phi_{out}\circ\tilde\theta_1.$$
\end{proposition}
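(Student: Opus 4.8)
The plan is to deduce the relation $\tilde\theta_2\circ\tilde\phi_{in}=\tilde\phi_{out}\circ\tilde\theta_1$ from the downstairs relation $\theta_2\circ\phi_{in}=\phi_{out}\circ\theta_1$ (Proposition~\ref{p.relation}) by a lifting/uniqueness argument. Both sides of the asserted equality are maps defined on $\widetilde S_1\setminus\widetilde L^s_1$ with values in $\widetilde S_2\setminus\widetilde L^u_2$. First I would check that both composites are indeed lifts of the \emph{same} map $S_1\setminus L^s_1\to S_2\setminus L^u_2$: using that $\tilde\theta_i$ is a lift of $\theta_i$, that $\tilde\phi_{in}$ is a lift of $\phi_{in}$, and that $\tilde\phi_{out}$ is a lift of $\phi_{out}$, one gets $p_2\circ(\tilde\theta_2\circ\tilde\phi_{in})=\theta_2\circ\phi_{in}\circ p_1$ and $p_2\circ(\tilde\phi_{out}\circ\tilde\theta_1)=\phi_{out}\circ\theta_1\circ p_1$, and these agree by Proposition~\ref{p.relation}. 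So the two maps $\tilde\theta_2\circ\tilde\phi_{in}$ and $\tilde\phi_{out}\circ\tilde\theta_1$ are two lifts of one and the same continuous map; since $\widetilde S_1\setminus\widetilde L^s_1$ need not be connected, I cannot immediately conclude, so the argument must identify the two lifts component by component, or rather pin down the deck-transformation ambiguity globally.

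The cleanest route is to compare the two lifts on a single point and then propagate. Concretely, fix a point $\tilde x\in\widetilde S_1\setminus\widetilde L^s_1$ lying over the base point region; both $\tilde\theta_2(\tilde\phi_{in}(\tilde x))$ and $\tilde\phi_{out}(\tilde\theta_1(\tilde x))$ lie over the same point $\theta_2(\phi_{in}(p_1\tilde x))\in S_2$, hence differ by a deck transformation $\gamma_0\in\pi_1(M_2,x_2)$, which is locally constant in $\tilde x$ and hence constant on each connected component of $\widetilde S_1\setminus\widetilde L^s_1$; since these components are glued along $\widetilde L^s_1$ into the connected surface $\widetilde S_1$ (each component of $\widetilde S_1$ being a plane by Proposition~\ref{p.topology-surface}, and $\widetilde\theta_1,\widetilde\theta_2$ extending continuously across these laminations in the appropriate sense), $\gamma_0$ is in fact a single global constant. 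It remains to show $\gamma_0=\mathrm{id}$. For this I would use the equivariance relations: combining \eqref{e.lift-theta-deck} for $i=1,2$, \eqref{e.lift-phi-s-deck}, and \eqref{e.lift-phi-u-deck}, one checks that both $\tilde\theta_2\circ\tilde\phi_{in}$ and $\tilde\phi_{out}\circ\tilde\theta_1$ intertwine the $\pi_1(M_1,x_1)$-action on the source with the $\pi_1(M_2,x_2)$-action on the target \emph{via the same isomorphism} $(\Phi_{in})_*$; consequently $\gamma_0$ commutes with the image of $(\Phi_{in})_*$, i.e.\ with all of $\pi_1(M_2,x_2)$. Since $M_2$ carries an Anosov flow it is irreducible with infinite, centerless (indeed word-hyperbolic or at least center-free) fundamental group, forcing $\gamma_0=\mathrm{id}$.

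Alternatively, and more elementarily, one can avoid the center argument: the specific lift $\tilde\phi_{out}$ was constructed by lifting the isotopy $(\phi_t)$ starting from $\tilde\phi_{in}=\tilde\phi_0$, and during this isotopy the relation $\theta_2\circ\phi_t=\phi_{t}'\circ\theta_1$-type compatibility degenerates appropriately; I would instead simply lift the \emph{downstairs} identity $\theta_2\circ\phi_{in}=\phi_{out}\circ\theta_1$ through the isotopy, observing that at $t=0$ one has the trivial identity $\theta_2\circ\phi_{in}=\theta_2\circ\phi_{in}$ with the chosen lift $\tilde\phi_{in}$ giving $\gamma_0=\mathrm{id}$ — wait, at $t=0$ the two sides are not both defined — so the honest version is: deform $\phi_{out}$ back to $\phi_{in}$ along $(\phi_t)_{t\in[0,1]}$, track the (constant, integer-valued) deck ambiguity $\gamma_0(t)$ between $\tilde\theta_2\circ\tilde\phi_{in}$ and $\tilde\phi_t'\circ\tilde\theta_1$ where $\tilde\phi_t'$ is a suitable companion lift, and note $\gamma_0$ is continuous in $t$ hence constant, while at one endpoint it is visibly trivial. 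The main obstacle I anticipate is precisely handling the fact that $\widetilde S_1\setminus\widetilde L^s_1$ is disconnected: one must argue that the deck-transformation discrepancy, a priori depending on the connected component, is actually the same across components — this is where Proposition~\ref{p.topology-surface} (planarity of components of $\widetilde S$) together with continuity of $\tilde\theta_i$ up to the laminations is used, and it is the one genuinely nontrivial point; everything else is formula-chasing with the equivariance identities \eqref{e.lift-theta-deck}--\eqref{e.lift-phi-u-deck} and Proposition~\ref{p.relation}.
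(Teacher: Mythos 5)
Your setup is the same as the paper's: both sides are lifts of the same map (by Proposition~\ref{p.relation}), so they differ by some deck transformation $\gamma_0\in\pi_1(M_2)$, and the equivariance relations \eqref{e.lift-theta-deck}--\eqref{e.lift-phi-u-deck} force $\gamma_0$ to commute with the whole image of $(\Phi_{in})_*$, i.e.\ to lie in the center of $\pi_1(M_2)$. Up to that point you are on the paper's track.

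The gap is in your final step. You dispose of $\gamma_0$ by asserting that the fundamental group of a $3$-manifold carrying a transitive Anosov flow is ``centerless (indeed word-hyperbolic or at least center-free).'' This is false: the geodesic flow on the unit tangent bundle $T^1\Sigma$ of a closed hyperbolic surface is a transitive Anosov flow, and $\pi_1(T^1\Sigma)$ has a nontrivial center (generated by the fiber class); more generally any Seifert-fibered Anosov $3$-manifold has centered, non-word-hyperbolic fundamental group. The paper's proof is careful precisely about this point: it invokes Ghys' theorem (\cite{Ghy}, see also \cite[Th\'eor\`eme 3.1]{Bar06}) to conclude that a nontrivial center would force $(Y_2^t)$ to be, up to finite cover, a geodesic flow of a hyperbolic surface, and then rules that out because $(Y_2^t)$ admits the transverse torus $S_2$. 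Without this dynamical input, the algebraic argument does not close. Your alternative ``track $\gamma_0(t)$ through the isotopy'' sketch does not rescue this either, because (as you yourself notice) the relation degenerates at the endpoints and there is no well-defined ``visibly trivial'' endpoint value.

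Your remarks about the disconnectedness of $\widetilde S_1\setminus\widetilde L^s_1$ are a reasonable thing to flag (the paper itself is terse on this), but the proposed resolution — continuity of $\tilde\theta_i$ ``across the laminations'' — is not available, since $\tilde\theta_1$ genuinely fails to extend to $\widetilde L^s_1$ (orbits starting there never return to $\widetilde S_1$). The way the equivariance actually controls the component-dependence is through the deck action on components; that can be made to work, but it is not by extending $\tilde\theta_i$.
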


\begin{proof}
According to Proposition~\ref{p.relation}, the diffeomorphisms $\theta_2\circ \phi_{in}$ and $\phi_{out}\circ\theta_1$ coincide. Hence the diffeomorphisms $\tilde\theta_2\circ\tilde\phi_{in}$ and $\tilde\phi_{out}\circ\tilde\theta_1$ are two lifts of the same diffeomorphism. It follows that there exists a deck transformation $\gamma_0\in\pi_1(M_2,y_0)$ such that 
$$\tilde\theta_2\circ\tilde\phi_{in}=\gamma_0\circ\tilde\phi_{out}\circ\tilde\theta_1.$$
Now consider a deck transformation $\gamma\in \pi_1(M_1,x_0)$. On the one hand, using~\eqref{e.lift-phi-s-deck} and~\eqref{e.lift-theta-deck}, we get 
$$\tilde\theta_2\circ\tilde\phi_{in}\gamma=\tilde\theta_2\circ (\Phi_{in})_*(\gamma)\circ \tilde\phi_{in}=(\Phi_{in})_*(\gamma)\circ \tilde\theta_2\circ\tilde\phi_{in}=\left((\Phi_{in})_*(\gamma)\cdot\gamma_0\right)\circ\tilde\phi_{out}\circ\tilde\theta_1.$$
On the other hand, using~\eqref{e.lift-theta-deck} and~\eqref{e.lift-phi-u-deck}, we get
$$\tilde\theta_2\circ\tilde\phi_{in}\circ\gamma=\gamma_0\circ\tilde\phi_{out}\circ\tilde\theta_1\circ\gamma=\gamma_0\circ\tilde\phi_{out}\circ\gamma\circ\tilde\theta_1=\left(\gamma_0\cdot(\Phi_{in})_*(\gamma)\right)\circ\tilde\phi_{out}\circ\tilde\theta_1.$$
Hence
$$(\Phi_{in})_*(\gamma)\cdot\gamma_0=\gamma_0\cdot(\Phi_{in})_*(\gamma).$$
Since $(\Phi_{in})_*(\gamma)$ ranges over the whole fundamental group $\pi_1(M_2,y_0)$, it follows that $\gamma_0$ is in the center of the fundamental group $\pi_1(M_2,y_0)$. If $\gamma_0\neq\mathrm{Id}$, this implies that $\pi_1(M_2,y_0)$ has a non-trivial center. Then, a (easy generalization of) well-known theorem of \'E. Ghys implies that, up to finite cover, the Anosov flow $(X_2^t)$ must be topologically equivalent to the geodesic flow on the unit tangent bundle of a closed hyperbolic surface  (see~\cite{Ghy}, or~\cite[Th\'eor\`eme 3.1]{Bar06}). This is clearly impossible, since $X_2$ admits a transverse torus (any connected component of the surface $S_2$ is such a torus). As a consequence, $\gamma_0$ must be the identity, and the desired relation $\tilde\theta_2\circ\tilde\phi_{in}=\tilde\phi_{in}\circ\tilde\theta_1$ is proved.
\end{proof}

\subsection{Construction of maps $\Delta^s:f_1^{s,\infty}\to f_2^{s,\infty}$ and $\Delta^u:f_1^{u,\infty}\to f_2^{u,\infty}$}

In section~\ref{s.coding}, we have defined some symbolic spaces which allow to code certain orbits of certain Anosov flows. Let us introduce these symbolic space in our particular setting. For $i=1,2$, we consider the alphabet 
$$\cA_i:=\{\mbox{ connected components of }\widetilde S_i\setminus\widetilde L^s_i\},$$
and the symbolic space
$$\Sigma_i:=\{(D_{p})_{p\in\ZZ}\mbox{ such that }D_{p} \in \cA_i \mbox{ and }\tilde\theta_i(D_{p})\cap D_{p+1}\neq\emptyset\mbox{ for every }p\}.$$ 
In order to code stable and unstable leaves, we consider the subspaces $\Sigma_i^s$ and $\Sigma_i^u$ of $\Sigma_i$ defined by 
$$\Sigma_i^s:=\{(D_{p})_{p\geq 0}\mbox{ such that }D_p \in \cA_i \mbox{ and }\tilde\theta_i(D_p)\cap D_{p+1}\neq\emptyset\mbox{ for every }p\}$$ 
and 
$$\Sigma_i^u:=\{(D_p)_{p<0}\mbox{ such that }D_p \in \cA_i \mbox{ and }\tilde\theta_i(D_p)\cap D_{p+1}\neq\emptyset\mbox{ for every }p\}.$$ 

\begin{proposition}
\label{p.equivalence-transitions}
Let $D_1$ and $D_1'$ be two elements of $\cA_1$. Let $D_2:=\widetilde\phi_{in}(D_1)$ and $D_2':=\widetilde\phi_{in}(D_1')$. Then $\widetilde\theta_1(D_1)$ intersects $D_1'$ if and only if $\widetilde\theta_2(D_2)$ intersects $D_2'$. 
\end{proposition}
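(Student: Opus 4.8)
The plan is to reduce the statement, via the commutation relation of Proposition~\ref{p.relation-lifts} together with the lifted isotopy, to the claim that a certain incidence relation between two transverse strips in a plane is unchanged along that isotopy, and then to prove this last claim by a connectedness (clopen) argument.

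First I would record that $\widetilde\phi_{in}$ is a homeomorphism of $\widetilde S_1$ onto $\widetilde S_2$ carrying $\widetilde L^s_1$ onto $\widetilde L^s_2$ --- so $D\mapsto\widetilde\phi_{in}(D)$ is a bijection $\cA_1\to\cA_2$ --- and that $\widetilde\phi_{out}$ carries $\widetilde L^u_1$ onto $\widetilde L^u_2$. Put $E_1:=\widetilde\theta_1(D_1)$; by Corollary~\ref{c.image-connected-component} and elementary planar topology, $E_1$ is a connected component of $\widetilde S_1\setminus\widetilde L^u_1$, i.e. a proper unstable strip bounded by two disjoint leaves $n_-,n_+$ of $\widetilde L^u_1$. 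Proposition~\ref{p.relation-lifts} gives $\widetilde\theta_2(D_2)=\widetilde\theta_2\bigl(\widetilde\phi_{in}(D_1)\bigr)=\widetilde\phi_{out}\bigl(\widetilde\theta_1(D_1)\bigr)=\widetilde\phi_{out}(E_1)$; applying the homeomorphism $\widetilde\phi_{in}$ to the condition ``$E_1\cap D_1'\neq\emptyset$'', one sees that the proposition is equivalent to
$$\widetilde\phi_{in}(E_1)\cap D_2'\neq\emptyset\ \Longleftrightarrow\ \widetilde\phi_{out}(E_1)\cap D_2'\neq\emptyset .$$
Since $\widetilde\phi_{in}(E_1)$ and $D_2'$ lie in definite connected components of $\widetilde S_2$ (and $\widetilde\phi_t(E_1)$ stays in a fixed one along any isotopy), if these two components differ there is nothing to prove; so I may assume both strips sit in a single plane $P\subset\widetilde S_2$.

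Next I would use the lifted isotopy $(\widetilde\phi_t)_{t\in[0,1]}$ from $\widetilde\phi_{in}$ to $\widetilde\phi_{out}$ constructed above, along which $\widetilde\phi_t(\widetilde L^u_1)$ and $\widetilde L^s_2$ stay strongly transverse and $\widetilde\phi_t$ stays equivariant (the extension of~\eqref{e.lift-phi-s-deck} to all $t$). For each $t$, $\widetilde\phi_t(E_1)$ is a connected component of $\widetilde S_2\setminus\widetilde\phi_t(\widetilde L^u_1)$, hence a strip bounded by the two disjoint properly embedded lines $N_t:=\widetilde\phi_t(n_-)$ and $N'_t:=\widetilde\phi_t(n_+)$, while $D_2'$ is a stable strip bounded by two leaves $\ell_-,\ell_+$ of $\widetilde L^s_2$. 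It then suffices to show that $J:=\{t\in[0,1]:\widetilde\phi_t(E_1)\cap D_2'\neq\emptyset\}$ is both open and closed in $[0,1]$: for then $0\in J\Leftrightarrow 1\in J$, which is exactly the displayed equivalence. Openness is immediate, since if $y\in\widetilde\phi_{t_0}(E_1)\cap D_2'$ then $\widetilde\phi_{t_0}^{-1}(y)$ lies in the open set $E_1$, hence $\widetilde\phi_t^{-1}(y)\in E_1$ for all $t$ near $t_0$. For closedness I would first note an elementary planar dichotomy (in the spirit of the proof of Proposition~\ref{p.strip}): as $\widetilde\phi_t(E_1)$ and $D_2'$ are open strips in $P$ each of whose boundary leaves is approached from one side only, one has $\widetilde\phi_t(E_1)\cap D_2'=\emptyset$ precisely when one of the leaves $N_t,N'_t$ separates $\widetilde\phi_t(E_1)$ from $D_2'$ in $P$, and then $\widetilde\phi_t(n_\pm)\cap D_2'=\emptyset$. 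So $[0,1]\setminus J$ is the union of the two sets $\{t:N_t\text{ separates }\widetilde\phi_t(E_1)\text{ from }D_2'\}$; as the side of $N_t$ occupied by $\widetilde\phi_t(E_1)$ varies continuously and $D_2'$ is connected, each of these sets is open provided the conditions ``$\widetilde\phi_t(n_-)\cap D_2'=\emptyset$'' and ``$\widetilde\phi_t(n_+)\cap D_2'=\emptyset$'' are open in $t$.

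The proof of this last point is the crux, and I expect it to be the main obstacle. Given $t_n\to t_0$ with $z_n\in\widetilde\phi_{t_n}(n_-)\cap D_2'$, write $w_n:=\widetilde\phi_{t_n}^{-1}(z_n)\in n_-$. If the $w_n$ stay in a compact part of the leaf $n_-$, one extracts a limit $w\in n_-$, obtains $z_n\to\widetilde\phi_{t_0}(w)\in\widetilde\phi_{t_0}(n_-)\cap\overline{D_2'}$, and, using the transversality of $\widetilde\phi_{t_0}(n_-)$ with $\ell_-$ and $\ell_+$ (strong transversality at $t_0$), concludes that $\widetilde\phi_{t_0}(n_-)$ already meets $D_2'$ --- so the condition ``$\widetilde\phi_{t}(n_-)\cap D_2'=\emptyset$'' fails at $t_0$ as well, which is what is needed. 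The delicate case is to exclude that the $w_n$ run off to an end of the leaf $n_-$, i.e. that intersection points ``escape to infinity''. To handle it I would use the global structure: $\widetilde\phi_t$ is equivariant, and the laminations $\widetilde L^u_1$ and $\widetilde L^s_2$ have only finitely many compact leaves, with every half-leaf asymptotic to a compact one whose holonomy is a contraction (properties (i)--(iii) of the introduction); projecting to the compact surfaces $S_1,S_2$ and exploiting the compactness there of $L^u_1$, of $\phi_t(L^u_1)$, and of $L^s_2$, one gets uniform control of all these leaves near their ends and thereby rules out the escape of intersection points to infinity. Once this is in place, $J$ is clopen, hence equal to $\emptyset$ or to $[0,1]$, and the proposition follows.
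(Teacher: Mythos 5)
Your proposal follows essentially the same route as the paper: it reduces the statement, via Proposition~\ref{p.relation-lifts} and the lifted strong-transversality isotopy $(\widetilde\phi_t)$, to the claim that the incidence $\widetilde\phi_t(\widetilde\theta_1(D_1))\cap\widetilde\phi_{in}(D_1')\neq\emptyset$ is independent of $t$. The paper dispatches this last step with a one-line ``it follows'' from strong transversality; you unpack it as a clopen argument and correctly isolate the one delicate point (ruling out escape of intersection points toward the ends of the leaves), which the paper's terse proof likewise leaves implicit.
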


\begin{proof}
We have the following sequence of equivalences.
\begin{eqnarray*}
\widetilde\theta_1(D_1)\cap D_1'  \neq  \emptyset & \displaystyle{\mathop{\Longleftrightarrow}^{(1)}} & \widetilde\phi_{in}(\widetilde\theta_1(D_1))\cap\widetilde\phi_{in}(D_1')  \neq  \emptyset \\
&  \displaystyle{\mathop{\Longleftrightarrow}^{(2)}} & \widetilde\phi_{out}(\widetilde\theta_1(D_1))\cap\widetilde\phi_{in}(D_1')  \neq  \emptyset  \\
& \displaystyle{\mathop{\Longleftrightarrow}^{(3)}} & \widetilde\theta_2(\widetilde\phi_{in}(D_1))\cap\widetilde\phi_{in}(D_1')  \neq \emptyset \\
& \displaystyle{\mathop{\Longleftrightarrow}^{(4)}} & \widetilde\theta_2(D_2)\cap D_2'  \neq  \emptyset.
\end{eqnarray*}
The first equivalence is straightforward. The last one is nothing but the definition of the connected components $D_2$ and $D_2'$. Equivalence $(3)$ follows from Proposition~\ref{p.relation-lifts}. It remains to prove equivalence $(2)$. For that purpose, observe that $\widetilde\theta_1(D_1)$ is a strip bounded by two leaves of $\widetilde L^u_1$, and $\widetilde\phi_{in}(D_1')$ is a strip bounded by two leaves of $\widetilde L^s_2$. Now recall that there exists an isotopy $(\widetilde\phi_t)_{t\in [0,1]}$ joining $\widetilde\phi_{in}$ to $\widetilde\phi_{out}$, such that the lamination $\widetilde\phi_t(\widetilde L^u_1)$ is  strongly transverse to the lamination $\widetilde L^s_2$. It follows that $\widetilde\phi_{out}(\widetilde\theta_1(D_1))$ intersects $\widetilde\phi_{in}(D_1')$ if and only if $\widetilde\phi_{in}(\widetilde\theta_1(D_1))$ intersects $\widetilde\phi_{in}(D_1')$. 
\end{proof}

As an immediate consequence of Proposition~\ref{p.equivalence-transitions}, we get:

\begin{corollary}
\label{c.symbolic-space-to-symbolic-space}
$(\tilde\phi_{in})^{\otimes\ZZ}:\cA^{\ZZ}\to\cA^{\ZZ}$ maps $\Sigma_1$ to $\Sigma_2$. 
\end{corollary}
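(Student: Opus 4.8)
The plan is to verify the defining condition of $\Sigma_2$ one coordinate at a time, feeding Proposition~\ref{p.equivalence-transitions} into each transition. First I would record the elementary observation that $\widetilde\phi_{in}$ induces a bijection at the level of alphabets, $\cA_1\to\cA_2$. Indeed, $\widetilde\phi_{in}:\widetilde S_1\to\widetilde S_2$ is a homeomorphism lifting $\phi_{in}$, and since $\phi_{in}(L^s_1)=L^s_2$ we have $\widetilde\phi_{in}(\widetilde L^s_1)=\widetilde L^s_2$ at the level of complete lifts; hence $\widetilde\phi_{in}$ carries the connected components of $\widetilde S_1\setminus\widetilde L^s_1$ bijectively onto those of $\widetilde S_2\setminus\widetilde L^s_2$. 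Consequently $(\widetilde\phi_{in})^{\otimes\ZZ}$ is a well-defined bijection $\cA_1^{\ZZ}\to\cA_2^{\ZZ}$ (this is what the statement abbreviates as $\cA^{\ZZ}\to\cA^{\ZZ}$).

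Next I would take an arbitrary $\overline D=(D_p)_{p\in\ZZ}\in\Sigma_1$, set $D_p':=\widetilde\phi_{in}(D_p)$, and check that $\overline{D'}=(D_p')_{p\in\ZZ}$ lies in $\Sigma_2$. By the previous paragraph each $D_p'$ belongs to $\cA_2$, so only the transition condition $\widetilde\theta_2(D_p')\cap D_{p+1}'\neq\emptyset$ remains to be verified, for every $p$. Fixing $p$ and applying Proposition~\ref{p.equivalence-transitions} with $D_1:=D_p$ and $D_1':=D_{p+1}$ (so that $D_2=D_p'$ and $D_2'=D_{p+1}'$ in the notation of that proposition), the hypothesis $\widetilde\theta_1(D_p)\cap D_{p+1}\neq\emptyset$, which holds because $\overline D\in\Sigma_1$, is equivalent to $\widetilde\theta_2(D_p')\cap D_{p+1}'\neq\emptyset$. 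This establishes the transition condition for all $p\in\ZZ$, hence $\overline{D'}\in\Sigma_2$, i.e. $(\widetilde\phi_{in})^{\otimes\ZZ}(\Sigma_1)\subset\Sigma_2$.

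There is essentially no obstacle here: the entire content of the statement is carried by Proposition~\ref{p.equivalence-transitions}, and what remains is the bookkeeping of invoking it in each coordinate. The only point deserving a word of care is the claim that $\widetilde\phi_{in}$ permutes the alphabets, which reduces to the equivariance of complete lifts under $\phi_{in}$ together with the identity $\phi_{in}(L^s_1)=L^s_2$ recorded when the diffeomorphisms $\phi_{in},\phi_{out}$ were introduced. If one wishes, running the same argument with $\widetilde\phi_{in}^{-1}$ (a lift of $\phi_{in}^{-1}$, satisfying $\phi_{in}^{-1}(L^s_2)=L^s_1$) shows that $(\widetilde\phi_{in})^{\otimes\ZZ}$ in fact maps $\Sigma_1$ \emph{onto} $\Sigma_2$, although only the stated inclusion is needed in the sequel.
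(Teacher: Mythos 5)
Your proof is correct and takes essentially the same approach as the paper, which derives the corollary as an immediate consequence of Proposition~\ref{p.equivalence-transitions}; you have simply spelled out the bookkeeping (the induced bijection $\cA_1\to\cA_2$ via $\phi_{in}(L^s_1)=L^s_2$, the coordinatewise application of the proposition, and the surjectivity via $\widetilde\phi_{in}^{-1}$) that the paper leaves implicit.
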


Corollary~\ref{c.symbolic-space-to-symbolic-space} entails that $(\tilde\phi_{in})^{\otimes\ZZ_{\geq 0}}$ maps $\Sigma_1^s$ to $\Sigma_2^s$, and $(\widetilde\phi_{in})^{\otimes\ZZ_{<0}}$ maps $\Sigma_1^u$ to $\Sigma_2^u$. Hence, the map $\tilde\phi_{in}$ builds a bridge between the symbolic spaces associated to the vector filed $Y_1$ and those associated to the vector filed $Y_2$.

Let us recall the definition of the coding maps constructed in Section~\ref{ss.coding-procedure}. For $i=1,2$, we denote by $\cF^s_i$ and $\cF^u_i$ the weak stable and the weak unstable foliations of the Anosov flow $(Y_i^t)$ on the manifold $M_i$. These two-dimensional foliations induce two one-dimensional foliations $F^s_i$ and $F^u_i$ on the surface $S_i$. We denote by $\widetilde F^s_i$ and $\widetilde F^u_i$ the lifts of $F^s_i$ and $F^u_i$ on $\widetilde S_i$. We denote by $f^s_i$ and $f^u_i$ the leaf spaces of the foliations $\widetilde F^s_i$ and $\widetilde F^u_i$. We denote by $f^{s,\infty}_i$ the subset of $f^s_i$ made of the leaves that are not in $\Wi W^s(\Lambda_i)$ (recall that $\Wi W^s(\Lambda_i)$ is a union of leaves of $\Wi\cF^s_i$ and therefore $\Wi W^s(\Lambda_i)\cap\Wi S_i$ is a union of leaves of $F^s_i$). Similarly, we  denote by $f^{u,\infty}_i$ the subset of $f^u_i$ made of the leaves that are not in $\Wi W^u(\Lambda_i)$. The construction of subsection~\ref{ss.coding-procedure} provides two bijective coding maps
$$\begin{array}{crcll}
\chi_i^s :&\tilde f^{s,\infty}_i & \longrightarrow & \Sigma^s_i \\
& \ell & \longmapsto &  (D_p)_{p\geq 0} &\mbox{where }\tilde\theta_i^p (\ell) \subset D_p\mbox{ for every }p\geq 0\\
\end{array}$$
and 
$$\begin{array}{crcll}
\chi_i^u :& \tilde f^{u,\infty}_i  & \longrightarrow & \Sigma^u_i\\
& \ell & \longmapsto & (D_p)_{p<0} &\mbox{where }\tilde\theta_i^p (\ell) \subset D_p\mbox{ for every }p< 0
\end{array}$$

Hence we obtain two natural bijective maps  
$$\Delta^s:=(\chi^s_2)^{-1}\circ(\tilde\phi_{in})^{\otimes\ZZ_{\geq 0}}\circ\chi^s_1 : \tilde f^{s,\infty}_1\longrightarrow \tilde f^{s,\infty}_2$$
and
$$\Delta^u:=(\chi^u_2)^{-1}\circ(\tilde\phi_{in})^{\otimes\ZZ_{< 0}}\circ\chi^u_1 : \tilde f^{u,\infty}_1\longrightarrow \tilde f^{u,\infty}_2$$

\subsection{Extension of the maps $\Delta^s$ and $\Delta^u$}

We wish to extend the map $\Delta^s$ in order to obtain a bijective map between the leaf spaces $\tilde f^s_1$ and $\tilde f^s_2$. In view to that goal, we will prove that $\Delta^s$ preserves the order of the leaves of the foliations $\widetilde F^s_1$ and $\widetilde F^s_2$.  Our first task is to write a precise definition of these order. First we choose an orientation of the lamination $L^u_X\subset\partial^{out} U$. Pushing this orientation by the maps $\pi_1$ and $\pi_2$, this defines some orientations of the laminations $L^u_1=(\pi_1)_*(L^u_X)\subset S_1$ and $L^u_2=(\pi_2)_*(L^u_X)\subset S_1$. Since $L^u_i$ is a sublamination of the foliation $F^u_i$ (and since $L^u_i$ intersects every connected component of $S_i$), the orientations of the lamination $L^u_1$ and $L^u_2$ define some orientations of the foliations $F^u_1$ and $F^u_2$. Finally, these orientation can be lifted, providing orientations of the lifted foliations $\widetilde F^u_1$ and $\widetilde F^u_2$. It is important to notice that our choice of orientations for $\widetilde F^u_1$ and $\widetilde F^u_2$ are not independent from each other. More precisely, the orientation are chosen so that $\phi_{out}=\pi_{2|\partial^{out} U}\circ(\pi_{2|\partial^{out} U})^{-1}$ maps the orientation of the lamination $L^u_1$ to the orientation of the lamination $L^u_2$, and therefore: 
\begin{equation}
\label{e.phi-out-preserves-orientation}
\tilde\phi_{out}\mbox{ maps the orientated lamination }\widetilde L^u_1\mbox{ to the orientated lamination }\widetilde L^u_2.
\end{equation}
As explained in Subsection~\ref{ss.partial-orders}, the orientation of the foliation $\widetilde F^u_i$ induces a partial order $\prec_i$ on the leaf space $\tilde f^s_i$ defined as follows: given two leaves $\ell_i,\ell_i'\in \tilde f^s_i$ satisfy $\ell_i\prec_i \ell_i'$ if there exists an arc segment of an oriented leaf of $\widetilde F^u_i$ going from a point of $\ell_i$ to a point of $\ell_i'$. Proposition~\ref{p.pre-order} proves that this indeed defines an order on $\tilde f^s_i$. Moreover, this order on $\tilde f^s_i$ induces a partial order on the alphabet $\cA_i$: given two elements $D_i$ and $D_i'$ of $\cA_i$, we write $D_i\prec_i D_i'$ if there exists a leaf $\tilde \alpha_i$ of $\widetilde F^s_i$ included in $D_i$ and  a leaf $\tilde \alpha_i'$ of $\widetilde F^s_i$ included in $D_i'$  such that $\tilde\alpha_i\prec_i\tilde\alpha_i'$. Proposition~\ref{p.order-compatible-cc} shows that we can replace ``there exists'' by "for every" in this definition. It follows that $\prec_i$ is indeed a partial order on $\cA_i$. Now comes the technical result which will allow us to extend the map $\Delta^s$:

\begin{proposition}
\label{p.Delta-preserves-order}
The map $\Delta^s:(f^{s,\infty}_1,\prec_1)\longrightarrow (f^{s,\infty}_2,\prec_2)$ is order-preserving. 
\end{proposition}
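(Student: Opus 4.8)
The plan is to read off the order of two leaves from the place where their codes first disagree. Take $\ell\prec_1\ell'$ in $f^{s,\infty}_1$ and set $(D_p)_{p\ge 0}:=\chi^s_1(\ell)$, $(D_p')_{p\ge 0}:=\chi^s_1(\ell')$, and let $p_0$ be the smallest $p$ with $D_p\ne D_p'$. Since $\widetilde\phi_{in}$ is injective on the alphabet, $\chi^s_2(\Delta^s(\ell))=(\widetilde\phi_{in}(D_p))_{p\ge 0}$ and $\chi^s_2(\Delta^s(\ell'))=(\widetilde\phi_{in}(D_p'))_{p\ge 0}$ first disagree at the same index $p_0$. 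Applying Proposition~\ref{p.oricoh} in $M_1$ and in $M_2$, the equivalence $\ell\prec_1\ell'\Longleftrightarrow\Delta^s(\ell)\prec_2\Delta^s(\ell')$ follows from two facts: \textbf{(A)} $\widetilde\phi_{in}$ is order preserving from $(\cA_1,\prec_1)$ to $(\cA_2,\prec_2)$; and \textbf{(B)} for every $D\in\cA_1$, the map $\widetilde\theta_1$ preserves the orientation of $\widetilde F^u_1$ on $D$ if and only if $\widetilde\theta_2$ preserves the orientation of $\widetilde F^u_2$ on $\widetilde\phi_{in}(D)$. Indeed the sign $\varepsilon_1$ (resp. $\varepsilon_2$) occurring in Proposition~\ref{p.oricoh} for the pair $\ell,\ell'$ (resp. $\Delta^s(\ell),\Delta^s(\ell')$) is the product of the elementary signs of \textbf{(B)} over the components $D_0,\dots,D_{p_0-1}$ (resp. their images under $\widetilde\phi_{in}$), because $\widetilde\theta_i^{p_0}$ restricted to the relevant sub-strip is a composition of restrictions of the maps $\widetilde\theta_i$ on single components; hence \textbf{(B)} gives $\varepsilon_1=\varepsilon_2$ and \textbf{(A)} identifies the $\prec_1$-order of $D_{p_0},D_{p_0}'$ with the $\prec_2$-order of their images, which is exactly what Proposition~\ref{p.oricoh} needs. (Equivalently one may run an induction on $p_0$, the base case $p_0=0$ being \textbf{(A)} and the inductive step the one-component case of \textbf{(B)}, using that $\Delta^s$ conjugates $\widetilde\theta_1$ to $\widetilde\theta_2$.)

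For \textbf{(A)}, I would first fix an orientation of the lamination $L^u_X$ — this is already part of the construction and is what orients $\widetilde F^u_1$ and $\widetilde F^u_2$ — and likewise fix an orientation of $L^s_X$; pushing by $\pi_1,\pi_2$ and lifting yields compatible orientations of $\widetilde L^s_1$ and $\widetilde L^s_2$ for which $\widetilde\phi_{in}$ carries the oriented lamination $\widetilde L^s_1$ onto the oriented lamination $\widetilde L^s_2$ (because $\phi_{in}=\pi_{2|\partial^{in}U}\circ(\pi_{1|\partial^{in}U})^{-1}$ and both $L^s_1,L^s_2$ are the push of $L^s_X$). Note also that $\widetilde\phi_{in}$ preserves the orientation of $\widetilde S$, since $\phi_{in}$ carries the orientation of $S_1$ (the push of the orientation of $\partial^{in}U$) to that of $S_2$. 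Now, given $D\prec_1 D'$, an oriented arc of $\widetilde F^u_1$ from $D$ to $D'$ (Proposition~\ref{p.strip}) crosses a finite chain $D=C_0,C_1,\dots,C_k=D'$ of connected components of $\widetilde S_1\setminus\widetilde L^s_1$, with $C_{j-1}$ and $C_j$ sharing a boundary leaf $m_j$ of $\widetilde L^s_1$, and with $C_{j-1}$ on the left and $C_j$ on the right of $m_j$. The co-orientation of $m_j$ "from left to right", together with the orientation of $m_j$, spans the orientation of $\widetilde S_1$ up to a fixed sign relating the chosen orientation of $L^u_X$ to the orientation convention of Subsection~\ref{ss.partial-orders} — a sign which is the same for $i=1$ and $i=2$, hence irrelevant. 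Since $\widetilde\phi_{in}$ preserves the orientation of $\widetilde S$ and the orientation of $m_j$, it preserves this co-orientation; so $\widetilde\phi_{in}(C_{j-1})$ lies on the left and $\widetilde\phi_{in}(C_j)$ on the right of $\widetilde\phi_{in}(m_j)$, and chaining gives $\widetilde\phi_{in}(D)\prec_2\widetilde\phi_{in}(D')$. Proposition~\ref{p.order-compatible-cc} makes "on the left/right of $m_j$" a property of the whole components, so this suffices.

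The step \textbf{(B)} is where I expect the real work to lie. I would use the commutation relation $\widetilde\theta_2\circ\widetilde\phi_{in}=\widetilde\phi_{out}\circ\widetilde\theta_1$ of Proposition~\ref{p.relation-lifts} restricted to a single $D\in\cA_1$: the homeomorphism $G:=\widetilde\theta_2|_{\widetilde\phi_{in}(D)}\circ\widetilde\phi_{in}|_D=\widetilde\phi_{out}|_{\widetilde\theta_1(D)}\circ\widetilde\theta_1|_D$ from the trivially bifoliated proper stable strip $D$ onto the proper unstable strip $\widetilde\theta_2(\widetilde\phi_{in}(D))=\widetilde\phi_{out}(\widetilde\theta_1(D))$ is to be analysed through both of its factorisations. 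Along the factorisation through $\widetilde\phi_{in}$, the map $\widetilde\phi_{in}|_D$ carries the oriented lamination $\widetilde L^s_1$ to the oriented lamination $\widetilde L^s_2$, and then the orientation behaviour of $\widetilde\theta_2$ on $\widetilde\phi_{in}(D)$ enters; along the factorisation through $\widetilde\phi_{out}$, the orientation behaviour of $\widetilde\theta_1$ on $D$ enters first, and then $\widetilde\phi_{out}|_{\widetilde\theta_1(D)}$ carries the oriented lamination $\widetilde L^u_1$ to the oriented lamination $\widetilde L^u_2$, which is precisely~\eqref{e.phi-out-preserves-orientation}. Since $G$ has a single well-defined orientation behaviour, and since by Remark~\ref{r.orientations} the orientation behaviour of each $\widetilde\theta_i$ on a component is the same for $\widetilde F^s$ and for $\widetilde F^u$ (both $\widetilde\theta_i$ preserving the orientation of $\widetilde S$), comparing the two expressions forces the elementary signs of $\widetilde\theta_1$ on $D$ and of $\widetilde\theta_2$ on $\widetilde\phi_{in}(D)$ to agree, which is \textbf{(B)}. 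The delicate point, and the main obstacle, is that $\widetilde\phi_{in}$ respects only the \emph{lamination} $\widetilde L^s$ (not the whole foliation $\widetilde F^s$), and $\widetilde\phi_{out}$ only the lamination $\widetilde L^u$; the bookkeeping has to be arranged so that the only orientations ever invoked are those of $\widetilde L^s$, $\widetilde L^u$ and $\widetilde S$ — which is exactly why Remark~\ref{r.orientations} is needed, as it turns the otherwise inaccessible $\widetilde F^{s/u}$-orientation data of the maps $\widetilde\theta_i$ into data one can see on the laminations. Combining \textbf{(A)} and \textbf{(B)} with Proposition~\ref{p.oricoh} in $M_1$ and $M_2$ then gives $\ell\prec_1\ell'\Longleftrightarrow\Delta^s(\ell)\prec_2\Delta^s(\ell')$, proving the proposition.
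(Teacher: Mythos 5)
Your overall skeleton is exactly the paper's: reduce the claim, via Proposition~\ref{p.oricoh} applied in $M_1$ and in $M_2$ to the first index where the codes of $\ell$ and $\ell'$ differ, to \textbf{(A)} ($\tilde\phi_{in}$ is order-preserving on $\cA$) and \textbf{(B)} (the elementary orientation sign of $\tilde\theta_1$ on $D$ matches that of $\tilde\theta_2$ on $\tilde\phi_{in}(D)$). These are precisely Lemma~\ref{l.phi-preserves-order} and Lemma~\ref{l.equivalence-orientation} of the paper, and your ``product of elementary signs'' remark is Corollary~\ref{c.equivalence-orientation}. So the architecture is right.

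However, both of your proposed proofs of \textbf{(A)} and \textbf{(B)} have the same genuine gap: neither invokes the strong isotopy $(\tilde\phi_t)_{t\in[0,1]}$ joining $\tilde\phi_{in}$ to $\tilde\phi_{out}$, and that isotopy is precisely what makes both statements true. For \textbf{(A)}, the order $\prec_i$ is defined by the orientation of $\widetilde F^u_i$ (equivalently, by the co-orientation of $\widetilde L^s_i$ induced by $\widetilde F^u_i$), and $\tilde\phi_{in}$ does \emph{not} respect $\widetilde L^u$ nor $\widetilde F^u$ --- only $\tilde\phi_{out}$ does, via~\eqref{e.phi-out-preserves-orientation}. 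Your argument replaces the $\widetilde F^u$-co-orientation of a leaf $m_j\subset\widetilde L^s_1$ by the co-orientation determined by an orientation of $\widetilde L^s$ and the orientation of $\widetilde S$, and then asserts that the comparison sign between these two co-orientations ``is the same for $i=1$ and $i=2$, hence irrelevant.'' That assertion is unjustified and is in fact equivalent to the statement you are trying to prove: it amounts to saying that $(\tilde\phi_{in})_*\widetilde F^u_1$ and $\widetilde F^u_2$ cross each leaf of $\widetilde L^s_2$ in the same direction, and this is exactly what the paper extracts from the isotopy --- $\tilde\phi_t(\widetilde L^u_1)$ stays strongly transverse to $\widetilde L^s_2$ for all $t$, so the crossing direction cannot flip between $\tilde\phi_{in}$ and $\tilde\phi_{out}$, and at $t=1$ it agrees with $\widetilde F^u_2$. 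Similarly for \textbf{(B)}: your ``$G$ has a single well-defined orientation behaviour'' argument is not precise (it is unclear which orientation structure $G:D\to G(D)$ is supposed to preserve or reverse, since $D$ is bounded by oriented $\widetilde L^s_1$-leaves and $G(D)$ by oriented $\widetilde L^u_2$-leaves, and there is no canonical comparison), and again the isotopy is nowhere used. In the paper's Lemma~\ref{l.equivalence-orientation}, the isotopy enters to establish that $\tilde\phi_{in}(\ell_1)$ and $\tilde\phi_{out}(\ell_1)$ cross the strip $D_2$ in the same direction (step~\eqref{e.4}); this is the step that links the orientation data of $\widetilde\theta_1$ on $D$ (seen through $\tilde\phi_{out}$) with that of $\widetilde\theta_2$ on $\tilde\phi_{in}(D)$ (seen through $\tilde\phi_{in}$). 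Since hypothesis~2 (strong isotopy) is precisely the hypothesis that distinguishes Theorem~\ref{t.main} from a false statement, any proof of \textbf{(A)} or \textbf{(B)} that does not use it is suspect; here, the missing use is not a stylistic omission but the heart of both lemmas.
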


In order to prove Proposition~\ref{p.Delta-preserves-order}, we need several intermediary results.

\begin{lemma}
\label{l.phi-preserves-order}
The map $\tilde\phi_{in}:(\cA_1,\prec_1)\longrightarrow (\cA_2,\prec_2)$ is order-preserving. 
\end{lemma}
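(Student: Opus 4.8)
The plan is to unwind the definition of $\tilde\phi_{in}$ and reduce the statement to the orientation-compatibility relation~\eqref{e.phi-out-preserves-orientation}. Recall that $D_1\prec_1 D_1'$ means there is an arc of an oriented leaf of $\widetilde F^u_1$ running from $D_1$ to $D_1'$; by Proposition~\ref{p.order-compatible-cc} we may as well pick leaves $\tilde\alpha_1\subset D_1$, $\tilde\alpha_1'\subset D_1'$ of $\widetilde F^s_1$ with $\tilde\alpha_1\prec_1\tilde\alpha_1'$, i.e. an arc $\tilde\beta$ of an oriented leaf of $\widetilde F^u_1$ with origin on $\tilde\alpha_1$ and terminus on $\tilde\alpha_1'$. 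I want to transport this configuration by $\tilde\phi_{in}$ into $\widetilde S_2$ and check the resulting arc is (or can be replaced by) an arc of an oriented leaf of $\widetilde F^u_2$ going from $D_2:=\tilde\phi_{in}(D_1)$ to $D_2':=\tilde\phi_{in}(D_1')$.

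The first step is to observe that $\tilde\phi_{in}(D_1)$ and $\tilde\phi_{in}(D_1')$ are the connected components of $\widetilde S_2\setminus\widetilde L^s_2$ corresponding to $D_2$, $D_2'$, since $\phi_{in}(L^s_1)=L^s_2$. The arc $\tilde\phi_{in}(\tilde\beta)$ then runs from a point of $D_2$ to a point of $D_2'$, but it need not lie in a leaf of $\widetilde F^u_2$, because $\phi_{in}$ does \emph{not} send $\widetilde F^u_1$ to $\widetilde F^u_2$ in general. This is exactly the point where the strong-transversality isotopy enters, just as in the proof of Proposition~\ref{p.equivalence-transitions}. The key step is therefore: using the isotopy $(\tilde\phi_t)_{t\in[0,1]}$ joining $\tilde\phi_{in}$ to $\tilde\phi_{out}$ with $\tilde\phi_t(\widetilde L^u_1)$ strongly transverse to $\widetilde L^s_2$ for all $t$, I will argue that the ``betweenness'' data encoded by $D_1\prec_1 D_1'$ — which really only records on which side of each intervening leaf of $\widetilde L^s_1$ the two strips sit, relative to the transverse $\widetilde L^u_1$-direction — is carried along the isotopy unchanged, so that it can be read off using $\tilde\phi_{out}$ in place of $\tilde\phi_{in}$. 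Concretely: $D_1\prec_1 D_1'$ iff there is an arc of a leaf of $\widetilde L^u_1$ (hence of $\widetilde F^u_1$, since leaves of $\widetilde L^u_1$ are leaves of $\widetilde F^u_1$) running from $D_1$ to $D_1'$ in the correct orientation, and by strong transversality along the whole isotopy this property is equivalent to the corresponding property for the images under $\tilde\phi_{out}$, i.e. to the existence of an arc of a leaf of $\tilde\phi_{out}(\widetilde L^u_1)$ from $\tilde\phi_{out}(D_1)$ to $\tilde\phi_{out}(D_1')$ crossing exactly the same collection of leaves of $\widetilde L^s_2$.

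Next I would translate this via $\tilde\phi_{out}$. By~\eqref{e.phi-out-preserves-orientation}, $\tilde\phi_{out}$ carries the oriented lamination $\widetilde L^u_1$ onto the oriented lamination $\widetilde L^u_2$; hence an oriented arc of a leaf of $\widetilde L^u_1$ from $D_1$ to $D_1'$ is sent to an oriented arc of a leaf of $\widetilde L^u_2\subset\widetilde F^u_2$. Its endpoints lie in $\tilde\phi_{out}(D_1)$ and $\tilde\phi_{out}(D_1')$, but to conclude $D_2\prec_2 D_2'$ I must relate these to $\tilde\phi_{in}(D_1)=D_2$ and $\tilde\phi_{in}(D_1')=D_2'$. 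Since $\tilde\phi_{in}$ and $\tilde\phi_{out}$ are joined by the isotopy $(\tilde\phi_t)$ through maps keeping $\tilde\phi_t(\widetilde L^u_1)$ strongly transverse to $\widetilde L^s_2$, the combinatorial position of $\tilde\phi_t(D_1)$ and $\tilde\phi_t(D_1')$ relative to the oriented $\widetilde F^u_2$-direction and the leaves of $\widetilde L^s_2$ is independent of $t$ (a connectedness-in-$t$ argument, exactly as the ``it follows that $\tilde\phi_{out}(\tilde\theta_1(D_1))$ intersects $\tilde\phi_{in}(D_1')$ iff $\tilde\phi_{in}(\tilde\theta_1(D_1))$ does'' step in Proposition~\ref{p.equivalence-transitions}). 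Therefore $D_2\prec_2 D_2'$, which is what we wanted.

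The main obstacle, I expect, is making the ``betweenness is preserved along the isotopy'' argument rigorous: one has to show that the oriented-$\widetilde F^u_2$ order between two strips of $\widetilde S_2\setminus\widetilde L^s_2$ is determined purely by which leaves of $\widetilde L^s_2$ an oriented $\widetilde F^u_2$-arc between them must cross, and that this crossing pattern is an isotopy invariant under deformations keeping the two laminations strongly transverse. Here Proposition~\ref{p.intersection-two-leaves} (a leaf of $\widetilde F^s$ and a leaf of $\widetilde F^u$ meet in at most one point) and Proposition~\ref{p.strip} (each strip is trivially bifoliated, so any $\widetilde F^u$-arc crosses it monotonically) will be used to guarantee that the order $\prec_2$ really is encoded combinatorially and does not jump during the isotopy. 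Once that is in place, the rest is bookkeeping with the orientation conventions, and one obtains that $\tilde\phi_{in}$ is order-preserving from $(\cA_1,\prec_1)$ to $(\cA_2,\prec_2)$.
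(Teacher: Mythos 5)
Your proposal is correct and follows essentially the same strategy as the paper: reduce $D_1\prec_1 D_1'$ to the existence of an oriented arc of a leaf of $\widetilde L^u_1$ crossing $D_1$ before $D_1'$, push it to $\widetilde S_2$ by $\tilde\phi_{in}$, use the strong-transversality isotopy $(\tilde\phi_t)$ to deduce that the $\tilde\phi_{out}$-image of the same arc still crosses $D_2=\tilde\phi_{in}(D_1)$ before $D_2'=\tilde\phi_{in}(D_1')$, and then invoke~\eqref{e.phi-out-preserves-orientation} to recognize that image as an oriented arc of a leaf of $\widetilde L^u_2$. The ``betweenness is preserved along the isotopy'' step you flag as the technical crux is exactly the (briefly stated) deduction in the paper's proof, so the two arguments coincide.
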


\begin{proof}
Consider two elements $D_1,D_1'$ of $\cA_1$. Assume that $D_1\prec_1 D_1'$. This means that there exists a leaf $\ell_1$ of the oriented lamination $\widetilde L^u_1$ which crosses $D_1$ before crossing $D_1'$. As a consequence, if we endow $\tilde\phi_{in}(\ell_1)$ with the image under $\tilde\phi_{in}$ of the orientation of $\alpha_1$, then $\tilde\phi_{in}(\ell_1)$ crosses $\tilde\phi_{in}(D_1)$ before crossing $\tilde\phi_{in}(D_1')$. Now recall that:
\begin{itemize}
\item $\tilde\phi_{in}(D_1)$ and $\tilde\phi_{in}(D_1')$ are strips bounded by leaves of the lamination $\tilde\phi_{in}(\widetilde L^s_1)=\widetilde L^s_2$,
\item there exists an isotopy $(\tilde\phi_t)$ joining $\tilde\phi_{in}$ to $\tilde\phi_{out}$ such that the lamination $\tilde\phi_t(\widetilde L^u_1)$ is strongly transverse to the lamination $\widetilde L^s_2$ for every $t$.
\end{itemize}
We deduce that, if we endow  $\tilde\phi_{out}(\ell_1)$ with the image under $\tilde\phi_{out}$ of the orientation of $\ell_1$, then $\tilde \phi_{out}(\ell_1)$ crosses $\tilde\phi_{in}(D_1)$ before crossing $\tilde\phi_{in}(D_1')$. According to~\eqref{e.phi-out-preserves-orientation}, this means that there is a leaf of the oriented lamination $\widetilde L^u_1$ which crosses $\tilde\phi_{in}(D_1)$ before crossing $\tilde\phi_{in}(D_1')$. By definition of the partial order $\prec_2$, this means that  $\tilde\phi_{in}(D_1)\prec_2\tilde\phi_{in}(D_1')$.
\end{proof}

\begin{lemma}
\label{l.equivalence-orientation}
Let $D_1$ be a connected component of $\widetilde S_1\setminus\widetilde L^s_1$. Set $D_2:=\tilde\phi_{in}(D_1)$. Then  the following are equivalent:
\begin{enumerate}
\item the map $\tilde\theta_1$ restricted to the strip $D_1$ preserves the orientation of the foliation $\widetilde F^u_1$,
\item the map $\tilde\theta_2$  restricted to the strip $D_2$ preserves the orientation of the foliation $\widetilde F^u_2$.
\end{enumerate}
\end{lemma}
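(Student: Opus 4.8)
The idea is to relate the orientation behavior of $\tilde\theta_i$ on the strip $D_i$ to the orientation behavior of $\tilde\theta_i$ on the image strip $\tilde\theta_i(D_i)$ via the \emph{unstable} foliation, and then to transfer this along $\tilde\phi_{in}$ (and $\tilde\phi_{out}$) using the strong transversality of the isotopy $(\tilde\phi_t)$. Recall that by Remark~\ref{r.orientations}(2), since $\tilde\theta_i$ always preserves the orientation of the surface $\widetilde S_i$, the map $\tilde\theta_i$ restricted to a connected component $D_i$ of $\widetilde S_i\setminus\widetilde L^s_i$ preserves the orientation of $\widetilde F^u_i$ if and only if it preserves the orientation of $\widetilde F^s_i$. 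So statements 1 and 2 can equivalently be phrased in terms of $\widetilde F^s_i$; I will use whichever of the two is more convenient at each point.

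First I would analyze condition 1 intrinsically in $M_1$. The strip $D_1\subset\widetilde S_1\setminus\widetilde L^s_1$ is the lift of a region in $S_1\setminus L^s_1$, and the crossing map there is a lift of the crossing map $\theta_X$ of the plug $(U,X)$, composed with the identifications $\pi_i$. Since $\theta_X$ is a fixed map of the fixed plug $(U,X)$, independent of $i$, the question of whether the crossing map reverses or preserves the transverse orientation along a given orbit segment of $X$ is \emph{the same computation in $U$ for $i=1$ and $i=2$}. Concretely: pick a leaf $\ell^s_1$ of $\widetilde F^s_1$ inside $D_1$; it projects to a leaf of $F^s_1$, which corresponds via $(\pi_{1|\partial^{in}U})^{-1}$ to an arc of $L^s_X$ in $\partial^{in}U$ (more precisely a full leaf, since $\widetilde F^s_i$ restricted to $\widetilde S_i\setminus\widetilde L^s_i$ consists of full leaves — Remark~\ref{r.preserve-foliations}); following the flow of $X$ to $\partial^{out}U$ and identifying back gives a piece of a leaf of $F^s_2$. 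The sign with which $\tilde\theta_i$ acts on the transverse ($\widetilde F^u_i$) orientation along this leaf is governed by whether the flow of $X$, transported from $\partial^{in}U$ to $\partial^{out}U$, preserves or reverses a chosen co-orientation — and the co-orientations on $\widetilde F^u_1$ and on $\widetilde F^u_2$ were set up (equation~\eqref{e.phi-out-preserves-orientation}) precisely so that $\tilde\phi_{out}$ carries the oriented $\widetilde L^u_1$ to the oriented $\widetilde L^u_2$. Hence the orientation sign in $U$ is the \emph{same} integer whether we read it off through $\pi_1$ or through $\pi_2$.

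The remaining point is to justify that this ``sign in $U$'' is faithfully detected by $\tilde\theta_i$ acting on $\widetilde F^u_i$ \emph{inside $\widetilde S_i$} — i.e., that no extra sign is introduced by the fact that $\tilde\theta_i(D_i)$ is a proper unstable strip carrying the $\widetilde F^u_i$-orientation transported from $D_i$. For $i=1$ this is a tautology; for $i=2$ we use the isotopy $(\tilde\phi_t)_{t\in[0,1]}$ from Proposition~\ref{p.isotopy}: along this isotopy the lamination $\tilde\phi_t(\widetilde L^u_1)$ stays strongly transverse to $\widetilde L^s_2$, so the combinatorial/orientation data of how $\tilde\phi_t(\widetilde L^u_1)$ threads through the strips bounded by $\widetilde L^s_2$-leaves does not change; at $t=1$ we have $\tilde\phi_1=\tilde\phi_{out}$ and $\tilde\phi_{out}(\widetilde L^u_1)=\widetilde L^u_2$ as oriented laminations. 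Combining this with Proposition~\ref{p.relation-lifts} ($\tilde\theta_2\circ\tilde\phi_{in}=\tilde\phi_{out}\circ\tilde\theta_1$): the image strip $\tilde\theta_2(D_2)=\tilde\theta_2(\tilde\phi_{in}(D_1))=\tilde\phi_{out}(\tilde\theta_1(D_1))$, so an oriented $\widetilde F^u_2$-arc crossing $\tilde\theta_2(D_2)$ in a given direction is exactly the $\tilde\phi_{out}$-image of an oriented $\widetilde F^u_1$-arc crossing $\tilde\theta_1(D_1)$, and $\tilde\phi_{out}$ preserves the unstable orientation by~\eqref{e.phi-out-preserves-orientation}. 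Chasing the orientations of a single unstable arc through the commuting square $\tilde\theta_2\circ\tilde\phi_{in}=\tilde\phi_{out}\circ\tilde\theta_1$ — entrance side via $\tilde\phi_{in}$, which preserves $\prec$ by Lemma~\ref{l.phi-preserves-order}, exit side via $\tilde\phi_{out}$, which preserves the unstable orientation — shows that $\tilde\theta_1$ reverses the $\widetilde F^u_1$-orientation on $D_1$ if and only if $\tilde\theta_2$ reverses the $\widetilde F^u_2$-orientation on $D_2$.

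The main obstacle, I expect, is the bookkeeping in this last paragraph: one must be careful that Lemma~\ref{l.phi-preserves-order} controls $\tilde\phi_{in}$ only through the \emph{order} $\prec$ on $\cA_i$ (hence, a priori, only up to the overall orientation of $\widetilde F^u_i$ within each strip, not the absolute co-orientation), whereas the conclusion needs the honest orientation sign of $\tilde\theta_i$. The resolution is exactly that $\prec_i$ on $\cA_i$ is defined from the chosen orientation of $\widetilde F^u_i$, and these two orientations (for $i=1,2$) are coupled by~\eqref{e.phi-out-preserves-orientation}; so ``order-preserving for $\tilde\phi_{in}$'' together with ``unstable-orientation-preserving for $\tilde\phi_{out}$'' pin down the sign. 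Writing this cleanly will require fixing, once and for all, a transverse arc of $\widetilde F^u$ meeting $D_i$, tracking the four points where it meets $\partial D_i$ and $\partial\tilde\theta_i(D_i)$, and comparing their cyclic/linear order before and after applying $\tilde\theta_i$ — a finite, if slightly tedious, verification.
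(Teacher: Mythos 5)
Your plan is correct and follows essentially the same strategy as the paper's proof: push a single oriented unstable leaf around the commuting square $\tilde\theta_2\circ\tilde\phi_{in}=\tilde\phi_{out}\circ\tilde\theta_1$, use the isotopy $(\tilde\phi_t)$ to show that $\tilde\phi_{in}(\ell_1)$ and $\tilde\phi_{out}(\ell_1)$ cross $D_2$ in the same direction, and use~\eqref{e.phi-out-preserves-orientation} to match the two unstable orientations along the exit strip $\tilde\theta_2(D_2)$ --- this is exactly the bookkeeping the paper carries out with the six auxiliary leaves $\ell_1,\ell_2,\hat\ell_2,\ell_1',\ell_2',\hat\ell_2'$ and the boundary-leaf neighborhoods. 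The one imprecision is the ``entrance side'' appeal to Lemma~\ref{l.phi-preserves-order}, which concerns the order $\prec$ between \emph{distinct} strips rather than the direction an arc crosses the single strip $D_2$; the isotopy argument you already invoke is the correct ingredient there, and is in fact the same mechanism underlying Lemma~\ref{l.phi-preserves-order}.
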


\begin{proof}
The proof is a bit intricate, because we need to introduce no less than six leaves and compare their orientations. Recall that we have chosen some orientations for the foliations $\widetilde F^u_1$ and $\widetilde F^u_2$. In the sequel, we will also consider the foliations $(\tilde\phi_{in})_*\widetilde F^u_1$, $(\tilde\phi_{out})_*\widetilde F^u_1$ and $(\tilde\phi_{t})_*\widetilde F^u_1$ ; we endow them with the images under $\tilde\phi_{in}$, $\tilde\phi_{out}$ and $\tilde\phi_{t}$ of the orientation of $\widetilde F^u_1$. 

We pick a leaf $\ell_1$ of the lamination $\widetilde L^u_1$ so that $\ell_1\cap D_1\neq\emptyset$ (such a leaf always exists since the laminations $\widetilde L^s_1$ and $\widetilde L^u_1$ are strongly transverse). Then we set
$$\ell_2:=\tilde\phi_{out}(\ell_1)\quad\quad\hat\ell_2:=\tilde\phi_{in}(\ell_1)\quad\quad\ell_1':=\tilde\theta_{1}(\ell_1\cap D_1)\quad\quad\ell_2':=\tilde\theta_{2}(\ell_2\cap D_2)\quad\quad\hat\ell_2':=\tilde\theta_{2}(\hat\ell_2\cap D_2).$$
Observe that 
\begin{equation}
\label{e.hat-ell_2'}
\hat\ell_2'=\tilde\theta_{2}(\tilde\phi_{in}(\ell_1)\cap D_2)=\tilde\theta_{2}\circ\tilde\phi_{in}(\ell_1\cap D_1)=\tilde\phi_{out}\circ \tilde\theta_{1}(\ell_1\cap D_1)=\tilde\phi_{out}(\ell_1')
\end{equation}
(the third equality follows from Proposition~\ref{p.relation-lifts}).  Now recall that, for $i=1,2$, both $\widetilde L^u_i$ and $(\tilde\theta_i)_*(\widetilde L^u_i\cap D_i^s)$ are sublaminations of the foliation $\widetilde\cF^u_i$. Also recall that $\tilde\phi_{out}(\widetilde L^u_1)=\widetilde L^u_2$. This provides some natural orientations on $\ell_1,\ell_1',\ell_2,\ell_2',\hat\ell_2,\hat\ell_2'$:
\begin{itemize}
\item $\ell_1$ and $\ell_1'$ are leaves of the foliation $\widetilde F^u_1$, hence inherit of the orientation of $\widetilde F^u_1$; 
\item $\ell_2$ and $\ell_2'$ are leaves of the foliation $\widetilde F^u_2$, hence inherit of the orientation of $\widetilde F^u_2$; we endow them with the orientation of this foliation;
\item $\hat\ell_2$ is a leaf of the foliation $(\tilde\phi_{in})_*\widetilde F^u_1$,  hence inherits of the orientation of $(\tilde\phi_{in})_*\widetilde F^u_1$;
\item $\hat\ell_2'$ is a leaf of the foliation $(\tilde\phi_{out})_*\widetilde F^u_1$, hence inherits of the orientation of $(\tilde\phi_{out})_*\widetilde F^u_1$.
\end{itemize}

By symmetry, it is enough to prove the implication $1\Rightarrow 2$. So, we assume that the restriction of $\tilde\theta_1$ to $D_1^s$ preserves the orientation of $\widetilde F^u_1$;  in particular:
\begin{equation}
\label{e.1}
\mbox{$\tilde\theta_1$ maps the orientation of $\ell_1$ to those of $\ell_1'$.}
\end{equation}
According to~\eqref{e.phi-out-preserves-orientation},
\begin{equation}
\label{e.2}
\mbox{$\tilde\phi_{out}$ maps the orientation of $\ell_1$ to those of $\ell_2$.}
\end{equation}
The orientations of $\ell_1,\ell_2,\hat\ell_2,\hat\ell_2'$ are chosen in such a way that $\tilde\phi_{in}^{-1}$ maps the orientation of $\hat\ell_2$ to those of $\ell_1$, and $\tilde\phi_{out}$ maps the orientation of $\ell_1'$ to those of $\hat\ell_2'$. Puting this together with~\eqref{e.1}, we obtain that $\tilde\phi_{out}\circ\tilde\theta_1\circ\tilde\phi_{in}^{-1}$ maps the orientation of $\hat\ell_2$ to those of~$\hat\ell_2'$. Using proposition~\ref{p.relation-lifts}, we obtain
\begin{equation}
\label{e.3}
\mbox{$\tilde\theta_2$ maps the orientation of $\hat\ell_2$ to those of $\hat\ell_2'$.}
\end{equation}

Our final goal is to prove that $\tilde\theta_2$ maps the orientation of $\ell_2$ to those of $\ell_2'$. So, in view of~\eqref{e.3}, we need to compare the orientations of $\ell_2$ and $\hat\ell_2$ on the one hand, and the orientations $\ell_2'$ and $\hat\ell_2'$ on the other hand. We start by $\ell_2$ and $\hat\ell_2$.

Recall that $D_2$ is a strip in $\widetilde S_2$ bounded by two leaves of the stable lamination $\widetilde L^s_2$. We denote these two leaves by $\alpha$ and $\beta$, in such a way that oriented unstable leaf $\ell_2$ enters in $D_2$ by crossing $\alpha$ and exits $D_2$ by crossing $\beta$. According to~\eqref{e.2}, the orientation of $\ell_2=(\tilde\phi_{out})_*\ell_1$ as a leaf of $\widetilde L^u_2\subset\widetilde F^u_2$ coincides with the orientation as a leaf of $(\tilde\phi_{out})_*\widetilde L^u_1\subset (\tilde\phi_{out})_* F^u_1$. Moreover, recall that there exists an isotopy $(\tilde\phi_t)_{t\in [0,1]}$ joining $\tilde\phi_0=\tilde\phi_{in}$ to  $\tilde\phi_1=\tilde\phi_{out}$, such that the lamination $\tilde\phi_t(\widetilde L^u_1)$ is strongly transverse to the lamination $\widetilde L^u_2$ for every $t$. We deduce that $\hat\ell_2=(\tilde\phi_{in})_*(\ell_1)$ crosses $D_2$ in the same direction as $\ell_2=(\tilde\phi_{out})_*\ell_1$. In other words,  
\begin{equation}
\label{e.4}
\mbox{both $\ell_2$ and $\hat\ell_2$ enter in $D_2$ by crossing $\alpha$ and exits $D_2$ by crossing $\beta$.}
\end{equation}
Let $U$ and $V$ be some disjoint neighborhoods of the stable leaves $\alpha$ and $\beta$ in the strip $D_2$. Assertion~\eqref{e.4} can be reformulated as follows
\begin{equation}
\label{e.5}
\mbox{the arcs of oriented leaves $\ell_2\cap D_2$ and $\hat\ell_2\cap D_2$ both go from $U$ to $V$.}
\end{equation}

We 	are left to compare the orientations of $\ell_2'$ and $\hat\ell_2'$.  First observe that $\tilde\theta_2(D_2)$ is an open strip in $\widetilde S_2$, bounded by two leaves of the unstable lamination $\widetilde L^u_2=(\tilde\phi_{out})_*\widetilde L^u_1$. The closure $\mbox{Cl}(\tilde\theta_2(D_2))$ of $\tilde\theta_2(D_2)$ is the union of the open strip $\tilde\theta_2(D_2)$ and its two boundary leaves. The boundary components of $\tilde\theta_2(D_2)$ are leaves of both the foliations $F^u_2$ and $(\tilde\phi_{out})_*F^u_1$. Moreover, $F^u_2$ and $(\tilde\phi_{out})_*F^u_1$ induce two trivial oriented foliations on the closed strip  $\mbox{Cl}(\tilde\theta_2(D_2))$. In particular, the leaves of $F^u_2$ and $(\tilde\phi_{out})_*F^u_1$ in $\mbox{Cl}(\tilde\theta_2(D_2))$ go from one end of $\mbox{Cl}(\tilde\theta_2(D_2))$ to the other end. In order to distinguish the two ends of the closed strip $\mbox{Cl}(\tilde\theta_2(D_2))$, we use the set $\mbox{Cl}(\tilde\theta_2(U))$ and  $\mbox{Cl}(\tilde\theta_2(V))$. These sets are disjoint neighbourhoods of the two ends of $\mbox{Cl}(\tilde\theta_2(D_2))$. So we just need to decide if the leaves go from $\mbox{Cl}(\tilde\theta_2(U))$ and  $\mbox{Cl}(\tilde\theta_2(V))$, or the contrary. On the one hand, putting~\eqref{e.3} and~\eqref{e.5} together, we obtain that $\hat\ell_2$ goes from $\mbox{Cl}(\tilde\theta_2(U))$ to $\mbox{Cl}(\tilde\theta_2(V))$. On the other hand,  $F^u_2$ and $(\tilde\phi_{out})_*F^u_1$ are trivial oriented foliations on $\mbox{Cl}(\tilde\theta_2(D_2))$, and, according to~\eqref{e.phi-out-preserves-orientation}, they induce the same orientation on the boundary leaves of $D_2'$. So we conclude that all the leaves of both the oriented foliations $F^u_2$ and $(\tilde\phi_{out})_*F^u_1$ go from $\mbox{Cl}(\tilde\theta_2(U))$ to $\mbox{Cl}(\tilde\theta_2(V))$. In particular, 
\begin{equation}
\label{e.6}
\mbox{the oriented leaves $\ell_2'$ and $\hat\ell_2'$ go from $\tilde\theta_2(U)$ to $\tilde\theta_2(V)$.}
\end{equation}
From~\eqref{e.5} and~\eqref{e.6}, we deduce that $\tilde\theta_{2|D_2}$ maps the orientation of $\ell_2$ to those of $\ell_2'$. By definition of the orientations of  $\ell_2$ and $\ell_2'$, this means that the restriction of  $\tilde\theta_2$ to the strip $D_2$ preserves the orientation of the foliation $\widetilde F^u_2$. This completes the proof of the implication $1\Rightarrow 2$. 
\end{proof}

\begin{figure}[ht]
\begin{center}
  \includegraphics[totalheight=12cm]{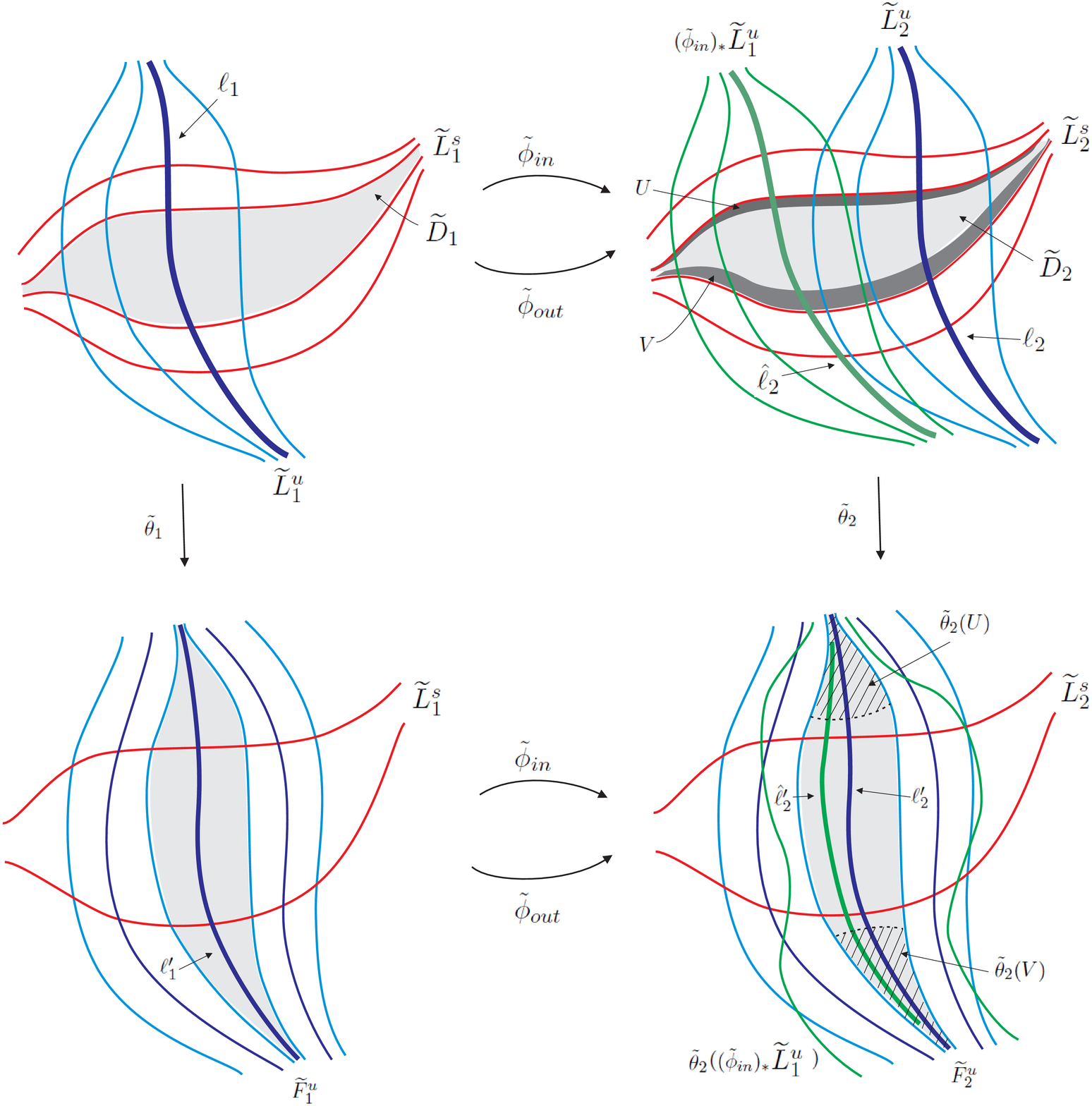}
\caption{\label{f.orientation}Proof of Lemma~\ref{l.equivalence-orientation}}
 \end{center}
\end{figure}

\begin{corollary}
\label{c.equivalence-orientation}
Let $D_{1,0},\dots,D_{1,p_0-1}$ be connected components of $\widetilde S_1\setminus\widetilde L^s_1$, so that $\bigcap_{p=0}^{p_0-1}\widetilde\theta_1^p(D_{1,p})$ is non-empty. For $p=1,\dots,p_0-1$, let $D_{2,p}:=\tilde\phi_{in}( D_{1,p})$. Then the following are equivalent:
\begin{enumerate}
\item the map $\widetilde\theta_1^{p_0}$ restricted to $\bigcap_{p=0}^{p_0-1}\widetilde\theta_1^p(D_{1,p})$ preserves the orientation of the foliation $\widetilde F^u_1$,
\item the map $\widetilde\theta_2^{p_0}$ restricted to $\bigcap_{p=0}^{p_0-1}\widetilde\theta_2^p(D_{2,p})$ preserves the orientation of the foliation $\widetilde F^u_2$.
\end{enumerate}
\end{corollary}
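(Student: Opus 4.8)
The plan is to derive the corollary from Lemma~\ref{l.equivalence-orientation} by iterating the latter and counting a parity. (In the displayed sets of the corollary the exponent on $\widetilde\theta_i$ should be read as $-p$, exactly as in Propositions~\ref{p.oricoh} and~\ref{p.sub-strips}, and one should also read $D_{2,0}:=\widetilde\phi_{in}(D_{1,0})$; I use these conventions below.) First I would set up the sub-strips and factor the iterated return maps. For $i=1,2$ put $\widehat D_i:=\bigcap_{p=0}^{p_0-1}\widetilde\theta_i^{-p}(D_{i,p})$; by hypothesis $\widehat D_1\neq\emptyset$, and $\widehat D_2\neq\emptyset$ as well by Proposition~\ref{p.equivalence-transitions} (it corresponds under $\widetilde\phi_{in}$ to the same finite cylinder as $\widehat D_1$). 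By Proposition~\ref{p.sub-strips}, $\widehat D_i$ is a trivially bifoliated proper stable sub-strip of $D_{i,0}$. For $0\le k\le p_0$ set $\widehat D_i^{(k)}:=\widetilde\theta_i^{k}(\widehat D_i)$. Since $\widehat D_i\subset\widetilde\theta_i^{-k}(D_{i,k})$, one has $\widehat D_i^{(k)}\subset D_{i,k}$ for $0\le k\le p_0-1$; in particular $\widehat D_i^{(k)}$ is disjoint from $\widetilde L^s_i$, so $\widetilde\theta_i$ is defined on it and $\widetilde\theta_i(\widehat D_i^{(k)})=\widehat D_i^{(k+1)}$. It follows that $\widetilde\theta_i^{p_0}$ is well-defined on $\widehat D_i$ and equals the composition of the $p_0$ maps $\widetilde\theta_i\colon\widehat D_i^{(k)}\to\widehat D_i^{(k+1)}$, $k=0,\dots,p_0-1$, the $k$-th of which is the restriction of $\widetilde\theta_i$ to the connected set $\widehat D_i^{(k)}\subset D_{i,k}$.

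Next I would reduce orientation-preservation to a parity count. Each map $\widetilde\theta_i\colon\widehat D_i^{(k)}\to\widehat D_i^{(k+1)}$ preserves the foliation $\widetilde F^u_i$ (Proposition~\ref{p.Poincare-map}), and, being a homeomorphism between connected sets, it either preserves or reverses the orientation of $\widetilde F^u_i$. Moreover this behavior is the same as that of $\widetilde\theta_i$ on all of the connected component $D_{i,k}$ of $\widetilde S_i\setminus\widetilde L^s_i$: the sign of the action of $\widetilde\theta_i$ on the orientation of $\widetilde F^u_i$ is locally constant, hence constant on the connected set $D_{i,k}$, and $\widehat D_i^{(k)}\subset D_{i,k}$ (compare Remarks~\ref{r.orientations}). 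Since these signs multiply under composition, $\widetilde\theta_i^{p_0}|_{\widehat D_i}$ preserves the orientation of $\widetilde F^u_i$ if and only if the integer $r_i:=\#\{\,k\in\{0,\dots,p_0-1\}\ :\ \widetilde\theta_i|_{D_{i,k}}\ \text{reverses the orientation of}\ \widetilde F^u_i\,\}$ is even.

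Finally I would invoke Lemma~\ref{l.equivalence-orientation} termwise. For each $k\in\{0,\dots,p_0-1\}$, that lemma applied to $D_1=D_{1,k}$ and $D_2=D_{2,k}=\widetilde\phi_{in}(D_{1,k})$ gives that $\widetilde\theta_1|_{D_{1,k}}$ preserves the orientation of $\widetilde F^u_1$ if and only if $\widetilde\theta_2|_{D_{2,k}}$ preserves the orientation of $\widetilde F^u_2$. Hence the two sets of reversing indices coincide, so $r_1=r_2$, and in particular $r_1$ is even if and only if $r_2$ is even; by the second paragraph this is precisely the asserted equivalence of (1) and (2). The argument is mostly bookkeeping; the one point demanding care is the reduction in the middle paragraph --- verifying $\widehat D_i^{(k)}\subset D_{i,k}$ and that the orientation behavior of $\widetilde\theta_i$ on a connected sub-region of $D_{i,k}$ agrees with its behavior on all of $D_{i,k}$ --- after which the corollary drops out of Lemma~\ref{l.equivalence-orientation} via the parity count.
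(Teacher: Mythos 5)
Your proposal is correct and takes essentially the same route as the paper: the paper introduces $J_i$ (the set of indices where $\widetilde\theta_i|_{D_{i,j}}$ preserves orientation), invokes Lemma~\ref{l.equivalence-orientation} termwise to get $J_1=J_2$, and then concludes by a parity argument, exactly your $r_1=r_2$ count with the complementary convention. You supply more detail on the factorization of $\widetilde\theta_i^{p_0}$ and on why the orientation-behavior on the sub-strip $\widehat D_i^{(k)}$ agrees with that on all of $D_{i,k}$ (the paper dispatches this with ``it is clear that''), but the argument is the same.
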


\begin{proof}
For $i=1,2$, consider the set 
$$J_i:=\left\{j\in\{0,\dots,p_0-1\}\mbox{ s. t. the restriction of }\widetilde\theta_i\mbox{ to }D_{i,p}\mbox{ preserves the orientation of }\widetilde F^u_i\right\}.$$ 
On the one hand, Lemma~\ref{l.equivalence-orientation} implies that the sets $J_1$ and $J_2$ coincide. On the other hand, it is clearly that the restriction of $\widetilde\theta_i$ to $\bigcap_{j=0}^{p_0-1}\widetilde\theta_i^p(D_{i,p})$ preserves the orientation of the leaves of $\widetilde F^u_i$ if and only if the cardinality of $J_i$ is even. The corollary follows. 
 \end{proof}

\begin{proof}[Proof of Proposition~\ref{p.Delta-preserves-order}]
We consider two leaves $\gamma_1$ and $\gamma_1'$ in $f^{s,\infty}_1$, we denote $\gamma_2:=\Delta^s(\gamma_1)$ and $\gamma_2':=\Delta^s(\gamma_1')$, and we assume that $\gamma_1\prec_{1}\gamma_1'$. We aim to prove $\gamma_2\prec_{2} \gamma_2'$. Let 
$$\chi_1^s(\tilde\gamma_1)=\left(D_{1,p}\right)_{p\geq 0}\quad\quad\chi_1^s(\tilde\gamma_1')=\left(D_{1,p}'\right)_{p\geq 0}\quad\quad\chi_2^s(\tilde\gamma_2)=\left(D_{2,p}\right)_{p\geq 0}\quad\quad\chi_2^s(\tilde\gamma_2')=\left(D_{2,p}'\right)_{p\geq 0}.$$
By defintion of the map $\chi_i^s$, this means that, for $i=1,2$, 
$$\tilde\gamma_i=\bigcap_{p\geq 0}\tilde\theta_i^{-p}(D_{i,p}) \quad\mbox{ and  }\quad\tilde\gamma_i'=\bigcap_{p\geq 0}\tilde\theta_i^{-p}(D_{i,p}').$$
And since $\tilde\gamma_2=\Delta^s(\tilde\gamma_1)$ and $\tilde\gamma_2'=\Delta^s(\tilde\gamma_1')$, we have 
$$D_{2,p}=\phi_{in}\left(D_{1,p}\right)\quad\mbox{and}\quad D_{2,p}'=\phi_{in}\left(D_{1,p}'\right)$$
for every $p\geq 0$. We denote by $p_0$ the smallest integer $p$ such that $D_{1,p}\neq D_{1,p}'$. 

Let us consider the case where the map $\widetilde\theta_1^{p_0}$ restricted to $\bigcap_{p=0}^{p_0-1}\tilde\theta_1^{-p}(D_{1,p})$ preserves the orientation of the foliation $\widetilde F^u_1$.
\begin{itemize}
\item Proposition~\ref{p.oricoh} implies that $D_{1,p_0}\prec_1 D_{1,p_0}'$.
\item Since $\phi_{in}:\cA_1\to\cA_2$ is order-preserving (Lemma~\ref{l.phi-preserves-order}), it follows that $D_{2,p_0}\prec_2 D_{2,p_0}'$.
\item Corollary~\ref{c.equivalence-orientation} implies that the map  $\widetilde\theta_2^{p_0}$, restricted to $\bigcap_{p=0}^{p_0-1}\tilde\theta_2^{-p}(D_{2,p})$ preserves the orientation of the foliation $\widetilde F^u_2$.
\item Using again Proposition~\ref{p.oricoh}, we deduce from the two last items above that $\tilde\gamma_2\prec_2\tilde\gamma_2'$, as desired.
\end{itemize}
The case where the map $\widetilde\theta_1^{p_0}$ restricted to $\bigcap_{p=0}^{p_0-1}\tilde\theta_1^{-p}(D_{1,p})$ reverses the orientation of the foliation $\widetilde F^u_1$ follows from the very same arguments. 
\end{proof}

\begin{corollary}
\label{c.extension-s}
The map $\Delta^s: f^{s,\infty}_1\longrightarrow f^{s,\infty}_2$ extends in a unique way to an order-preserving bijection $\Delta^s: f^{s}_1\longrightarrow f^{s}_2$.
\end{corollary}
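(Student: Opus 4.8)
The plan is to extend $\Delta^s$ ``by density for the order $\prec$''. First I would record three preliminary facts. (a) By the stable analogue of Proposition~\ref{p.density-stable-manifold}, the set $f^{s,\infty}_1$ is open and dense in $f^s_1$, and its complement $f^s_1\setminus f^{s,\infty}_1$ (the leaves contained in $\widetilde W^s(\Lambda_1)$) is \emph{also} dense; likewise for $i=2$. (b) By Proposition~\ref{p.local-total-order}, $\prec_1$ is a local total order on $f^s_1$; concretely, given an unstable leaf $\ell^u$ of $\widetilde F^u_1$, the map sending $x\in\ell^u$ to the stable leaf through $x$ is, by Proposition~\ref{p.intersection-two-leaves}, an injection of $\ell^u$ onto an open subset of $f^s_1$ which is an order isomorphism between the orientation of $\ell^u$ and $\prec_1$; so a neighbourhood of any leaf in $f^s_1$ is order-identified with an open interval of $\RR$. (c) Crucially, the quotient map $\widetilde S_1\to f^s_1$ being continuous, the image in $f^s_1$ of a \emph{compact} sub-arc of $\ell^u$ is a compact subset of $f^s_1$; this is the local ``order-completeness'' I will exploit. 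The same statements hold on the side $i=2$.

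Next I would carry out the extension. Let $\ell_0\in f^s_1\setminus f^{s,\infty}_1$. Using an unstable leaf through $\ell_0$ I identify a neighbourhood of $\ell_0$ in $f^s_1$ with an interval $I_1\subset\RR$, $\ell_0\leftrightarrow 0$, and by density of $f^{s,\infty}_1$ I pick $\hat\ell^-_n\nearrow\ell_0$ and $\hat\ell^+_n\searrow\ell_0$ in $f^{s,\infty}_1\cap I_1$. Since $\Delta^s$ is an order-preserving bijection (Proposition~\ref{p.Delta-preserves-order}; it is a composition of bijections by construction), $\ell^-_n:=\Delta^s(\hat\ell^-_n)$ is $\prec_2$-increasing, $\ell^+_n:=\Delta^s(\hat\ell^+_n)$ is $\prec_2$-decreasing, and $\ell^-_n\prec_2\ell^+_m$ for all $n,m$. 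Completing an unstable arc from $\ell^-_1$ to $\ell^+_1$ to an unstable leaf, and arguing with the $L/R$ decomposition exactly as in the proof of Proposition~\ref{p.pre-order}, I would show that every stable leaf $\prec_2$-between $\ell^-_1$ and $\ell^+_1$ crosses the corresponding compact sub-arc of that unstable leaf; hence all the $\ell^\pm_n$ lie in one compact subset of $f^s_2$ order-identified with a compact interval of $\RR$. Therefore $\ell^-_n\to\ell^-_\infty$ and $\ell^+_n\to\ell^+_\infty$ in $f^s_2$ with $\ell^-_\infty\preceq_2\ell^+_\infty$; and if the inequality were strict, picking $\ell'\in f^{s,\infty}_2$ strictly $\prec_2$-between them would give $\hat\ell^-_n\prec_1(\Delta^s)^{-1}(\ell')\prec_1\hat\ell^+_n$ for all $n$, forcing $(\Delta^s)^{-1}(\ell')=\ell_0$, which contradicts $\ell_0\notin f^{s,\infty}_1$. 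So $\ell^-_\infty=\ell^+_\infty$, and I set $\Delta^s(\ell_0):=\ell^-_\infty$.

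Then I would run the remaining verifications, none of which should be deep. Well-definedness: two admissible approximating sequences at $\ell_0$ are mutually $\prec_1$-cofinal, and two unstable leaves through $\ell_0$ induce the same germ of order at $\ell_0$, so monotonicity of $\Delta^s$ forces the same limit. Bijectivity: running the identical construction with $(\Delta^s)^{-1}$ in place of $\Delta^s$ produces a map $f^s_2\to f^s_1$, and using the density of both $f^s_i\setminus f^{s,\infty}_i$ together with the squeeze argument above one checks that the two extended maps are mutually inverse. Order-preservation on all of $f^s_1$: given $\ell_0\prec_1\ell_1$, insert (density of $f^{s,\infty}_1$) leaves of $f^{s,\infty}_1$ strictly $\prec_1$-between $\ell_0$ and $\ell_1$ and apply the monotone-limit description of $\Delta^s$ at $\ell_0$ and at $\ell_1$. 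Uniqueness: any order-preserving extension must send $\ell_0$ to a leaf squeezed between the $\ell^\pm_n$, and we have just seen there is exactly one such leaf.

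The step I expect to be the main obstacle is coping with the non-Hausdorffness (branching) of the leaf spaces $f^s_i$: a leaf $\ell_0$ can be a one-sided limit of a sequence that simultaneously accumulates onto a distinct, non-separated leaf, so approximating $\ell_0$ from a single side would not determine $\Delta^s(\ell_0)$. This is precisely why one squeezes $\ell_0$ from \emph{both} sides, and why one needs the complement $f^s_i\setminus f^{s,\infty}_i$ — not merely $f^{s,\infty}_i$ — to be dense: it guarantees that the branching pattern of $f^s_1$ is already faithfully recorded by the ordered dense subset $f^{s,\infty}_1$ on which $\Delta^s$ is defined, hence transported correctly onto $f^s_2$. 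Getting the bookkeeping of these branchings right in the injectivity and order-preservation arguments is where the care will be needed.
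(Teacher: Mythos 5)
Your argument is correct and is essentially the fleshed-out version of the paper's proof, which cites the same three ingredients — Proposition~\ref{p.Delta-preserves-order} (order-preservation of $\Delta^s$), Proposition~\ref{p.density-stable-manifold} (density of $f^{s,\infty}_i$ and of its complement), and Proposition~\ref{p.local-total-order} (local total ordering of $f^s_i$) — and leaves the extension-by-density and two-sided-squeeze bookkeeping implicit. You are also right to flag the non-Hausdorffness of the leaf spaces as the real point; the crossing lemma you sketch (any $\ell$ with $\ell^-_1\prec_2\ell\prec_2\ell^+_1$ must meet the unstable arc joining $\ell^-_1$ to $\ell^+_1$, hence lie in the same chart as the two-sided limit) is exactly what makes the extended map single-valued and unique, and it goes through by the $L(\cdot)/R(\cdot)$ argument together with Proposition~\ref{p.intersection-two-leaves}.
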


\begin{proof}
%
%
%
%
This is an immediate consequence of the following facts:
\begin{itemize}
\item $\Delta_s: f^{s,\infty}_1\longrightarrow f^{s,\infty}_2$ is an order-preserving map (Proposition~\ref{p.Delta-preserves-order});
\item for $i=1,2$, $f^{s,\infty}_i$ is a dense subset of the (non-separated) one-dimensional manifold $f^{s}_i$ (Proposition~\ref{p.density-stable-manifold});
\item for $i=1,2$, each leaf $\ell\in f^{s}_i$ has a neighborhood $U_\ell$ in  $f^{s}_i$ so that the leaves in $U_\ell$ are totally ordered (Proposition~\ref{p.local-total-order}).
\end{itemize}
\end{proof}

%

Of course, the stable and the unstable direction play some symmetric roles, hence the same arguments as above alow to prove the following analog of Corollary~\ref{c.extension-s}:

\begin{corollary}
\label{c.extension-u}
The map $\Delta^u: f^{u,\infty}_1\longrightarrow f^{u,\infty}_2$ extends in a unique way to an order-preserving bijection $\widehat\Delta^u: f^{u}_1\longrightarrow f^{u}_2$.
\end{corollary}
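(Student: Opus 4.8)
The plan is to mirror, line for line, the proof of Corollary~\ref{c.extension-s}, exchanging throughout the roles of the stable and the unstable directions. First I would fix the orientations. Choose an orientation of the lamination $L^s_X\subset\partial^{in}U$, push it by $\pi_1$ and $\pi_2$ to orientations of $L^s_1$ and $L^s_2$, and hence — since $L^s_i$ is a sublamination of $F^s_i$ meeting every connected component of $S_i$ — of the foliations $F^s_1,F^s_2$; then lift to orientations of $\widetilde F^s_1,\widetilde F^s_2$. By the very definition of $\phi_{in}=\pi_{2|\partial^{in}U}\circ(\pi_{1|\partial^{in}U})^{-1}$, these orientations are linked exactly as in~\eqref{e.phi-out-preserves-orientation}: $\tilde\phi_{in}$ carries the oriented lamination $\widetilde L^s_1$ to the oriented lamination $\widetilde L^s_2$. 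As in Subsection~\ref{ss.partial-orders}, an orientation of $\widetilde F^s_i$ induces a pre-order $\prec_i$ on $f^u_i$ (declare $m\prec_i m'$ when an oriented arc of a leaf of $\widetilde F^s_i$ runs from a point of $m$ to a point of $m'$), and hence — via the unstable analogs of Propositions~\ref{p.pre-order} and~\ref{p.order-compatible-cc} — a pre-order on $\cA_i$ and a lexicographic pre-order on $\Sigma^u_i$.

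Next I would establish the unstable analog of Proposition~\ref{p.Delta-preserves-order}: the bijection $\Delta^u:(f^{u,\infty}_1,\prec_1)\to(f^{u,\infty}_2,\prec_2)$ is order-preserving. This is the only point requiring genuine work, and it is obtained by transcribing Lemma~\ref{l.phi-preserves-order} (here one uses that $\tilde\phi_{in}$ maps the oriented $\widetilde L^s_1$ to the oriented $\widetilde L^s_2$, and that $\tilde\phi_t(\widetilde L^u_1)$ stays strongly transverse to $\widetilde L^s_2$ along the isotopy), Lemma~\ref{l.equivalence-orientation} together with Corollary~\ref{c.equivalence-orientation} (their proofs rest on Remark~\ref{r.orientations}(2), which makes ``preserves the orientation of $\widetilde F^u$'' interchangeable with ``preserves the orientation of $\widetilde F^s$'', so these statements are symmetric under $s\leftrightarrow u$), and the unstable analog of Proposition~\ref{p.oricoh} relating $\prec$ on $f^u_i$ to the lexicographic order on $\Sigma^u_i$ according to whether the relevant iterate of $\tilde\theta_i$ preserves or reverses the orientation of the foliations. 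The one bookkeeping subtlety to watch is that the coding map $\chi^u_i$ reads negative indices, so the shift runs in the opposite direction and the critical index $p_0$ in the lexicographic comparison is the \emph{last} place where the two codes disagree; once this is kept straight, the chain of implications from the proof of Proposition~\ref{p.Delta-preserves-order} goes through word for word.

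With these three inputs — $\Delta^u$ order-preserving, $f^{u,\infty}_i$ dense in $f^u_i$ (the unstable analog of Proposition~\ref{p.density-stable-manifold}), and $\prec_i$ a local total order on $f^u_i$ (the unstable analog of Proposition~\ref{p.local-total-order}) — the conclusion follows by the same abstract argument as in Corollary~\ref{c.extension-s}: for $m\in f^u_1$ one picks a neighborhood on which $\prec_1$ is a total order, represents $m$ by the pair consisting of the $\prec_1$-downward set and the $\prec_1$-upward set of nearby leaves in $f^{u,\infty}_1$, transports this pair by $\Delta^u$, and lets $\widehat\Delta^u(m)$ be the unique leaf of $f^u_2$ that it squeezes; one then checks independence of the chosen neighborhood, compatibility with $\Delta^u$ on $f^{u,\infty}_1$, and that the symmetric construction applied to $(\Delta^u)^{-1}$ produces a two-sided inverse, so that $\widehat\Delta^u$ is the unique order-preserving bijective extension.

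The main obstacle is exactly the verification of the unstable version of Proposition~\ref{p.Delta-preserves-order}; everything else is a faithful translation of the already-established stable case, the only trap being the reversal of reading direction in $\chi^u$ and the consequent placement of the comparison index $p_0$.
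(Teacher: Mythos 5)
Your proposal is correct and follows exactly the route the paper intends: the authors dispatch this corollary with the single remark that the stable and unstable directions play symmetric roles, and you supply the mirrored details---the dual orientation choice via $L^s_X$ giving the unstable analogue of~\eqref{e.phi-out-preserves-orientation} for $\tilde\phi_{in}$ and $\widetilde L^s_i$, the observation that Lemma~\ref{l.equivalence-orientation} and Corollary~\ref{c.equivalence-orientation} apply verbatim by Remark~\ref{r.orientations}(2), and the reversed index bookkeeping in $\Sigma^u$. The one subtlety you flag (the placement of the critical index $p_0$ when the codes are read at negative times) is indeed the main trap, and your handling of it is consistent with the rest of the argument.
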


\subsection{Mating $\widehat\Delta^s$ and $\widehat\Delta^u$: construction of the map $\widehat\Delta$} 

Now, we will mate the maps $\widehat\Delta^s$ and $\widehat\Delta^u$ to obtain a $\widehat\Delta:\widetilde S_1\to\widetilde S_2$. In view to that goal, we need the following lemma:

\begin{lemma}
Consider a leaf $\ell^s_1$ of the stable foliation $\widetilde\cF_1^s$ and a leaf $\ell^u_1$ of the unstable foliation $\widetilde\cF_1^u$. Then $\ell_1^s$ intersects $\ell_1^u$ if and only if $\widehat\Delta^s(\ell_1^s)$ intersects $\widehat\Delta^u(\ell_1^u)$.
\end{lemma}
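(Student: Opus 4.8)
The plan is to prove a single implication and recover the other by symmetry. Observe that the entire construction is symmetric under exchanging the indices $1$ and $2$: replacing $\phi_{in}=\pi_{2|\partial^{in} U}\circ(\pi_{1|\partial^{in} U})^{-1}$ by $\phi_{in}^{-1}=\pi_{1|\partial^{in} U}\circ(\pi_{2|\partial^{in} U})^{-1}$ and reversing the strong isotopy $(\psi_s)$, one obtains exactly the data producing the maps going from $M_2$ to $M_1$. Since $(\Delta^s)^{-1}=(\chi^s_1)^{-1}\circ(\widetilde\phi_{in}^{-1})^{\otimes\ZZ_{\geq0}}\circ\chi^s_2$ and likewise for $\Delta^u$, the uniqueness part of Corollary~\ref{c.extension-s} (and of Corollary~\ref{c.extension-u}) shows that $(\widehat\Delta^s)^{-1}$ and $(\widehat\Delta^u)^{-1}$ are the maps built by the reversed construction. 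Hence it is enough to prove: \emph{if $\ell^s_1$ meets $\ell^u_1$, then $\widehat\Delta^s(\ell^s_1)$ meets $\widehat\Delta^u(\ell^u_1)$}; the converse follows by applying this to $(\widehat\Delta^s(\ell^s_1),\widehat\Delta^u(\ell^u_1))$ and the reversed construction. Also, since a leaf of $\widetilde\cF^s_1$ meets $\widetilde S_1$ in a union of leaves of $\widetilde F^s_1$, we may assume $\ell^s_1$ and $\ell^u_1$ are leaves of $\widetilde F^s_1$ and $\widetilde F^u_1$ respectively, meeting in at most one point by Proposition~\ref{p.intersection-two-leaves}.

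First I would dispose of the case $\ell^s_1\in f^{s,\infty}_1$, $\ell^u_1\in f^{u,\infty}_1$. Write $\chi^s_1(\ell^s_1)=(D_{1,p})_{p\geq 0}$ and $\chi^u_1(\ell^u_1)=(D_{1,p})_{p<0}$. By the argument of Lemma~\ref{l.inverse-code}(3), $\ell^s_1=\bigcap_{p\geq 0}\widetilde\theta_1^{-p}(D_{1,p})$ and $\ell^u_1=\bigcap_{p<0}\widetilde\theta_1^{-p}(D_{1,p})$, so $\ell^s_1$ meets $\ell^u_1$ if and only if the concatenated word $(D_{1,p})_{p\in\ZZ}$ lies in $\Sigma_1$; as every transition except the one at $p=-1$ holds automatically, this means $\widetilde\theta_1(D_{1,-1})\cap D_{1,0}\neq\emptyset$. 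By definition of $\widehat\Delta^s,\widehat\Delta^u$ one has $\chi^s_2(\widehat\Delta^s(\ell^s_1))=(\widetilde\phi_{in}(D_{1,p}))_{p\geq 0}$ and $\chi^u_2(\widehat\Delta^u(\ell^u_1))=(\widetilde\phi_{in}(D_{1,p}))_{p<0}$, so the same reasoning shows $\widehat\Delta^s(\ell^s_1)$ meets $\widehat\Delta^u(\ell^u_1)$ iff $\widetilde\theta_2(\widetilde\phi_{in}(D_{1,-1}))\cap\widetilde\phi_{in}(D_{1,0})\neq\emptyset$. These two conditions are equivalent by Proposition~\ref{p.equivalence-transitions}, which settles this case (even as an equivalence).

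For the general case, suppose $x:=\ell^s_1\cap\ell^u_1$ is a point, and fix a small bifoliated rectangular neighbourhood of $x$ in $\widetilde S_1$. Using the density of $f^{s,\infty}_1$ in $f^s_1$ (Proposition~\ref{p.density-stable-manifold}) and the local total order (Proposition~\ref{p.local-total-order}), choose $\sigma^-,\sigma^+\in f^{s,\infty}_1$ inside this neighbourhood, crossing $\ell^u_1$ transversally near $x$, with $\sigma^-\prec_1\ell^s_1\prec_1\sigma^+$; symmetrically choose $\tau^-,\tau^+\in f^{u,\infty}_1$ crossing $\ell^s_1$ near $x$ with $\tau^-\prec\ell^u_1\prec\tau^+$ for the analogous order on $f^u_1$. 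The previous case applies to every pair among $\{\sigma^\pm,\tau^\pm,\ell^s_1,\ell^u_1\}$ whose two members are both of infinite type; in particular $\widehat\Delta^s(\sigma^\pm)$ both cross $\widehat\Delta^u(\ell^u_1)$, say at points $b^\pm$. Now I would invoke the elementary fact that along an arc of a leaf of $\widetilde F^u_2$ the leaf of $\widetilde F^s_2$ through a point varies $\prec_2$-monotonically (Definition~\ref{d.order}), continuously, and injectively (Proposition~\ref{p.intersection-two-leaves}); thus the subarc $[b^-,b^+]\subset\widehat\Delta^u(\ell^u_1)$ yields an embedded arc in $f^s_2$ that is monotone from $\widehat\Delta^s(\sigma^-)$ to $\widehat\Delta^s(\sigma^+)$. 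Since $\widehat\Delta^s$ is order-preserving, $\widehat\Delta^s(\sigma^-)\prec_2\widehat\Delta^s(\ell^s_1)\prec_2\widehat\Delta^s(\sigma^+)$, and all these leaves lie in one plane component of $\widetilde S_2$, where a monotone embedded arc coincides with the order interval between its endpoints; hence $\widehat\Delta^s(\ell^s_1)$ lies on that arc, i.e. $\widehat\Delta^u(\ell^u_1)$ crosses $\widehat\Delta^s(\ell^s_1)$.

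The main obstacle is exactly the last step: handling monotone arcs inside the \emph{non-Hausdorff} leaf spaces $f^s_i$, and checking that a $\prec_2$-monotone embedded arc joining $\widehat\Delta^s(\sigma^-)$ to $\widehat\Delta^s(\sigma^+)$ must pass through every leaf strictly between them. This needs the fact that all the relevant leaves sit inside a single topological plane component of $\widetilde S_2$, together with Propositions~\ref{p.local-total-order} and~\ref{p.order-compatible-cc}; one should also verify, using the local bifoliated product near $x$, that the sandwiching leaves $\sigma^\pm,\tau^\pm$ can indeed be chosen strictly comparable to $\ell^s_1$ and $\ell^u_1$. Once the lemma is established, it allows one to define $\widehat\Delta:\widetilde S_1\to\widetilde S_2$ by $\widehat\Delta(\ell^s\cap\ell^u):=\widehat\Delta^s(\ell^s)\cap\widehat\Delta^u(\ell^u)$ without ambiguity, and the reversed construction gives its bijectivity.
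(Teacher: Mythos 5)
Your strategy coincides with the paper's (which is extremely terse here: it just says the doubly-infinite case follows from Proposition~\ref{p.equivalence-transitions}, and ``the general case follows by density''). You are fleshing out that density argument, which is valuable, but as written there is a logical gap in the step where you move from the doubly-infinite case to the general one.

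Concretely, you choose $\sigma^\pm\in f^{s,\infty}_1$ and $\tau^\pm\in f^{u,\infty}_1$, observe that the doubly-infinite case applies to every pair \emph{both} of whose members are of infinite type, and then write ``in particular $\widehat\Delta^s(\sigma^\pm)$ both cross $\widehat\Delta^u(\ell^u_1)$''. That ``in particular'' is a non-sequitur: in the general case $\ell^u_1$ is \emph{not} assumed to lie in $f^{u,\infty}_1$, so the pair $(\sigma^\pm,\ell^u_1)$ need not be of the type handled by the first step, and the crossing points $b^\pm$ have not been produced. Your own setup introduces the $\tau^\pm$ precisely in order to cope with $\ell^u_1$, but you never use them, and the argument silently assumes what it was meant to prove in the sub-case where $\ell^u_1$ escapes $f^{u,\infty}_1$. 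The fix is to iterate the density argument in one variable at a time: first, fixing $\ell^u_1\in f^{u,\infty}_1$ and letting $\ell^s_1$ be arbitrary, sandwich $\ell^s_1$ between $\sigma^\pm$ and run exactly the planar-topology argument you describe; this establishes the ``mixed'' case. Then, for $\ell^u_1$ arbitrary, sandwich it between $\tau^\pm$, invoke the mixed case to get $\widehat\Delta^u(\tau^\pm)$ crossing $\widehat\Delta^s(\ell^s_1)$, and repeat the same monotone-arc argument in the $u$-direction. Alternatively, use all four leaves $\sigma^\pm,\tau^\pm$ at once: they span a compact bifoliated rectangle $R$ in $\widetilde S_1$, all six pairwise intersections among $\{\sigma^\pm,\tau^\pm\}$ are of infinite type, hence $\widehat\Delta^s(\sigma^\pm)$ and $\widehat\Delta^u(\tau^\pm)$ span a rectangle $R'$ in $\widetilde S_2$; then argue by order-preservation that both $\widehat\Delta^s(\ell^s_1)$ and $\widehat\Delta^u(\ell^u_1)$ must cross $R'$, and use the local product structure inside $R'$. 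Either fix is routine, but as stated the proof does not cover the case $\ell^u_1\notin f^{u,\infty}_1$.

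Two minor remarks. First, the symmetry-and-uniqueness argument for reducing to one implication is correct (indeed $(\widehat\Delta^s)^{-1}$ is order-preserving and restricts to $(\Delta^s)^{-1}$, so uniqueness of the extension in Corollary~\ref{c.extension-s} identifies it with the reversed construction), but it is unnecessary overhead: the doubly-infinite step already yields an equivalence, and the density/monotone-arc argument can be read in both directions. Second, the planar-topology step you flag as ``the main obstacle'' (a $\prec_2$-monotone arc between two stable leaves in a fixed plane component must meet every leaf strictly between them) is indeed the crux; it follows from the Jordan separation of the plane component by each properly embedded stable leaf, together with Proposition~\ref{p.intersection-two-leaves}, and you are right to single it out rather than treat it as obvious.
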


\begin{proof}
The case where the leaves $\ell^s_1$ and $\ell^u_1$ belong to $f^{s,\infty}_1$ and $f^{u,\infty}_1$ is a consequence of Proposition~\ref{p.equivalence-transitions} (together with the definitions of the maps $\Delta^s$, $\Delta^u$ and $\Delta$): the leaves $\ell^s_1$ and $\ell^u_1$ intersect at $x$ if and only if the leaves $\widehat\Delta^s(\ell_1^s)=\Delta^s(\ell_1^s)$ and $\widehat\Delta^u(\ell^u_1)=\Delta^u(\ell^u_1)$ intersect at $\Delta(x)$. The general case follows by density of $f^{s,\infty}_i$ and $f^{u,\infty}_i$ in $f^{s,\infty}_i$ and $f^{u,\infty}_i$.
\end{proof}

Now we define a map $\widehat\Delta:\widetilde S_1\to\widetilde S_2$.  Let $\tilde x$ be any point in $\widetilde S_1$. Denote by $\ell_1^s$ (resp. $\ell_1^u$) the leaf of the stable foliation $\widetilde\cF^s_1$ (resp. the unstable foliation $\widetilde\cF^u_1$) passing through $x$. Recall that $x$ is the unique intersection point of $\ell^s_1$ and $\ell^u_1$. According to the preceding lemma, the stable leaf $\widehat\Delta^s(\ell^s_1)$ and the unstable leaf $\widehat\Delta^u(\ell^u_1)$ do intersect. According to Proposition~\ref{p.intersection-two-leaves}, the intersection is a single point. We define $\widehat\Delta(\tilde x)$ to be the unique intersection point of the leaves  $\widehat\Delta^s(\ell^s_1)$ and $\widehat\Delta^u(\ell^u_1)$. In other words, $\widehat\Delta$ is defined by 
\begin{equation}
\label{e.definition-extension}
\widehat\Delta(\ell^s_1\cap\ell^u_1)=\widehat\Delta^s(\ell^s_1)\cap\widehat\Delta^u(\ell^u_1).
\end{equation}
By construction, the map $\widehat\Delta$ is bijective, maps the foliations $\widetilde F_1^s,\widetilde F_1^u$ to the foliations $\widetilde F_2^s,\widetilde F_2^u$, preserving the orders on the leaf spaces. Since the leaf spaces are locally totally ordered (Proposition~\ref{p.local-total-order}), it follows that $\widehat\Delta$ is continuous. Hence $\widehat\Delta$ is a homeomorphism.

\begin{proposition}
\label{p.equivariance-Delta}
The map $\widehat\Delta:\widetilde S_1\to\widetilde S_2$ is equivariant with respect to the actions of the fundamental groups: for every $\gamma$ of $\pi_1(M_1)$, 
$$\widehat\Delta\circ\gamma=(\tilde\Phi_{in})_*(\gamma)\circ\widehat\Delta.$$
\end{proposition}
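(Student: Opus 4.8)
The plan is to reduce the equivariance of $\widehat\Delta$ to the equivariance of the two maps $\widehat\Delta^s\colon f^s_1\to f^s_2$ and $\widehat\Delta^u\colon f^u_1\to f^u_2$ with respect to the isomorphism $(\Phi_{in})_*$, and to obtain the latter by combining the equivariance of the coding maps (Proposition~\ref{p.coding-commutes-pi-1}) with the relation~\eqref{e.lift-phi-s-deck} satisfied by the lift $\tilde\phi_{in}$. Throughout I will use that, since $\tilde\phi_{in}$ is the restriction to $\widetilde S_1$ of the chosen lift $\tilde\Phi_{in}$, one has $(\tilde\Phi_{in})_*(\gamma)=\tilde\Phi_{in}\circ\gamma\circ\tilde\Phi_{in}^{-1}=(\Phi_{in})_*(\gamma)$, so that the relation to be proved is $\widehat\Delta\circ\gamma=(\Phi_{in})_*(\gamma)\circ\widehat\Delta$.

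First I would check equivariance of $\Delta^s$ on the dense set $f^{s,\infty}_1$. Applying Proposition~\ref{p.coding-commutes-pi-1} to each of the two Anosov flows, the coding maps $\chi^s_1$ and $\chi^s_2$ intertwine the $\pi_1(M_i)$-actions on $f^{s,\infty}_i$ and on $\Sigma^s_i$, the latter being the letterwise action of deck transformations on the alphabets $\cA_i$. On $\cA_1$, relation~\eqref{e.lift-phi-s-deck} reads $\tilde\phi_{in}\circ\gamma=(\Phi_{in})_*(\gamma)\circ\tilde\phi_{in}$, so applying it letter by letter shows that $(\tilde\phi_{in})^{\otimes\ZZ_{\geq 0}}\colon\Sigma^s_1\to\Sigma^s_2$ is $(\Phi_{in})_*$-equivariant. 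Composing these three equivariances in the definition $\Delta^s=(\chi^s_2)^{-1}\circ(\tilde\phi_{in})^{\otimes\ZZ_{\geq 0}}\circ\chi^s_1$ yields $\Delta^s\circ\gamma=(\Phi_{in})_*(\gamma)\circ\Delta^s$ as maps $f^{s,\infty}_1\to f^{s,\infty}_2$ (note that $\gamma$ preserves $\widetilde W^s(\Lambda_1)$, hence preserves $f^{s,\infty}_1$). The identical computation, with $\chi^u_i$ and $(\tilde\phi_{in})^{\otimes\ZZ_{<0}}$ in place of $\chi^s_i$ and $(\tilde\phi_{in})^{\otimes\ZZ_{\geq 0}}$, gives $\Delta^u\circ\gamma=(\Phi_{in})_*(\gamma)\circ\Delta^u$ on $f^{u,\infty}_1$.

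Next I would pass to the extensions. Since the orientation of $\widetilde F^u_i$ is the lift of a fixed orientation of $F^u_i$ on $S_i$, it is $\pi_1(M_i)$-invariant; hence every deck transformation of $\widetilde M_i$ preserves the order $\prec_i$ on $f^s_i$, and likewise every $(\Phi_{in})_*(\gamma)$ preserves $\prec_2$. Consequently both $\widehat\Delta^s\circ\gamma$ and $(\Phi_{in})_*(\gamma)\circ\widehat\Delta^s$ are order-preserving bijections $f^s_1\to f^s_2$ which agree on the dense subset $f^{s,\infty}_1$ (Proposition~\ref{p.density-stable-manifold}); since $f^s_i$ is locally totally ordered (Proposition~\ref{p.local-total-order}), the order-preserving extension of $\Delta^s$ off $f^{s,\infty}_1$ is unique, which is precisely the uniqueness asserted in Corollary~\ref{c.extension-s}. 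Therefore the two maps coincide on all of $f^s_1$, i.e. $\widehat\Delta^s\circ\gamma=(\Phi_{in})_*(\gamma)\circ\widehat\Delta^s$; the same argument with Corollary~\ref{c.extension-u} gives $\widehat\Delta^u\circ\gamma=(\Phi_{in})_*(\gamma)\circ\widehat\Delta^u$.

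Finally I would conclude using the defining formula~\eqref{e.definition-extension}. A deck transformation $\gamma$ sends each leaf of $\widetilde\cF^s_1$ (resp. $\widetilde\cF^u_1$) to a leaf of the same foliation and commutes with taking intersection points, so for $\tilde x=\ell^s_1\cap\ell^u_1$ one has $\gamma\tilde x=\gamma(\ell^s_1)\cap\gamma(\ell^u_1)$. Hence, using~\eqref{e.definition-extension} and the equivariances just proved,
\begin{align*}
\widehat\Delta(\gamma\tilde x)
&=\widehat\Delta^s(\gamma\ell^s_1)\cap\widehat\Delta^u(\gamma\ell^u_1)
 =\bigl((\Phi_{in})_*(\gamma)\,\widehat\Delta^s(\ell^s_1)\bigr)\cap\bigl((\Phi_{in})_*(\gamma)\,\widehat\Delta^u(\ell^u_1)\bigr)\\
&=(\Phi_{in})_*(\gamma)\bigl(\widehat\Delta^s(\ell^s_1)\cap\widehat\Delta^u(\ell^u_1)\bigr)
 =(\Phi_{in})_*(\gamma)\,\widehat\Delta(\tilde x),
\end{align*}
which is the claimed relation. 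The only mildly delicate point is the extension step — one must know that deck transformations and the $(\Phi_{in})_*(\gamma)$'s preserve the relevant orders so that the density-plus-uniqueness argument applies — but this follows at once from the $\pi_1$-invariance of the chosen orientations; everything else is a direct chase through the definitions.
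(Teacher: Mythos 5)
Your proof is correct and follows essentially the same strategy as the paper's: factor $\Delta$ through the coding maps $\chi$ and the letterwise map $(\tilde\phi_{in})^{\otimes\cdot}$, invoke Proposition~\ref{p.coding-commutes-pi-1} and equation~\eqref{e.lift-phi-s-deck} to get equivariance on the dense ``$\infty$'' part, then extend by density. The only (minor) difference is in the extension step: the paper extends the pointwise map $\Delta:\widetilde S_1^\infty\to\widetilde S_2^\infty$ all at once by continuity of $\widehat\Delta$ and of the deck transformations, whereas you extend $\Delta^s$ and $\Delta^u$ separately via the uniqueness of order-preserving extensions (checking, correctly, that deck transformations preserve $\prec_i$ because the orientation of $\widetilde F^u_i$ is a lift and hence $\pi_1$-invariant) and then plug into the mating formula~\eqref{e.definition-extension}; this is a bit more laborious but slightly more self-contained, as it uses the uniqueness clauses of Corollaries~\ref{c.extension-s} and~\ref{c.extension-u} directly rather than an unrecorded continuity claim about $\widehat\Delta|_{\widetilde S_1^\infty}$.
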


\begin{proof}
This is a rather immediate consequence of the construction of $\widehat\Delta$. First recall that $\widehat\Delta$ is a continuous extension of the map $\Delta:\widetilde S_1^\infty\to\widetilde S_2^\infty$ and recall that $\widetilde S_1^\infty,\widetilde S_2^\infty$ are dense subsets of $\widetilde S_1,\widetilde S_2$. As a consequence, it is enough to prove that $\Delta$ is equivariant with respect to the actions of the fundamental groups. Now recall that $\Delta$ is defined as the composition of three maps:
$$\Delta=(\chi_2)^{-1}\circ(\tilde\phi_{in})^{\otimes\ZZ}\circ\chi_1.$$ 
But we know that:
\begin{itemize}
\item the map $\chi_i$ commutes with the action of the fundamental group $\pi_1(M_i)$ for $i=1,2$ (Proposition~\ref{p.coding-commutes-pi-1}),
\item the map $\tilde\phi_{in}$ satisfies the following equivariance $\tilde\phi_{in}\circ\gamma=(\tilde\Phi_{in})_*(\gamma)\circ\tilde\phi_{in}$ (equation~\eqref{e.lift-phi-s-deck}).
\end{itemize}
This shows that the map $\Delta$ satisfies the equivariance relation $\Delta\circ\gamma=(\tilde\Phi_{in})_*(\gamma)\circ\Delta$, and completes the proof.
\end{proof}

\begin{proposition}
\label{p.conjugacy}
The map $\widehat\Delta:\widetilde S_1\to\widetilde S_2$ conjugates the Poincar\'e maps $\tilde\theta_1$ and $\tilde\theta_2$, \emph{that is} 
$$\widehat\Delta\circ\tilde\theta_1=\tilde\theta_2\circ\widehat\Delta.$$
\end{proposition}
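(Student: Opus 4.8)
The plan is to establish the conjugacy first on the dense, $\tilde\theta_i$-invariant subset $\widetilde S_i^\infty$ by going through the symbolic model, and then to extend it to all of $\widetilde S_1\setminus\widetilde L^s_1$ by a density argument. \emph{Step 1 (symbolic conjugacy on $\widetilde S_1^\infty$).} Recall that on $\widetilde S_1^\infty$ the map $\widehat\Delta$ coincides with $\Delta=(\chi_2)^{-1}\circ(\tilde\phi_{in})^{\otimes\ZZ}\circ\chi_1$: this is exactly how $\widehat\Delta$ was obtained by mating $\widehat\Delta^s$ and $\widehat\Delta^u$ (for $x\in\widetilde S_i^\infty$ the stable, resp. unstable, leaf through $x$ is coded by the non-negative, resp. negative, part of $\chi_i(x)$, and $\widehat\Delta^s$, $\widehat\Delta^u$ restrict to $\Delta^s$, $\Delta^u$ on $f^{s,\infty}_i$, $f^{u,\infty}_i$). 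By Proposition~\ref{p.coding-conjugates-Poincare-shift}, $\chi_i$ conjugates $\tilde\theta_i$ on $\widetilde S_i^\infty$ to the left shift $\sigma$ on $\Sigma_i$, and by Corollary~\ref{c.symbolic-space-to-symbolic-space} the letterwise map $(\tilde\phi_{in})^{\otimes\ZZ}$ sends $\Sigma_1$ into $\Sigma_2$ and obviously commutes with $\sigma$. Hence, on $\widetilde S_1^\infty$,
$$\widehat\Delta\circ\tilde\theta_1=(\chi_2)^{-1}\circ(\tilde\phi_{in})^{\otimes\ZZ}\circ\sigma\circ\chi_1=(\chi_2)^{-1}\circ\sigma\circ(\tilde\phi_{in})^{\otimes\ZZ}\circ\chi_1=\tilde\theta_2\circ\widehat\Delta.$$

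\emph{Step 2 (matching the domains).} The composition $\tilde\theta_2\circ\widehat\Delta$ only makes sense where $\widehat\Delta$ takes values in $\widetilde S_2\setminus\widetilde L^s_2$, so I would next check that $\widehat\Delta(\widetilde S_1\setminus\widetilde L^s_1)=\widetilde S_2\setminus\widetilde L^s_2$. The key point is that $\widehat\Delta$ maps each connected component (``strip'') $D_1$ of $\widetilde S_1\setminus\widetilde L^s_1$ onto the strip $\tilde\phi_{in}(D_1)$ of $\widetilde S_2\setminus\widetilde L^s_2$. Indeed, a leaf $\ell\in f^{s,\infty}_1$ contained in $D_1$ has $\chi^s_1(\ell)$ beginning with the letter $D_1$, hence $\widehat\Delta^s(\ell)=\Delta^s(\ell)$ has code beginning with $\tilde\phi_{in}(D_1)$, so $\widehat\Delta(D_1\cap\widetilde S_1^\infty)\subset\tilde\phi_{in}(D_1)$; since $D_1\cap\widetilde S_1^\infty$ is dense in the open set $D_1$ and $\widehat\Delta$ is a homeomorphism, $\widehat\Delta(D_1)$ is an open subset of $\overline{\tilde\phi_{in}(D_1)}$, hence is contained in its topological interior, which is $\tilde\phi_{in}(D_1)$ (Proposition~\ref{p.strip}); the symmetric argument for $\widehat\Delta^{-1}$ gives the reverse inclusion. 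Because $\tilde\phi_{in}$ is a homeomorphism with $\tilde\phi_{in}(\widetilde L^s_1)=\widetilde L^s_2$, the map $D_1\mapsto\tilde\phi_{in}(D_1)$ is a bijection $\cA_1\to\cA_2$, and taking unions over all strips gives $\widehat\Delta(\widetilde S_1\setminus\widetilde L^s_1)=\widetilde S_2\setminus\widetilde L^s_2$ (equivalently $\widehat\Delta(\widetilde L^s_1)=\widetilde L^s_2$). I expect this step — verifying that the \emph{extended} homeomorphism $\widehat\Delta$ still respects the decomposition of $\widetilde S_i$ into $\widetilde L^s_i$ and its complementary strips — to be the main obstacle.

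\emph{Step 3 (extension by density).} Recall that $\widetilde S_i^\infty$ is dense in $\widetilde S_i$ (its complement $(\widetilde W^s(\Lambda_i)\cup\widetilde W^u(\Lambda_i))\cap\widetilde S_i$ is a countable union of properly embedded lines, hence meager, so one concludes by the Baire category theorem). Fix $x\in\widetilde S_1\setminus\widetilde L^s_1$; by Step 2, $\widehat\Delta(x)\in\widetilde S_2\setminus\widetilde L^s_2$, so both $\widehat\Delta(\tilde\theta_1(x))$ and $\tilde\theta_2(\widehat\Delta(x))$ are defined. Choose $x_n\in\widetilde S_1^\infty$ with $x_n\to x$. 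Using that $\tilde\theta_1$ is a homeomorphism of $\widetilde S_1\setminus\widetilde L^s_1$ onto $\widetilde S_1\setminus\widetilde L^u_1$, that $\tilde\theta_2$ is a homeomorphism of $\widetilde S_2\setminus\widetilde L^s_2$ onto $\widetilde S_2\setminus\widetilde L^u_2$, and that $\widehat\Delta$ is continuous, we may let $n\to\infty$ in the identity $\widehat\Delta(\tilde\theta_1(x_n))=\tilde\theta_2(\widehat\Delta(x_n))$ of Step 1 and obtain $\widehat\Delta(\tilde\theta_1(x))=\tilde\theta_2(\widehat\Delta(x))$. This proves the proposition.
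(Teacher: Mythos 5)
Your proof is correct and follows essentially the same strategy as the paper: establish the conjugacy on a dense ``generic'' set via the symbolic model, then extend by density. The paper implements this at the level of the leaf spaces $f^s_i, f^u_i$ (showing $\widehat\Delta^s$ and $\widehat\Delta^u$ separately intertwine the actions of $\tilde\theta_1$ and $\tilde\theta_2$ and then invoking the mating formula~\eqref{e.definition-extension}), whereas you work directly at the level of points of $\widetilde S_i^\infty$; these are cosmetically different packagings of the same argument. Your Step~2 --- verifying that $\widehat\Delta$ sends $\widetilde L^s_1$ onto $\widetilde L^s_2$, so that the two sides of the claimed identity have matching domains --- is a worthwhile explicit check that the paper's very compressed proof passes over in silence.

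One inaccuracy to flag: your parenthetical justification of the density of $\widetilde S_i^\infty$ in $\widetilde S_i$ is wrong. The set $\widetilde W^s(\Lambda_i)\cap\widetilde S_i = \bigcup_{p\geq 0}\tilde\theta_i^{-p}(\widetilde L^s_i)$ is in general \emph{not} a countable union of properly embedded lines: each $\tilde\theta_i^{-p}(\widetilde L^s_i)$ is a closed sublamination of $\widetilde F^s_i$ which may have a Cantor transversal (for instance when the maximal invariant set $\Lambda_X$ of the plug contains a horseshoe), so the Baire-category argument as you state it does not apply. The density itself is true, but the correct justification is Proposition~\ref{p.density-stable-manifold} (and its unstable analogue), which rests on the transitivity of the Anosov flow $(Y_i^t)$, i.e.\ on the density in $M_i$ of every weak-stable leaf. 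Replacing your parenthetical by a citation of that proposition closes the gap, and the rest of the argument is sound.
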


\begin{proof}
On the one hand, for $i=1,2$, the coding map $\chi_i^s$ conjugates the Poincar\'e map $\tilde\theta_i$ on $\Wi S_i$ to the shift map on the symbolic space $\Sigma_i^s$ (Proposition~\ref{p.coding-conjugates-Poincare-shift}). On the other hand, the map $(\tilde\phi_{in})^{\otimes\ZZ_{\geq 0}}$ obviously conjugates the shift map on $\Sigma_1^s$ to  the shift map on $\Sigma_2^s$. Hence, $\Delta^s=(\chi^s_2)^{-1}\circ(\tilde\phi_{in})^{\otimes\ZZ_{\geq 0}}\circ\chi^s_1$ conjugates the action $\tilde\theta_1$ on $f^{s,\infty}_1$ to the action of $\tilde\theta_2$ on $f^{s,\infty}_2$. By density of $f^{s,\infty}_1$ in $f^{s}_i$, it follows that $\widehat\Delta^s$ conjugates the action $\tilde\theta_1$ on $f^{s}_1$ to the action of $\tilde\theta_2$ on $f^{s}_2$. Similalrly, $\widehat\Delta^u$ conjugates the action $\tilde\theta_1$ on $f^{u}_1$ to the action of $\tilde\theta_2$ on $f^{u}_2$. 
Finally, since $\Wi \Delta$ is defined by mating $\widehat\Delta^s$ and $\widehat\Delta^u$ (see~\eqref{e.definition-extension}), this implies that  $\Wi \Delta$ conjugates $\tilde\theta_1$  to $\tilde\theta_2$. 
\end{proof}

\subsection{From the map $\widehat\Delta$ to the orbital equivalence}

To conclude the proof of Theorem~\ref{t.main}, we need to introduce the orbit spaces of the Anosov flows $(Y_1^t)$ and $(Y_2^t)$. The \emph{orbit space} of $(Y_i^t)$ is by definition the quotient of the manifold $\widetilde M_i$ by the action of the flow $(Y_i^t)$. We denote it by $O_i$, and we denote by $\mathrm{pr}_i$ the natural projection of $\widetilde M_i$ on $O_i$. The action of the fundamental group $\pi_1(M_i)$ on $\widetilde M_i$ induces an action of this group on $O_i$. The two dimensional foliations $\widetilde\cF^s_i$ and $\widetilde\cF^u_i$ are leafwise invariant under the flow $(Y_i^t)$ and therefore can be projected in the orbit space $O_i$. They induce a pair $(g^s_i,g^u_i)$ of transverse 1-dimensional foliations on $O_i$. 

The orbit space $O_i$ by itself does not carry much information: indeed, $O_i$ is always separated manifold diffeomorphic to $\RR^2$ (see \cite[Proposition 2.1]{Fen} or \cite[Theorem 3.2]{Bar95}). The pair of transverse foliations $(g^s_i,g^u_i)$ carries a much more interesting information (see the work of Barbot and Fenley on the subject; good references are. Barbot's habilitation memoir~\cite{Bar06} and Barthelm\'e's lecture notes~\cite{Bar18}). The action of $\pi_1(M_i)$ on $O_i$ carries an even richer dynamical information: actually, this action characterizes the flow $(Y_i^t)$ up to topological equivalence (see Theorem~\ref{t.Barbot} below). 

Recall that $\Lambda$ denotes the maximal invariant set of the initial hyperbolic plug $(U,X)$, that $\Lambda_i$ denotes the projection of $\Lambda$ in the manifold $M_i=U/\psi_i$, and that $\widetilde\Lambda_i$ the complete lift of $\Lambda_i$ in the universal cover $\widetilde M_i$. Now, we denote by $L_i$ the projection of the set $\widetilde\Lambda_i$ in $O_i$. 

\begin{lemma}
\label{l.projection-surface}
The projection $\mathrm{pr}_i(\widetilde S_i)$ of the surface $\widetilde S_i$ in the orbit space $O_i$ is exactly the complement of the set $L_i$ in $O_i$.
\end{lemma}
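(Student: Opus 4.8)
The plan is to recognise both sides of the claimed equality as the image under $\mathrm{pr}_i$ of one and the same $(\widetilde Y_i^t)$-invariant set, namely $\widetilde M_i\setminus\widetilde\Lambda_i$. The key input is the description of $\Lambda$ recalled in Subsection~\ref{ss.setting}: $\Lambda$ is exactly the union of the orbits of $(Y^t)$ that never meet the transverse surface $S$; equivalently $\Lambda_i=\pi_i(\Lambda_X)$ is disjoint from $S_i$, and every orbit of $(Y_i^t)$ not contained in $\Lambda_i$ meets $S_i$.

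First I would note that $\widetilde\Lambda_i=p_i^{-1}(\Lambda_i)$, being the complete lift of a flow-invariant set, is a union of orbits of $(\widetilde Y_i^t)$, hence saturated for the projection $\mathrm{pr}_i$. Therefore $\mathrm{pr}_i^{-1}(L_i)=\widetilde\Lambda_i$, and consequently $O_i\setminus L_i=\mathrm{pr}_i(\widetilde M_i\setminus\widetilde\Lambda_i)$. On the other hand $\mathrm{pr}_i(\widetilde S_i)$ is, by definition of $\mathrm{pr}_i$, the set of orbits of $(\widetilde Y_i^t)$ that meet $\widetilde S_i$. So the lemma reduces to the assertion that an orbit $\tilde o$ of $(\widetilde Y_i^t)$ meets $\widetilde S_i$ if and only if $\tilde o\not\subset\widetilde\Lambda_i$.

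To prove this equivalence I would argue as follows. If $\tilde o$ meets $\widetilde S_i$ at a point $\tilde x$, then $p_i(\tilde x)\in S_i$; since $\Lambda_i$ is disjoint from $S_i$, we get $\tilde x\notin p_i^{-1}(\Lambda_i)=\widetilde\Lambda_i$, and as $\widetilde\Lambda_i$ is a union of orbits, $\tilde o\not\subset\widetilde\Lambda_i$. Conversely, if $\tilde o\not\subset\widetilde\Lambda_i$, then its projection $o:=p_i(\tilde o)$, which is an orbit of $(Y_i^t)$ in $M_i$, is not contained in $\Lambda_i$ (otherwise $\tilde o\subset p_i^{-1}(\Lambda_i)=\widetilde\Lambda_i$); by the characterisation of $\Lambda_i$ above, $o$ meets $S_i$ at some point $x$, and choosing a lift $\tilde x\in\tilde o$ of $x$ gives $\tilde x\in p_i^{-1}(S_i)=\widetilde S_i$, so $\tilde o$ meets $\widetilde S_i$. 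Combining the two implications yields $\mathrm{pr}_i(\widetilde S_i)=\mathrm{pr}_i(\widetilde M_i\setminus\widetilde\Lambda_i)=O_i\setminus L_i$.

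I do not expect a genuine obstacle here; the only points that call for a little care are purely formal — the fact that a lift $\tilde o$ of an orbit $o$ contains a preimage of every point of $o$ (which is what transfers the intersection with $S_i$ upstairs), and the fact that $\widetilde\Lambda_i$ is orbit-saturated (which is what makes $\mathrm{pr}_i^{-1}(L_i)=\widetilde\Lambda_i$ an equality rather than just an inclusion).
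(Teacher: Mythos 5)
Your proof is correct and follows essentially the same approach as the paper's: both rest on the characterisation of $\Lambda_i$ as the union of $(Y_i^t)$-orbits that never meet $S_i$, lift this to the universal cover, and conclude that $\mathrm{pr}_i(\widetilde S_i)$ is the complement of $\mathrm{pr}_i(\widetilde\Lambda_i)=L_i$. You merely make explicit the two formal points (orbit-saturation of $\widetilde\Lambda_i$ and the lifting of intersection points) that the paper leaves implicit.
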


\begin{proof}
The set $\Lambda$ is the union of the orbits of the vector field $X$ which remain in $U$ forever, \emph{i.e.} which do not intersect $\partial U$. Hence  the set $\Lambda_i=\pi_i(\Lambda)$ is the union of the orbits of the vector field $Y_i=(\pi_i)_*X$ which do not intersect the surface $S_i=\pi_i(\partial U)$. As a further consequence, $\widetilde\Lambda_i$ is the union of the orbits of the vector field $\widetilde Y_i$ which do not intersect the surface $\widetilde S_i$. This means that the projection of $\widetilde S_i$ in the orbit space $O_i$ is exactly the complement of the projection of the set $\widetilde \Lambda_i$.  
\end{proof}

Proposition~\ref{p.conjugacy} can be rephrased as follows: two points $x,x'\in\widetilde S_1$ belong to the same orbit of the flow $(\widetilde Y_1^t)$ if and only if the points $\widehat\Delta(x)$ and $\widehat\Delta(x')$ belong to the same orbit of the flow~$(\widetilde Y_2^t)$. As a consequence, the homeomorphism $\widehat\Delta:\widetilde S_1\to\widetilde S_2$ induces a homeomorphism 
$$\delta:\mathrm{pr}_1(\widetilde S_1)=O_1\setminus L_1\longrightarrow \mathrm{pr}_2(\widetilde S_2)=O_2\setminus L_2.$$
Since $\hat\Delta$ is equivariant with respect to the actions of the fundamental groups (Proposition~\ref{p.equivariance-Delta}), the homeomorphism $\delta$ is also equivariant: for every $\gamma\in\pi_1(M_1)$, 
$$\delta\circ\gamma=(\tilde\Phi_{in})_*(\gamma)\circ\delta.$$
Our next step is to extend the map $\eta$ to the whole orbit spaces.

\begin{proposition}
\label{p.extension-orbit-space}
The homeomorphism $\delta:O_1\setminus L_1\to O_2\setminus L_2$ can be extended in a unique way to a homeomorphism $\overline\delta:O_1\to O_2$, which is equivariant with respect to the actions of the fundamental groups of $M_1$ and $M_2$.  
\end{proposition}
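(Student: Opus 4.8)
The plan is to construct $\overline\delta$ by a limiting procedure at the points of $L_1$, governed by the two transverse foliations $g^s_i,g^u_i$ of $O_i$ and by the orders on their leaf spaces. I would begin with the topology of $L_i$. First, $L_i$ is \emph{closed} in $O_i$: $\widetilde\Lambda_i$ is closed and flow invariant, so a sequence in $O_i$ coming from $\widetilde\Lambda_i$ which converges in $O_i$ lifts to a sequence of orbits in $\widetilde\Lambda_i$ converging in $\widetilde M_i$, whose limit is again in $\widetilde\Lambda_i$. Second, $L_i$ is \emph{nowhere dense}: $\Lambda_i$ is a proper compact invariant subset of a transitive Anosov flow, hence has no proper attractor or repeller, hence is nowhere dense in $M_i$; for the same reason $\Lambda_i$ contains no two-dimensional piece of a weak stable or weak unstable leaf, so the trace of $L_i$ on any leaf of $g^s_i$ or $g^u_i$ is nowhere dense in that leaf. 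In particular $O_i\setminus L_i$ is dense in $O_i$, so a continuous extension of $\delta$ to $O_1$ is automatically unique and, agreeing with the equivariant homeomorphism $\delta$ on a dense set, equivariant; and every leaf of $g^s_i$ or $g^u_i$ meets $O_i\setminus L_i$, since a two-dimensional weak leaf cannot sit inside $\widetilde\Lambda_i$.

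Next I would show that $\delta$ is adapted to the foliations $g^s_i,g^u_i$. Recall that $\delta$ is induced by $\widehat\Delta:\widetilde S_1\to\widetilde S_2$, which sends $\widetilde F^s_1,\widetilde F^u_1$ to $\widetilde F^s_2,\widetilde F^u_2$, conjugates $\widetilde\theta_1$ to $\widetilde\theta_2$ (Proposition~\ref{p.conjugacy}), and is order preserving on the leaf spaces $f^s_i,f^u_i$ (Corollaries~\ref{c.extension-s} and~\ref{c.extension-u}). Two leaves of $\widetilde F^s_i$ lie in a common leaf of $\widetilde\cF^s_i$ exactly when the corresponding orbits of $\widetilde Y_i$ lie in a common weak stable leaf, and within one connected component of $\big(\text{leaf of }g^s_i\big)\cap(O_i\setminus L_i)$ any two such leaves of $\widetilde F^s_i$ are joined by a finite chain of leaves of $\widetilde F^s_i$ whose consecutive terms are related by a power of $\widetilde\theta_i$ (the $\widetilde F^s_i$-leaves involved project onto an open cover of that connected component). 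Since $\widehat\Delta$ conjugates $\widetilde\theta_1$ to $\widetilde\theta_2$, it carries such chains to such chains; it follows that $\delta$ maps every connected component of $\big(\text{leaf of }g^s_1\big)\cap(O_1\setminus L_1)$ homeomorphically and monotonically onto a connected component of $\big(\text{leaf of }g^s_2\big)\cap(O_2\setminus L_2)$, and likewise for $g^u$; moreover incidence is preserved, i.e.\ a leaf of $g^s_1$ meets a leaf of $g^u_1$ iff the corresponding leaves of $g^s_2,g^u_2$ meet (the analogous statement on $\widetilde S_i$ — the lemma used to mate $\widehat\Delta^s$ and $\widehat\Delta^u$ — passes to $O_i$ by density and the fact that two such leaves meet in at most one point).

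The extension is then obtained as follows. Fix $x\in L_1$ and let $g^s_1(x),g^u_1(x)$ be the leaves through it. Using the first paragraph, one can choose leaves $g^s_\pm\neq g^s_1(x)$ of $g^s_1$ and $g^u_\pm\neq g^u_1(x)$ of $g^u_1$, arbitrarily close to $g^s_1(x),g^u_1(x)$ and on both sides of them, intersecting pairwise and bounding a topological rectangle $R\ni x$ with $R\cap(O_1\setminus L_1)$ dense in $R$. By the previous paragraph, $\delta(R\cap(O_1\setminus L_1))$ is contained in a rectangle $R'$ of $O_2$ bounded by the leaves of $g^s_2$ carrying $\delta(g^s_\pm\cap(O_1\setminus L_1))$ and those of $g^u_2$ carrying $\delta(g^u_\pm\cap(O_1\setminus L_1))$; as $g^s_\pm\to g^s_1(x)$ and $g^u_\pm\to g^u_1(x)$, these bounding leaves converge (order preservation together with the local total order of the leaf spaces, Proposition~\ref{p.local-total-order}) to leaves $h^s_\infty$ of $g^s_2$ and $h^u_\infty$ of $g^u_2$ which, by the incidence statement above, meet in a single point. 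The rectangles $R$ shrink to $\{x\}$ and the rectangles $R'$ shrink to $\{h^s_\infty\cap h^u_\infty\}$, so I set $\overline\delta(x):=h^s_\infty\cap h^u_\infty$ (independent of the auxiliary leaves); together with the continuity of $\delta$ on $O_1\setminus L_1$ this yields a continuous map $\overline\delta:O_1\to O_2$ extending $\delta$. Running the same construction for $\delta^{-1}$ gives a continuous extension $\overline{\delta^{-1}}:O_2\to O_1$; as $\overline\delta\circ\overline{\delta^{-1}}$ and $\overline{\delta^{-1}}\circ\overline\delta$ are continuous and agree with the identity on a dense set, $\overline\delta$ is a homeomorphism, and uniqueness and equivariance are as noted in the first paragraph.

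The main difficulty is the interplay, in the second and third paragraphs, between the purely symbolic description of $\delta$ — which is blind to $\widetilde W^s(\Lambda)\cup\widetilde W^u(\Lambda)$ — and the weak-leaf structure near $L_i$. One must control the fact that a single leaf of $g^s_1$ may meet $O_1\setminus L_1$ in many components, and, above all, the possible non-Hausdorff branching of the leaf spaces of $g^s_i,g^u_i$ at or near the orbits of $L_i$: this is what makes the existence of genuine two-sided sandwiching leaves $g^s_\pm,g^u_\pm$ and the trapping of $x$ by nested shrinking rectangles delicate. The nowhere-density of $L_i$ inside individual weak leaves — which rests on $\Lambda_i$ having no attracting or repelling pieces — is precisely the input that makes these choices possible.
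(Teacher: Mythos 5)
Your approach is genuinely different from the paper's, and it has a real gap. The paper's proof reduces the extension problem to a clean planar--topology lemma (Lemma~\ref{l.extension-orbit-space}): if $A,B\subset\RR^2$ are \emph{totally discontinuous} and $h:\RR^2\setminus A\to\RR^2\setminus B$ is a homeomorphism that is proper at infinity, then $h$ extends to a homeomorphism of $\RR^2$, by showing that the closures of images of a nested sequence of compact connected neighbourhoods of $x\in A$ form a nested sequence of compact connected sets whose intersection is a connected subset of $B$, hence a singleton. The dynamical input needed is minimal: Lemma~\ref{l.totally-discontinuous} shows $L_i$ is totally discontinuous (because $\Lambda_X$ is a saddle hyperbolic set with no attractors or repellers, hence transversally totally discontinuous, and local sections inject into $O_i$), and Lemma~\ref{l.preserves-infinity} establishes properness at infinity by distinguishing the end $\infty_i$ of $O_i\setminus L_i$ from the ends created by points of $L_i$ using the foliation $g^s_{i,0}$. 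No correspondence between full leaves of $g^s_1$ and $g^s_2$ near $L_i$ is ever needed.

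Your rectangle argument, by contrast, needs exactly that kind of leaf-level correspondence near $L_i$, and that is where it breaks down. To define $\overline\delta(x)$ as $h^s_\infty\cap h^u_\infty$ you must know: (a) that the images under $\delta$ of the one-sided approximating leaves $g^s_\pm$ of $g^s_1$ are carried in \emph{single} leaves of $g^s_2$ (but $\delta$ only sends a connected component of $\ell\cap(O_1\setminus L_1)$ onto a connected component of some $\ell'\cap(O_2\setminus L_2)$, and a leaf $\ell$ of $g^s_1$ disjoint from $L_1$ can perfectly well be sent onto a \emph{proper} subarc of a leaf $\ell'$ of $g^s_2$ that does meet $L_2$); (b) that the two one-sided families of image leaves converge to the \emph{same} limit leaf $h^s_\infty$, which is exactly the kind of statement that fails under the non-Hausdorff branching of the leaf space of $g^s_2$ that you yourself flag; and (c) that the limit leaves $h^s_\infty$ and $h^u_\infty$ intersect, whereas your incidence argument was established only on $O_i\setminus L_i$ and does not automatically pass to boundary leaves. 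You correctly identify the branching as ``the main difficulty,'' but you do not resolve it; in the presence of branching the construction does not produce a well-defined point. Note also that you only record that $L_i$ is nowhere dense, whereas the paper's route depends essentially on the stronger total discontinuity — this is precisely what lets the extension lemma dispense with any foliation bookkeeping near $L_i$. If you want to salvage a leaf-by-leaf argument you would need to prove, before extending, that $\widehat\Delta^s$ and $\widehat\Delta^u$ descend to well-defined maps on the full leaf spaces of $g^s_i$ and $g^u_i$ respecting branching; the paper's argument gets the homeomorphism $\overline\delta$ first and such compatibility of $\overline\delta$ with $g^s_i,g^u_i$ as a consequence.
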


We shall use the following general lemma of planar topology.

\begin{lemma}
\label{l.extension-orbit-space}
Let $A$ and $B$ be totally discontinuous subsets of $\RR^2$, and $h:\RR^2\setminus A\to \RR^2\setminus B$. Assume that, for every compact subset $K$ of $\RR^2$, the set $h(K\setminus A)$ is relatively compact in $\mathbb{R}^2$. Then $h$ can be extended to a homeomorphism of $\bar h:\RR^2\to\RR^2$.
\end{lemma}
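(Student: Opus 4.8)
The plan is to prove Lemma~\ref{l.extension-orbit-space} directly by constructing the extension point by point and checking continuity, using only elementary planar topology. The heart of the matter is to understand what a single point $a\in A$ should be sent to. Since $A$ is totally discontinuous (totally disconnected), $a$ has a neighbourhood basis consisting of open sets $V$ whose (topological) frontier $\partial V$ is contained in $\RR^2\setminus A$. (Indeed, a point in a totally disconnected subset of $\RR^2$ is separated from any other point of $A$ by a Jordan curve avoiding $A$; intersecting finitely many of the bounded Jordan domains one obtains arbitrarily small open neighbourhoods of $a$ with frontier missing $A$.) First I would fix such a decreasing neighbourhood basis $V_1\supset V_2\supset\cdots$ of $a$ with $\bigcap_n \overline{V_n}=\{a\}$ and each $\partial V_n\subset\RR^2\setminus A$, and consider the images $h(\partial V_n)$. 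Each $h(\partial V_n)$ is a Jordan curve (or a finite union of Jordan curves) in $\RR^2\setminus B$; the relative-compactness hypothesis guarantees that $h(\overline{V_n}\setminus A)$ is relatively compact, hence bounded, so $h(\partial V_n)$ together with the region it ``encloses on the $V_n$ side'' defines a decreasing sequence of nonempty compact sets $K_n:=\overline{h(V_n\setminus A)}$.

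The key step is then to show that $\bigcap_n K_n$ is a single point. For this I would argue that the diameters of the $K_n$ tend to $0$. Suppose not: then there is $\varepsilon>0$ and points $p_n,q_n\in K_n$ with $|p_n-q_n|\ge\varepsilon$; passing to the limit and using that $A$ is totally disconnected, one produces a nondegenerate connected compact subset of $\bigcap_n K_n$. But $\bigcap_n K_n$ projects (under $h^{-1}$, which is defined off $B$) back into $\bigcap_n\overline{V_n}=\{a\}$ except possibly on $B$; more carefully, any point $y\in\bigcap_n K_n$ which is \emph{not} in $B$ satisfies $h^{-1}(y)\in\bigcap_n\overline{V_n}=\{a\}$, which is impossible since $h^{-1}(y)$ would then be a point of $\RR^2\setminus A$ equal to $a\in A$. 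Hence $\bigcap_n K_n\subset B$. Since $B$ is totally disconnected, a connected subset of it is a single point, so $\bigcap_n K_n$ is a point, call it $\bar h(a)$. A symmetric argument shows this value does not depend on the chosen neighbourhood basis, so $\bar h:\RR^2\to\RR^2$ is well-defined, extends $h$, and by construction sends $A$ into $B$. Running the same argument with $h^{-1}$ produces an extension $\overline{h^{-1}}$ sending $B$ into $A$, and a routine check (using the neighbourhood bases on both sides) shows $\overline{h^{-1}}$ is a two-sided inverse of $\bar h$, so $\bar h$ is a bijection with $\bar h(A)=B$.

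It remains to check that $\bar h$ and its inverse are continuous, equivalently that $\bar h$ is a homeomorphism. Continuity at points of $\RR^2\setminus A$ is immediate since $h$ is already a homeomorphism there and $A$ is closed in $\RR^2$ (totally disconnected subsets of $\RR^2$ under the relative-compactness hypothesis are in fact closed: if $a_n\to a$ with $a_n\in A$ and $a\notin A$, the values $h(a_n)$ accumulate, by the diameter-shrinking argument applied locally, on $h(a)\in\RR^2\setminus B$, contradicting that $h$ is a homeomorphism near $a$ — alternatively one simply adds closedness of $A,B$ as part of ``totally discontinuous'' as the paper surely intends). Continuity at a point $a\in A$ follows from the construction: given a neighbourhood $W$ of $\bar h(a)$, choose $n$ with $K_n\subset W$; then $V_n$ is a neighbourhood of $a$ with $\bar h(V_n)\subset K_n\subset W$, using that $\bar h$ maps $V_n\cap A$ into $K_n$ (any $a'\in V_n\cap A$ has neighbourhoods inside $V_n$, so its $K$-sequence is eventually inside $K_n$). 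The same reasoning applied to $h^{-1}$ gives continuity of $\bar h^{-1}$.

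The main obstacle I anticipate is the ``diameter tends to zero'' step: one must rule out that the images of a shrinking neighbourhood basis of a point of $A$ fail to shrink, and the only leverage available is that the limiting obstruction would be a nondegenerate continuum forced to lie inside the totally disconnected set $B$ (together with the relative-compactness hypothesis, which prevents the images from escaping to infinity). Making this rigorous requires care with the distinction between the ``inside'' and ``outside'' of the Jordan curves $h(\partial V_n)$ and with the fact that $h$ preserves this separation (a homeomorphism of $\RR^2$ minus a totally disconnected set still respects complementary regions of embedded circles, by invariance of domain / the Jordan curve theorem applied in $\RR^2$). Everything else is bookkeeping. Once Lemma~\ref{l.extension-orbit-space} is in hand, Proposition~\ref{p.extension-orbit-space} follows by applying it with $A=L_1$, $B=L_2$ (which are totally disconnected in $O_i\cong\RR^2$ because $\widetilde\Lambda_i$ is a lift of a hyperbolic set and its projection to the orbit space meets each leaf of $g^s_i,g^u_i$ in a Cantor-like set), $h=\delta$, the relative-compactness hypothesis being supplied by the fact that $\widetilde S_i$ is a complete lift of a closed transverse surface, so $\delta$ does not ``lose mass'' over compact sets; and the equivariance of $\overline\delta$ follows from the equivariance of $\delta$ together with the uniqueness of the extension.
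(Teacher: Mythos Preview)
Your strategy coincides with the paper's: define $\bar h(a)$ as the intersection of the closures of the $h$-images of a shrinking system of neighbourhoods of $a$, show this intersection is a single point because it must lie inside the totally disconnected set $B$, then check continuity and run the symmetric argument for $h^{-1}$.

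There is, however, a gap in your ``diameters tend to zero'' step. From sequences $p_n,q_n\in K_n$ with $|p_n-q_n|\ge\varepsilon$ you cannot, by ``passing to the limit'', produce a nondegenerate \emph{connected} subset of $\bigcap_n K_n$; limit points of two sequences give at best two points, not a continuum. What you actually need (and what your Jordan-curve discussion is groping towards) is that each $K_n$ is itself connected, so that $\bigcap_n K_n$, as a nested intersection of nonempty compact connected sets, is a nonempty continuum. The paper gets this directly and avoids Jordan curves entirely: it chooses a decreasing sequence $(X_n)$ of \emph{compact connected} sets with $\bigcap_n X_n=\{a\}$ (e.g.\ closed discs), observes that $X_n\setminus A$ is connected (a connected planar set minus a totally disconnected set stays connected), hence $Y_n:=\overline{h(X_n\setminus A)}$ is compact and connected, and then $\bigcap_n Y_n$ is a nonempty continuum contained in $B$, hence a point. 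This replaces your entire ``main obstacle'' paragraph: no tracking of insides and outsides of $h(\partial V_n)$ is needed.

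Two smaller remarks. Your parenthetical attempt to deduce that $A$ is closed from the hypotheses is not convincing; in the paper's application the sets $L_i$ are closed for independent reasons, and the lemma is stated somewhat informally. And in your final paragraph, the relative-compactness hypothesis for $\delta$ is \emph{not} supplied by $\widetilde S_i$ being the lift of a closed surface; the paper establishes it separately (Lemma~\ref{l.preserves-infinity}) by a dynamical argument using the stable foliation $g^s_{i,0}$ to distinguish the end at infinity of $O_i$ from the punctures at points of~$L_i$.
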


This lemma is easy and certainly well-known by people working in planar topology, but we were not able to find it in the literature. We provide a proof for sake of completeness.

\begin{proof}
We proceed to the definition of $\bar h$. Let $x$ be a point in $A$. We pick a decreasing sequence $(X_n)_{n\geq 0}$ of compact connected subsets of $\RR^2$ so that $X_n\neq \{x\}$ for every $n$, and so that $\bigcap_n X_n=\{x\}$. For every $n\geq 0$, let $Y_n$ be the closure in $\RR^2$ of the set $h(X_n\setminus A)$. Our assumptions imply that $(Y_n)_{n\geq 0}$ is a decreasing sequence of non-empty compact connected subsets of $\RR^2$. As a consequence, the intersection $\bigcap_n Y_n$ must be a non-empty compact connected subset of $\RR^2$. Moreover, since $\bigcap_n X_n=\{x\}\subset A$, the intersection $\bigcap_n Y_n$ must be included in $B$. Since $B$ is totally disconnected, it follows that $\bigcap_n Y_n$ must be a singleton $\{y\}$. Standard arguments show that the point $y$ does not depend on the choice of the sequence $(X_n)$. We set $\overline h(x):=y$. Repeating the same procedure for each point $x\in A$, we get an extension $\bar h:\RR^2\to\RR^2$ of $h$. The continuity of $\bar h$ follows easily from its definition. 

Of course, the same procedure yields a continuous extension $\overline{h^{-1}}:\RR^2\to\RR^2$ of the map $h^{-1}:\RR^2\setminus B\to \RR^2\setminus A$. Since $\RR^2\setminus A$ and $\RR^2\setminus B$ are dense in $\RR^2$, the equalities $h\circ h^{-1}=\mathrm{Id}_{\RR^2\setminus B}$ and $h^{-1}\circ h=\mathrm{Id}_{\RR^2\setminus A}$ extend to $\bar h\circ \overline{h^{-1}}=\overline{h^{-1}}\circ \bar h=\mathrm{Id}_{\RR^2}$. This shows that that $\overline h$ is a homeomorphism and completes the proof.
\end{proof}

\begin{lemma}
\label{l.totally-discontinuous}
For $i=1,2$, the set $L_i$ is totally discontinuous in $O_i\simeq\RR^2$.
\end{lemma}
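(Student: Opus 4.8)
The plan is to prove that $L_i$ is totally discontinuous by first establishing that it is totally discontinuous near each of its points, and then upgrading this to a global statement by a standard planar‑topology argument. Fix $\xi_0\in L_i$, represented by an orbit $\gamma_0$ of $\widetilde Y_i$ contained in $\widetilde\Lambda_i$, and choose a small disc $D$ in $\widetilde M_i$ transverse to $\gamma_0$; shrinking $D$ we may assume that $\mathrm{pr}_i|_D$ is a homeomorphism onto an open neighbourhood of $\xi_0$ in $O_i$. One first notes that $\Lambda_i$ is a locally maximal hyperbolic set: indeed, as in the proof of Lemma~\ref{l.projection-surface}, $\Lambda_i$ is exactly the maximal invariant set of the open set $\pi_i(\mathrm{int}\,U)=M_i\setminus S_i$. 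Hence, shrinking $D$ further, the set $\widetilde\Lambda_i\cap D$ has local product structure: it is homeomorphic to $\Lambda^u\times\Lambda^s$, where $\Lambda^u:=\widetilde\Lambda_i\cap W^{uu}_{\mathrm{loc}}$ and $\Lambda^s:=\widetilde\Lambda_i\cap W^{ss}_{\mathrm{loc}}$ are the traces of $\widetilde\Lambda_i$ on a local strong unstable, resp. strong stable, arc through $\gamma_0\cap D$. Since $\mathrm{pr}_i$ carries $\widetilde\Lambda_i\cap D$ onto $L_i\cap\mathrm{pr}_i(D)$, it is enough to show that $\Lambda^u$ and $\Lambda^s$ are totally disconnected; being closed subsets of arcs, this means showing they contain no non‑degenerate subarc.

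The second step is to show that $\Lambda_i$ contains no attractor and no repeller. Suppose $A\subseteq\Lambda_i$ were an attractor, with an open forward‑invariant trapping neighbourhood. For $x\in A$ one has $W^{uu}(x)\subseteq A$: if $y\in W^{uu}(x)$ then $d(\widetilde Y_i^{-t}y,\widetilde Y_i^{-t}x)\to 0$, so $Y_i^{-t}y$ eventually lies in the trapping neighbourhood, which being forward invariant forces $y\in A$. Consequently the whole weak unstable leaf $\cF^u_i(x)=\bigcup_t W^{uu}(Y_i^t x)$ is contained in $A$. But $(Y_i^t)$ is transitive Anosov, so $\cF^u_i(x)$ is dense in $M_i$; being also contained in the compact set $A$, this forces $A=M_i$, hence $\Lambda_i=M_i$. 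This is impossible, since any orbit of $Y_i$ meeting the transverse surface $S_i$ leaves $M_i\setminus S_i=\pi_i(\mathrm{int}\,U)$ and therefore is not in $\Lambda_i$. Reversing the time direction shows that $\Lambda_i$ contains no repeller either.

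To conclude, I would invoke the structure theory of locally maximal hyperbolic sets. If $\Lambda^u$ contained a non‑degenerate subarc $J$ of a strong unstable manifold, then the forward saturation $\bigcup_{t\ge 0}Y_i^t(J)$ would be an open, two–dimensional, forward–invariant subset of a single weak unstable leaf contained in $\Lambda_i$, and its $\omega$–limit set would be an attracting basic piece of $\Lambda_i$, contradicting the second step; dually $\Lambda^s$ contains no subarc. Hence $\Lambda^u$ and $\Lambda^s$ are totally disconnected, so $L_i$ is totally discontinuous near each of its points. Finally, since $O_i\cong\RR^2$ is locally compact and locally connected, a closed set that is totally discontinuous near each of its points is totally discontinuous: if a connected component $C$ of $L_i$ contained two points, pick a connected, relatively compact open neighbourhood $N$ of one of them whose closure avoids the other and is contained in an open set $N'$ with $L_i\cap N'$ totally discontinuous; by the boundary–bumping lemma the component of the first point in $C\cap\overline N\subseteq L_i\cap N'$ would be non‑degenerate, contradicting total discontinuity of $L_i\cap N'$.

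The main obstacle is the implication ``$\Lambda^u$ contains a subarc $\Rightarrow\Lambda_i$ contains an attractor''. Intuitively an arc of strong unstable manifold sitting inside a hyperbolic set is swept forward, under uniform expansion, onto an attracting piece; making this precise requires a careful trapping–region construction, thickening a long forward iterate $Y_i^{t_0}(J)$ (which has large unstable length and lies in any prescribed neighbourhood of $\omega(J)$) by short local strong stable arcs and then by the flow. The remaining steps are routine once this hyperbolic‑dynamics input is available.
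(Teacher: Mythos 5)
Your overall route is the same as the paper's: reduce to showing the lifted hyperbolic set $\widetilde\Lambda_i$ meets transversals of the flow in totally disconnected sets, then project down to $O_i$ along local transversals on which $\mathrm{pr}_i$ is injective. Where you diverge is in how much you prove versus cite. The paper simply asserts, as a known fact of hyperbolic dynamics, that a saddle hyperbolic set (one containing no attractor and no repeller) is ``transversally totally discontinuous,'' and then propagates this through $\Lambda_X\to\Lambda_i\to\widetilde\Lambda_i\to L_i$. You instead try to rebuild that fact from local product structure, and you yourself flag the gap: the implication ``$\Lambda^u$ contains a nondegenerate subarc $\Rightarrow$ $\Lambda_i$ contains an attracting basic piece'' is left as a sketch requiring a trapping-region construction. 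That is exactly the standard lemma the paper is implicitly invoking, so your proof is not wrong, but it is incomplete precisely at the one nontrivial hyperbolic-dynamics input; a clean way to close it is to appeal directly to the classical statement (e.g.\ via spectral decomposition and local product structure) that for a locally maximal hyperbolic set $\Lambda$ which is not an attractor, $W^u_{\mathrm{loc}}(x)\cap\Lambda$ is totally disconnected for every $x\in\Lambda$, rather than rederiving it.

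One point where your write-up is actually more careful than the paper's: the paper opens with ``By our assumptions, the maximal invariant set $\Lambda_X$ contains neither attractors nor repellers,'' but this is \emph{not} literally among the hypotheses of Theorem~\ref{t.main} (unlike Theorem~\ref{t.BBY}). You correctly derive it from the transitivity of $(Y_i^t)$: an attractor inside $\Lambda_i$ would contain full weak unstable leaves, which are dense by transitivity, forcing $\Lambda_i=M_i$, impossible since orbits crossing $S_i$ are not in $\Lambda_i$. That small argument genuinely belongs in a complete proof, and you supply it. The final local-to-global upgrade via boundary bumping is also fine, though the paper avoids it by phrasing everything through local sections at every point. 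In short: same strategy, one acknowledged gap (a citable standard fact), and one step where you improve on the paper's exposition.
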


Let us introduce some terminology that will be used in the proof of Lemma~\ref{l.totally-discontinuous}. By a \emph{local section} of a vector field $Z$ on a $3$-manifold $P$, we mean a compact surface with boundary embedded in $P$ and transverse to $Z$. A $(Z^t)$-invariant set $\Omega\subset P$ is said to be \emph{transversally totally discontinuous} if $\Omega\cap\Sigma$ is totally discontinuous for every local section $\Sigma$ of $Z$. 

\begin{proof}
By our assumptions, the maximal invariant set $\Lambda_X$ of the hyperbolic plug $(U,X)$ contains neither attractors nor repellors. Since $\Lambda_X$ is a hyperbolic set, it follows that $\Lambda_X$ is transversally totally discontinuous. Hence the projection $\Lambda_i$ of $\Lambda_X$ in the manifold $M_i$ is also transversally totally discontinuous (recall that $\Lambda_X$ sits in the interior of $U$ and that the projection $p_i:U\to M_i$ is a homeomorphism in restriction to the interior of $U$). As a further consequence, the complete lift $\widetilde\Lambda_i$ of $\Lambda_i$ in the universal cover $\widetilde M_i$ is also transversally totally discontinuous. 

Now recall that $(\widetilde M_i,\widetilde Y_i)$ is topologically equivalent to $\RR^3$ equipped with the trivial vertical unit vector field. As a consequence, for every point $x\in\widetilde M_i$, we can find a local section $\Sigma$ of $\widetilde Y_i$, so that $x\in\Sigma$, and so that no orbit of $\widetilde Y_i$ intersects $\Sigma$ twice. This implies that the restriction to $\Sigma$ of the projection $\mathrm{pr}:\widetilde M_i\to O_i$ is one-to-one, hence a homeomorphism onto its image. Since $\widetilde\Lambda_i$ is transversally totally discontinuous, it follows that the set $L_i=\mathrm{pr}(\widetilde\Lambda_i)$ is totally discontinuous in $O_i$. 
\end{proof}

\begin{lemma}
\label{l.preserves-infinity}
For every compact set $K\subset O_1\simeq\RR^2$, the set $\eta(K\setminus L_1)$ has compact closure in $O_2\simeq\RR^2$.
\end{lemma}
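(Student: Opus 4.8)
The plan is to argue by contradiction, reducing everything to the behaviour of $\delta$ near $L_1$. If the conclusion fails for some compact $K\subset O_1$, there is a sequence $z_n\in K\setminus L_1$ with $\delta(z_n)\to\infty$ in $O_2\simeq\RR^2$; after extraction $z_n\to z_\infty\in K$, and since $\delta$ is a homeomorphism of $O_1\setminus L_1$ onto $O_2\setminus L_2$ the limit $z_\infty$ cannot lie in $O_1\setminus L_1$, so $z_\infty\in L_1$, say $z_\infty=\mathrm{pr}_1(\tilde w)$ with $\tilde w\in\widetilde\Lambda_1$. Hence it suffices to show that whenever $z_n\in O_1\setminus L_1$ converges to a point $z_\infty\in L_1$, the sequence $\delta(z_n)$ stays in a compact subset of $O_2$; in fact one expects $\delta(z_n)$ to converge to the image of $z_\infty$ under a canonical map $L_1\to L_2$.

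First I would package the ``guts''. Deleting from $U$ thin flow-collars of $\partial^{in}U$ and $\partial^{out}U$ produces a compact $V\subset U$ with $\bigcap_t X^t(V)=\Lambda_X$; then $N_i:=\pi_i(V)$ is a compact neighbourhood of $\Lambda_i$ in $M_i$ with $\partial N_i$ transverse to $Y_i$, $\bigcap_t Y_i^t(N_i)=\Lambda_i$, and $M_i\setminus\mathrm{int}(N_i)$ a product collar $S_i\times[-1,1]$ ($S_i=S_i\times\{0\}$) across which $Y_i$ flows transversally in bounded flow-time. Consequently, in $\widetilde M_i$, every orbit of $\widetilde Y_i$ not contained in $\widetilde\Lambda_i$ crosses $\widetilde S_i$ within bounded flow-time of each instant at which it leaves a component of $\widetilde N_i:=p_i^{-1}(N_i)$. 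Crucially, the diffeomorphism $\bar\Phi_{in}$ of Proposition~\ref{p.extension} is the identity off a collar of $\partial^{out}U$, hence (for the collars chosen thin enough) equals the identity on $V$; so $\Phi_{in}$ carries $N_1$ onto $N_2$ while identifying the flows there and the hyperbolic sets $\Lambda_1\cong\Lambda_X\cong\Lambda_2$, and its lift $\widetilde\Phi_{in}$ carries each component of $\widetilde N_1$ onto one of $\widetilde N_2$ identifying the flows, in particular $\widetilde\Lambda_1$ onto $\widetilde\Lambda_2$ equivariantly; on orbit spaces this gives a map $\iota:L_1\to L_2$. Put $z_\infty':=\iota(z_\infty)=\mathrm{pr}_2(\widetilde\Phi_{in}(\tilde w))$.

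Then I would pick $\tilde u_n\in\mathrm{orb}_{\widetilde Y_1}(z_n)$ with $\tilde u_n\to\tilde w$, let $\widetilde N^0$ be the component of $\widetilde N_1$ containing $\mathrm{orb}(\tilde w)$, and let $[a_n,b_n]\ni 0$ be the maximal interval on which $Y_1^t(\tilde u_n)\in\widetilde N^0$ (not both ends infinite, since $z_n\notin L_1$). If $b_n$ stays bounded along a subsequence, then flowing $\tilde u_n$ to the exit of $\widetilde N^0$ and a little further yields a point of $\mathrm{orb}(z_n)\cap\widetilde S_1$ lying in a compact subset of $\widetilde M_1\simeq\RR^3$, hence of the closed surface $\widetilde S_1$; since $\widehat\Delta$ is a homeomorphism $\widetilde S_1\to\widetilde S_2$ conjugating the return maps (Proposition~\ref{p.conjugacy}) and $\mathrm{pr}_2$ is continuous, $\delta(z_n)$ then stays in a compact subset of $O_2$, contradicting $\delta(z_n)\to\infty$; the case where $a_n$ stays bounded is symmetric. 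So we may assume $a_n\to-\infty$ and $b_n\to+\infty$: the orbit of $z_n$ performs a long two-sided excursion in $\widetilde N^0$ shadowing $\mathrm{orb}(\tilde w)$, and via the identity $\widetilde\Phi_{in}$ on $\widetilde N^0=\widetilde\Phi_{in}(\widetilde N^0)$ this forces the $\widetilde S_1$-crossings of $\mathrm{orb}(z_n)$ flanking the excursion — say $\tilde z_n^-$ just before and $\tilde z_n^+=\widetilde\theta_1(\tilde z_n^-)$ just after — to sit on leaves of $\widetilde L^s_1$, resp.\ $\widetilde L^u_1$, whose $\widehat\Delta$-images, by the matching of the coding with $\widetilde\Phi_{in}$ on the guts, sit on leaves of $\widetilde W^s(\mathrm{orb}\,\widetilde\Phi_{in}(\tilde w))$, resp.\ $\widetilde W^u(\mathrm{orb}\,\widetilde\Phi_{in}(\tilde w))$. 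It then follows that $\widehat\Delta(\tilde z_n^+)$ lies on a $\widetilde Y_2$-orbit meeting a fixed compact neighbourhood of $\mathrm{orb}(\widetilde\Phi_{in}(\tilde w))$, so $\delta(z_n)=\mathrm{pr}_2(\widehat\Delta(\tilde z_n^+))\to z_\infty'$, again contradicting $\delta(z_n)\to\infty$.

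The main obstacle is the compatibility invoked in the last step: that the coding machinery of Section~\ref{s.coding}, hence the homeomorphism $\widehat\Delta$, is compatible with the canonical identification $\widetilde\Phi_{in}|_{\widetilde\Lambda_1}:\widetilde\Lambda_1\to\widetilde\Lambda_2$ of the guts. Precisely, one must show that the finite symbolic word recording the last crossings of $\widetilde S_1$ by an orbit before it is absorbed into $\widetilde\Lambda_1$ near $\mathrm{orb}(\tilde w)$ is mapped by $(\tilde\phi_{in})^{\otimes\ZZ}$ to the word recording the absorption of the image orbit into $\widetilde\Lambda_2$ near $\mathrm{orb}(\widetilde\Phi_{in}(\tilde w))$ — exactly the regime where the coding map $\chi$ is undefined and where the symbolic description degenerates. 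Establishing this uses $\tilde\phi_{in}(\widetilde L^s_1)=\widetilde L^s_2$ and $\tilde\phi_{out}(\widetilde L^u_1)=\widetilde L^u_2$, the strong transversality preserved along the isotopy $(\tilde\phi_t)$, Proposition~\ref{p.relation-lifts}, the identification $\widetilde\Phi_{in}|_{\widetilde N_1}$, and the continuity of $\widehat\Delta^s,\widehat\Delta^u$ from Corollaries~\ref{c.extension-s} and~\ref{c.extension-u}; it is the technical heart of the proof.
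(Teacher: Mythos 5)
Your route is genuinely different from the paper's, and it contains a gap that you yourself flag as ``the technical heart.'' The paper's proof is a short topological argument about ends of the planar surface $O_i\setminus L_i$ that avoids orbit-tracking entirely. The lemma is equivalent to the statement that $\delta$ sends the distinguished end $\infty_1$ of $O_1\setminus L_1$ to $\infty_2$, and the foliation $g^s_{i,0}$ (the restriction of $g^s_i$ to $O_i\setminus L_i$) singles out $\infty_i$ among the other ends of $O_i\setminus L_i$ (which correspond bijectively to points of $L_i$, since $L_i$ is totally discontinuous): there exist leaves of $g^s_{i,0}$ going from $\infty_i$ to $\infty_i$ (any leaf of $g^s_i$ disjoint from $L_i$), whereas at an end $x$ coming from a point of $L_i$ no leaf of $g^s_{i,0}$ goes from $x$ to $x$, because each leaf of $g^s_{i,0}$ is a connected component of $\hat\ell\setminus L_i$ for a properly embedded line $\hat\ell$ running from $\infty_i$ to $\infty_i$. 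As $\delta$ carries $g^s_{1,0}$ to $g^s_{2,0}$, it must preserve the distinguished end.

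Your argument aims at a stronger statement --- that $\delta(z_n)$ converges to the specific point $\iota(z_\infty)$, where $\iota:L_1\to L_2$ is induced by $\widetilde\Phi_{in}|_{\widetilde\Lambda_1}$ --- which would in particular identify the extension $\overline\delta$ on $L_1$. But the compatibility you invoke between the coding machinery (hence $\widehat\Delta$) and the tautological identification $\widetilde\Phi_{in}$ of the two copies of the guts is nowhere established in the paper, and it is not a detail. The extensions $\widehat\Delta^s,\widehat\Delta^u$ are produced by a purely order-theoretic completion (Proposition~\ref{p.Delta-preserves-order} plus density of $f^{s,\infty}_i$ in $f^s_i$), with no control, a priori, on which leaves of $\widetilde L^s_2$ are the images of which leaves of $\widetilde L^s_1$: nothing at that stage rules out that $\widehat\Delta^s$ sends a leaf of $\widetilde L^s_1$ lying in the stable manifold of the orbit of $\tilde w$ to a leaf lying in the stable manifold of a \emph{different} orbit of $\widetilde\Lambda_2$. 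In addition, the intermediate claim that the flanking crossings $\tilde z_n^{\pm}$ ``sit on'' leaves of $\widetilde L^s_1$, resp.\ $\widetilde L^u_1$ is false as stated --- by construction these crossings lie \emph{off} those laminations, since $\widetilde\theta_1$ is undefined on $\widetilde L^s_1$; what you would actually need is that they accumulate on such leaves and then to pass to the limit precisely in the regime where the coding degenerates. So the proposal is incomplete as written: closing it would require a substantial new compatibility lemma that the paper's end-counting argument circumvents entirely.
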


\begin{proof}
For $i=1,2$, the surface $O_i\setminus  L_i$ has infinitely many ends. One of them is the end of $O_i\simeq\RR^2$, that we denote by $\infty_i$. The other ends are in one to one correspondance with the points of $L_i$ (since $L_i$ is totally discontinuous). Proving lemma~\ref{l.preserves-infinity} is equivalent to proving that the homeomophism $\eta:O_1\setminus L_1\to O_2\setminus L_2$ maps the end $\infty_1$ to the end $\infty_2$. 

From the viewpoint of the topology of the surface $O_i\setminus L_i$, nothing distinguishes $\infty_i$ from the other ends. Hence we need to introduce some dynamical invariants to prove that $\eta$ necessarily maps $\infty_1$ to $\infty_2$. 

For $i=1,2$, the foliation $\widetilde\cF^s_i$ induces a $1$-dimensional foliation $g^s_i$ on the  space $O_i$. We denote by $g^s_{i,0}$ the restriction of the foliation  $g^s_i$ to $O_i\setminus L_i$. According to Lemma~\ref{l.projection-surface}, $g^s_{i,0}$ can be obtained as the projection on $O_i$ of the foliation $\widetilde\cF^s_i\cap\widetilde S_i=\widetilde F^s_i$. As a consequence, $\eta$ maps the foliation $g^s_{1,0}$ to the foliation $g^s_{2,0}$. 

Since $O_i$ is a plane, every leaf of the foliation $g^s_i$ is a properly embedded line, going from $\infty_i$ to $\infty_i$ (recall that $\infty_i$ is the unique end of $O_i$). The leaves of $g^s_i=(\mathrm{pr}_i)_*\widetilde\cF^s_i$ that intersect $L_i=\mathrm{pr}_i(\Lambda_i)$ are the projections of the leaves of the lamination $W^s(\widetilde \Lambda_i)$. In particular, there exist leaves of $g^s_i$ that do not intersect $L_i$. As a consequence, there exist leaves of $g^s_{i,0}$ going from $\infty_i$ to $\infty_i$. On the other hand, if $x$ is an end of $O_i\setminus L_i$ corresponding to a point of $L_i$, then there does not exist any leaf of $g^s_{i,0}$ going from $x$ to $x$ (because every leaf $\ell$ of $g^s_{i,0}$ is a connected component of $\hat\ell\setminus L_i$ where $\hat\ell$ a line in $O_i$ going from $\infty_i$ to $\infty_i$). So, the foliation $g^s_{i,0}$ allows to distinguish $\infty_i$ from the other ends of $O_i\setminus L_i$. Since $\eta$ maps $g^s_{1,0}$ to $g^s_{2,0}$, it follows that $\eta$ must map $\infty_1$ to $\infty_2$. Since $\infty_i$ is the unique end of $O_i$, this exactly means that, for a compact set $K\subset O_1\simeq\RR^2$, the set $\eta(K\setminus L_1)$ has compact closure in $O_2\simeq\RR^2$.
\end{proof}

\begin{proof}[Proof of Proposition~\ref{p.extension-orbit-space}]
Lemmas~\ref{l.totally-discontinuous} and~\ref{l.preserves-infinity}, together with the fact that $O_1$ and $O_2$ are homeomorphic to $\RR^2$, show that we are exactly in the situation of Lemma~\ref{l.extension-orbit-space}. Applying this Lemma, we get a homeomophism $\bar\delta:O_1\to O_2$ extending $\eta$. The equivariance of $\bar\eta$ follows from those of $\delta$, by continuity and by density of $O_i\setminus L_i$ in $O_i$.
\end{proof}

We will now conclude the proof of Theorem~\ref{t.main} by using a result of Barbot. 

\begin{theorem}[See Theorem 3.4 of~\cite{Bar95}, or Proposition 1.36 and Corollaire 1.42 of~\cite{Bar06}]
\label{t.Barbot}
Two transitive Anosov flows are topologically equivalent if and only if there exist a homeomorphism between their orbit spaces, which is equivariant with respect to the actions of the fundamental groups, and which does not exchange the stable/unstable directions. 
\end{theorem}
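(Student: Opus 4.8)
The statement is classical; here is the argument one would give. The \textbf{``only if''} implication is the easy one: if $f\colon M_1\to M_2$ is a topological equivalence, then $f$ sends oriented orbits of $(Y_1^t)$ to oriented orbits of $(Y_2^t)$, and --- since for an Anosov flow the weak stable leaf through a point is characterised purely in terms of which orbits stay within bounded distance in forward time, and symmetrically for the weak unstable leaf --- $f$ must carry $\cF_1^s$ to $\cF_2^s$ and $\cF_1^u$ to $\cF_2^u$. Lifting $f$ to the universal covers gives a homeomorphism $\widetilde f\colon\widetilde M_1\to\widetilde M_2$ that is equivariant for $f_*\colon\pi_1(M_1)\to\pi_1(M_2)$ and respects oriented orbits; passing to the quotients by the flows, it descends to an equivariant homeomorphism $O_1\to O_2$ carrying $g_1^s,g_1^u$ to $g_2^s,g_2^u$, hence not exchanging the stable and unstable directions.

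For the \textbf{``if''} implication, let $\Phi\colon O_1\to O_2$ be an equivariant homeomorphism with $\Phi(g_1^s)=g_2^s$ and $\Phi(g_1^u)=g_2^u$. The plan is to lift $\Phi$ to a homeomorphism $\widetilde H\colon\widetilde M_1\to\widetilde M_2$ sending each oriented orbit of $(\widetilde Y_1^t)$ onto an oriented orbit of $(\widetilde Y_2^t)$ and equivariant for $\Phi_*$; quotienting then yields the desired topological equivalence $H\colon M_1\to M_2$. To construct $\widetilde H$ one first records the fibred structure of $\widetilde M_i$: the flow $(\widetilde Y_i^t)$ is a free, proper $\RR$-action on $\widetilde M_i\cong\RR^3$ whose orbit space is $O_i\cong\RR^2$, so $\mathrm{pr}_i\colon\widetilde M_i\to O_i$ is a principal $\RR$-bundle, necessarily trivial. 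Choosing a continuous section and reparametrising time along the orbits, one identifies $\widetilde M_i$ with $O_i\times\RR$ in such a way that $(\widetilde Y_i^t)$ becomes the translation flow $(x,s)\mapsto(x,s+t)$ and each $\gamma\in\pi_1(M_i)$ acts by $\gamma\cdot(x,s)=(\bar\gamma x,\,s+c^{(i)}_\gamma(x))$ for a continuous cocycle $c_\gamma^{(i)}\colon O_i\to\RR$.

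With these coordinates, it suffices to find, for every $x\in O_1$, an orientation-preserving homeomorphism $\alpha_x\colon\RR\to\RR$ depending continuously on $x$, so that $\widetilde H(x,s):=(\Phi(x),\alpha_x(s))$ satisfies the twisted equivariance equation
$$\alpha_{\bar\gamma x}\bigl(s+c^{(1)}_\gamma(x)\bigr)=\alpha_x(s)+c^{(2)}_{\Phi_*\gamma}(\Phi x)\qquad\text{for all }\gamma\in\pi_1(M_1),\ x\in O_1,\ s\in\RR.$$
Solving this equation is where the real work lies, and I expect it to be the main obstacle. The decisive point is that we want only a topological equivalence, not a conjugacy, so at each point we may draw on the convex, infinite-dimensional space of increasing self-homeomorphisms of $\RR$: concretely, one fixes a locally finite $\pi_1(M_1)$-equivariant open cover of $O_1$ with a subordinate partition of unity, prescribes $\alpha_x$ on a piece of a fundamental domain, and propagates and averages using the cocycle relations, the overlap conditions being guaranteed by the cocycle identities for $c^{(1)}$ and $c^{(2)}$. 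The points of $O_1$ fixed by some nontrivial $\gamma$ --- they correspond to periodic orbits, and there $c_\gamma^{(1)}(x)$ and $c_{\Phi_*\gamma}^{(2)}(\Phi x)$ are the respective periods --- must be handled with extra care, choosing $\alpha_x$ so that it conjugates one translation to the other.

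Finally, $\widetilde H$ is a homeomorphism --- an inverse is produced the same way starting from $\Phi^{-1}$ --- it carries oriented orbits of $(\widetilde Y_1^t)$ to oriented orbits of $(\widetilde Y_2^t)$ by construction, and it is $\Phi_*$-equivariant, so it descends to a homeomorphism $H\colon M_1\to M_2$ mapping the oriented orbits of $(Y_1^t)$ onto those of $(Y_2^t)$; this is the sought topological equivalence. Here the hypothesis that $\Phi$ does not exchange the stable and unstable directions is precisely what prevents $\widetilde H$ from being forced to reverse the orientation of the orbits, i.e.\ what allows an honest topological equivalence rather than merely one up to time reversal.
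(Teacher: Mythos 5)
The paper does not prove this statement; it cites it verbatim from Barbot (Theorem~3.4 of~\cite{Bar95}, or Proposition~1.36 and Corollaire~1.42 of~\cite{Bar06}), so there is no in-house proof to compare against. Your ``only if'' direction is routine and fine. Your ``if'' direction correctly identifies the standard strategy --- trivialize $(\widetilde M_i,\widetilde Y_i)$ as $O_i\times\RR$ with translation flow, write the deck action as a skew-product over a cocycle $c^{(i)}_\gamma$, and reduce the problem to solving the twisted equivariance equation for a family $(\alpha_x)$ of orientation-preserving homeomorphisms of $\RR$ --- and your cocycle equation is the right one. But the sketch has a genuine gap, and a misplacement of the key hypothesis.

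The gap is that you never establish solvability. At a point $x$ fixed by a nontrivial $\gamma$, the equation forces $\alpha_x$ to conjugate translation by $c^{(1)}_\gamma(x)$ to translation by $c^{(2)}_{\Phi_*\gamma}(\Phi x)$, which is possible with $\alpha_x$ \emph{increasing} only if the two numbers have the \emph{same sign}. You acknowledge periodic points need ``extra care'' but never show the signs match, and nothing in the partition-of-unity construction you describe forces it. This is precisely where the hypothesis that $\Phi$ does not exchange stable and unstable enters, and it enters at the very start of the construction, not at the end as you suggest. The mechanism: the element $\gamma$ representing one positive turn around the periodic orbit acts near its fixed point $x\in O_1$ as a saddle contracting the $g^s_1$-leaf and expanding the $g^u_1$-leaf (this is just the lifted Poincar\'e return map seen in the orbit space). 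Since $\Phi$ carries $g^s_1$ to $g^s_2$ and $g^u_1$ to $g^u_2$, $\Phi_*\gamma$ contracts $g^s_2$ and expands $g^u_2$ near $\Phi(x)$, hence also represents the \emph{positive} turn, so $c^{(2)}_{\Phi_*\gamma}(\Phi x)>0$. If $\Phi$ swapped the foliations, these signs would be systematically opposite and the equation for increasing $\alpha_x$ would be unsolvable --- you would only get an equivalence with $(Y_2^{-t})$. So your closing remark that the hypothesis ``prevents $\widetilde H$ from being forced to reverse orientation'' is morally right, but as written the construction does not use the hypothesis at all, which means as stated it would ``prove'' the false claim that any equivariant homeomorphism of orbit spaces gives an orientation-preserving equivalence. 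Density of periodic orbits (transitivity) then propagates the sign agreement from the fixed points to the whole cocycle, and from there the remaining extension is indeed soft; but that step is the heart of the matter and cannot be left to a partition-of-unity hand-wave.
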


\begin{proof}[Proof of Theorem~\ref{t.main}]
The Theorem is an immediate consequence of Proposition~\ref{p.extension-orbit-space} and Theorem~\ref{t.Barbot}.
\end{proof}

\vskip 1cm
\noindent Fran\c cois B\'eguin

\noindent {\small LAGA - UMR CNRS 7539 }

\noindent{\small Universit\'e Paris 13, 93430 Villetaneuse, FRANCE}

\noindent{\footnotesize{E-mail: beguin@math.univ-paris13.fr}}
\vskip 2mm

\noindent Bin Yu

\noindent {\small School of Mathematical Sciences}

\noindent{\small Tongji University, Shanghai 200092, CHINA}

\noindent{\footnotesize{E-mail: binyu1980@gmail.com }}

\end{document}